\documentclass[11pt]{article}
\usepackage[T1]{fontenc}
\usepackage[utf8]{inputenc}
\usepackage{lmodern,amsmath,amssymb,amsthm,mathtools,mathrsfs,microtype}
\usepackage[margin=1in,headheight=14pt]{geometry}
\usepackage{xcolor,tikz,float}
\usepackage[colorlinks=true,allcolors=black]{hyperref}
\usepackage{bookmark,enumitem,needspace,booktabs,tabularx}

\allowdisplaybreaks[2]
\setlist{topsep=4pt,itemsep=3pt,leftmargin=*}
\newtheorem{theorem}{Theorem}[section]
\newtheorem{proposition}[theorem]{Proposition}
\newtheorem{lemma}[theorem]{Lemma}
\newtheorem{corollary}[theorem]{Corollary}
\newtheorem{conjecture}[theorem]{Conjecture}
\theoremstyle{remark}
\newtheorem{remark}[theorem]{Remark}
\newcommand{\Ric}{\operatorname{Ric}}
\newcommand{\Rm}{\operatorname{Rm}}
\newcommand{\Hess}{\operatorname{Hess}}
\newcommand{\diver}{\operatorname{div}}
\newcommand{\Vol}{\operatorname{Vol}}
\newcommand{\Area}{\operatorname{Area}}
\numberwithin{equation}{section}
\hypersetup{pdftitle={Positive Einstein metrics on infinitely many four-manifolds},
pdfsubject={Conic surface analysis, angle deformation, and geometric compactness}}

\title{Positive Einstein metrics on infinitely many four-manifolds}
\author{Tristan Ozuch}
\makeatletter
\renewcommand{\maketitle}{%
  \begin{center}
    {\Large\bfseries\@title\par}\medskip
    {\large\@author\par}%
  \end{center}\medskip}
\makeatother

\begin{document}
\maketitle
\begin{abstract}
We construct smooth Einstein metrics with positive scalar curvature on \(\#_k({\mathbb S^2}\times {\mathbb S^2})\) and on \(k\mathbb{CP}^2\#(2k+1)\overline{\mathbb{CP}}{}^{2}\), for \(k\geqslant2\). In particular, we show that positive Einstein metrics exist on infinitely many homeomorphism types of closed $4$-manifolds.

The first family shows that any simply connected spin $4$-manifold
carrying metrics with positive scalar curvature is homeomorphic to an Einstein $4$-manifold. Wick rotation gives associated Lorentzian static axisymmetric Einstein metrics with positive cosmological constant and arbitrarily many horizons with the same surface gravity.

The metrics are obtained by varying the cone angle $2\pi\beta$ of a prescribed singularity along a
torus, starting from the product metric on $\mathbb S^2\times\mathbb S^2$ and
from the symmetric K\"ahler-Einstein metric on
$\mathbb{CP}^2\#3\overline{\mathbb{CP}}{}^2$.
At $\beta=\frac{2}{k+1}$, changing
the period of the angle around the torus removes the cone singularity and changes the topology.

For both families, as $k\to+\infty$, the metrics collapse along the torus orbits to an explicit $2$-dimensional disk, whose boundary has Hausdorff dimension $\frac{4}{3}$. At the scale of curvature, they converge to complete periodic Ricci-flat metrics with Kasner asymptotics: the Euclidean Myers-Korotkin-Nicolai metric for the first family, and a new one for the second.

\end{abstract}

\section{Introduction}\label{sec:introduction}

A Riemannian metric $g$ is Einstein if, for a constant $\lambda$, it solves
\begin{equation}\label{eq:einstein}
 \Ric(g)=\lambda g.
\end{equation}
Einstein metrics are the natural generalizations of constant curvature metrics in higher dimensions and are candidates for optimal metrics on a given topology \cite{Bes87}.

In dimension four, the Chern-Gauss-Bonnet formula reads, for a closed four-manifold $M$,
\begin{equation}\label{eq:CGB}
 \int_M|\Rm_g|^2\,dV_g=32\pi^2\chi(M)+4\int_M|\Ric_g^\circ|^2\,dV_g,
\end{equation}
where $\Ric_g^\circ$ is the trace-free Ricci tensor, so Einstein metrics minimize $\int_M|\Rm_g|^2\,dV_g$. Their existence is strongly constrained by the topology through the classical Hitchin-Thorpe inequality  \cite{Hit74,Tho69} and LeBrun's Seiberg-Witten refinements \cite{LeB96,LeB01}.
A central question is \cite[Introduction, Section~I, p.~18]{Bes87}:
\begin{quote}
\emph{``Are manifolds admitting an Einstein metric rather scarce or numerous?''}
\end{quote}
In dimension four, Einstein metrics with positive scalar curvature have so far been rare, and a concrete question is:
\emph{Do infinitely many pairwise nonhomeomorphic closed four-manifolds
admit Einstein metrics with positive scalar curvature?}
Theorem~\ref{thm:main} answers this question affirmatively.

\subsection{Known examples of Einstein $4$-manifolds and main result}

Our limited understanding of Einstein $4$-metrics is often attributed to the lack of examples. Constructing Einstein metrics to uncover the landscape of these metrics has a long history, whose successes can be summarized as follows.
\begin{itemize}
    \item Tools from complex geometry and the simplifications induced by a (conformally) K\"ahler structure \cite{Aub78,Yau78,Tia90,CLW08}. This heavily biases our intuition about Einstein metrics: for instance, all known compact Ricci-flat $4$-manifolds are flat or locally hyperk\"ahler \cite{And10,LeB26}.
    \item Cohomogeneity one symmetries reduce the Einstein equation to ordinary differential equations \cite{Boh98,PP87,LPP04}, but in dimension $4$ they only reach a few simply connected topologies \cite{Par86}, on which the known examples are conformally flat or conformally K\"ahler.
    \item Gluing constructions have had some success when gluing together rigid models with negative curvature: the Dehn fillings of cusped hyperbolic manifolds \cite{And06,Biq08,Bam12} and the smoothings of branched covers of hyperbolic manifolds \cite{FP20,HJ24,GH25}. Outside negative curvature, this almost never happens: stringent obstructions to gluing exist in general \cite{Biq13,Ozu21,Ozu22a,Ozu22b,Ozu24,HO}, even when gluing together K\"ahler pieces \cite{LO26}.
\end{itemize}
In the present article, we introduce a different, purely Riemannian construction: we deform the cone angle of a prescribed singularity along a torus.
\medskip 

Until now, the known simply connected examples with positive scalar curvature were the round ${\mathbb S^4}$, the
Fubini-Study metric on $\mathbb{CP}^2$, the product on ${\mathbb S^2}\times {\mathbb S^2}$,
Page's metric on
$\mathbb{CP}^2\#\overline{\mathbb{CP}}{}^{2}$ \cite{Pag78}, Tian's construction of
K\"ahler-Einstein metrics on the del Pezzo surfaces
$\mathbb{CP}^2\#r\overline{\mathbb{CP}}{}^{2}$ for $3\leqslant r\leqslant8$
\cite{Tia90}, and the Chen-LeBrun-Weber metric on
$\mathbb{CP}^2\#2\overline{\mathbb{CP}}{}^{2}$ \cite{CLW08}. Apart from the conformally flat round sphere, all of them are K\"ahler or conformally K\"ahler, and LeBrun classified  conformally Kähler Einstein $4$-metrics \cite{LeB12}.
The examples with positive scalar curvature above only reach finitely many homeomorphism types, with $b_2\leqslant9$. In higher dimensions, many positive Einstein metrics are found with symmetries \cite{WZ90,Boh98,PP87,LPP04}.

We construct smooth Einstein metrics with positive
scalar curvature and arbitrarily large second Betti number on both spin and non-spin $4$-manifolds. 
\begin{theorem}\label{thm:main}
For every $k\geqslant 2$, the smooth manifolds
\[
 M_k:=\#_k({\mathbb S^2}\times {\mathbb S^2}),\qquad \text{ and }\qquad
 N_k:=k\mathbb{CP}^{2}\#(2k+1)\overline{\mathbb{CP}}{}^{2}
\]
admit Einstein metrics with positive scalar curvature.
\end{theorem}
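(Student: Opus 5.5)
The plan is to work with metrics invariant under a $T^2$-action, so that the Einstein equation \eqref{eq:einstein} reduces to a system on the $2$-dimensional orbit space. Toric $4$-manifolds are encoded by this orbit space, a polygon-like disk, with a labelling of its boundary edges by the circle subgroups that collapse there. The starting points are the product metric on ${\mathbb S^2}\times{\mathbb S^2}$ (orbit space a square) and the symmetric Kähler--Einstein metric on $\mathbb{CP}^2\#3\overline{\mathbb{CP}}{}^2$ (orbit space a hexagon). At each, I would declare one boundary edge, or rather one fixed torus orbit direction, to carry a cone singularity of angle $2\pi\beta$ along a torus. I then want a continuous family $g_\beta$ of $T^2$-invariant Einstein metrics with exactly that conical singularity, starting at $\beta=1$ from the smooth model.

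The first step is local: show that the relevant conic Einstein deformation problem is unobstructed at $\beta=1$, and more generally along the path. I would set up the linearized Einstein operator in a Bianchi/harmonic gauge on edge-type weighted Hölder spaces adapted to the cone angle, restricted to $T^2$- and reflection-invariant tensors. I would then prove that it has trivial kernel. The tool here is a Bochner/Lichnerowicz argument using $\lambda>0$ together with the symmetry, which kills the Killing and conformal directions. The implicit function theorem then gives a smooth curve in $\beta$.

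The second step is global continuation down to $\beta=\frac{2}{k+1}$. I would use a method of continuity: openness from the invertibility above, and closedness from compactness estimates. For closedness, I would obtain a priori control on the orbit-space metric and the torus fibre metrics, via the reduced ODE/PDE structure on the disk, Gauss--Bonnet \eqref{eq:CGB} in its conic form, and volume and diameter bounds from Myers. I would also need to rule out bubbling of Ricci-flat orbifold or ALF-type pieces under the symmetry.

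The final step is the topological surgery. At $\beta=\frac{2}{k+1}$, quotienting or unwrapping the angular coordinate around the torus by the appropriate factor changes the period. The singular edge then becomes smooth, and the new collapsing circle data generate a new toric structure. I would then verify by the standard toric classification, reading off the sequence of edge weights, that the result is $M_k$ or $N_k$. The parity of the intersection form distinguishes the spin family from the non-spin family, and $b_2$ and the signature identify the topology. Smoothness of the resulting metric across the former singular torus is then the usual cone-angle-$2\pi$ removal.

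I expect the main obstacle to be the closedness step of the continuity argument. One must exclude degeneration of the family, either through the fibres collapsing or through the kernel of the linearization appearing, for all $\beta\in[\frac{2}{k+1},1]$ with a uniform $k$-independent argument. My first attempt would be an explicit reduction: use the Weyl--Papapetrou form of $T^2$-invariant Einstein metrics to reduce to a harmonic-map-type equation on the disk with prescribed boundary behavior. Existence and monotonicity in $\beta$ would then be obtained by a maximum principle or sub/supersolution argument.
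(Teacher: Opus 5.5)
Your overall architecture matches the paper's: continuity in the cone angle, openness by the implicit function theorem, closedness by compactness, and identification of the topology from the rod sequence via Orlik--Raymond. The gap is in the step that actually changes the topology. A boundary edge of the orbit space is a rod: its orbits are circles and its closure is a fixed $2$-sphere, not a torus. A cone angle along it is the circumference-to-distance ratio of the collapsing circle, and removing it only changes the period of that circle, i.e.\ the lattice. The number of rods, hence $b_2$ (the number of rods minus two), is unchanged, so this can never produce $M_k$ or $N_k$.

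The singular locus must be the principal orbit $\Sigma=\pi^{-1}(o)$ over the center $o$ of the square or hexagon. Then the singularity is a cone point of the quotient metric $h$ while $G(o)>0$; this is also why $\chi(\Sigma)=[\Sigma]^2=0$ makes the right-hand side of \eqref{eq:edge-HT} vanish for every $\beta$. Changing the period of the normal angle $\theta$ about $o$ is a branched cover of the orbit disk, and this is what multiplies the rods. Even so, an $n$-fold unwrapping reaches cone angle $2\pi$ only at $\beta=1/n$, with $4n$ or $6n$ rods, which gives only odd $k$. To get every $k\geqslant2$ at $\beta=\frac{2}{k+1}$, you need an isometry acting by the half-turn $\theta\mapsto\theta+\pi$: here $\mathsf r_x\mathsf r_y$, resp.\ $\mathsf r^3\mathsf c$, since $\mathsf A^3=-I$. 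Then $h$ and $G$ are $\pi$-periodic in $\theta$ and can be read with period $(k+1)\pi$ (a quotient by the half-turn followed by a $(k+1)$-fold branched cover). This gives cone angle $(k+1)\pi\beta$ and $2(k+1)$, resp.\ $3(k+1)$, rods. These symmetries must be imposed along the whole deformation. They also fix $\Phi(o)$ and exclude the angular modes $R^{1/\beta}e^{\pm i\theta}$, whose second derivatives are unbounded for $\frac12<\beta<1$.

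Your analytic steps are named, but their key ingredients are missing. For openness, positivity of the Einstein operator fails on the full space. The square metrics are linearly unstable (Proposition~\ref{prop:square-instability}), and already on $\mathbb S^2\times\mathbb S^2$ the parallel TT tensor $g_{\mathbb S^2}\oplus(-g_{\mathbb S^2})$ is destabilizing; among the imposed symmetries, only the swap $\mathsf e$ excludes it. So a Bochner argument with $\lambda>0$ must use the symmetric class in an essential way, and you do not say how. The paper works with the reduced equations on the orbit surface, a conic rather than an edge problem, and pairs the linearization with the test $(2b-3f,-\xi,-2f)$. The scalar variations are controlled by a singular Lichnerowicz--Obata inequality (Mondello), the shape variation by the curvature $-1$ of $\mathbb H^2$, and the boundary terms are killed by the rod angle conditions. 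It also needs Fredholmness of index zero, and a separate holomorphic-quadratic-differential argument that the reduced equations imply $\Ric=3g$.

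For closedness, Myers and Bishop--Gromov give only upper volume bounds; without a lower bound the family can collapse and bubbles need not be ALE. The missing ingredient is the Schl\"afli formula $\dot V=\frac{2\pi}{3}\Area(\Sigma)\dot\beta$ together with the area--volume inequality $\frac{2\pi\beta}{15}\Area(\Sigma)\leqslant\Vol\leqslant\frac{2\pi\beta}{3}\Area(\Sigma)$. These give $\Vol(g_\beta)\geqslant V_1\beta^5$. Noncollapsing then makes any curvature bubble away from $\Sigma$ a nonflat Ricci-flat ALE space with a torus action. The condition $|\det(v,v')|=1$ on the rod vectors forces $\Gamma\approx\mathbb Z^2/(\mathbb Zv_-+\mathbb Zv_+)$ to be trivial, so the bubble is $\mathbb R^4$, a contradiction. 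Curvature near $\Sigma$ and the conformal factor up to the rods (via the closed fixed surface of $\mathsf c$) still need separate bounds. Your fallback of sub/supersolutions and monotonicity in $\beta$ does not apply as stated. With $\lambda\neq0$ one has $\Delta_h\rho=-6\rho$, so $\rho$ is not a Weyl coordinate. The system couples $\rho$, the $\mathbb H^2$-valued shape $\Phi$ and the conformal factor, with no evident comparison principle.
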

By Lichnerowicz \cite{Lic63}, a closed simply connected spin $4$-manifold with positive scalar curvature has signature zero, hence it is homeomorphic to ${\mathbb S^4}$ or to some $M_k$ by Freedman \cite{Fre82}. All of these carry metrics with positive scalar curvature \cite{GL80}, so by Theorem~\ref{thm:main} each is homeomorphic to a manifold carrying a positive Einstein metric.

The metrics on $M_k$ have an effective isometric ${\mathbb T^2}$-action with orthogonal and hypersurface-orthogonal generators, and they have \textit{pure curvature operator}: in some orthonormal frame $(e_i)$, the curvature operator is diagonal in the basis $(e_i\wedge e_j)_{i<j}$ of $\Lambda^2$. We also show that they are all linearly unstable in Proposition \ref{prop:square-instability}.
\medskip 

\noindent \textit{Lorentzian examples. }By Wick rotation, the metrics on $M_k$ also yield static Lorentzian Einstein metrics with positive cosmological constant. They have $k+1$ nondegenerate Killing horizons with equal surface gravities, hence matching Hawking temperatures (Proposition~\ref{prop:wick}). To the best of our knowledge, the closest Lorentzian examples are the static axisymmetric metrics found numerically by Dias-Gibbons-Santos-Way \cite{DGSW23}. Note, however, that the Lorentzian
setting allows much weaker regularity conditions, which would correspond
to Riemannian metrics with conical singularities along surfaces.
\medskip

\noindent 
\emph{Collapse.} As $k\to+\infty$, the metrics of Theorem~\ref{thm:main} collapse to an explicit disk $B_\infty$, the orbit space of the Kottler metric with negative mass and cosmological constant $3$, whose boundary circle has Hausdorff dimension $\frac{4}{3}$ (Theorem~\ref{thm:collapsed-limit}).
\medskip

\noindent 
\emph{Finite fundamental groups.}
For every integer $k\geqslant1$, a quotient of the metrics of Theorem \ref{thm:main} by a free $\mathbb Z_{k+1}$-action gives a closed oriented Einstein four-manifold $(Z^{(k)},g_Z^{(k)})$ with
\[
 \Ric(g_Z^{(k)})=3g_Z^{(k)},\qquad \pi_1(Z^{(k)})=\mathbb Z_{k+1},\qquad\text{and}\qquad  \Vol(Z^{(k)},g_Z^{(k)})=o(k^{-1}).
\]
In the square family, we have $H_*(Z^{(k)};\mathbb Q)\approx H_*({\mathbb S^4};\mathbb Q)$ and $\int_{Z^{(k)}}|\Rm|^2\,dV=64\pi^2$, the minimum for closed oriented positive Einstein four-manifolds. In the hexagon family, we find $H_*(Z^{(k)};\mathbb Q)\approx H_*(\overline{\mathbb{CP}}{}^2;\mathbb Q)$ and $\int_{Z^{(k)}}|\Rm|^2\,dV=96\pi^2$ (Corollary~\ref{cor:finite-fundamental-groups}). In particular, Einstein metrics with $\Ric=3g$ and fixed curvature energy can have arbitrarily large finite fundamental groups. For $k=1$, both quotients $Z^{(1)}$ occur in LeBrun's classification \cite[Propositions~2 and~3]{LeB21}.

As $k\to+\infty$, the quotients $(Z^{(k)},g_Z^{(k)})$ converge in the Gromov-Hausdorff sense to a segment of length $\frac{\pi}{3}$, with $|\Rm|\to+\infty$ at one end only (Remark~\ref{rem:segment-limit}). Blow-ups at the corners converge to complete periodic Ricci-flat limits: an equal-rod Myers-Korotkin-Nicolai quotient in the square family, and a non-spin metric on $\overline{\mathbb{CP}}{}^2\setminus {\mathbb T^2}$ which is not locally Hermitian in either orientation in the hexagon family (Theorem~\ref{thm:periodic-models}). The corresponding blow-ups of the original metrics are their periodic extensions (Remark~\ref{rem:covers}).

\subsection{The two initial metrics and the angle deformation}
Our construction starts from one of the following two smooth metrics, both denoted by $g_1$.
\medskip

\noindent 
\emph{The square.}  On
$M_1={\mathbb S^2}\times {\mathbb S^2}$, let $g_{{\mathbb S^2}}$ be the round unit metric and take $g_1:=\tfrac13(g_{{\mathbb S^2}}+g_{{\mathbb S^2}})$.
Rotation around an axis in each sphere gives a ${\mathbb T^2}$-action with square quotient.
\medskip

\noindent 
\emph{The hexagon.} On $N_1=\mathbb{CP}^{2}\#3\overline{\mathbb{CP}}{}^{2}$,
take the toric K\"ahler-Einstein metric with the dihedral symmetries of
its hexagonal ``fan'' toric quotient
\cite[Theorem~1.1 and Example~4.1]{BS99}.

Let $M$ be $M_1$ or $N_1$,  let $\pi:M\to\overline\Omega:=M/{\mathbb T^2}$ be the quotient map, write $o$ for the center of $\overline\Omega$, and set $\Sigma:=\pi^{-1}(o)$.  A cone angle $2\pi\beta$ along $\Sigma$ means
that, to leading order in normal polar coordinates, the metric $g$ is
$dr^2+\beta^2r^2d\theta^2+g_\Sigma$, with $\theta$ of period $2\pi$
and $g_\Sigma$ a smooth metric on $\Sigma$.
\begin{theorem}\label{thm:angle}
For either initial metric $g_1$, there is a family
$(g_\beta)_{0<\beta\leqslant1}$ starting at $g_1$, with
\[
 \Ric(g_\beta)=3g_\beta\quad\text{on }M\setminus\Sigma,\quad \text{ and }\quad\text{ conical singularity of angle $2\pi\beta$ along $\Sigma$}.
\]
They are smooth on $M\setminus\Sigma$ and invariant under the torus action and under the finite  group $\mathcal G$ of Section~\ref{subsec:symmetries}. They depend smoothly on $\beta$ in the weighted spaces of Section~\ref{sec:spaces}, in the sense of Corollary~\ref{cor:local}.
\end{theorem}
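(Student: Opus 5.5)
The plan is a continuity argument in $\beta$, starting at $\beta=1$ and run inside the class of $\mathbb T^2\times\mathcal G$-invariant metrics with prescribed cone angle along $\Sigma$.

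\emph{Reduction.} Because the torus action has orthogonal, hypersurface-orthogonal generators, I would use an ansatz of Weyl--Papapetrou type on the regular part. The torus-invariant metric is written as
\[
g=e^{2\nu}(d\rho^2+dz^2)+\rho\,(e^{u}d\phi_1^2+e^{-u}d\phi_2^2),
\]
after a conformal normalization adapted to $\lambda=3$. In this form $\Ric=3g$ becomes an elliptic system on the quotient polygon $\overline\Omega$, with the torus orbit $\Sigma=\pi^{-1}(o)$ sitting over the interior point $o$. If one prefers to avoid the Ricci-flat Weyl structure, which is not exact when $\lambda\neq0$, one can instead work directly with the gauged Einstein operator $\Phi(g)=\Ric(g)-3g+\delta^*_g B_{g_\beta}(g)$ (Bianchi/DeTurck gauge). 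This operator is restricted to $\mathcal G\times\mathbb T^2$-invariant symmetric tensors lying in the weighted H\"older spaces of Section~\ref{sec:spaces}, namely edge spaces of Mazzeo type near $\Sigma$ with weight $\delta\in(0,\min(1,1/\beta))$ excluded from the indicial roots.

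\emph{Approximate solutions and linearization.} For $\beta$ near a given $\beta_0$, I would build a reference metric $\tilde g_\beta$ from $g_{\beta_0}$ by changing the angular coefficient $\beta_0^2r^2d\theta^2$ to $\beta^2r^2d\theta^2$ with a cutoff near $\Sigma$. This gives a smooth family with cone angle $2\pi\beta$ and error $\Phi(\tilde g_\beta)=O(|\beta-\beta_0|)$ in the target space.

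The key step is to show that the linearized operator $L_{g_\beta}=\tfrac12\Delta_L-3$ (on divergence-free tensors, in gauge) is an isomorphism between the invariant weighted spaces. Fredholmness follows from the edge calculus once the weight avoids the indicial roots, whose values depend on $\beta$. Invertibility then reduces to the absence of invariant infinitesimal Einstein deformations. The symmetry group $\mathcal G$ should kill the invariant kernel: it rules out the trace and conformal directions (Obata and Lichnerowicz, using $\Ric=3g$) and the modes that merely change the cone angle or the orbit geometry. Given this isomorphism, the implicit function theorem produces $g_\beta$ depending smoothly on $\beta$, which is the content of Corollary~\ref{cor:local}.

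\emph{Continuation to all $\beta\in(0,1]$.} The set of $\beta$ reached is open by the previous step. For closedness I need a priori estimates along the path that are uniform on compact subintervals $[\beta_*,1]$:
\begin{itemize}
\item a positive lower bound on volume, via Bishop--Gromov, which still holds with angle $<2\pi$ because such edges are "positively curved";
\item bounded diameter, by Myers;
\item curvature bounds away from $\Sigma$, obtained by excluding bubbling with the $\mathbb T^2$-symmetry and the $L^2$ curvature bound from Chern--Gauss--Bonnet with an edge correction term.
\end{itemize}
Given these, Cheeger--Gromov compactness together with edge regularity passes to limits. Nondegeneracy must also persist along the path. I would check it by a Bochner/Weitzenb\"ock argument on invariant tensors, or by showing that the reduced ODE/PDE on $\overline\Omega$ has a monotone structure, for instance a maximum principle for the potential $u$.

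I expect the main obstacle to be this uniform invertibility, i.e.\ excluding invariant kernel elements for every $\beta$ and not only near $\beta=1$. I would first try to prove it through the reduced equations on $\overline\Omega$, where invariant kernel elements satisfy a scalar elliptic equation with a sign.
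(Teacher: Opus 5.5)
Your overall architecture (continuity in $\beta$ from $g_1$, the implicit function theorem for openness, compactness on compact subintervals for closedness) is the paper's. However, two load-bearing steps do not work as you state them.

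\textbf{The volume lower bound.} Bishop--Gromov with $\Ric=3g$ and $\operatorname{diam}\leqslant\pi$ gives an \emph{upper} bound on $\Vol(g_\beta)$. It also gives the relative bound $\Vol B(p,r)\geqslant\frac{\mathsf V(r)}{\mathsf V(\pi)}\Vol(g_\beta)$. It gives no lower bound on $\Vol(g_\beta)$ itself. Collapse of the torus orbits is the real danger, and it actually occurs as $\beta\to0$, since along the branch $\Vol(g_\beta)\leqslant\beta V_1$. The paper gets noncollapsing on compact subintervals from two ingredients:
\begin{itemize}
\item a Schl\"afli formula $\frac{d}{d\beta}\Vol(g_\beta)=\frac{2\pi}{3}\Area(\Sigma,g_\beta)$;
\item an area--volume inequality $\frac{2\pi\beta}{15}\Area(\Sigma)\leqslant\Vol\leqslant\frac{2\pi\beta}{3}\Area(\Sigma)$, from Gauss--Bonnet on the level sets of $\psi=\rho/\rho(o)$ together with $K_h\leqslant1$.
\end{itemize}
Together they integrate to $\Vol(g_\beta)\geqslant V_1\beta^5$. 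Your bubbling exclusion depends on this, because a blow-up limit is ALE only if it is noncollapsed. Even then, $\mathbb T^2$-symmetry alone does not rule out toric ALE instantons such as Eguchi--Hanson. What rules them out is that all pairs of distinct rod vectors satisfy $|\det(v,v')|=1$, which forces the group at infinity to be trivial. You also omit curvature bounds at $\Sigma$ itself and control of the conformal factor up to the rods and corners. The paper gets these from the reduced equations near $o$ and from the closed fixed-point surface of $\mathsf c$.

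\textbf{Invertibility.} Here you give no mechanism. Obata--Lichnerowicz, in a singular form such as Mondello's once $\beta<1$, only controls the scalar directions. The Einstein operator on TT tensors has negative directions for these metrics. Already at $g_1$ the parallel tensor $g_{\mathbb S^2}\oplus(-g_{\mathbb S^2})$ is destabilizing, and the paper shows the $M_k$ metrics are unstable. So a Bochner identity pairing a kernel element with itself gives no sign, and you give no reason why restricting to $\mathcal G$-invariant tensors would restore one.

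The paper instead works in the torus-reduced variables $(f,\xi,b)$: log orbit area, shape in $\mathbb H^2$, and conformal factor. It pairs the kernel equations with the non-symmetric test $(2b-3f,-\xi,-2f)$. After integration by parts, the boundary terms at the rods vanish because of the linearized angle conditions $2b=f\mp\xi_s$. This yields
\[
0=3\bigl(\|df_0\|_\mu^2-4\|f_0\|_\mu^2\bigr)+6\|\eta-2f_0\|_\mu^2+\mathcal Q_\Phi(\xi)+12\mu(\Omega)\langle f\rangle_\mu^{2},
\]
where every term is nonnegative by the singular Obata inequality and the negative curvature of $\mathbb H^2$.

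\textbf{The Weyl ansatz.} This is not available either. For $\lambda=3$ one has $\Delta_h\rho=-6\rho$, so $\rho$ is not harmonic and attains its maximum exactly at $o$; hence $(\rho,z)$ are not coordinates near $\Sigma$. Moreover, the hexagon family has $G_{12}\neq0$. If you pass to reduced equations, you also need the recovery step showing that the trace-free horizontal part of $\Ric-3g$ vanishes.
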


The finite symmetries \eqref{eq:group-actions} and \eqref{eq:hexagon-lifts} make the metric coefficients $h$ and $G$ of \eqref{eq:intro-matrix-ansatz} $\pi$-periodic in $\theta$. We may therefore define $g_\beta^{(k)}$ by the same metric formula with
$\theta\in\mathbb R/(k+1)\pi\mathbb Z$.
Its cone angle along $\Sigma$ is $(k+1)\pi\beta$, which equals $2\pi$ at $\beta=\frac{2}{k+1}$.
Section~\ref{sec:proof-main} shows that the resulting metrics $g^{(k)}_{2/(k+1)}$ are smooth Einstein metrics on $M_k$ and $N_k$. The orbit space then has $2(k+1)$ sides in the square construction and $3(k+1)$ sides in the hexagon construction, as in Figure~\ref{fig:covering}.
\begin{figure}[htbp]
\centering
\begin{tikzpicture}[every node/.style={font=\small},
  first/.style={thick},second/.style={thick,dashed},
  third/.style={thick,densely dotted}]
\foreach \d/\xx/\kk/\angle in {2/-3.8/1/1,3/0/2/{2/3},4/3.8/3/{1/2}}{
 \begin{scope}[xshift=\xx cm]
  \node at (0,1.5) {$k=\kk$, $\beta=\angle$};
  \foreach \row/\mult/\offset in {0/2/45,1/3/30}{
   \begin{scope}[yshift=-3.1*\row cm]
    \pgfmathtruncatemacro{\n}{\mult*\d}
    \pgfmathtruncatemacro{\last}{\n-1}
    \pgfmathsetmacro{\handle}{.5*sin(180/\n)}
    \foreach \j in {0,...,\last}{
     \pgfmathsetmacro{\ang}{360*\j/\n+\offset}
     \pgfmathsetmacro{\nang}{360*(\j+1)/\n+\offset}
     \ifnum\row=0\relax
      \pgfmathtruncatemacro{\kind}{mod(\j,2)*2}
     \else
      \pgfmathtruncatemacro{\kind}{mod(\j,3)}
     \fi
     \ifcase\kind\relax\def\sty{first}\or\def\sty{second}\or\def\sty{third}\fi
     \draw[\sty]
      ({.9*cos(\ang)},{.9*sin(\ang)})
      .. controls
       ({.9*cos(\ang)+\handle*cos(\ang+135)},
        {.9*sin(\ang)+\handle*sin(\ang+135)}) and
       ({.9*cos(\nang)-\handle*cos(\nang+45)},
        {.9*sin(\nang)-\handle*sin(\nang+45)})
      .. ({.9*cos(\nang)},{.9*sin(\nang)});
     \draw[thin]
      ({.9*cos(\ang)+.055*cos(\ang+135)},
       {.9*sin(\ang)+.055*sin(\ang+135)})
      -- ({.9*cos(\ang)+.055*(cos(\ang+135)-cos(\ang+45))},
          {.9*sin(\ang)+.055*(sin(\ang+135)-sin(\ang+45))})
      -- ({.9*cos(\ang)-.055*cos(\ang+45)},
          {.9*sin(\ang)-.055*sin(\ang+45)});
    }
    \fill (0,0) circle (1.5pt);
    \ifnum\row=0\relax
     \node at (0,-1.25) {$M_{\kk}$};
    \else
     \node at (0,-1.25) {$N_{\kk}$};
    \fi
   \end{scope}
  }
 \end{scope}}
\end{tikzpicture}
\caption{Orbit spaces for the square
construction (top) and hexagon construction (bottom).
The dot represents $\Sigma$.
Solid, dashed and dotted sides have rod vectors $(1,0)$, $(1,1)$ and
$(0,1)$, respectively.}
\label{fig:covering}
\end{figure}

This can be compared with Grove-Ziller \cite[Theorem~3.3 and Examples~4.2-4.4]{GZ12} who
construct polar metrics from Riemannian orbit polygons, including on connected sums of
$\mathbb S^2\times\mathbb S^2$ or with
Bazaikin-Matvienko \cite{BM07} who use warped products over
right-angled polygons and torus quotients to construct invariant metrics with positive Ricci
curvature.

\medskip 
We choose $\Sigma$ to be a principal torus orbit for the following reason. By Atiyah-LeBrun \cite[Theorems~2.1 and~2.2]{AL13}, as in the smooth case, an Einstein metric with a singularity as in Theorem \ref{thm:angle} satisfies the edge-cone Hitchin-Thorpe inequality
\begin{equation}\label{eq:edge-HT}
 2\chi(M)\pm3\tau(M)\geqslant(1-\beta)\bigl(2\chi(\Sigma)\pm(1+\beta)[\Sigma]^2\bigr).
\end{equation}
A principal torus orbit has $\chi(\Sigma)=0$ and $[\Sigma]^2=0$. So the right-hand side of \eqref{eq:edge-HT} vanishes for every $\beta$, and the constraint reduces to the Hitchin-Thorpe inequality of $M$.

Using the cone angle as a continuity parameter is a classical idea which was applied successfully to K\"ahler-Einstein metrics with cone singularities along a divisor  \cite{Don12,JMR16,CDS15a,CDS15b,CDS15c,Tia15,BG22}.
In the Ricci-flat case, Li-Sun \cite{LS25} recently constructed complete Ricci-flat metrics on infinitely many topologies. There, the angles $2\pi$ are obtained by a degree argument.

\subsection[The reduced equations]{The reduced equations}\label{subsec:intro-reduced}

On the set of two-dimensional torus orbits, write the metric $g$ as
\begin{equation}\label{eq:intro-matrix-ansatz}
 \begin{gathered}
 g=h+G_{ij}\,d\phi_i\,d\phi_j,
 \qquad G=G^{\mathsf T}>0,\\
 \rho:=\sqrt{\det G},\qquad \Phi:=\rho^{-1}G:\Omega\longrightarrow\mathbb H^2,\\
 \mathbb H^2:=\{S=S^{\mathsf T}>0:\det S=1\},\qquad\text{and}\qquad
 g_{\mathbb H^2}:=\tfrac12\operatorname{tr}(S^{-1}dS\,S^{-1}dS).
 \end{gathered}
\end{equation}
Here $h$ is the quotient metric, the angles $\phi_1,\phi_2$ have period $2\pi$, repeated indices are summed, $4\pi^2\rho$ is the area of the torus orbits and $\Phi_{ij}\,d\phi_i\,d\phi_j$ is the flat metric of the torus orbit rescaled to area $4\pi^2$. In the square case,
$G=\operatorname{diag}(u^2,v^2)$ and
$\Phi=\operatorname{diag}(\frac{u}v,\frac{v}u)$ lies on a geodesic of $\mathbb H^2$.

We solve the following \textit{reduced Einstein equations}, consequences of the Einstein equation \eqref{eq:einstein} with $\lambda=3$:
 \begin{equation}\label{eq:intro-reduced}
 \Delta_h\rho=-6\rho,\qquad
 \tau_h(\Phi)+d\Phi(\nabla_h\log\rho)=0,\qquad\text{and}\qquad
 4K_h=|d\Phi|_h^2-|d\log\rho|_h^2,
\end{equation}
where $K_h$ is the Gaussian curvature of $h$ and $\tau_h(\Phi)$ is the tension field \eqref{eq:matrix-tension} and we seek solutions of \eqref{eq:intro-reduced} for which the quotient metric $h=h_\beta$ has cone angle $2\pi\beta$ at $o$.

For the square family, whose metrics are static, the reduced equations \eqref{eq:intro-reduced} are the Weyl-Papapetrou reduction with cosmological constant for static axisymmetric metrics \cite[Section~3.4]{CLSZ07}.

\subsection{Strategy and analytic ingredients}\label{sec:strategy}
The proof is a continuity method parametrized by the cone angle along $\Sigma$: starting from $g_1$ at $\beta=1$, we decrease $\beta$ and show that the set of $\beta$ reached by Einstein metrics $g_\beta$ with cone angle $2\pi\beta$ along $\Sigma$ is both open and closed in $(0,1]$, hence equal to $(0,1]$. In particular, it contains all of the values $\beta=\frac{2}{k+1}$ for $k\geqslant 1$.

Throughout the deformation, we impose the finite symmetries of Section~\ref{sec:geometry}, which fix the central point $o$ and the value $\Phi(o)$. They restrict the Fourier frequencies in the polar angle $\theta$ near $o$ (Lemma~\ref{lem:central-modes}) and dictate our choice of weighted spaces.
\medskip

\noindent
\emph{Analytic setup. }
The prescribed singularity along $\Sigma = \pi^{-1}(\{o\})$ is a cone point of $h$ \textit{only}: $G$ extends to $o$ with $G(o)>0$. Near $o$, we use the conic surface analysis of Mazzeo-Weiss \cite[Section~4]{MW17}.  Deforming a cone angle along a surface is an edge problem, whose linearization can have infinite-dimensional kernel or cokernel \cite{Maz91}. Torus invariance removes the dependence along the singular torus and reduces it to a conic problem on the orbit surface. Along the rods and at the corners, the function spaces encode the smoothness conditions, and the Dirichlet problem at the corners is solved with an estimate of Grisvard (Lemma~\ref{lem:grisvard}).
\medskip

\noindent
\emph{Recovery of the Einstein equation.} For a solution of \eqref{eq:intro-reduced}, the tensor $T:=\rho(\Ric_g-3g)|_{T\Omega}$ may a priori not vanish but $T$ is trace-free and divergence-free, and we show that this, together with the rod angle conditions \eqref{eq:angle-equalities}, forces $T=0$ (Proposition~\ref{prop:recovery}).
\medskip

\noindent
\emph{Rigidity at fixed angle.} Local continuation by the implicit function theorem with parameter $\beta$ requires the linearization at fixed angle to be invertible (Section~\ref{sec:kernel}). Its kernel vanishes by an integral identity whose terms are all nonnegative: a singular Lichnerowicz-Obata inequality (Proposition~\ref{prop:spectrum}) controls the variation of $\rho$, and the negative curvature of $\mathbb H^2$ controls the variation of $\Phi$. The boundary terms vanish thanks to the rod angle conditions \eqref{eq:angle-equalities}. Comparison with an invertible operator shows that the linearization is Fredholm of index zero, hence invertible (Proposition~\ref{prop:invertibility}).
\medskip

\noindent
\emph{Compactness.} Section~\ref{sec:compactness} proves compactness on compact subintervals of $(0,1)$. Concavity of $\sqrt\rho$ controls the level sets of $\rho$ (Corollary~\ref{cor:concavity}), and a Schl\"afli formula with an area-volume inequality bounds the volume from below on compact angle intervals (Proposition~\ref{prop:Schlafli} and Corollary~\ref{cor:volume-branch}). Near $\Sigma$, the reduced equations \eqref{eq:intro-reduced} alone  bound $|\Rm|$ on a tube of fixed size (Lemma~\ref{lem:curv-central}). At the rods and corners, we follow \cite[Section~3.1]{Don08}: the fixed-point set of the isometry $\mathsf c$ completes to a \textit{closed} surface $\bar S$, made of four copies of $\overline\Omega$ glued along the rods, on which the points of $\partial\Omega$ are interior points. This lets us bound the conformal factor (Section~\ref{sec:reflected-section} and Lemma~\ref{lem:coord-conformal}) without dealing with boundary effects. A curvature blow-up elsewhere would give a nonflat Ricci-flat ALE space, excluded by the rods of our families (Corollary~\ref{cor:ale-exclusion}).
\medskip

\noindent
\emph{Degeneration.} As $k\to+\infty$, $\Vol(g^{(k)})\to0$ and $\sup_q\operatorname{diam}\pi^{-1}(q)\leqslant C\sqrt{\rho_k(o)}\to0$ by Lemma~\ref{lem:scalar}(c).
The angle of the rotation $\mathsf r$ by one rod tends to zero, so the limiting reduced data are rotationally symmetric, and the reduced equations \eqref{eq:intro-reduced} identify the limit disk $B_\infty$ (Theorem~\ref{thm:collapsed-limit}).

In Section~\ref{sec:questions}, we discuss possible extensions of the construction starting from the Fubini-Study, Page and Chen-LeBrun-Weber metrics, see Figure~\ref{fig:reachable-topologies}. Appendix~\ref{sec:appendix-auxiliary} collects technical proofs.

\subsection*{Acknowledgements}
The author would like to thank Rafe Mazzeo for explaining to him, years ago, the (many) difficulties of deforming Einstein metrics with conical singularities, as well as his works \cite{Maz91} and \cite{MW17}.
This work is supported by the National Science Foundation under Grant No.~DMS-2405328.

\subsection*{Use of AI tools}
In the preparation of this article, the author used large language models for the following purposes:
\begin{enumerate}
    \item running numerical tests to confirm the existence of the metrics and to test specific steps of the proof, 
    \item locating references in the literature, some of which were used to shorten and streamline arguments,
    \item verification of logic, constant dependencies and computations, which also led to occasional shortcuts in arguments, 
    \item formatting and language editing, and
    \item extending the construction for $\#_k({\mathbb S^2}\times {\mathbb S^2})$ to $k\mathbb{CP}^2\#(2k+1)\overline{\mathbb{CP}}{}^{2}$, and merging the two constructions into a single proof. The author deliberately chose to combine these results into one article, so as not to contribute to a flood of incremental publications in differential geometry.
\end{enumerate}

\section{Geometric setup}
\label{sec:geometry}
\subsection{The metrics and their symmetries}\label{subsec:symmetries}

Recall $\pi:M\to\overline\Omega=M/{\mathbb T^2}$ and define
$\Omega:=\overline\Omega\setminus\partial\overline\Omega$.
Topologically, $\overline\Omega$ is a closed disk and as a manifold with right-angle corners, it has four sides in the square case and six in the hexagon case.

The torus angles $\phi_1,\phi_2$ of \eqref{eq:intro-matrix-ansatz} and their lattice remain fixed. We write $\boldsymbol\phi:=(\phi_1,\phi_2)^{\mathsf T}$ and $X_i:=\partial_{\phi_i}$ for the fixed generators. Our metrics have the form
\begin{equation}\label{eq:matrix-ansatz}
 g=h+G_{ij}\,d\phi_i\,d\phi_j,
 \qquad G_{ij}=g(X_i,X_j),\qquad\text{and}\qquad G=G^{\mathsf T}>0.
\end{equation}
The coefficients are functions on $\Omega$, and $d\phi_i\,d\phi_j$ denotes the symmetric product $\tfrac12(d\phi_i\otimes d\phi_j+d\phi_j\otimes d\phi_i)$.
For either initial metric $g_1$, let $h_1$ denote its quotient metric. We write $\rho_1$ and $\Phi_1$ for its coefficients, so that its Gram matrix is $G_1=\rho_1\Phi_1$.

The fibers of $\pi$ over $\Omega$ are the \emph{principal} orbits, those with trivial stabilizer. An open side of $\overline\Omega$ is called a \emph{rod} \cite{Wis03,ER02,CT10}: over it, the stabilizer is the circle generated by $n_1X_1+n_2X_2$ for a primitive pair $(n_1,n_2)\in\mathbb Z^2$, the (up to sign) \emph{rod vector} of that side. Over a corner, the stabilizer is $\mathbb T^2$, and the rod vectors of the two adjacent rods form a lattice basis, as in \cite[Definition~3.5]{LS25}. We write $e_1:=(1,0)$ and $e_2:=(0,1)$ for the standard basis of $\mathbb Z^2$.

Throughout the deformation, we moreover impose a group of symmetries $\mathcal G$, namely $\mathcal G_4$ for the square and $\mathcal G_6$ for the hexagon, described in \eqref{eq:group-actions} and \eqref{eq:hexagon-lifts} in the next subsubsections. We also note that for both families, any two rod vectors $v,v'$ with $v\neq\pm v'$ satisfy
\begin{equation}\label{eq:rod-determinant}
 |\det(v,v')|=1.
\end{equation}

\subsubsection{The square}\label{subsec:square}
Here $\Omega=(-\frac{\pi}{2},\frac{\pi}{2})^2$, with $x,y$ the latitude coordinates
on the two spheres. The initial metric $g_1$ is
\begin{equation}\label{eq:square-initial-coordinates}
 g_1=\tfrac13\bigl(dx^2+dy^2+\cos^2x\,d\phi_1^2+\cos^2y\,d\phi_2^2\bigr).
\end{equation}
Hence the rods $\{x=\pm\frac{\pi}{2}\}$ have rod vector $(1,0)$, the rods $\{y=\pm\frac{\pi}{2}\}$ have rod vector $(0,1)$, and the corners are the four points $(\pm\frac{\pi}{2},\pm\frac{\pi}{2})$.
The initial quotient metric is $h_1=\frac{1}{3}(dx^2+dy^2)$. We fix its conformal class and consider the diagonal case $G=\operatorname{diag}(u^2,v^2)$ of \eqref{eq:matrix-ansatz}:
\begin{equation}\label{eq:ansatz}
 g=h+u^2d\phi_1^2+v^2d\phi_2^2,\qquad h\in[h_1],\qquad\text{and}\qquad u,v>0.
\end{equation}

Besides the torus action, we impose
$\mathcal G_4:=\langle\mathsf r_x,\mathsf r_y,\mathsf e,\mathsf c_1,\mathsf c_2\rangle$, where
\begin{equation}\label{eq:group-actions}
\begin{aligned}
\mathsf r_x(x,y,\phi_1,\phi_2)&:=(-x,y,\phi_1,\phi_2),\qquad
\mathsf r_y(x,y,\phi_1,\phi_2):=(x,-y,\phi_1,\phi_2),\\
\mathsf e(x,y,\phi_1,\phi_2)&:=(y,x,\phi_2,\phi_1),\\
\mathsf c_1(x,y,\phi_1,\phi_2)&:=(x,y,-\phi_1,\phi_2),\qquad\text{and}\qquad
\mathsf c_2(x,y,\phi_1,\phi_2):=(x,y,\phi_1,-\phi_2).
\end{aligned}
\end{equation}
We put $\mathsf c:=\mathsf c_1\mathsf c_2$, which reverses both angles.
In terms of the coefficients, invariance under $\mathsf c_1$, or $\mathsf c_2$, is equivalent to $G_{12}=0$, that is, to the diagonal form \eqref{eq:ansatz}, and the other generators mean that
\[
 \begin{gathered}
\mathsf r_x^*h=\mathsf r_y^*h=\mathsf e^*h=h,\\
 u(-x,y)=u(x,-y)=u(x,y),\qquad\text{and}\qquad v(x,y)=u(y,x).
 \end{gathered}
\]

\subsubsection{The hexagon}
Here the off-diagonal entry $G_{12}$ is allowed in \eqref{eq:intro-matrix-ansatz}. The K\"ahler-Einstein metric $g_1$ has the
isometries below by \cite[Theorem~1.1, Section 3 and Example~4.1]{BS99}. Set
\begin{equation}\label{eq:hexagon-symmetries}
 \mathsf A:=\begin{pmatrix}1&-1\\1&0\end{pmatrix},\qquad
 \mathsf B:=\begin{pmatrix}0&1\\1&0\end{pmatrix},\qquad\text{so that}\qquad
 \mathsf A^6=\mathsf B^2=I\quad\text{and}\quad
 \mathsf B\mathsf A\mathsf B=\mathsf A^{-1}.
\end{equation}

Let $\mathsf r,\mathsf b$ be the order-six rotation and a reflection of $\overline\Omega$, $\mathsf b$ mapping the two rods with rod vector $\pm(1,0)$ to the two rods with rod vector $\pm(0,1)$. We use the same symbols for their lifts, defined by
\begin{equation}\label{eq:hexagon-lifts}
 \mathsf r(p,\boldsymbol\phi):=(\mathsf r p,\mathsf A\boldsymbol\phi),
 \qquad
 \mathsf b(p,\boldsymbol\phi):=(\mathsf b p,\mathsf B\boldsymbol\phi),
 \qquad\text{and}\qquad
 \mathsf c(p,\boldsymbol\phi):=(p,-\boldsymbol\phi).
\end{equation}
We require invariance under $\mathcal G_6:=\langle\mathsf r,\mathsf b,\mathsf c\rangle$, that is, in terms of the coefficients,
\begin{equation}\label{eq:hexagon-equivariance}
 \mathsf r^*h=\mathsf b^*h=h,\qquad
 G\circ\mathsf r=\mathsf A^{-\mathsf T}G\mathsf A^{-1},\qquad\text{and}\qquad
 G\circ\mathsf b=\mathsf B^{-\mathsf T}G\mathsf B^{-1}.
\end{equation}

The six rods have rod vectors
$v_j:=\mathsf A^j(1,0)^{\mathsf T}$, $0\leqslant j\leqslant5$, namely
\[
 (1,0),\ (1,1),\ (0,1),\ (-1,0),\ (-1,-1),\ (0,-1).
\]

\subsubsection{The central torus}
In the square case, set $m:=4$ and $\mathsf r(x,y):=(-y,x)$, lifted as $\mathsf r_x\mathsf e$, and in the hexagon case, set $m:=6$ and let $\mathsf r$ be the order-six rotation of $\overline\Omega$ in \eqref{eq:hexagon-lifts}.
For every metric \eqref{eq:matrix-ansatz}, $G$ extends to $o$ with $G(o)>0$, as the singularity is in $h$ only (Section~\ref{subsec:intro-reduced}), and the restriction $g_\Sigma$ of $g$ to
$\Sigma$ is a flat metric. The symmetries in $\mathcal G$ fix its determinant-one matrix $\Phi(o)$, so that
\begin{equation}\label{eq:central-torus-metric}
 \begin{gathered}
 g_\Sigma=\rho(o)\,(\Phi_o)_{ij}d\phi_i\,d\phi_j,\qquad \rho(o)>0,\\
 \Phi_o:=\Phi_1(o)=
 \begin{cases}
 I,&m=4,\\[1mm]
 \dfrac1{\sqrt3}\begin{pmatrix}2&-1\\-1&2\end{pmatrix},&m=6
 \end{cases},\qquad\text{and}\qquad \Phi(o)=\Phi_o.
 \end{gathered}
\end{equation}
Indeed, in the hexagon case, we have
\[
 \Phi(o)=\mathsf A^{-\mathsf T}\Phi(o)\mathsf A^{-1},\qquad
 \det\Phi(o)=1\qquad\text{and}\qquad\Phi(o)>0
 \quad\Longrightarrow\quad
 \Phi(o)=\Phi_o.
\]
In the square case, invariance under $\mathsf c_1$ and $\mathsf e$ of \eqref{eq:group-actions} implies
$\Phi(o)=I$, or equivalently $u(o)=v(o)$.

\subsubsection{Conformal normalization and fixed coordinates}\label{sec:common-coordinates}

We construct background metrics $\gamma_\beta$ with cone angle $2\pi\beta$ at $o$, to be perturbed as $h=e^{2a}\gamma_\beta$.
Fix a corner $c_0$ and let $c_j:=\mathsf r^jc_0$, $0\leqslant j<m$, be the ordered corners.

The \emph{central chart} is a conformal coordinate $w=x+iy=re^{i\theta}$ near $o$, with $w(o)=0$, normalized so that $\mathsf r$ acts by $w\mapsto e^{2\pi i/m}w$. In the square case, the latitude coordinates of \eqref{eq:square-initial-coordinates} give the central chart and, after translations and changes of sign, the adapted corner charts.
\medskip

\noindent\emph{Background metrics.} Let $\chi(r)$ be a smooth radial cutoff, equal to one near $o$ and supported in the central chart. Fix a smooth invariant $\gamma_1\in[h_1]$, even across the rods and at the corners, with $\gamma_1=\frac{1}{3}|dw|^2$ near $o$: in the square, $\gamma_1:=h_1$. In the hexagon, for a smooth function $\kappa_1$ near $o$, we have $h_1=\frac{1}{3}e^{2\kappa_1}|dw|^2$ near $o$ and $\gamma_1:=e^{-2\chi\kappa_1}h_1$.

We write $a_1$ for the conformal factor of $g_1$, so that $h_1=e^{2a_1}\gamma_1$, with $a_1=0$ in the square and $a_1=\chi\kappa_1$ in the hexagon.
\medskip

\noindent\emph{Fixed coordinates.} For $0<\beta<2$, we choose homeomorphisms $\Psi_\beta$ from a fixed copy of $\overline\Omega$, the \emph{fixed coordinates}, onto $\overline\Omega$, smooth diffeomorphisms away from $o$, depending smoothly on $\beta$ and commuting with the finite symmetries. They satisfy $\Psi_1=\operatorname{id}$ globally on $\overline\Omega$ and $\Psi_\beta=\operatorname{id}$ near $\partial\Omega$. In polar coordinates $(R,\theta)$ on the fixed copy and $(r,\theta)$ on the central chart, and for some $R_0>0$ independent of $\beta\in (0,2) $ with $\chi=1$ on $\Psi_\beta(\{R<R_0\})$, they are given by
\begin{equation}\label{eq:fixed-cone-identification}
\begin{aligned}
 \Psi_\beta(R,\theta)&:=((\beta R)^{1/\beta},\theta),
 &&\text{on }\{R<R_0\},\\
 \gamma_\beta&:=\Psi_\beta^*
       \bigl(e^{2(\beta-1)\chi(r)\log r}\gamma_1\bigr)
       =\tfrac13(dR^2+\beta^2R^2d\theta^2)
       &&\text{on $\{R<R_0\}$}.
\end{aligned}
\end{equation}
Thus $r=(\beta R)^{1/\beta}$ there, as for the conic models of \cite[Section~2.1]{MW17}, and $\gamma_\beta$ has cone angle $2\pi\beta$ at $o$. 

Pulling back all coefficients by $\Psi_\beta$ makes the unknowns functions on a set independent of $\beta$ and we keep the same letters for the pulled-back coefficients. In the fixed coordinates, the conformal class is $\Psi_\beta^*[h_1]$, and we write $h=e^{2a}\gamma_\beta$: the conformal factor $a$ is the unknown.

In a conformal chart, write $\gamma_1=e^{2c_\gamma}|dw|^2$. Before pullback, we have the expansion
\begin{equation}\label{eq:conformal-coefficients}
 h=e^{2a_w}|dw|^2,\qquad\text{and}\qquad
 a_w:=a+c_\gamma+(\beta-1)\chi(r)\log r,
\end{equation}
where $a$ stands for $a\circ\Psi_\beta^{-1}$ and the logarithmic term carries the angle.

\subsection[Ricci curvature and the reduced equations]{Ricci curvature and the reduced equations}
\label{sec:curvature}\label{sec:reduction}

For a map $\Phi$ from a Riemannian surface $(\Omega,\gamma)$ to $(\mathbb H^2,g_{\mathbb H^2})$ of \eqref{eq:intro-matrix-ansatz}, which has curvature $-1$,
the tension field is
\begin{equation}\label{eq:matrix-tension}
 \tau_\gamma(\Phi):=\operatorname{tr}_\gamma\nabla d\Phi=\Delta_\gamma\Phi-|d\Phi|_{\gamma}^2\Phi,
 \qquad\text{where}\qquad |d\Phi|^2_\gamma:=\operatorname{tr}_\gamma\Phi^*g_{\mathbb H^2}.
\end{equation}
The matrix formula is Lemma~\ref{lem:matrix-calculus}(iii). We use the sign convention $\Delta_\gamma:=\operatorname{div}_\gamma\nabla_\gamma$.

In the fixed coordinates \eqref{eq:fixed-cone-identification}, our notation is
\begin{equation}\label{eq:conic-chart}
 \gamma:=\gamma_\beta,\qquad h=e^{2a}\gamma,\qquad
 G=\rho\Phi,\qquad\text{and}\qquad d\mu:=\rho\,dA_h.
\end{equation}
For a $2\times2$ matrix $A$, put $A^\circ:=A-\tfrac12\operatorname{tr}(A)I$, and similarly $S^\circ:=S-\tfrac12(\operatorname{tr}_hS)h$ for a symmetric $2$-tensor $S$ on $(\Omega,h)$.

\begin{proposition}\label{prop:Ricci}
Define
\[
 \Ric_g^{\rm v}:=(\Ric_g(X_i,X_j))_{i,j=1}^2.
\]
For \eqref{eq:matrix-ansatz}, with $\ell:=\log\rho$, the mixed components $\Ric_g(Y,X_i)$, $Y\in T\Omega$, vanish, and we have
\begin{equation}\label{eq:matrix-ricci}
 \begin{aligned}
 \Ric_g|_{T\Omega}
 &=K_hh-\Hess_h\ell-\tfrac12d\ell^2
                         -\tfrac12\Phi^*g_{\mathbb H^2},\\
 G^{-1}\Ric_g^{\rm v}
 &=-\tfrac12\rho^{-1}\diver_h(\rho G^{-1}dG).
 \end{aligned}
\end{equation}
For a matrix of differential forms, $\operatorname{div}_h$ and $d$ mean entry by entry.

The scalar curvature and volume are
\begin{equation}\label{eq:scalar-volume}
 \begin{aligned}
 S_g&=2K_h-2\rho^{-1}\Delta_h\rho
                  +\tfrac12(|d\ell|_h^2-|d\Phi|_h^2),\\
 dV_g&=\rho\,dA_h\,d\phi_1\,d\phi_2,\qquad\text{and}\qquad
  \Vol(g)=4\pi^2\int_\Omega d\mu.
 \end{aligned}
\end{equation}
For a torus-invariant function $f$, we also have
\begin{equation}\label{eq:geometric-laplacian}
 \Delta_gf=\rho^{-1}\diver_h(\rho\nabla_hf)
 =\Delta_hf+\langle d\ell,df\rangle_h.
\end{equation}
\end{proposition}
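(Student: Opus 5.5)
The plan is to treat $g=h+G_{ij}\,d\phi_i\,d\phi_j$ as a Riemannian submersion $\pi:(\Omega\times\mathbb T^2,g)\to(\Omega,h)$ with totally geodesic horizontal distribution and flat fibers, and compute with O'Neill's formulas, or directly in a local frame. Since the horizontal distribution $T\Omega$ is integrable (the $d\phi_i$ are closed and $g$ has no cross terms), the O'Neill $A$-tensor vanishes. The remaining ingredients are the second fundamental form of the fibers, $T_{X_i}X_j=-\tfrac12\nabla_h G_{ij}$, and the intrinsic flatness of the fibers. The mixed components $\Ric_g(Y,X_i)$ involve $A$ and the divergence of $T$ along fibers; all fiber derivatives vanish by torus invariance, so they are zero.

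For the horizontal part, O'Neill gives $\Ric_g(Y,Z)=\Ric_h(Y,Z)-\langle T_Y,T_Z\rangle$-type terms minus $g(\nabla_Y N,Z)$, where $N=\sum T_{U_a}U_a$ is the mean curvature vector of the fibers. I will express these in matrix form. The mean curvature is $N=-\tfrac12\nabla_h\log\det G=-\nabla_h\ell$, which produces $-\Hess_h\ell$. The second term is $\tfrac14\operatorname{tr}(G^{-1}dG\,G^{-1}dG)$. Writing $G=\rho\Phi$ gives $G^{-1}dG=d\ell\,I+\Phi^{-1}d\Phi$ with $\operatorname{tr}(\Phi^{-1}d\Phi)=0$, so this term splits as $\tfrac12 d\ell^2+\tfrac12\Phi^*g_{\mathbb H^2}$, using $g_{\mathbb H^2}=\tfrac12\operatorname{tr}(S^{-1}dS)^2$. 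With $\Ric_h=K_hh$ this gives the first line of \eqref{eq:matrix-ricci}. The vertical part follows by the analogous computation: the Gauss equation plus the normal derivative of $T$ give $\Ric_g(X_i,X_j)=-\tfrac12\Delta_hG_{ij}-\tfrac14\langle d\ell,dG_{ij}\rangle\cdot 2+\tfrac12(dG\,G^{-1}dG)_{ij}$. This I recognize as $-\tfrac12\rho^{-1}\bigl(G\diver_h(\rho G^{-1}dG)\bigr)_{ij}$ after expanding $d(G^{-1})=-G^{-1}dG\,G^{-1}$.

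As a cross-check I would redo the vertical part via the standard Kaluza–Klein/Weyl–Papapetrou formula for metrics $h+G_{ij}d\phi_id\phi_j$, which is classical. The scalar curvature then follows by tracing. The trace of the vertical Ricci is $-\tfrac12\rho^{-1}\diver_h(\rho\operatorname{tr}(G^{-1}dG))=-\rho^{-1}\diver_h(\rho\, d\ell)=-\rho^{-1}\Delta_h\rho$. The trace of the horizontal part is $2K_h-\Delta_h\ell-\tfrac12|d\ell|^2-\tfrac12|d\Phi|^2$. Combining these, and using $\Delta_h\ell=\rho^{-1}\Delta_h\rho-|d\ell|^2$, gives \eqref{eq:scalar-volume}. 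The volume form is immediate since $\sqrt{\det G}=\rho$. Finally, \eqref{eq:geometric-laplacian} is the coordinate formula for $\Delta_g=|g|^{-1/2}\partial(|g|^{1/2}g^{-1}\partial)$ restricted to $\phi$-independent $f$, with $|g|^{1/2}=\rho\,|h|^{1/2}$.

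The main obstacle is purely bookkeeping: getting the correct numerical coefficients and signs in the matrix identities. In particular, the vertical Ricci formula mixes the trace and trace-free parts of $G^{-1}dG$. I would verify it on the square initial metric $g_1$, where $G=\tfrac13\operatorname{diag}(\cos^2x,\cos^2y)$ and $\Ric=3g$ is known, checking both lines of \eqref{eq:matrix-ricci} there.
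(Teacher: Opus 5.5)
Your plan is correct and is essentially the paper's proof. The paper takes the classical formulas $\Ric_g|_{T\Omega}=\Ric_h-\Hess_h\ell-\tfrac14\operatorname{tr}(J\otimes J)$ and $G^{-1}\Ric_g^{\rm v}=-\tfrac1{2\rho}\diver_h(\rho J)$, with $J=G^{-1}dG$, directly from Li--Sun rather than rederiving them via O'Neill, and then carries out exactly your splitting $J=d\ell\,I+\Phi^{-1}d\Phi$ and your two traces.

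Two small repairs are needed. First, with $N=\sum_aT_{U_a}U_a=-\nabla_h\ell$, O'Neill's horizontal formula contains $+\operatorname{Sym}g(\nabla_YN,Z)$, not a minus sign; as written, your formula would give $+\Hess_h\ell$. Second, the mixed components are most cleanly killed by the isometry $\boldsymbol\phi\mapsto-\boldsymbol\phi$, which fixes horizontal vectors and reverses each $X_i$, so that together with torus invariance $\Ric_g(Y,X_i)=-\Ric_g(Y,X_i)$. This is cleaner than appealing to vanishing fiber derivatives, since the O'Neill mixed terms also contain connection terms.
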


\begin{proof}
The Ricci curvature is computed in \cite[Section~2.1, (2.6), (2.10), and~(2.17)]{LS25}:
with $J:=G^{-1}dG$ it gives
\[
 \Ric_g|_{T\Omega}=\Ric_h-\Hess_h\log\rho
                    -\tfrac14\operatorname{tr}(J\otimes J),
 \qquad\text{and}\qquad G^{-1}\Ric_g^{\rm v}=-\tfrac1{2\rho}\diver_h(\rho J).
\]
Our normalization satisfies
\[
 \begin{aligned}
 J&=d\ell\,I+\Phi^{-1}d\Phi,\qquad
 \operatorname{tr}(\Phi^{-1}d\Phi)=0,\\
 \tfrac14\operatorname{tr}(J\otimes J)
 &=\tfrac12d\ell^2+\tfrac12\Phi^*g_{\mathbb H^2},\qquad\text{and}\qquad
 \sqrt{\det g}=\rho\sqrt{\det h}.
 \end{aligned}
\]
These identities give \eqref{eq:matrix-ricci}. Then $S_g=\operatorname{tr}_h(\Ric_g|_{T\Omega})+\operatorname{tr}(G^{-1}\Ric_g^{\rm v})$ and $dV_g=\rho\,dA_h\,d\phi_1\,d\phi_2$ give \eqref{eq:scalar-volume} and
\eqref{eq:geometric-laplacian}.
\end{proof}

\begin{proposition}\label{prop:reduced}
For \eqref{eq:matrix-ansatz}, with $h=e^{2a}\gamma$ for any metric $\gamma$ conformal to $h$, such as $\gamma=\gamma_\beta$ of \eqref{eq:conic-chart}, the equations
$\Ric_g^{\rm v}=3G$ and $S_g=12$ are equivalent to
\begin{equation}\label{eq:matrix-reduced}
 \begin{aligned}
 \rho^{-1}\Delta_\gamma\rho+6e^{2a}&=0,\\
 \tau_\gamma(\Phi)+d\Phi(\nabla_\gamma\ell)&=0,\\
 \Delta_\gamma a-K_\gamma+\tfrac14(|d\Phi|_\gamma^2-|d\ell|_\gamma^2)&=0.
 \end{aligned}
\end{equation}
\end{proposition}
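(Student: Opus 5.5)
The plan is to read off both equations from Proposition~\ref{prop:Ricci}. The vertical equation $\Ric_g^{\rm v}=3G$ will be split into its trace and trace-free parts, $S_g=12$ will be used via \eqref{eq:scalar-volume}, and everything will then be transported from $h$ to $\gamma$ by two-dimensional conformal covariance. For $h=e^{2a}\gamma$ on a surface we have
\[
\Delta_h=e^{-2a}\Delta_\gamma,\qquad |d f|_h^2=e^{-2a}|df|_\gamma^2,\qquad |d\Phi|_h^2=e^{-2a}|d\Phi|_\gamma^2,\qquad \tau_h(\Phi)=e^{-2a}\tau_\gamma(\Phi),\qquad K_h=e^{-2a}(K_\gamma-\Delta_\gamma a).
\]
The first four hold because the Dirichlet energy is conformally invariant in dimension two, and $d\Phi(\nabla_h\ell)=e^{-2a}d\Phi(\nabla_\gamma\ell)$ holds as well. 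Each equation in \eqref{eq:matrix-reduced} will therefore be the corresponding $h$-equation multiplied by $e^{2a}$.

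\emph{Vertical equation.} By \eqref{eq:matrix-ricci}, $\Ric_g^{\rm v}=3G$ is equivalent to $\diver_h(\rho J)=-6\rho I$ with $J=G^{-1}dG=d\ell\,I+\Phi^{-1}d\Phi$. Since $\operatorname{tr}(\Phi^{-1}d\Phi)=0$, this matrix equation is equivalent to two pieces.

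The trace gives $\diver_h(\rho\,d\ell)=\Delta_h\rho=-6\rho$. Multiplying by $e^{2a}\rho^{-1}$ yields the first line of \eqref{eq:matrix-reduced}.

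The trace-free part gives
\[
0=\diver_h(\rho\,\Phi^{-1}d\Phi)=\rho\bigl(\diver_h(\Phi^{-1}d\Phi)+\Phi^{-1}d\Phi(\nabla_h\ell)\bigr).
\]
To finish, I will show that $\diver_h(\Phi^{-1}d\Phi)=\Phi^{-1}\tau_h(\Phi)$. Expanding,
\[
\diver(\Phi^{-1}d\Phi)=\Phi^{-1}\Delta\Phi-\sum_k(\Phi^{-1}\partial_k\Phi)^2
\]
in an orthonormal frame. Each $N_k:=\Phi^{-1}\partial_k\Phi$ is a trace-free $2\times2$ matrix, so Cayley--Hamilton gives $N_k^2=\tfrac12\operatorname{tr}(N_k^2)I$. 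Summing over $k$ and using the definition of $g_{\mathbb H^2}$, this equals $|d\Phi|^2I$. The identity then follows from \eqref{eq:matrix-tension}, possibly via Lemma~\ref{lem:matrix-calculus}.

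Since $\Phi$ is invertible, the trace-free part is equivalent to $\tau_h(\Phi)+d\Phi(\nabla_h\ell)=0$, and conformal covariance gives the second line. Conversely, both lines together give back the full matrix equation, because the trace and trace-free parts were treated independently.

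\emph{Scalar equation.} Assuming the first line, i.e.\ $\rho^{-1}\Delta_h\rho=-6$, formula \eqref{eq:scalar-volume} reads
\[
S_g=2K_h+12+\tfrac12\bigl(|d\ell|_h^2-|d\Phi|_h^2\bigr).
\]
Hence $S_g=12$ is equivalent to $4K_h=|d\Phi|_h^2-|d\ell|_h^2$. Substituting $K_h=e^{-2a}(K_\gamma-\Delta_\gamma a)$ and multiplying by $e^{2a}/4$ gives the third line. Conversely, the three lines of \eqref{eq:matrix-reduced} yield $\Ric_g^{\rm v}=3G$ by the previous step and then $S_g=12$ by this computation, so the two systems are equivalent.

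I expect the main obstacle to be bookkeeping rather than any conceptual step. The delicate points are the identity $\diver(\Phi^{-1}d\Phi)=\Phi^{-1}\tau(\Phi)$, with the normalization $g_{\mathbb H^2}=\tfrac12\operatorname{tr}(S^{-1}dS\,S^{-1}dS)$ matched against the factor in $N^2=\tfrac12\operatorname{tr}(N^2)I$, and keeping the sign convention $\Delta=\diver\nabla$ consistent in the formula for $K_h$.
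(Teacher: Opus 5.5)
Your proposal is correct and follows the paper's proof essentially step by step. Both split $G^{-1}\Ric_g^{\rm v}$ into trace and trace-free parts via $J=d\ell\,I+\Phi^{-1}d\Phi$, use $\rho^{-1}\diver_h(\rho\Phi^{-1}d\Phi)=\Phi^{-1}\bigl(\tau_h(\Phi)+d\Phi(\nabla_h\ell)\bigr)$, substitute $\Delta_h\rho=-6\rho$ into \eqref{eq:scalar-volume}, and conclude with $e^{2a}K_h=K_\gamma-\Delta_\gamma a$; the only differences are that you rederive that identity by Cayley--Hamilton (the paper cites Lemma~\ref{lem:matrix-calculus}(iii)) and you check the converse direction explicitly where the paper leaves it implicit.
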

\begin{proof}
Using the decomposition in the proof of Proposition~\ref{prop:Ricci}, the vertical equation and \eqref{eq:matrix-ricci} give
\[
 \begin{aligned}
 6=\operatorname{tr}(G^{-1}\Ric_g^{\rm v})
 &=-\rho^{-1}\diver_h(\rho\nabla_h\ell)
 =-\rho^{-1}\Delta_h\rho,\\
 0=(G^{-1}\Ric_g^{\rm v})^\circ
 &=-\frac{1}{2\rho}\diver_h(\rho \Phi^{-1}d\Phi).
 \end{aligned}
\]
The equation $(G^{-1}\Ric_g^{\rm v})^\circ=0$ becomes the equation for $\Phi$ in \eqref{eq:matrix-reduced} after multiplication by $e^{2a}$, since  $\rho^{-1}\diver_h(\rho\Phi^{-1}d\Phi)=\Phi^{-1}\bigl(\tau_h(\Phi)+d\Phi(\nabla_h\ell)\bigr)$, by Lemma~\ref{lem:matrix-calculus}(iii).
For the scalar equation $S_g=12$, use the scalar-curvature identity \eqref{eq:scalar-volume}:
\[
 12=2K_h+12+\tfrac12(|d\ell|_h^2-|d\Phi|_h^2),
 \qquad\text{that is,}\qquad 4K_h=|d\Phi|_h^2-|d\ell|_h^2.
\]
The equation for $a$ in \eqref{eq:matrix-reduced} is obtained using the 2D curvature formula
\[
 e^{2a}K_h=K_\gamma-\Delta_\gamma a
 \qquad \text{ with }h=e^{2a}\gamma.
\]
\end{proof}

\begin{remark}\label{rem:remaining-component}
In local coordinates on $\mathbb H^2$, the principal parts of the system \eqref{eq:matrix-reduced} are the scalar Laplacians $\Delta_\gamma$ of $\ell$, of the two coordinates of $\Phi$ and of $a$, so it is elliptic. By \eqref{eq:matrix-ricci}, for a solution of \eqref{eq:matrix-reduced}, the only component of $\Ric_g-3g$ which may not vanish is the trace-free horizontal part
\begin{equation}\label{eq:remaining-component}
 (\Ric_g-3g)|_{T\Omega}
 =-\bigl(\Hess_h\ell+\tfrac12d\ell^2+\tfrac12\Phi^*g_{\mathbb H^2}\bigr)^\circ.
\end{equation}
Imposing its two scalar equations, with second-order part $(\partial_x^2\ell-\partial_y^2\ell,\,2\partial_x\partial_y\ell)$, would make the system overdetermined. Proposition~\ref{prop:recovery} shows that \eqref{eq:remaining-component} vanishes for the solutions of \eqref{eq:matrix-reduced} satisfying the angle conditions \eqref{eq:angle-equalities}, so they solve \eqref{eq:einstein} with $\lambda=3$.
\end{remark}

\subsection{Smoothness and prescribed angles}\label{sec:smoothness}

We use $w=x+iy$ for local conformal coordinates of $[h_1]$. Near a corner the chart identifies $\overline\Omega$ with a quadrant $\{x\geqslant0,y\geqslant0\}$, and near a rod, it is identified with $\{x\geqslant0\}$. A function on a rod chart $\{x\geqslant 0\}$ is called even or odd if its extension across the rod has that parity in $x$. At a corner $\{x\geqslant0,y\geqslant0\}$, these parities are in $x$ and $y$.

For a rod choose its rod vector $v\in\mathbb Z^2$ as the first vector of a lattice basis, and choose a conformal chart $(x,y)$ with the rod $\{x=0\}$, $x\geqslant0$. Write $h=e^{2\bar a}|dw|^2$. At a corner choose the rod vectors of the two adjacent rods as the torus basis, with the first circle collapsing on $\{x=0\}$ and the second on $\{y=0\}$, and write $h=e^{2\widehat a}|dw|^2$. In an adapted integral basis, we continue to write $G$ for the Gram matrix, and the target coordinates and all barred or hatted coefficients in that chart refer to this basis. 
In rod and corner charts, hats and bars designate the corner and rod quantities and rescalings, respectively, on the entire chart: for instance rescaled Gram matrices are
\begin{equation}\label{eq:normalized-gram}
\begin{aligned}
 \bar G&:=\operatorname{diag}(x^{-1},1)\,G\,
                  \operatorname{diag}(x^{-1},1),
 &\bar\rho&:=\sqrt{\det\bar G}=\frac{\rho}{x},\\
 \widehat G&:=\operatorname{diag}(x^{-1},y^{-1})\,G\,
                  \operatorname{diag}(x^{-1},y^{-1}),
 &\widehat\rho&:=\sqrt{\det\widehat G}=\frac{\rho}{xy}.
\end{aligned}
\end{equation}

The following criteria are standard, compare \cite{Wis03,ER02,CT10,LS25}. Smooth extension of the metric $g$ at a rod is equivalent to
\begin{equation}\label{eq:matrix-smooth-side}
\begin{gathered}
 G=\begin{pmatrix}
       x^2\bar G_{11}&x\bar G_{12}\\
       x\bar G_{12}&\bar G_{22}
    \end{pmatrix},\qquad
 \bar G_{11}|_{x=0}=e^{2\bar a}|_{x=0},\qquad\bar G>0\ \text{up to }\{x=0\},\\
 \bar a,\bar G_{11},\bar G_{22}\ \text{are smooth and even in }x,
 \qquad\text{and}\qquad \bar G_{12}\ \text{is smooth and odd in }x.
\end{gathered}
\end{equation}
At a corner, smoothness is equivalent to
\begin{equation}\label{eq:matrix-smooth-corner}
\begin{gathered}
 G=\begin{pmatrix}
       x^2\widehat G_{11}&xy\widehat G_{12}\\
       xy\widehat G_{12}&y^2\widehat G_{22}
    \end{pmatrix},\qquad\widehat G>0\ \text{up to the axes},\\
 \widehat G_{11}|_{x=0}=e^{2\widehat a}|_{x=0},\qquad
 \widehat G_{22}|_{y=0}=e^{2\widehat a}|_{y=0},\\
 \widehat a,\widehat G_{11},\widehat G_{22}\ \text{are smooth and even in }x,y,
 \qquad\text{and}\qquad \widehat G_{12}\ \text{is smooth and odd in each variable}.
\end{gathered}
\end{equation}

They follow by writing $g$ in the Cartesian coordinates
\begin{equation}\label{eq:cartesian}
 (\mathsf X_a,\mathsf Y_a):=(x_a\cos\phi_a,\,x_a\sin\phi_a),\qquad\text{where}\qquad(x_1,x_2):=(x,y),
\end{equation}
near a corner, using $dx_a^2+x_a^2d\phi_a^2=d\mathsf X_a^2+d\mathsf Y_a^2$, $x_a^2d\phi_a=\mathsf X_a\,d\mathsf Y_a-\mathsf Y_a\,d\mathsf X_a$, and that smooth even functions of $x_a$ are smooth functions of $x_a^2$. At a rod, only the first plane is used.

The angle conditions in \eqref{eq:matrix-smooth-side} and \eqref{eq:matrix-smooth-corner} say that the circumference-to-distance ratio of the orbit circles tends to $2\pi$ at the rods, as in the angle normalization of \cite[Definition~4.19]{LS25}: at a rod $\{x=0\}$, for instance, we have
\begin{equation}\label{eq:rod-angle-ratio}
 \lim_{x\downarrow0}
 \frac{2\pi\sqrt{G_{11}(x,y)}}{\operatorname{dist}_g((x,y,\boldsymbol\phi),\{x=0\})}
 =2\pi\sqrt{\bar G_{11}(0,y)}\,e^{-\bar a(0,y)}.
\end{equation}

\paragraph{Regularized coordinates.}

In computations, it is convenient to use the following global coordinates $(s,\omega)\in\mathbb{R}^2$ on $\mathbb H^2$ to parametrize $\Phi= \rho^{-1}G$, in which:
\begin{equation}\label{eq:target-halfplane}
 \begin{gathered}
 s:=\log\frac{\rho}{G_{11}},\qquad \omega:=\frac{G_{12}}{G_{11}},\\
 \Phi=\begin{pmatrix}e^{-s}&e^{-s}\omega\\
                  e^{-s}\omega&e^s+e^{-s}\omega^2\end{pmatrix},
 \qquad\text{and}\qquad g_{\mathbb H^2}=ds^2+e^{-2s}d\omega^2.
 \end{gathered}
\end{equation}
 These are the half-plane coordinates $z:=\omega+ie^s$ of $\mathbb H^2$. The rescalings \eqref{eq:normalized-gram} act on $z$ by the dilations $z\mapsto xz$ at a rod and $z\mapsto\frac{x}yz$ at a corner, and on the logarithmic height $s$ by the translations $s\mapsto s+\log x$, respectively $s\mapsto s+\log\frac{x}{y}$. So removing these singular factors is additive and the angle conditions \eqref{eq:angle-equalities} are affine, which simplifies the analysis of the singular derivative terms of the regularized equations in Lemma~\ref{lem:regularized-equations}.

 At rods and corners we use  the same coordinates for the rescaled matrices $\bar G$ and $\widehat G$:
\begin{equation}\label{eq:regularized-corner-matrix}
\begin{array}{c|ccc}
 &\text{log area}&\text{longitudinal coordinate}&\text{transverse coordinate}\\ \hline
 \text{rod}&\bar\ell:=\ell-\log x&
       \bar s:=s+\log x&\bar\omega:=x\omega\\
 \text{corner}&\widehat\ell:=\ell-\log x-\log y&
       \widehat s:=s-\log(y/x)&\widehat\omega:=(x/y)\omega.
\end{array}
\end{equation}
With hats at a corner, and identically with bars at a rod, we have
\begin{equation}\label{eq:gram-regular-coordinates}
 \widehat G=e^{\widehat\ell}
 \begin{pmatrix}
  e^{-\widehat s}&e^{-\widehat s}\widehat\omega\\
  e^{-\widehat s}\widehat\omega&
       e^{\widehat s}+e^{-\widehat s}\widehat\omega^2
 \end{pmatrix}.
\end{equation}
These algebraic changes preserve $C^{j,\alpha}$ regularity on sets where $\widehat G$, respectively $\bar G$, is uniformly positive definite, and there $\widehat G$ has even diagonal and odd off-diagonal entries if and only if $\widehat\ell,\widehat s$ are even and $\widehat\omega$ is odd in each normal variable. Note that these are compatible: on a corner chart with $y>0$,
$\bar\ell=\widehat\ell+\log y$, $\bar s=\widehat s+\log y$ and $\bar\omega=y\widehat\omega$.

Since $\bar G_{11}=e^{\bar\ell-\bar s}$, $\widehat G_{11}=e^{\widehat\ell-\widehat s}$, and $\widehat G_{22}=e^{\widehat\ell+\widehat s}$ on $\{y=0\}$, where the odd $\widehat\omega$ vanishes, the angle equalities of \eqref{eq:matrix-smooth-side} and \eqref{eq:matrix-smooth-corner} become
\begin{equation}\label{eq:angle-equalities}
\begin{aligned}
 2\bar a&=\bar\ell-\bar s &&\text{on a rod }\{x=0\},\\
 2\widehat a&=\widehat\ell-\widehat s &&\text{on }\{x=0\} \text{ close to a corner $\{x=y=0\}$},\\
 2\widehat a&=\widehat\ell+\widehat s &&\text{on }\{y=0\} \text{ close to a corner $\{x=y=0\}$}.
\end{aligned}
\end{equation}
At a corner $c=\{x=y=0\}$ they give $\widehat s(c)=0$ and $\widehat\ell(c)=2\widehat a(c)$.
In summary, where $\widehat G>0$, the metric $g$ is smooth at a corner if and only if $\widehat a,\widehat\ell,\widehat s$ are smooth and even, $\widehat\omega$ is smooth and odd in each variable, and \eqref{eq:angle-equalities} holds and similarly with bars at a rod.

\begin{remark}
Li and Sun only impose bounded hyperbolic distance from the rod and corner models, and \textit{recover} smoothness from the Ricci-flat equations after adjusting periods and coordinates \cite[Section~4.2]{LS25}.
Here regularity is \textit{prescribed} in fixed charts also for variations that do not satisfy the Einstein equation. These prescribed conditions dictate some of our choices of function spaces.
\end{remark}

\section{The reduced equations and regularity}\label{sec:spaces}

Fix an Einstein metric $g_*$ with cone angle $2\pi\beta_*$ along $\Sigma$, where $0<\beta_*\leqslant1$, in either symmetry class of Section~\ref{sec:geometry}: $\Ric(g_*)=3g_*$ on $M\setminus\Sigma$, and $g_*$ extends smoothly across the rods and corners. We also assume that $g_*$ satisfies the central condition \eqref{eq:background-central}, stated in Section~\ref{subsec:local-center} once the weighted spaces are defined. 

We first define the spaces $\mathcal X$ and $\mathcal Y$ of Section~\ref{subsec:local-global} on which the left-hand sides of \eqref{eq:matrix-reduced} give a smooth map. We then prove that a solution of \eqref{eq:matrix-reduced} satisfies $\Ric_g=3g$ and has the required regularity at the rods, corners and central torus. Section~\ref{sec:kernel} then proves invertibility and local angle deformation. Except for Section~\ref{subsec:central-regularity}, the arguments below also apply at $\beta_*=1$.

Only the central conical point requires a weight. Fix $0<\alpha<1$ and a compact interval $J:=[\beta_-,\beta_+]\subset(0,2)$ containing $\beta_*$ in its interior. Since $\beta_+<2$, we may fix a weight $\nu$ with
\begin{equation}\label{eq:weight-choice}
 1<\nu<\min\Bigl\{2,\frac{2}{\beta_+}\Bigr\}.
\end{equation}
Constants $C$ depend only on $g_*$, $J$, $\alpha$, $\nu$ and $\mathfrak U$, where $\mathfrak U$ is the fixed finite collection of coordinate charts and bundle trivializations of Section~\ref{subsec:local-global}. Higher-order estimates may also depend on the number of derivatives.

The function spaces below encode three conditions required by the geometry and the reduced equations. At the rods, the angle conditions \eqref{eq:angle-equalities} prescribe the boundary values of the conformal variation $b$, and its normal derivative is recovered later from the remaining Einstein equation (Proposition~\ref{prop:recovery}). At a corner, the third equation must satisfy the compatibility condition \eqref{eq:corner-compatibility} of the $C^{2,\alpha}$ Dirichlet problem (Remark~\ref{rem:corner-compatibility}). At the center, the symmetries remove the constant shape mode and place the nonconstant harmonic degrees above the weight $\nu$ of \eqref{eq:weight-choice}.

\subsection{Coordinates, boundary conditions, and function spaces}\label{subsec:coordinates-spaces}

\subsubsection{Parametrizing metric variations}
Use the fixed coordinates and reference metrics $\gamma_\beta$ of \eqref{eq:fixed-cone-identification}: all coefficients below are functions of these fixed coordinates. Write the background as $g_*=h_*+(G_*)_{ij}\,d\phi_i\,d\phi_j$, with $h_*=e^{2a_*}\gamma_{\beta_*}$ and $G_*=\rho_*\Phi_*=e^{\ell_*}\Phi_*$.

The unknown is $q=(f,\xi,b)$, where $f$ is the (logarithmic) area variation, $b$ the conformal variation, and $\xi$ the shape variation, a vector field along $\Phi_*$, that is, $\xi(p)\in T_{\Phi_*(p)}\mathbb H^2=\bigl(\Phi_*^*T\mathbb H^2\bigr)_p$. We use the hyperbolic exponential, in the matrix form of Lemma~\ref{lem:matrix-calculus}:
\begin{equation}\label{eq:local-chart}
\begin{gathered}
 \rho=\rho_*e^f,\qquad a=a_*+b,\\
 \Phi:=\operatorname{Exp}_{\Phi_*}\xi=\cosh|\xi|_*\,\Phi_*+\frac{\sinh|\xi|_*}{|\xi|_*}\,\xi,\qquad\text{and}\qquad
 |\xi|_*^2:=\tfrac12\operatorname{tr}\bigl((\Phi_*^{-1}\xi)^2\bigr).
\end{gathered}
\end{equation}
Formula \eqref{eq:local-chart} parametrizes all matrices in $\mathbb H^2$ and commutes with the prescribed symmetries. We call $q=(f,\xi,b)$ a \textit{variation} of $g_*$, and we put
\begin{equation}\label{eq:perturbed-metric}
 g_{\beta,q}:=e^{2(a_*+b)}\gamma_\beta+\rho_*e^f\,\Phi_{ij}\,d\phi_i\,d\phi_j,
 \qquad\text{with }\Phi\text{ given by \eqref{eq:local-chart}}.
\end{equation}
At fixed $\beta$, \eqref{eq:scalar-volume} gives $dV_{g_{\beta,q}}=e^{f+2b}\,dV_{g_{\beta,0}}$.

At $(\beta,q)=(\beta_*,0)$, the derivative of $q\mapsto g_{\beta,q}$ is the tensor variation
\[
 \dot g(q)=2b\,h_*+\bigl(fG_*+\rho_*\xi\bigr)_{ij}\,d\phi_i\,d\phi_j.
\]
For symmetric matrices $W$ and $S\in\mathbb H^2$, define the projection $\Pi_S$ onto $T_S\mathbb H^2$ along $\mathbb R S$ by
\begin{equation}\label{eq:matrix-projection}
 \Pi_SW:=W-\tfrac12\operatorname{tr}(S^{-1}W)S.
\end{equation}
For the metric $g_{\beta,q}$ of \eqref{eq:perturbed-metric}, the reduced Einstein equations \eqref{eq:matrix-reduced} read $\mathcal E(\beta,q)=0$, where, with $\tau_\gamma(\Phi)=\operatorname{tr}_\gamma\nabla d\Phi$,
\begin{equation}\label{eq:conic-map}
 \mathcal E(\beta,q):=
 \begin{pmatrix}\mathcal E_1\\ \mathcal E_2\\ \mathcal E_3\end{pmatrix}
 =\begin{pmatrix}
 \rho^{-1}\Delta_{\gamma_\beta}\rho+6e^{2a}\\
 \Pi_{\Phi_*}\mathcal H\\
 \Delta_{\gamma_\beta}a-K_{\gamma_\beta}+\frac{1}{4}\bigl(|d\Phi|_{\gamma_\beta}^2-|d\ell|_{\gamma_\beta}^2\bigr)
 \end{pmatrix},
 \qquad\text{and}\qquad \mathcal H:=\tau_{\gamma_\beta}(\Phi)+d\Phi(\nabla_{\gamma_\beta}\ell).
\end{equation}
The projection \eqref{eq:matrix-projection} identifies $T_\Phi\mathbb H^2$ with the fixed bundle $T_{\Phi_*}\mathbb H^2$, so $\mathcal E_2=0$ is equivalent to $\mathcal H=0$. Since $g_*$ is Einstein, $\mathcal E(\beta_*,0)=0$.

\subsubsection{Local norms at the central point}\label{subsec:local-center}
Recall that $\Phi_o=\Phi_1(o)=\Phi_*(o)$ by the symmetries, see \eqref{eq:central-torus-metric}. 
Near the center we use three target variables:
\begin{center}\small
\begin{tabular}{@{}ll@{}}
$\xi(p)\in T_{\Phi_*(p)}\mathbb H^2$ & the variation in the exponential chart \eqref{eq:local-chart}\\
$\xi^o:=\Pi_{\Phi_o}\xi\in T_{\Phi_o}\mathbb H^2$ & the same variation in the fixed vector space $T_{\Phi_o}\mathbb H^2$\\
$U:=\Pi_{\Phi_o}\Phi\in T_{\Phi_o}\mathbb H^2$ & the nonlinear shape map in the same fixed space
\end{tabular}
\end{center}
By Lemma~\ref{lem:matrix-calculus}, $\Pi_{\Phi_o}$ restricts to an isomorphism $T_{\Phi_*(p)}\mathbb H^2\to T_{\Phi_o}\mathbb H^2$, and $\Phi$ is recovered from $U$ by
\begin{equation}\label{eq:central-matrix-coordinate}
 \Phi=\sqrt{1+|U|_o^2}\,\Phi_o+U,\qquad\text{and}\qquad
 |U|_o^2:=\tfrac12\operatorname{tr}\bigl((\Phi_o^{-1}U)^2\bigr).
\end{equation}
Both constructions commute with the central symmetries, which fix $\Phi_o$.

Let $B_o:=\{0\leqslant R<R_0\}$ be the central chart, with $R_0$ as in \eqref{eq:fixed-cone-identification}. For $A_o:=[1,2]\times\mathbb R/(2\pi\mathbb Z)$, $\delta\in\mathbb R$, $k\geqslant0$, and $F$ a function or a $T_{\Phi_o}\mathbb H^2$-valued map on $B_o\setminus\{o\}$, define
\begin{equation}\label{eq:b-holder-norm}
 \|F\|_{R^\delta C^{k,\alpha}_{\mathrm b}}:=\sup_{0<\lambda<\frac{R_0}{2}}\lambda^{-\delta}\|F(\lambda R',\theta)\|_{C^{k,\alpha}(A_o)}.
\end{equation}
The annular norm uses $dR'^2+R'^2d\theta^2$ here for simplicity since using $dR'^2+\beta^2R'^2d\theta^2$ gives an equivalent norm, uniformly for $\beta\in J$. Ordinary H\"older estimates on the fixed annulus $A_o$, applied to $u(\lambda R',\theta)$ and $v(\lambda R',\theta)$, give, 
\begin{equation}\label{eq:weighted-product}
 \|uv\|_{R^{\delta_1+\delta_2}C^{j,\alpha}_{\mathrm b}}\leqslant C\|u\|_{R^{\delta_1}C^{j,\alpha}_{\mathrm b}}\|v\|_{R^{\delta_2}C^{j,\alpha}_{\mathrm b}},
\end{equation}
and the continuous inclusions $R^{\delta_+}C^{j,\alpha}_{\mathrm b}\subset R^{\delta_-}C^{j,\alpha}_{\mathrm b}$ for $\delta_+\geqslant\delta_-$ \cite[Section~4.1]{MW17}, \cite[Proposition~2.1(iii)]{PR00}. For $\delta\in\mathbb R$, we write $F=O_{\mathrm b}(R^\delta)$ if $F\in R^\delta C^{k,\alpha}_{\mathrm b}$ near $o$ for every integer $k\geqslant0$, with the norms \eqref{eq:b-holder-norm}, or equivalently, if all derivatives $(R\partial_R)^i\partial_\theta^jF$ are $O(R^\delta)$. For vector-valued maps this is meant componentwise, and for tensors it refers to the components in the coframe $(dR,R\,d\theta,d\phi_1,d\phi_2)$. We assume that the background $g_*$ satisfies
\begin{equation}\label{eq:background-central}
 \rho_*-\rho_*(o),\quad a_*-a_*(o)\quad\text{and the entries of }\Phi_*-\Phi_o\quad\text{belong to }R^\nu C^{2,\alpha}_{\mathrm b},
\end{equation}
where $\Phi_*(o)=\Phi_o$ by \eqref{eq:central-torus-metric}. This is satisfied by $g_{1}$ at $\beta_*=1$, by \eqref{eq:seed-bounds} below, so the curve of metrics can start, and these properties continue to hold for the other metrics $g_\beta$ of the curve, by \eqref{eq:central-domain}, and for their limits, by \eqref{eq:coord-central-derivatives}. On $B_o$, we require the following conditions, with constants $f(o),b(o)\in\mathbb R$, and define the associated norm:
\begin{equation}\label{eq:central-domain}
\begin{gathered}
 f-f(o),\ b-b(o),\ \xi^o\in R^\nu C^{2,\alpha}_{\mathrm b},\\
 \|q\|_{\mathcal X(B_o)}:=|f(o)|+|b(o)|+\|f-f(o)\|_{R^\nu C^{2,\alpha}_{\mathrm b}}+\|b-b(o)\|_{R^\nu C^{2,\alpha}_{\mathrm b}}+\|\xi^o\|_{R^\nu C^{2,\alpha}_{\mathrm b}}.
\end{gathered}
\end{equation}
Since $\nu>0$, $f(o)$ and $b(o)$ are the limits of $f$ and $b$ at $o$, the decomposition in \eqref{eq:central-domain} is unique, and $\xi(o)=0$.

The central conditions \eqref{eq:central-domain} are motivated by the angular Fourier modes in $\theta$ allowed by the symmetries in $\mathcal G$, as in the conic theory of \cite{MW17}. For fixed $R$, a function on $B_o\setminus\{o\}$ is expanded in the angle variable $\theta$ as
$F(R,\theta)=\sum_{j\in\mathbb Z}F_j(R)\,e^{ij\theta}$
and $T_{\Phi_o}\mathbb H^2$-valued maps are expanded componentwise. By \cite[Section~4]{MW17}, the homogeneous solutions of $\Delta_{\gamma_\beta}$ on the punctured disk are read, mode by mode, from
\begin{equation}\label{eq:central-indicial-equation}
 \Delta_{\gamma_\beta}(R^\lambda e^{ij\theta})
 =3\Bigl(\lambda^2-\frac{j^2}{\beta^2}\Bigr)R^{\lambda-2}e^{ij\theta}.
\end{equation}
Hence a nonzero Fourier index $j$ has the $\Delta_{\gamma_\beta}$-harmonic modes $R^{\pm|j|/\beta}e^{ij\theta}$ and the index $0$ gives $1$ and $\log R$. The finite symmetries determine which indices occur.

\begin{lemma}\label{lem:central-modes}
Let $F$ be a $\mathcal G$-invariant function on $B_o\setminus\{o\}$, such as $f$, $b$, $\rho$ or $a$, or one of the equivariant maps $\xi^o$ and $U=\Pi_{\Phi_o}\Phi$. Then $F_j\neq0$ only for $j\in2\mathbb Z$, and $\xi^o_0=U_0=0$. In particular, every harmonic mode $R^{|j|/\beta}e^{ij\theta}$ of $F$ with $j\neq0$ has degree at least $2/\beta$.
\end{lemma}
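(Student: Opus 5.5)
The plan is to use only the rotation $\mathsf r$ of order $m$ and its squares, together with the action of $\mathsf r$ on the target. In the central chart $\mathsf r$ acts by $w\mapsto e^{2\pi i/m}w$. After the pullback by $\Psi_\beta$ of \eqref{eq:fixed-cone-identification}, which preserves $\theta$, it acts by $\theta\mapsto\theta+2\pi/m$.

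\textbf{Scalar functions.} For an invariant scalar function $F$ (such as $f,b,\rho,a$), invariance under $\mathsf r^{m/2}$ gives $F(R,\theta+\pi)=F(R,\theta)$. Here $\mathsf r^{m/2}$ is the rotation by $\pi$: $\mathsf r_x\mathsf e$ squared in the square case, and $\mathsf r^3$ in the hexagon case. Comparing Fourier coefficients gives $F_j(1-e^{ij\pi})=0$, so $F_j=0$ for odd $j$. In fact full $\mathsf r$-invariance gives $j\in m\mathbb Z$, but only $2\mathbb Z$ is needed.

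\textbf{Matrix-valued maps.} For $\xi^o$ and $U$ the target also moves. In the hexagon case $\mathsf r^3$ acts on $\boldsymbol\phi$ by $\mathsf A^3=-I$. By \eqref{eq:hexagon-equivariance}, $G\circ\mathsf r^3=(-I)G(-I)=G$, so $\Phi$, $\Phi_*$, $U=\Pi_{\Phi_o}\Phi$ and $\xi^o$ are genuinely $\pi$-periodic in $\theta$. In the square case, $\mathsf r^2=(\mathsf r_x\mathsf e)^2=\mathsf r_x\mathsf r_y$ acts trivially on the angles, and the same conclusion holds. This gives $j\in2\mathbb Z$ for these maps too.

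\textbf{Vanishing zero modes.} For $\xi^o_0=U_0=0$, I use the full rotation $\mathsf r$. The zero mode $U_0(R)$ is the $\theta$-average, and it lies in $T_{\Phi_o}\mathbb H^2$. The equivariance $G\circ\mathsf r=\mathsf A^{-\mathsf T}G\mathsf A^{-1}$, and its square analogue coming from $\mathsf e$ (which swaps the angles, so $\Phi\circ\mathsf r=\mathsf P\Phi\mathsf P^{\mathsf T}$ with $\mathsf P$ the swap), pass to $U$ and $\xi^o$. This uses the linearity of $\Pi_{\Phi_o}$ and the fact that $\mathsf A^{-\mathsf T}\Phi_o\mathsf A^{-1}=\Phi_o$, so the projection commutes with the conjugation. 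Averaging over $\theta$ then shows that $U_0(R)$ is fixed by the linear action $S\mapsto\mathsf A^{-\mathsf T}S\mathsf A^{-1}$ on $T_{\Phi_o}\mathbb H^2$.

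This linear action is the isotropy representation of the hyperbolic isometry fixing $\Phi_o$. Since $\mathsf A$ has order $6$ and acts nontrivially, it is a rotation of the $2$-plane $T_{\Phi_o}\mathbb H^2$ by a nonzero angle, $\pm2\pi/3$ (or $\pi/3$), and so it has no nonzero fixed vector. In the square case the action is the swap conjugation. On $T_I\mathbb H^2=\{\text{trace-free symmetric}\}$ it sends $\operatorname{diag}(t,-t)\mapsto\operatorname{diag}(-t,t)$ and fixes the off-diagonal part. The off-diagonal part is killed by $\mathsf c_1$-invariance, which forces $G_{12}=0$. Hence the only fixed vector is $0$, and $U_0=\xi^o_0=0$. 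To confirm the rotation claim I will check $\operatorname{tr}$ of the action on the explicit basis; this bookkeeping is the only real computation.

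\textbf{Degree bound.} The last assertion then follows from \eqref{eq:central-indicial-equation}. For $j\neq0$ the harmonic modes are $R^{\pm|j|/\beta}e^{ij\theta}$. With $j\in2\mathbb Z\setminus\{0\}$ we have $|j|\geqslant2$, so the growing mode $R^{|j|/\beta}$ has degree at least $2/\beta$.

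The main obstacle is bookkeeping. One has to verify that the lift of the geometric rotation by $\pi$ acts trivially on the target matrices, rather than by a nontrivial conjugation. One also has to verify that the pulled-back rotation is exactly a $\theta$-translation in the fixed coordinates, which uses that $\Psi_\beta$ commutes with the symmetries.
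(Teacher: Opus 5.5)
Your proof is correct and is essentially the paper's argument. The paper uses the single relation $(e^{2\pi ij/m}I-C)F_j=0$ for the full rotation $\mathsf r$, which gives at once the sharper classes $j\in m\mathbb Z$ for invariant functions, $j\in4\mathbb Z+2$ in the square and $j\in6\mathbb Z\pm2$ in the hexagon; your split into the half-turn (where $C^{m/2}=I$ forces $j$ even) and the zero mode (where $1$ is not an eigenvalue of $C$) is a specialization of that eigenvalue computation, and your parenthetical ``or $\pi/3$'' cannot occur, since $\mathsf A^3=-I$ acts trivially on $T_{\Phi_o}\mathbb H^2$ and the angle is $\pm2\pi/3$, which does not affect your argument.
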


\begin{proof}
The rotation $\mathsf r$ acts by $\theta\mapsto\theta+2\pi/m$ and on the values of $F$ by a linear map $C$ commuting with $\Pi_{\Phi_o}$, so that $F(R,\theta+2\pi/m)=C\,F(R,\theta)$. Taking Fourier coefficients, we obtain
\[
 \bigl(e^{2\pi ij/m}I-C\bigr)F_j=0,
\]
so $F_j\neq0$ only if $e^{2\pi ij/m}$ is an eigenvalue of $C$.
\begin{itemize}
\item For invariant functions $C=1$, so $j\in m\mathbb Z$.
\item In the square, $U$ and $\xi^o$ lie on the diagonal line of $T_{\Phi_o}\mathbb H^2$, where $C=-1$ because $\mathsf r=\mathsf r_x\mathsf e$ exchanges the two circles, so $j\in4\mathbb Z+2$.
\item In the hexagon, $C(V)=\mathsf A^{-\mathsf T}V\mathsf A^{-1}$ by \eqref{eq:hexagon-equivariance}. In the coordinates $V=\bigl(\begin{smallmatrix}v_1&v_2\\v_2&-v_1-v_2\end{smallmatrix}\bigr)$ of $T_{\Phi_o}\mathbb H^2=\{\operatorname{tr}(\Phi_o^{-1}V)=0\}$, it reads $C(v_1,v_2)=(-v_1-v_2,v_1)$ with eigenvalues $e^{\pm2\pi i/3}$, hence $j\in6\mathbb Z\pm2$.
\end{itemize}
Together with \eqref{eq:central-indicial-equation}, this gives the stated degrees.
\end{proof}

Smoothness of $g_1$ and Lemma~\ref{lem:central-modes} give, since a smooth function whose Fourier modes all have even index has no linear term at $o$,
\begin{equation}\label{eq:seed-bounds}
 \rho_1-\rho_1(o)=O_{\mathrm b}(R^2),\qquad a_1-a_1(o)=O_{\mathrm b}(R^2),\qquad \text{and} \qquad
 U_1=\Pi_{\Phi_o}\Phi_1=O_{\mathrm b}(R^2),
\end{equation}so $g_1$ satisfies \eqref{eq:background-central} at $\beta_*=1$, by \eqref{eq:central-matrix-coordinate} and $\nu<2$.

\begin{remark}\label{rem:weight-restrictions}
The three restrictions in \eqref{eq:weight-choice} are used as follows.
\begin{enumerate}
\item $\nu>1$: since $\int_0^{R_0}(R^{\nu-2})^2R\,dR<+\infty$, every $F\in R^{\nu-2}C^{0,\alpha}_{\mathrm b}$ is square integrable for $dA_{\gamma_\beta}$ near $o$, so the forcing terms lie in the energy spaces of Section~\ref{sec:kernel}. Moreover $|dF|_{\gamma_\beta}=O(R^{\nu-1})=o(1)$ for $F\in R^\nu C^{2,\alpha}_{\mathrm b}$.
\item $\nu<2$: $R^2\in R^\nu C^{2,\alpha}_{\mathrm b}$, so the quadratic remainders, such as the $O_{\mathrm b}(R^2)$ terms of \eqref{eq:center-regularity}, lie in $R^\nu C^{2,\alpha}_{\mathrm b}$, and the embedding \eqref{cmp:compact-weighted-embedding} is compact.
\item $\nu<2/\beta_+$: for $\beta\in J$ and every index $j\neq0$ allowed by Lemma~\ref{lem:central-modes}, $\nu<2/\beta\leqslant|j|/\beta$, so the weight stays below the degree of every harmonic mode $R^{|j|/\beta}e^{ij\theta}$ with $j\neq0$.
\end{enumerate}
\end{remark}

\begin{remark}\label{rem:friedrichs}
As in \cite[Section~5.4]{MW17}, we use the Friedrichs extension of $\Delta_{\gamma_\beta}$: the conditions \eqref{eq:central-domain} exclude $\log R$ and the negative powers in the expansion of harmonic modes, which would make the energy $\int_{\{R<R_0\}}|du|^2_{\gamma_\beta}\,dA_{\gamma_\beta}$ infinite.
\end{remark}

\subsubsection{Local norms at rods and corners}\label{subsec:local-rods}

Three descriptions of the metric are used near the axes. The Gram matrices $G$, $\bar G$ and $\widehat G$ express positivity and smooth extension. The variation $q=(f,\xi,b)$ gives the global variations, compatible with the symmetries and with the covariant linearization. The regularized coordinates \eqref{eq:regularized-corner-matrix} give the local equations and norms at rods and corners, and Lemma~\ref{lem:regularized-chart} below relates them to $q$.

In an adapted rod chart $U_\partial$ of \eqref{eq:matrix-smooth-side}, the first torus generator collapses on $\{x=0\}$, with $x\geqslant0$ normal to the rod and $y$ tangential. Use the target coordinates $(s,\omega)$ of \eqref{eq:target-halfplane} in this basis, and the orthonormal frame
\begin{equation}\label{eq:target-rod-frame}
 e_s:=\partial_s,\qquad\text{and}\qquad e_\perp:=e^s\partial_\omega,\qquad\text{in which }g_{\mathbb H^2}=ds^2+e^{-2s}d\omega^2.
\end{equation}
We decompose
\[
 \xi(p)=\xi_s(p)\,e_s\bigl(\Phi_*(p)\bigr)+\xi_\perp(p)\,e_\perp\bigl(\Phi_*(p)\bigr),\qquad\text{and}\qquad |\xi|_*^2=\xi_s^2+\xi_\perp^2.
\]

Write $C^{j,\alpha}_{\mathrm{ev}}$ and $C^{j,\alpha}_{\mathrm{odd}}$ for functions whose even, respectively odd, extensions in the normal variable belong to $C^{j,\alpha}$, with the norms of those extensions. Here, $F(|x|,y)$ is the even extension, and the odd extension is $\operatorname{sgn}(x)F(|x|,y)$, with value zero at $x=0$. The regularity conditions and norm are
\begin{equation}\label{eq:side-domain-norm}
\begin{gathered}
 \xi=\xi_se_s+\xi_\perp e_\perp,\qquad f,\xi_s\in C^{2,\alpha}_{\mathrm{ev}},\qquad \xi_\perp\in C^{2,\alpha}_{\mathrm{odd}},\qquad\text{and}\qquad b\in C^{2,\alpha},\\
 \|q\|_{\mathcal X(U_\partial)}:=\|f\|_{C^{2,\alpha}_{\mathrm{ev}}}+\|\xi_s\|_{C^{2,\alpha}_{\mathrm{ev}}}+\|\xi_\perp\|_{C^{2,\alpha}_{\mathrm{odd}}}+\|b\|_{C^{2,\alpha}}.
\end{gathered}
\end{equation}
The fixed-angle boundary condition is
\begin{equation}\label{eq:conic-boundary}
 2b=f-\xi_s\qquad\text{on }\{x=0\}.
\end{equation}
 The regularized Gram coefficients have the parities of Section~\ref{sec:smoothness}, and Lemma~\ref{lem:regularized-chart} identifies \eqref{eq:conic-boundary} with the exact angle equality $2\bar a=\bar\ell-\bar s$.

At a corner $c$, use the quadrant chart $U_c$ of \eqref{eq:matrix-smooth-corner}, whose two adapted circle generators collapse on $x=0$ and $y=0$. Here $C^{j,\alpha}_{\mathrm{ev}}$ and $C^{j,\alpha}_{\mathrm{odd}}$ mean even, respectively odd, in \emph{each} signed variable. In particular, $F(|x|,|y|)$ is the even extension. Require
\begin{equation}\label{eq:corner-domain}
\begin{gathered}
 f,\xi_s\in C^{2,\alpha}_{\mathrm{ev}},\qquad \xi_\perp\in C^{2,\alpha}_{\mathrm{odd}},\qquad b\in C^{2,\alpha},\\
 2b=f-\xi_s\quad\text{on }\{x=0\},\qquad\text{and}\qquad 2b=f+\xi_s\quad\text{on }\{y=0\},\\
 \|q\|_{\mathcal X(U_c)}:=\|f\|_{C^{2,\alpha}_{\mathrm{ev}}}+\|\xi_s\|_{C^{2,\alpha}_{\mathrm{ev}}}+\|\xi_\perp\|_{C^{2,\alpha}_{\mathrm{odd}}}+\|b\|_{C^{2,\alpha}(U_c)}.
\end{gathered}
\end{equation}

 \begin{lemma}\label{lem:regularized-chart}
At a corner, put  $\widehat a:=a+c_\gamma$ and $\widehat\Phi:=\widehat\rho^{-1}\widehat G$. The exponential chart \eqref{eq:local-chart} reads
\begin{equation}\label{eq:matrix-chart-differential}
 \widehat\ell=\widehat\ell_*+f,\qquad
 \widehat\Phi=\operatorname{Exp}_{\widehat\Phi_*}\bigl(\xi_s\,e_s(\widehat\Phi_*)+\xi_\perp\,e_\perp(\widehat\Phi_*)\bigr),\qquad\text{and}\qquad
 \widehat a=\widehat a_*+b.
\end{equation}
The map $q\mapsto(\widehat\ell-\widehat\ell_*,\widehat s-\widehat s_*,\widehat\omega-\widehat\omega_*,\widehat a-\widehat a_*)$ is a smooth diffeomorphism between neighborhoods of $0$ in $C^{2,\alpha}_{\mathrm{ev}}\times C^{2,\alpha}_{\mathrm{ev}}\times C^{2,\alpha}_{\mathrm{odd}}\times C^{2,\alpha}$, with differential $(f,\xi_s,e^{\widehat s_*}\xi_\perp,b)$ at $q=0$. In particular, for $q$ near $0$, $\|q\|_{\mathcal X(U_c)}$ is equivalent to
\[
 \|\widehat\ell-\widehat\ell_*\|_{C^{2,\alpha}_{\mathrm{ev}}}+\|\widehat s-\widehat s_*\|_{C^{2,\alpha}_{\mathrm{ev}}}+\|\widehat\omega-\widehat\omega_*\|_{C^{2,\alpha}_{\mathrm{odd}}}+\|\widehat a-\widehat a_*\|_{C^{2,\alpha}}.
\]
On the axes, $\widehat s-\widehat s_*=\xi_s$ and $\widehat\omega=\widehat\omega_*$, so the boundary conditions of \eqref{eq:corner-domain} are exactly the angle equalities \eqref{eq:angle-equalities}. The same holds with bars at a rod, with \eqref{eq:conic-boundary}.
\end{lemma}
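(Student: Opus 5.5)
The plan is to compute each regularized coefficient directly from the exponential chart \eqref{eq:local-chart}, using that the rescalings \eqref{eq:normalized-gram} are congruences by the fixed diagonal matrix $D:=\operatorname{diag}(x^{-1},y^{-1})$.

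\emph{Step 1: the chart for $\widehat\ell$ and $\widehat a$.} Since $\widehat G=DGD$, we have $\widehat\rho=\rho/(xy)$. Hence
\[
\widehat\ell=\ell-\log x-\log y=\ell_*+f-\log(xy)=\widehat\ell_*+f.
\]
Near a corner, $\gamma_\beta=\gamma_1$ because $\Psi_\beta=\operatorname{id}$ near $\partial\Omega$. With $\gamma_1=e^{2c_\gamma}|dw|^2$ we get $\widehat a=a+c_\gamma$, so $\widehat a=\widehat a_*+b$ is immediate.

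\emph{Step 2: the chart for $\widehat\Phi$.} For the shape, $\widehat\Phi=\widehat\rho^{-1}DGD=(xy)\,D\Phi D$. The map $S\mapsto (xy)DSD$ is the action of $\det(D)^{-1/2}D\in SL_2$ on $\mathbb H^2$, so it is an isometry of $g_{\mathbb H^2}$. An isometry commutes with $\operatorname{Exp}$ and carries orthonormal frames to orthonormal frames. In half-plane coordinates it is the dilation $z\mapsto (x/y)z$, which translates $s$ and preserves the frame $(e_s,e_\perp)$ of \eqref{eq:target-rod-frame}. Indeed $e_s=\partial_s$ is clearly invariant. For the other vector, $e_\perp=e^s\partial_\omega$ and $\partial_\omega$ gets multiplied by $x/y$, while $e^{s}$ gets multiplied by $y/x$, so $e_\perp$ is also invariant. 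Therefore
\[
\widehat\Phi=\operatorname{Exp}_{\widehat\Phi_*}\bigl(\xi_se_s(\widehat\Phi_*)+\xi_\perp e_\perp(\widehat\Phi_*)\bigr).
\]

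\emph{Step 3: explicit geodesics and the differential.} In the coordinates $(s,\omega)$ the exponential map of $\mathbb H^2$ is an explicit real-analytic map $(\widehat s_*,\widehat\omega_*,\xi_s,\xi_\perp)\mapsto(\widehat s,\widehat\omega)$. Geodesics in the half-plane are explicit: vertical lines and semicircles. Its differential in $\xi$ at $\xi=0$ is $(\xi_s,e^{\widehat s_*}\xi_\perp)$, because $e_\perp=e^s\partial_\omega$. Combined with Step 1, this gives the stated differential $(f,\xi_s,e^{\widehat s_*}\xi_\perp,b)$.

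\emph{Step 4: parities.} Here I use the reflection symmetry $\omega\mapsto-\omega$ of the target, an isometry fixing $e_s$ and reversing $e_\perp$. Under it, $\widehat s$ is even and $\widehat\omega$ is odd in $(\widehat\omega_*,\xi_\perp)$ jointly. With the parities of the background, $\widehat s_*$ even and $\widehat\omega_*$ odd by \eqref{eq:matrix-smooth-corner}, the even/odd spaces are preserved in each variable. Since $\widehat G_*>0$ up to the axes, the background coefficients are bounded in $C^{2,\alpha}$. The composition operator by a smooth function is therefore smooth on $C^{2,\alpha}$ (Banach algebra property). Its inverse is given by the logarithm map, which is again smooth, so the inverse function theorem applies and gives the local diffeomorphism and the norm equivalence.

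\emph{Step 5: the axes and the boundary conditions.} On $\{x=0\}$ the odd functions $\xi_\perp$ and $\widehat\omega_*$ vanish. The geodesic from $\widehat\Phi_*$ in direction $\xi_se_s$ is then the vertical line, so $\widehat s=\widehat s_*+\xi_s$ and $\widehat\omega=0=\widehat\omega_*$ there. Substituting into \eqref{eq:angle-equalities} and using the background equalities $2\widehat a_*=\widehat\ell_*-\widehat s_*$ gives $2b=f-\xi_s$, which is the first boundary condition in \eqref{eq:corner-domain}. The same computation on $\{y=0\}$ gives $2b=f+\xi_s$. The rod case is identical with $D=\operatorname{diag}(x^{-1},1)$.

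The main obstacle is Step 4: one has to check that the identification $\widehat\omega\leftrightarrow e^{\widehat s_*}\xi_\perp$ preserves $C^{2,\alpha}_{\mathrm{odd}}$ with equivalent norms up to the axes. This uses that odd functions are $x$ (respectively $xy$) times even functions; the version with $xy$ is not justified here, and I would fall back on composition with a smooth map that is odd in the relevant variables.
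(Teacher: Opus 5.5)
Your proposal is correct and follows the paper's proof essentially step by step. Both arguments use the same ingredients: the determinant-one congruence $S\mapsto(xy)DSD$ is the dilation $z\mapsto\frac{x}{y}z$, an isometry of $\mathbb H^2$ that commutes with $\operatorname{Exp}$ and preserves the frame $(e_s,e_\perp)$; parities follow from the reflection $\omega\mapsto-\omega$, followed by the inverse function theorem; and on the axes the geodesics are vertical lines. The worry you flag at the end is not a real obstacle: applying the smooth equivariant map to the even/odd reflected extensions, as you already do in Step 4, gives the $C^{2,\alpha}_{\mathrm{odd}}$ regularity directly, with no division by $xy$.
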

\begin{proof}
Put $\mathsf D:=(xy)^{1/2}\operatorname{diag}(x^{-1},y^{-1})$, so that $\det\mathsf D=1$ and  $\widehat\Phi=\mathsf D\Phi\mathsf D$. By Lemma~\ref{lem:matrix-calculus}(iii), the congruence $S\mapsto\mathsf DS\mathsf D$ is an isometry of $\mathbb H^2$  commuting with \eqref{eq:matrix-exp}. It is the dilation $z\mapsto\frac{x}yz$  of Section~\ref{sec:smoothness}, which preserves the frame \eqref{eq:target-rod-frame}, and this gives \eqref{eq:matrix-chart-differential} and the differential. The reflections  across the axes act on the regularized coordinates by $\widehat\omega\mapsto-\widehat\omega$, which fixes $e_s$ and reverses $e_\perp$. Since $\widehat s_*$ is even, $\widehat\omega_*$ is odd  and $\operatorname{Exp}$ depends smoothly on its base point, the map preserves the parities, and the inverse function theorem gives the diffeomorphism and the equivalence of norms. On an axis, $\xi_\perp=0$ by oddness, and $\tau\mapsto\operatorname{Exp}_{\widehat\Phi_*}(\tau e_s)$ is the vertical line $\{\widehat\omega=\widehat\omega_*\}$, parametrized by arclength $\widehat s$. So $\widehat s-\widehat s_*=\xi_s$ and $\widehat\omega=\widehat\omega_*$  there, and subtracting the angle equalities  of $g_*$ from those of $g_{\beta,q}$ gives the last claim. At a rod, use $\mathsf D:=x^{1/2}\operatorname{diag}(x^{-1},1)$.
\end{proof}

\emph{The corner value of $\xi_s$.} At a corner $c$, even without the boundary conditions of \eqref{eq:corner-domain}, every $\mathcal G$-equivariant $\xi$ with $\xi_s$ even and $\xi_\perp$ odd satisfies
\begin{equation}\label{eq:corner-symmetry}
 \xi_s(c)=0.
\end{equation}
Indeed, a conjugate of $\mathsf e$, respectively of $\mathsf{br}$, fixes $c$, exchanges $x$ and $y$, and conjugates $\widehat G$ by $\mathsf B$, so invariance and the oddness of $\widehat G_{12}$ give $\widehat s(c)=\widehat s_*(c)=0$, and  Lemma~\ref{lem:regularized-chart} gives $\xi_s(c)=\widehat s(c)-\widehat s_*(c)$.

\paragraph{Leading operators at rods and corners.}
In these hat and bar variables, the  diagonal singular parts of the reduced equations \eqref{eq:matrix-reduced}, which the analysis below has to invert, are
\begin{equation}\label{eq:dictionary-operators}
 D_j:=\partial_x^2+\frac{j}x\partial_x+\partial_y^2+\frac{j}y\partial_y\quad\text{for }j\in\{1,2\},\qquad\text{and}\qquad
 D_\perp:=D_1-\frac{1}{x^2}-\frac{1}{y^2}.
\end{equation}
The equations also contain singular coupling terms, such as the divided derivatives of $\widehat\ell$ in Lemma~\ref{lem:regularized-equations}. We only treat the most difficult case of a corner. In a conformal chart $w=x+iy$, the subscript $0$ denotes the Euclidean metric $|dw|^2$: thus $\Delta_0=\partial_x^2+\partial_y^2$, with the corresponding $\Hess_0$, $\langle\cdot,\cdot\rangle_0$ and $|\cdot|_0$.

\begin{remark}\label{rem:conformal-parity}
The matrix coefficients have the parities indicated in the table below. The conformal variation $b$, however, is an ordinary $C^{2,\alpha}$ function with the prescribed boundary values of \eqref{eq:corner-domain}.
\begin{center}\small
\begin{tabular}{@{}l>{\raggedright\arraybackslash}p{1.55cm}>{\raggedright\arraybackslash}p{3.3cm}>{\raggedright\arraybackslash}p{2.5cm}>{\raggedright\arraybackslash}p{4.4cm}@{}}
\toprule
component&coefficient&regularity condition&model operator&Euclidean model\\ \midrule
$f$&$\widehat\ell$&$C^{2,\alpha}$, even &$D_2$&radial Laplacian on $\mathbb R^3\times\mathbb R^3$\\
$\xi_s$&$\widehat s$&$C^{2,\alpha}$, even&$D_1$&radial Laplacian on $\mathbb R^2\times\mathbb R^2$\\
$\xi_\perp$&$\widehat\omega$&$C^{2,\alpha}$, odd&$D_\perp$ &Laplacian on $\mathbb R^2\times\mathbb R^2$ on radial times first harmonic\\
$b$&$\widehat a$&$C^{2,\alpha}$, no parity imposed, prescribed boundary values&$\Delta_0$&Dirichlet problem on the quadrant\\
\bottomrule
\end{tabular}
\end{center}
\end{remark}

Lemma~\ref{lem:division-lifts} identifies the model operators for $f$, $\xi_s$ and $\xi_\perp$, acting on our parity spaces, with Euclidean Laplacians acting on ordinary H\"older spaces, so standard Schauder estimates apply. For $b$, we use boundary Schauder estimates \cite[Chapter~6]{GT01} along the rods, the Dirichlet estimate at corners of Lemma~\ref{lem:grisvard}, based on \cite[Section~6.4.2]{Gri85}, and the conic estimate of Mazzeo-Weiss, Lemma~\ref{lem:cone-Schauder}, at the center.

\subsubsection{The spaces \texorpdfstring{$\mathcal X$}{X} and \texorpdfstring{$\mathcal Y$}{Y}}\label{subsec:local-global}
Let $\mathcal C$ be the set of corners of $\Omega$. The collection of charts $\mathfrak U$, invariant under the finite symmetries we impose, consists of the central chart $B_o$, one quadrant chart $U_c$ for each $c\in\mathcal C$, finitely many rod charts $U_\partial$, and finitely many interior charts away from $o$ and $\partial\Omega$ and over which $\Phi_*^*T\mathbb H^2$ is trivialized. Define
\begin{equation}\label{eq:deformation-domain}
 \mathcal X=\mathcal X^{2,\alpha}_\nu:=\left\{(f,\xi,b):\
 \begin{array}{l}
 f,b\ \mathcal G\text{-invariant},\ \xi\ \mathcal G\text{-equivariant},\\
 \eqref{eq:central-domain},\ \eqref{eq:side-domain-norm},\ \eqref{eq:conic-boundary},\ \eqref{eq:corner-domain}\text{ hold}
 \end{array}\right\},
\end{equation}
with ordinary $C^{2,\alpha}$ regularity in the remaining interior charts. Its norm is the sum $\|q\|_{\mathcal X}:=\sum_{U\in\mathfrak U}\|q|_U\|_{\mathcal X(U)}$ of the local norms \eqref{eq:central-domain}, \eqref{eq:side-domain-norm} and \eqref{eq:corner-domain} over the fixed finite collection $\mathfrak U$, with the $C^{2,\alpha}$ norm on the interior charts.

The values of the reduced map $\mathcal E$ of \eqref{eq:conic-map} are triples $F=(F_1,F_2,F_3)$ of $\mathcal G$-invariant functions $F_1,F_3$ and a $\mathcal G$-equivariant section $F_2$ of $\Phi_*^*T\mathbb H^2$ on $\Omega\setminus\{o\}$. Decompose $F_2$ in the frame \eqref{eq:target-rod-frame} and impose the following local norms:
\begin{equation}\label{eq:conic-spaces}
\begin{array}{c|c|c}
 \text{chart}&\text{components}&\text{regularity}\\ \hline
 o&(F_1,F_2^o,F_3)&R^{\nu-2}C^{0,\alpha}_{\mathrm b}\\
 \text{rod or corner}&(F_1,F_{2,s},F_3)&C^{0,\alpha}\\
 \text{rod or corner}&F_{2,\perp}&C^{0,\alpha}_{\mathrm{odd}}\\
 \text{interior}&(F_1,F_2,F_3)&C^{0,\alpha}.
\end{array}
\end{equation}
 Here $F_2^o:=\Pi_{\Phi_o}F_2$. The transverse component is odd so that its first-angular-mode lift $F_{2,\perp}(|Z|,y)Z/|Z|$ extends continuously across the axis, which requires $F_{2,\perp}=0$ there, and Lemma~\ref{lem:division-lifts}(b) gives its H\"older bound.  Define $\mathcal Y:=\mathcal Y^{0,\alpha}_{\nu-2}$ to consist of the triples satisfying \eqref{eq:conic-spaces} for which, at every corner $c\in\mathcal C$, one has
\begin{equation}\label{eq:corner-compatibility}
 F_3(c)=0.
\end{equation}
Its norm is the sum of the component norms in \eqref{eq:conic-spaces} over the charts of $\mathfrak U$. Lemma~\ref{lem:reduced-map} below shows that the nonlinear map itself takes values in $\mathcal Y$.

\begin{remark}\label{rem:corner-compatibility}
The corner condition \eqref{eq:corner-compatibility} is necessary for the $C^{2,\alpha}$ Dirichlet problem: if $u\in C^2(\{x,y\geqslant0\})$ vanishes on both axes, then we have
\[
 \partial_x^2u(0)=\partial_y^2u(0)=0,\qquad\text{so}\qquad \Delta_0u(0)=0,
\]
and the following lemma, based on \cite[Theorem~6.4.2.5]{Gri85}, shows that it is also sufficient.
\end{remark}

\begin{lemma}\label{lem:grisvard}
Let $U$ be a relative neighborhood of the corner $c$ in a quadrant chart $\{x,y\geqslant0\}$, let $\gamma=e^{2c_\gamma}|dw|^2$ with $c_\gamma$ smooth and even, and let $0<\alpha<1$. Let $u\in H^1(U)$ vanish on $\{x=0\}$ and on $\{y=0\}$, and solve $\Delta_\gamma u=F$ weakly, with $F\in C^{0,\alpha}(U)$ and $F(c)=0$. Then, for every relative neighborhood $U'$ of $c$ with compact closure in $U$, we have
\[
 \|u\|_{C^{2,\alpha}(U')}\leqslant C\bigl(\|F\|_{C^{0,\alpha}(U)}+\|u\|_{L^2(U)}\bigr).
\]
\end{lemma}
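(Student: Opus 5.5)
The plan is to reduce to a Dirichlet problem for the flat Laplacian on the quadrant, and then to use odd reflection to turn the corner into an interior point of $\mathbb R^2$.

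\emph{Step 1: flatten the equation.} Since $\gamma=e^{2c_\gamma}|dw|^2$ is conformal, $\Delta_\gamma=e^{-2c_\gamma}\Delta_0$. The equation therefore becomes $\Delta_0u=\tilde F:=e^{2c_\gamma}F$. This $\tilde F$ is still $C^{0,\alpha}$ with comparable norm, and it satisfies $\tilde F(c)=0$.

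\emph{Step 2: split off the corner value.} The compatibility condition $F(c)=0$ is exactly what makes odd reflection work in H\"older spaces. Let $\tilde u$ be the odd extension of $u$ in $x$ and in $y$ separately, so $\tilde u(-x,y)=-\tilde u(x,y)$ and $\tilde u(x,-y)=-\tilde u(x,y)$. Let $\tilde F$ be extended in the same way. Because $u$ vanishes on both axes, $\tilde u\in H^1$ of the reflected neighborhood $\tilde U$ of $0$, and it solves $\Delta_0\tilde u=\tilde F$ weakly there. To check this, test against $\varphi$ and decompose $\varphi$ into its four parity components. Only the doubly odd part pairs nontrivially, and that part vanishes on the axes, so it is admissible for the Dirichlet weak formulation on $U$. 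The standard identity $\int_{\tilde U}\nabla\tilde u\cdot\nabla\varphi=4\int_U\nabla u\cdot\nabla\varphi_{\rm oo}$ then gives the equation.

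\emph{Step 3: H\"older continuity of the reflected data.} I then check that the doubly odd extension $\tilde F$ lies in $C^{0,\alpha}(\tilde U)$ with $\|\tilde F\|_{C^{0,\alpha}}\leqslant C\|F\|_{C^{0,\alpha}(U)}$. Continuity of $\tilde F$ forces $\tilde F=0$ on both axes, which is more than $F(c)=0$.

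\emph{Main obstacle.} The difficulty is that $F$ need not vanish on the axes away from $c$. So the naive odd reflection is discontinuous, and this is exactly why Grisvard is needed: the corner singular functions of the Dirichlet Laplacian at angle $\pi/2$ are $r^{2k}\sin(2k\theta)$, which are smooth. My fix is to subtract a correction first.

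Let $F_0$ be a $C^{0,\alpha}$ function on a neighborhood of $c$ whose restrictions to the two axes equal those of $\tilde F$. For instance, take $F_0(x,y)=\tilde F(x,0)+\tilde F(0,y)-\tilde F(0,0)$, and note that the last term is zero. Then $\tilde F-F_0$ vanishes on both axes, so its doubly odd extension is $C^{0,\alpha}$ with controlled norm.

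Next solve $\Delta_0v=F_0$ explicitly with $v\in C^{2,\alpha}$ and $v=0$ on the axes. By linearity it suffices to treat $F_0=\psi(x)$ with $\psi(0)=0$ and $\psi\in C^{0,\alpha}$ (and symmetrically for $y$). For this I would take the Newtonian potential of the odd-in-$y$, even-in-$x$ extension of $\psi(x)\chi$, with $\chi$ a cutoff. That extension is H\"older because of the factor structure and $\psi(0)=0$. The resulting $v$ vanishes on $\{y=0\}$ by oddness in $y$.

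It must also vanish on $\{x=0\}$. That is a Dirichlet problem for $v|_{x=0}$ with $C^{2,\alpha}$ data vanishing at the corner to second order; the second-order vanishing comes from $\psi(0)=0$. I correct it by a boundary Schauder step on the half-plane $\{x\geqslant0\}$ \cite[Chapter~6]{GT01}, using odd reflection in $y$ again. This bookkeeping is the step I expect to need the most care. Alternatively, I would cite \cite[Theorem~6.4.2.5]{Gri85} directly, since it says that for angle $\pi/2$ the only obstruction to $C^{2,\alpha}$ is the compatibility condition $F(c)=0$.

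\emph{Conclusion.} The difference $w:=u-v$ solves $\Delta_0w=\tilde F-F_0$ with Dirichlet data on both axes. Its doubly odd extension is a weak $H^1$ solution with $C^{0,\alpha}$ right-hand side. Interior Schauder estimates on $\tilde U$ then give $\|w\|_{C^{2,\alpha}(U')}\leqslant C(\|F\|_{C^{0,\alpha}}+\|w\|_{L^2})$, and $L^2$-to-$C^0$ is handled by the standard De Giorgi/$H^1$ bound. Adding back $v$, whose norm is bounded by $C\|F\|_{C^{0,\alpha}}$, yields the estimate.
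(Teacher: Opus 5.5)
\paragraph{The gap.} The step that fails is your construction of $v$ for $F_0=\psi(x)$. The odd-in-$y$, even-in-$x$ extension $\operatorname{sgn}(y)\,\psi(|x|)\chi$ is \emph{not} H\"older. It jumps by $2\psi(x)\chi(x,0)$ across $\{y=0\}$ wherever $\psi(x)\neq0$, and $\psi(0)=0$ only makes it continuous at the single point $c$. So the interior Newtonian-potential estimate does not give $v\in C^{2,\alpha}$. Your second correction needs $C^{2,\alpha}$ boundary data $v(0,\cdot)$, so it inherits the same gap.

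\paragraph{The repair.} Swap the parities. Because $\psi(0)=0$, the odd extension $\hat\psi(x):=\operatorname{sgn}(x)\psi(|x|)$ is $C^{0,\alpha}$. Let $v_1$ solve $\Delta_0v_1=\hat\psi(x)$ on a half-disk $\{|w|<r,\ y>0\}$ with zero Dirichlet data. By uniqueness $v_1$ is odd in $x$, hence $v_1=0$ on $\{x=0\}$. The flat-boundary Schauder estimate \cite[Lemma~6.18 and Corollary~6.7]{GT01} gives $\|v_1\|_{C^{2,\alpha}}\leqslant C\|\psi\|_{C^{0,\alpha}}$ near $c$, because it does not require the right-hand side to vanish on $\{y=0\}$. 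No second correction is then needed.

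Your original $v$ can also be saved, but only with a different justification. Write $\operatorname{sgn}(y)G=2\mathbf 1_{\{y>0\}}G-G$, with $G$ the even extension. The half-ball Newtonian-potential estimate of \cite[Chapter~4]{GT01} then makes $v$ of class $C^{2,\alpha}$ up to $\{y=0\}$ from each side. Since $v=0$ on $\{y=0\}$, the equation at $c$ gives $\partial_y^2v(c)=\psi(0)=0$, which is the second-order vanishing you invoke.

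\paragraph{The obstacle.} Your diagnosis of why Grisvard is needed is off. If the only singular solutions at angle $\frac{\pi}{2}$ were the smooth $\operatorname{Im}(w^{2k})$, the hypothesis $F(c)=0$ would be superfluous. The actual obstruction is the non-$C^2$ solution $\frac12y^2+\frac1\pi\operatorname{Im}(w^2\log w)$ of $\Delta_0u=1$ vanishing on both axes. In your decomposition it would come from the constant term $-\tilde F(c)$ of $F_0$ if $F(c)\neq0$.

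\paragraph{Comparison with the paper.} With the repair, your argument is correct and takes a different route. The paper uses the double reflection only to get $u\in C^{1,\alpha}$. It then applies Grisvard's corner expansion $u=v+\lambda\operatorname{Im}(w^2\log w)$ with $v\in C^{2,\alpha}$. It reads off $\lambda$ from the boundary values, $v_{xx}(c)=0$ and $v_{yy}(c)=\pi\lambda$, so $\pi\lambda=e^{2c_\gamma(c)}F(c)$.

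Your subtraction of the axis traces turns $F(c)=0$ into $\psi_i(0)=0$, which is exactly what makes the one-sided odd reflections H\"older. This needs only interior and flat-boundary Schauder theory, so it is self-contained and avoids \cite{Gri85}. The paper's version is shorter and exhibits exactly which singular term $F(c)\neq0$ would produce. Your fallback of citing \cite{Gri85} is the paper's route, but the coefficient $\lambda$ still has to be computed as above.

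Finally, $U'$ may extend along the axes beyond the small neighborhood of $c$ where $F_0$ is defined. Add the routine covering by boundary Schauder estimates there, as the paper does.
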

\begin{proof}
The odd reflections of $u$ and $F$ across both axes solve a weak equation near $c$ with bounded right-hand side, so  $u\in C^{1,\alpha}$ near $c$  by \cite[Theorem~8.17 and Corollary~8.36]{GT01}. The local corner expansion of \cite[Theorems~6.4.2.4 and~6.4.2.5]{Gri85}, applied to $\Delta_0u=e^{2c_\gamma}F$, gives $u=v+\lambda\operatorname{Im}(w^2\log w)$ near $c$,  with $w=x+iy$, $v\in C^{2,\alpha}$, and the stated bound for $v$ and $\lambda$. The function  $\operatorname{Im}(w^2\log w)$ is harmonic, vanishes on $\{y=0\}$ and equals $-\frac{\pi}{2}y^2$ on $\{x=0\}$, so the zero boundary values  imply $v_{xx}(c)=0$ and $v_{yy}(c)=\pi\lambda$. Consequently $e^{2c_\gamma(c)}F(c)=\Delta_0v(c)=\pi\lambda$, and  $F(c)=0$ eliminates the logarithmic term. Away from the corner,  the estimate is the boundary Schauder estimate \cite[Lemma~6.18 and Corollary~6.7]{GT01}.
\end{proof}

\subsection{Smoothness of the reduced map}\label{subsec:reduced-map}

\begin{lemma}\label{lem:reduced-map}
There is a neighborhood $\mathcal U_0$ of $0$ in $\mathcal X$ such that the map
\begin{equation}\label{eq:reduced-map}
 \mathcal E:J\times\mathcal U_0\longrightarrow\mathcal Y
\end{equation}
is well defined and smooth. In particular, $\mathcal E_3(\beta,q)(c)=0$ at every corner $c\in\mathcal C$, for every $(\beta,q)\in J\times\mathcal U_0$.
\end{lemma}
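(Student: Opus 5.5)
The plan is to check smoothness chart by chart over the finite collection $\mathfrak U$, since both $\mathcal X$ and $\mathcal Y$ carry sums of local norms. In each chart, I will write $\mathcal E$ as a composition of three kinds of operations, each of which is smooth between the relevant Banach spaces. The first is the smooth substitution maps $q\mapsto(\rho,\Phi,a)$ from \eqref{eq:local-chart}. The second is the linear differential operators $\Delta_{\gamma_\beta}$ and $d$, whose coefficients depend smoothly on $\beta$ in the fixed coordinates of \eqref{eq:fixed-cone-identification}. The third is pointwise products, with algebra estimates such as \eqref{eq:weighted-product}. Superposition by smooth functions like $\exp$, $\cosh$ and $\sinh(t)/t$ is smooth on $C^{j,\alpha}$-type spaces, which handles the first kind. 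Interior charts are then routine.

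\textbf{Central chart.} I will write $\ell=\ell_*+f$, $a=a_*+b$ and $\Phi$ via $U=\Pi_{\Phi_o}\Phi$ using \eqref{eq:central-matrix-coordinate}. The hypothesis \eqref{eq:background-central} and \eqref{eq:central-domain} say that every quantity is a constant plus an element of $R^\nu C^{2,\alpha}_{\mathrm b}$. Since $\Delta_{\gamma_\beta}$ maps constants to $0$ and $R^\nu C^{2,\alpha}_{\mathrm b}\to R^{\nu-2}C^{0,\alpha}_{\mathrm b}$ boundedly, uniformly and smoothly in $\beta\in J$, the second-order terms are controlled. The same holds for $K_{\gamma_\beta}$, which vanishes near $o$ because $\gamma_\beta$ is flat there. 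The first-order terms satisfy $d\Phi$, $d\ell\in R^{\nu-1}C^{1,\alpha}_{\mathrm b}$ in the coframe $(dR,R\,d\theta)$, so their quadratic products lie in $R^{2\nu-2}\subset R^{\nu-2}$. The zeroth-order term $6e^{2a}$ is bounded and hence lies in $R^0\subset R^{\nu-2}$, because $\nu<2$. The projection $\Pi_{\Phi_o}\Pi_{\Phi_*}$ is smooth in $\Phi_*$, so $F_2^o$ inherits the same weight.

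\textbf{Rods and corners.} Here I will pass to the regularized variables using Lemma~\ref{lem:regularized-chart}. I will then rewrite \eqref{eq:matrix-reduced} in terms of $\widehat\ell,\widehat s,\widehat\omega,\widehat a$, which is the content of Lemma~\ref{lem:regularized-equations}. After removing the singular factors $\log x$ and $\log y$, the equations take the form of $D_2\widehat\ell$, $D_1\widehat s$, $D_\perp\widehat\omega$ and $\Delta_0\widehat a$, plus lower-order couplings. These couplings involve divided derivatives such as $x^{-1}\partial_x\widehat\ell$ and $\widehat\omega/x$. Lemma~\ref{lem:division-lifts} turns these into bounded maps from the parity spaces $C^{2,\alpha}_{\mathrm{ev}}$ and $C^{2,\alpha}_{\mathrm{odd}}$ to $C^{0,\alpha}$: an even function has an odd $x$-derivative, which vanishes on the axis and can be divided by $x$. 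The parity bookkeeping then places $F_{2,\perp}$ in $C^{0,\alpha}_{\mathrm{odd}}$. I expect this step to be the main obstacle. One must verify that every singular term, including the $\widehat\omega$ contributions to $|d\Phi|^2$, pairs into a divided quantity of the correct parity.

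\textbf{Corner compatibility.} Finally I will verify \eqref{eq:corner-compatibility}. The third equation $\mathcal E_3$ is a multiple of $S_g-12$, and at a corner the Gram matrix is smooth with the parities of \eqref{eq:matrix-smooth-corner}. I will compute $\mathcal E_3(c)$ directly. The boundary conditions of \eqref{eq:corner-domain} give $\widehat a=\frac12(\widehat\ell\mp\widehat s)$ on the axes. Differentiating these twice along each axis, and using evenness and \eqref{eq:corner-symmetry}, expresses $\partial_x^2\widehat a(c)$ and $\partial_y^2\widehat a(c)$ in terms of second derivatives of $\widehat\ell$ and $\widehat s$. The divided derivative terms converge to second derivatives at $c$. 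Substituting these into $\mathcal E_3$, the terms should cancel exactly, which is the identity $\mathcal E_3(c)=0$. An alternative is to use the $\mathsf e$ or $\mathsf{br}$ symmetry exchanging $x$ and $y$ at $c$, which forces the antisymmetric combinations to vanish. This cancellation is where I would look most carefully. Once it holds for all $q$ in a neighborhood, the map lands in the closed subspace $\mathcal Y$, and smoothness follows from the chartwise statements.
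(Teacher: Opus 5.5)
Your proposal is correct and is essentially the paper's proof: smoothness chart by chart via Lemmas~\ref{lem:regularized-chart} and~\ref{lem:regularized-equations} at the axes and \eqref{eq:weighted-product} at $o$ (plus the $\mathcal G$-equivariance of \eqref{eq:conic-map}, which you should state), and $\mathcal E_3(c)=0$ from \eqref{eq:divided-even} and the twice-differentiated angle equalities \eqref{eq:corner-jets}, where the quadratic terms drop out at $c$ because $d\widehat\ell(c)=d\widehat s(c)=0$ and $A_\Phi(c)=0$ by \eqref{eq:connection-regularity}. Two side remarks are inaccurate but not load-bearing: off-shell $\mathcal E_3$ is not a multiple of $S_g-12$ (in fact $S_g-12=-2e^{-2a}(\mathcal E_1+\mathcal E_3)$), and the $x\leftrightarrow y$ symmetry alone only reduces $e^{2c_\gamma(c)}\mathcal E_3(c)$ to $2\partial_x^2\widehat a(c)-\partial_x^2(\widehat\ell+\widehat s)(c)$, which still needs the angle equality to vanish.
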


\begin{proof}
\emph{Axes.} Use the corner regularizations of \eqref{eq:regularized-corner-matrix}. By  Lemma~\ref{lem:regularized-chart}, they depend smoothly on $q$, with $\widehat\ell,\widehat s$ even, $\widehat\omega$ odd in each variable and $\widehat a\in C^{2,\alpha}$, and they satisfy the angle equalities \eqref{eq:angle-equalities}. 

By Lemma~\ref{lem:regularized-equations}, the regularized form \eqref{cmp:nonlinear-first-residuals} of the reduced equations \eqref{eq:matrix-reduced} defines smooth maps into the spaces \eqref{eq:conic-spaces}, and its transverse component $e^{2c_\gamma}\mathcal H_\perp$ is odd in each variable.

For the condition \eqref{eq:corner-compatibility}, evaluate the last line of \eqref{cmp:nonlinear-first-residuals} at $c$. At $c$, \eqref{eq:divided-even} gives $x^{-1}\partial_x(\widehat\ell+\widehat s)=\partial_x^2(\widehat\ell+\widehat s)$ and $y^{-1}\partial_y(\widehat\ell-\widehat s)=\partial_y^2(\widehat\ell-\widehat s)$, the first derivatives of the even functions $\widehat\ell$ and $\widehat s$ vanish, and $A_\Phi(c)=0$ by \eqref{eq:connection-regularity}. So \eqref{eq:corner-jets} gives
\[
 e^{2c_\gamma(c)}\mathcal E_3(c)=\Delta_0\widehat a(c)-\tfrac12\partial_x^2(\widehat\ell+\widehat s)(c)-\tfrac12\partial_y^2(\widehat\ell-\widehat s)(c)=0.
\]
At a rod, $\bar\omega$ and $e^{2c_\gamma}\mathcal H_\perp$ are odd in $x$.

\emph{Center.} For functions or $T_{\Phi_o}\mathbb H^2$-valued maps $F,\widetilde F$ with $F-F(o),\widetilde F-\widetilde F(o)\in R^\nu C^{2,\alpha}_{\mathrm b}$, we have $dF,d\widetilde F\in R^{\nu-1}C^{1,\alpha}_{\mathrm b}$, so  \eqref{eq:weighted-product} and the inclusion $R^{2\nu-2}C^{0,\alpha}_{\mathrm b}\subset R^{\nu-2}C^{0,\alpha}_{\mathrm b}$, valid as $\nu>0$,  bound $dF\cdot d\widetilde F$ in $R^{\nu-2}C^{0,\alpha}_{\mathrm b}$ by the product of the norms of $F-F(o)$ and $\widetilde F-\widetilde F(o)$ in $R^\nu C^{2,\alpha}_{\mathrm b}$, and $\Delta_{\gamma_\beta}$ maps constants plus $R^\nu C^{2,\alpha}_{\mathrm b}$ to $R^{\nu-2}C^{0,\alpha}_{\mathrm b}$.

\emph{Interior.} Interior charts use ordinary H\"older multiplication. All coefficients depend smoothly on $\beta$ in the fixed coordinates, and \eqref{eq:conic-map} preserves the finite symmetries. This proves the smooth mapping into $\mathcal Y$.
\end{proof}

\subsection{Recovery of the Einstein equation and boundary smoothness}\label{subsec:recovery}
\subsubsection{Recovering Einstein metrics}

For a solution of \eqref{eq:matrix-reduced}, the weighted Einstein defect  is the real part of a holomorphic quadratic differential on the  orbit surface \cite[Section~3.3]{MW17}, and, as in \cite[Section~2.2]{Wis03},  the angle conditions \eqref{eq:angle-equalities} make it vanish.

Let $\zeta:\overline\Omega\to\overline{\mathbb D}$, with $\mathbb D:=\{\zeta\in\mathbb C:|\zeta|<1\}$, be the homeomorphism which is conformal on $\Omega$ for $[h_1]$, normalized by $\zeta(o)=0$ and $\zeta(c_0)=1$. 
Its normalization and the order-$m$ rotation give
\begin{equation}\label{eq:normalized-quotient}
 \zeta(c_j)=e^{2\pi ij/m},\qquad\text{and}\qquad
 \zeta\circ\mathsf r=e^{2\pi i/m}\zeta.
\end{equation}
Since every quotient metric $h$ is conformal to $h_1$ before the identifications \eqref{eq:fixed-cone-identification}, $\zeta$ is a conformal coordinate for every such $h$. Unlike the central chart $w$ of Section~\ref{sec:common-coordinates}, $\zeta$ is global and straightens the right angles at the corners.

\begin{proposition}\label{prop:recovery}
Let $(\beta,q)\in J\times\mathcal U_0$ satisfy $\mathcal E(\beta,q)=0$, and let $g=g_{\beta,q}$. Then $\Ric_g=3g$ on $\pi^{-1}(\Omega\setminus\{o\})$, and $\partial_na=0$ on each rod, where $n$ is the $\gamma_\beta$-unit normal.
\end{proposition}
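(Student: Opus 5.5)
We follow the route announced before the statement: show that the residual Einstein defect is the real part of a holomorphic quadratic differential on $\mathbb D$, and use the boundary conditions together with the finite symmetries to force it to vanish.

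\emph{Step 1: the defect is trace-free and divergence-free.} By Proposition~\ref{prop:reduced} and Remark~\ref{rem:remaining-component}, a solution of $\mathcal E(\beta,q)=0$ (equivalently $\mathcal H=0$, since $\Pi_{\Phi_*}$ is an isomorphism) has $\Ric_g^{\rm v}=3G$ and $S_g=12$. The mixed components vanish by Proposition~\ref{prop:Ricci}. Hence the only possibly nonzero part of $E:=\Ric_g-3g$ is the horizontal trace-free piece \eqref{eq:remaining-component}.

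The contracted Bianchi identity $\diver_gE=\tfrac12dS_g-\tfrac12 d(\operatorname{tr}_g 3g)=0$ holds on $\pi^{-1}(\Omega\setminus\{o\})$. For a torus-invariant tensor that is horizontal and trace-free, this identity reduces, via $dV_g=\rho\,dA_h\,d\phi_1d\phi_2$, to $\diver_h(\rho E|_{T\Omega})=0$. The vertical trace terms drop out because $E$ has no vertical part.

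So $T:=\rho E|_{T\Omega}$ is a trace-free, divergence-free symmetric 2-tensor on $(\Omega\setminus\{o\},h)$. In two dimensions this condition is conformally invariant. Therefore $T=\operatorname{Re}(\Theta)$, where $\Theta=\psi(\zeta)\,d\zeta^2$ and $\psi$ is holomorphic on $\mathbb D\setminus\{0\}$.

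\emph{Step 2: behaviour at the center.} Using the conformal expansion \eqref{eq:conformal-coefficients} and the decay \eqref{eq:central-domain}, we bound $\rho$, $d\ell$, $d\Phi$ and $\Hess_h\ell$ in the cone coordinates. With $\nu>1$ (Remark~\ref{rem:weight-restrictions}), this should give $|T|_h=O(R^{\nu-2})$. Translating to $\zeta\sim r$ with $r=(\beta R)^{1/\beta}$, we get $|\psi|\lesssim r^{-2}\cdot$(a power of order $<2$), so the singularity of $\psi$ at $0$ is at worst a pole of order less than $2$.

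By Lemma~\ref{lem:central-modes}, invariance under $\mathsf r$ forces $\Theta\circ\mathsf r=\Theta$. Since $\zeta\circ\mathsf r=e^{2\pi i/m}\zeta$, only the monomials $\zeta^n\,d\zeta^2$ with $n+2\in m\mathbb Z$ survive. A simple pole ($n=-1$) is therefore excluded, as is any $n\in\{-1,0,1\}$ not allowed by $m$. Hence $\psi$ extends holomorphically across $0$.

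\emph{Step 3: boundary behaviour.} This is the main obstacle. On a rod, in a straightened chart with the rod at $\{x=0\}$, the smoothness parities mean that $T(\partial_x,\partial_y)$ involves odd quantities. Meanwhile $T(\partial_x,\partial_x)-T(\partial_y,\partial_y)$ contains $\partial_x^2\ell-\partial_y^2\ell$ together with singular terms like $x^{-1}\partial_x\ell$ and the $\Phi$ terms.

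I expect the angle equalities \eqref{eq:angle-equalities} to be exactly what cancels the $O(1/x)$ pieces and makes $T$ bounded up to the rod. Parity then gives $T(\partial_x,\partial_y)=0$ on the rod, so $\operatorname{Im}(\psi\,d\zeta^2)$ restricted to the boundary vanishes, that is, $\Theta$ is real on $\partial\mathbb D$. The computation should follow Weinstein's \cite[Section~2.2]{Wis03}, using the regularized equations of Lemma~\ref{lem:regularized-equations}.

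At corners, the right-angle straightening $\zeta$ and the evenness give boundedness, with at most the corner singularities removable.

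Then $\zeta^2\psi(\zeta)$ is holomorphic on $\mathbb D$, continuous up to the boundary except at finitely many points where it is mildly singular, and real on $\partial\mathbb D$. Schwarz reflection makes it a bounded entire function, hence a real constant. Rotation invariance of $\Theta$ by $e^{2\pi i/m}$ rules out a nonzero constant, since $\zeta^{-2}d\zeta^2$ is invariant but $\zeta^{-2}$ would contradict holomorphy at $0$. So $T=0$ and $\Ric_g=3g$.

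\emph{Step 4: the Neumann condition.} Once $T=0$, the off-diagonal component $T(\partial_x,\partial_y)$ at the rod, expanded to first order in $x$, should reduce to a multiple of $\partial_x(2\bar a-\bar\ell+\bar s)$ plus terms that vanish by parity. Since $\bar\ell,\bar s$ are even, their normal derivatives vanish on the rod. This yields $\partial_x\bar a=0$, and hence $\partial_na=0$, because $c_\gamma$ is even across the rods.
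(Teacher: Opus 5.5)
The overall architecture matches the paper's. You show $T=\rho(\Ric_g-3g)|_{T\Omega}$ is trace-free and divergence-free, bound $|T|_{|d\zeta|^2}=O(|\zeta|^{\beta\nu-2})$ at $o$, impose a boundary condition on the arcs, and finish by reflection. Steps 1--2 and the reflection are fine; note that $\zeta^2\psi\to0$ at $0$ already forces the constant to vanish, so no symmetry argument is needed there.

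The gap is in the rod computations, which are the real content of the proposition. In Step 3, the vanishing of $T(\partial_x,\partial_y)$ on a rod is not a parity fact. In $\mathcal X$ the conformal variation $b$, hence $\bar a$, has no parity at all, only the Dirichlet data $2b=f-\xi_s$. Moreover, terms built from the even $\bar\ell,\bar s$ survive at $x=0$. With $\rho=xe^{\bar\ell}$, $d\ell=dx/x+d\bar\ell$, $ds=-dx/x+d\bar s$ and $(\Hess_hF)^\circ=(\Hess_0F-2\,d\bar a\,dF)^\circ$, the $x^{-2}$ terms cancel algebraically. The $x^{-1}$ cross terms, multiplied by $\rho$, then leave
\[
 T=-e^{\bar\ell}\bigl(dx\,d(\bar\ell-\bar s-2\bar a)\bigr)^\circ+O(x),\qquad
 T(\partial_x,\partial_y)|_{x=0}=e^{\bar\ell}\,\partial_y\bigl(\bar a-\tfrac12(\bar\ell-\bar s)\bigr)\big|_{x=0}.
\]
A term such as $\tfrac12e^{\bar\ell}\partial_y\bar\ell$ is even and generally nonzero. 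The combination vanishes only because it is the tangential derivative of the angle equality $2\bar a=\bar\ell-\bar s$ along the rod.

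Conversely, boundedness of $T$ up to the rod needs no angle condition; it comes from the weight $\rho$. So you have swapped the roles of parity and the angle condition, and the angle condition is precisely the boundary condition that kills the quadratic differential. At corners, likewise, the $O(1)$ bound on $|T|_{|d\zeta|^2}$ uses $d\widehat a(c)=0$, which comes from differentiating the angle equalities, not from evenness alone. That said, after reflection an $O(|\zeta-p_c|^{-1/2})$ singularity would still be removable.

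In Step 4 you use the wrong component. Expand the off-diagonal component to first order in $x$, using evenness of $\bar\ell,\bar s$ and $A_\Phi=O(x)$:
\[
 \partial_x\bigl[T(\partial_x,\partial_y)\bigr]\big|_{x=0}=\partial_y\bigl(e^{\bar\ell}\partial_x\bar a\bigr)\big|_{x=0}.
\]
This is forced by the Cauchy--Riemann relation between the two components of $T$. So $T=0$ would only tell you that $e^{\bar\ell}\partial_x\bar a$ is constant along each rod, and you would still need the corner limit $e^{\bar\ell}=ye^{\widehat\ell}\to0$ to conclude.

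The quantity $\partial_x(2\bar a-\bar\ell+\bar s)$ you are after appears instead in the diagonal component at order zero. The displayed expansion gives $T(\partial_x,\partial_x)|_{x=0}=e^{\bar\ell}\partial_x\bar a$. Hence $T=0$ gives $\partial_x\bar a=0$ directly, and $\partial_na=0$ follows because $c_\gamma$ is even across the rods.
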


\begin{proof}
Set $T:=\rho(\Ric_g-3g)|_{T\Omega}$.  Since \eqref{eq:matrix-reduced} is elliptic with smooth coefficients on $\Omega\setminus\{o\}$, interior Schauder estimates \cite[Theorem~6.2]{GT01} make $q$, hence $g$ and $T$, smooth there, and the identities \eqref{eq:residual-tensor} below hold in the classical sense in $\Omega\setminus\{o\}$.

\emph{Step 1: the equations for $T$.}  By \eqref{eq:matrix-ricci} and Proposition~\ref{prop:reduced} we have $\operatorname{tr}_hT=0$. Since the scalar curvature of $g$ is constant, the contracted Bianchi identity gives $\diver_g(\Ric_g-3g)=0$, and we compute
\[
 \diver_hT=\rho\,\bigl(\diver_g(\Ric_g-3g)\bigr)|_{T\Omega}=0.
\]
Together with \eqref{eq:matrix-ricci}, this gives
\begin{equation}\label{eq:residual-tensor}
 T=-\rho\bigl(\Hess_h\ell+\tfrac12d\ell^2+\tfrac12\Phi^*g_{\mathbb H^2}\bigr)^\circ,\qquad \operatorname{tr}_hT=0,\qquad\text{and}\qquad \diver_hT=0.
\end{equation}
It remains to prove $T=0$.

\emph{Step 2: behavior at the sides, the corners and $o$.} In dimension two, the equations $\operatorname{tr}_hT=0$ and $\diver_hT=0$ are conformally invariant by \cite[Section~3.3, Proposition~2]{MW17}. We shall apply $\operatorname{tr}T=0$ and $\diver T=0$ with the Euclidean metric $|d\zeta|^2$ of \eqref{eq:normalized-quotient}, and we write $p_c:=\zeta(c)\in\partial\mathbb D$ for the image of a corner $c$. 

At a rod, in the rod coordinates $(x,y)$ of \eqref{eq:matrix-smooth-side}, with the rod regularizations $\rho=xe^{\bar\ell}$, $s=-\log x+\bar s$, $\omega=\frac{\bar\omega}x$ and $h=e^{2\bar a}|dw|^2$, we have $d\ell=\frac{dx}x+d\bar\ell$, $ds=-\frac{dx}x+d\bar s$ and $\Hess_0\log x+\frac{1}{2}(\frac{dx}x)^2+\frac{1}{2}(\frac{dx}x)^2=0$, where the two squares come from $\frac{1}{2}d\ell^2$ and $\frac{1}{2}ds^2$. By \eqref{eq:connection-regularity}, we also have $A_\Phi=O(x)$.
Using $(\Hess_hF)^\circ=(\Hess_0F-2d\bar a\,dF)^\circ$ and $\Phi^*g_{\mathbb H^2}=ds^2+A_\Phi^2$ in \eqref{eq:residual-tensor}, the terms of order $x^{-2}$ cancel by the identity for $\Hess_0\log x$ above, and, since $\rho=xe^{\bar\ell}$, this leaves
\[
 T=-e^{\bar\ell}\bigl(dx\,d(\bar\ell-\bar s-2\bar a)\bigr)^\circ+O(x).
\]
Since $\partial_x\bar\ell=\partial_x\bar s=0$ on $\{x=0\}$, $T$ extends continuously to $\{x=0\}$, with
\begin{equation}\label{eq:residual-boundary}
 T(\partial_x,\partial_y)|_{x=0}=e^{\bar\ell}\,\partial_y\bigl(\bar a-\tfrac12(\bar\ell-\bar s)\bigr)|_{x=0}=0,\qquad\text{and}\qquad
 T(\partial_x,\partial_x)|_{x=0}=e^{\bar\ell}\,\partial_x\bar a|_{x=0}.
\end{equation}
The first equality uses the angle equality $2\bar a=\bar\ell-\bar s$ on $\{x=0\}$ of \eqref{eq:angle-equalities}, and, once $T=0$ is known, the second gives the missing Neumann condition.

At a corner, use $w=x+iy$, $r_c:=|w|$ and $h=e^{2\widehat a}|dw|^2$, let $\Hess_0$ denote the Hessian for $|dw|^2$, and put $L_+:=\log(xy)$ and $L_-:=\log(y/x)$. All terms of \eqref{eq:residual-tensor} involving only $L_\pm$ cancel as in the flat case by
\[
 \Hess_0L_++\tfrac12dL_+^2+\tfrac12dL_-^2=0.
\]
 The remaining terms of \eqref{eq:residual-tensor} contain bounded second derivatives, or one logarithmic derivative multiplied by a first derivative which is $O(r_c)$ uniformly up to both axes: evenness gives $d\widehat\ell(c)=d\widehat s(c)=0$, \eqref{eq:corner-jets} gives $d\widehat a(c)=0$, and \eqref{eq:connection-regularity} gives $|A_\Phi|_0=O(r_c)$. Since $\rho=O(r_c^2)$  and $\rho(|dL_+|_0+|dL_-|_0)=O(r_c)$, we obtain
\begin{equation}\label{eq:residual-corner}
 |T|_{|dw|^2}=O(r_c^2),\qquad\text{and}\qquad |T|_{|d\zeta|^2}=O(1).
\end{equation}
For the second bound, $\zeta-p_c=\psi_c(w^2)$ near $c$, with $\psi_c$ holomorphic, $\psi_c(0)=0$ and $\psi_c'(0)\neq0$, since $\zeta$ maps the right angle  at $c$ conformally onto a neighborhood of $p_c$ in $\overline{\mathbb D}$. So $|d\zeta|/|dw|$ is bounded above and below by positive  multiples of $r_c$.

Near $o$, $\zeta$ is a conformal coordinate with $d\zeta(o)\neq0$, so \eqref{eq:fixed-cone-identification} gives $R=|\zeta|^\beta e^{\kappa}$ near $o$, with $\kappa$ smooth. From now on, the subscript $0$ refers to the Euclidean metric $|d\zeta|^2$, not to $|dw|^2$. Write $h=e^{2a_\zeta}|d\zeta|^2$ near $o$, with $|da_\zeta|_0=O(|\zeta|^{-1})$ by \eqref{eq:conformal-coefficients}. By \eqref{eq:central-domain} and the algebraic coordinate \eqref{eq:central-matrix-coordinate}, the $\zeta$-derivatives of order $k=1,2$ of $\ell$ and $U$ are $O(|\zeta|^{\beta\nu-k})$. Hence we have
\begin{equation}\label{eq:residual-center}
 |T|_0\leqslant C\rho\,\bigl(|\partial^2\ell|_0+|da_\zeta|_0|d\ell|_0+|d\ell|_0^2+|d\Phi|_0^2\bigr)=O(|\zeta|^{-2+\beta\nu}).
\end{equation}

\emph{Step 3: two harmonic contractions.} In $\zeta=u+iv$, since $\operatorname{tr}_0T=0$ and $\diver_0T=0$, we have
\[
 T=A(du^2-dv^2)+2B\,du\,dv,\qquad \partial_uA+\partial_vB=0\qquad\text{and}\qquad \partial_uB-\partial_vA=0.
\]
With  $X_\zeta:=u\partial_u+v\partial_v$ and $Y_\zeta:=-v\partial_u+u\partial_v$,  the functions $P_\zeta:=T(X_\zeta,X_\zeta)$ and $Q_\zeta:=T(X_\zeta,Y_\zeta)$ satisfy $dP_\zeta=*dQ_\zeta$, with $*du=dv$, so they are conjugate harmonic functions on $\mathbb D\setminus\{0\}$. By \eqref{eq:residual-center}, $|P_\zeta|+|Q_\zeta|\leqslant2|\zeta|^2|T|_0=O(|\zeta|^{\beta\nu})$, so both extend harmonically across $0$, with $P_\zeta(0)=Q_\zeta(0)=0$, and by \eqref{eq:residual-corner} they are bounded near each $p_c$. On an open arc of $\partial\mathbb D\setminus\{p_c:c\in\mathcal C\}$, $X_\zeta$ and $Y_\zeta$ are the Euclidean unit normal and tangent, and $\zeta$ maps the normal and tangent directions of a rod chart to those of the arc. So $Q_\zeta$ is continuous up to the arc by Step~2, and the first identity of \eqref{eq:residual-boundary} gives $Q_\zeta=0$ there.

 So $Q_\zeta$ is  bounded and harmonic on $\mathbb D$  and vanishes on $\partial\mathbb D$ outside finitely many points, hence $Q_\zeta=0$. Then  $P_\zeta$ is constant, equal to $P_\zeta(0)=0$, and, as $X_\zeta$ and $Y_\zeta$  span away from $0$  and $T$ is trace-free, $T=0$ on  $\Omega\setminus\{o\}$.

Finally, since $T=0$, the second identity in \eqref{eq:residual-boundary} gives $\partial_x\bar a=0$ on $\{x=0\}$, and in a rod chart $\gamma_\beta=e^{2c_\gamma}(dx^2+dy^2)$, so $n=\pm e^{-c_\gamma}\partial_x$. By \eqref{eq:conformal-coefficients}, $\bar a=a+c_\gamma$, and $c_\gamma$ is even across $x=0$ because $\gamma_1$ is even across the rods (Section~\ref{sec:common-coordinates}). Hence $\partial_na=0$.
\end{proof}

\subsubsection{Smoothness at the sides and corners}

\begin{corollary}\label{cor:fixed-set-regularity}
Let $(\beta,q)\in J\times\mathcal U_0$ satisfy $\mathcal E(\beta,q)=0$. Then $g_{\beta,q}$ extends smoothly across $\pi^{-1}(\partial\Omega)$.

\end{corollary}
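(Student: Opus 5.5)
We need $g_{\beta,q}$ to be smooth across the rods and corners, given only $q\in\mathcal X$, i.e. $C^{2,\alpha}$ regularized coefficients with the parities of Section~\ref{sec:smoothness} and the angle equalities \eqref{eq:angle-equalities}. By Proposition~\ref{prop:recovery}, $g:=g_{\beta,q}$ satisfies $\Ric_g=3g$ on $\pi^{-1}(\Omega\setminus\{o\})$. The strategy is to show first that $g$ extends as a $C^{1,\alpha}$ (indeed $C^{2,\alpha}$-type) Riemannian metric across $\pi^{-1}(\partial\Omega)$, and then to upgrade to smoothness by elliptic regularity of the Einstein equation in harmonic coordinates.

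\emph{Step 1: a $C^{1,\alpha}$ extension.} In a corner chart, pass to the Cartesian coordinates \eqref{eq:cartesian}. Since $q\in\mathcal X$, Lemma~\ref{lem:regularized-chart} shows that $\widehat a,\widehat\ell,\widehat s$ are $C^{2,\alpha}$ and even and $\widehat\omega$ is $C^{2,\alpha}$ and odd in each variable, with the angle equalities \eqref{eq:angle-equalities} holding exactly. Lemma~\ref{lem:division-lifts} then says that even $C^{2,\alpha}$ functions of $x_a$ lift to $C^{2,\alpha}$ functions of $(\mathsf X_a,\mathsf Y_a)$, and that odd functions divided by $x_a$, times $\mathsf X_a$ or $\mathsf Y_a$, lift with a loss of at most one derivative. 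Write
\[
 e^{2\widehat a}(dx_a^2)+x_a^2\widehat G_{aa}d\phi_a^2=e^{2\widehat a}(d\mathsf X_a^2+d\mathsf Y_a^2)+(\widehat G_{aa}-e^{2\widehat a})(\mathsf X_a d\mathsf Y_a-\mathsf Y_a d\mathsf X_a)^2.
\]
The factor $\widehat G_{aa}-e^{2\widehat a}$ vanishes on $\{x_a=0\}$ and is even, so it is $x_a^2$ times a $C^{0,\alpha}$ even function. The same applies to the cross term $xy\widehat G_{12}\,d\phi_1d\phi_2$. Hence the components of $g$ in Cartesian coordinates are at least $C^{0,\alpha}$, and via the divided-difference bounds at least $C^{1,\alpha}$ after the $x_a^{-2}$ weights are absorbed. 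Rods are handled identically with one plane.

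\emph{Step 2: bootstrap.} The metric $g$ is then a $C^{1,\alpha}$ (or at least $W^{1,p}$ with $p>4$) metric on a neighbourhood of $\pi^{-1}(\partial\Omega)$, and it is Einstein away from the codimension-two set $\pi^{-1}(\partial\Omega)$. Because this set has codimension two and $g$ is $C^1$, the weak Einstein equation holds across it: integrate by parts against test functions supported in tubes of radius $\epsilon$, whose boundary terms are $O(\epsilon)$. In harmonic coordinates for $g$, which exist and are $C^{2,\alpha}$ for $C^{1,\alpha}$ metrics, $\Ric_g=3g$ reads $-\tfrac12 g^{kl}\partial_k\partial_l g_{ij}+Q(g,\partial g)=3g_{ij}$. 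Schauder bootstrapping (as in DeTurck--Kazdan) then gives $g\in C^\infty$.

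\emph{Main obstacle.} The hard step is Step 1: checking that the division lifts give enough regularity (at least $C^{1}$, with the weak equation valid) for the $x_a^{-2}$-weighted terms, especially at corners where two such terms interact. If the direct lift only gives $C^{0,\alpha}$, I would first improve the regularity of the regularized coefficients. To do this I would use the regularized equations of Lemma~\ref{lem:regularized-equations}, now with zero right-hand side, together with the extra Neumann condition $\partial_n a=0$ from Proposition~\ref{prop:recovery}. With that Neumann condition, $\widehat a$ becomes even, so all four coefficients satisfy the model equations $D_2,D_1,D_\perp,\Delta_0$ of Remark~\ref{rem:conformal-parity}. Lemma~\ref{lem:division-lifts} identifies these with Euclidean Laplacians on lifted ordinary functions, so Schauder estimates give $C^{k,\alpha}$ parity regularity for every $k$. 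This yields smoothness directly through the criteria \eqref{eq:matrix-smooth-side}--\eqref{eq:matrix-smooth-corner}.
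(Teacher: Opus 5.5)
\textbf{Step 1 rests on a false parity claim.} Lemma~\ref{lem:regularized-chart} gives $\widehat a-\widehat a_*=b\in C^{2,\alpha}$ with \emph{no} parity (Remark~\ref{rem:conformal-parity}). Only $\widehat\ell,\widehat s$ are even and $\widehat\omega$ odd.

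If $\partial_x\widehat a\neq0$ on $\{x=0\}$, then $\widehat a(|Z|,\cdot)$ has a kink at $Z=0$. Also, $\widehat G_{11}-e^{2\widehat a}$ then vanishes only to first order on the axis, so the Cartesian components of $g$ are merely Lipschitz. The real difficulty is therefore not the $x_a^{-2}$ weights but the parity of the conformal factor. The fix is the Neumann condition $\partial_na=0$ of Proposition~\ref{prop:recovery}, which the paper uses at the very start. It gives $\partial_x\widehat a=0$ on $\{x=0\}$ and $\partial_y\widehat a=0$ on $\{y=0\}$. Hence $\partial_x\partial_y\widehat a$ vanishes on both axes, and the even extension of $\widehat a$ is $C^{2,\alpha}$. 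Once this holds, $\widehat G_{aa}-e^{2\widehat a}$ vanishes to second order, and the estimates of Lemma~\ref{lem:division-lifts} handle the weighted terms.

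\textbf{Step 2 proves smoothness in the wrong coordinates.} Even after this repair, Step 2 only shows that $g$ is smooth in $g$-harmonic coordinates. These are a priori just $C^{2,\alpha}$ functions of the Cartesian coordinates \eqref{eq:cartesian}. So you have not shown that $g$ is smooth in the given smooth structure of $M$, nor that $\widehat\ell,\widehat s,\widehat\omega,\widehat a$ are smooth with their parities in the fixed charts. That is what the statement means, and it is used later, e.g.\ smoothness of $\widehat\omega/(xy)$ for the background in Lemma~\ref{lem:lifted-target}. Identifying the two smooth structures needs an extra argument you do not give.

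\textbf{The fallback needs the triangular order.} Your fallback is essentially the paper's proof. However, ``Schauder estimates give $C^{k,\alpha}$ parity regularity for every $k$'' hides exactly where a naive bootstrap stalls. The couplings in \eqref{cmp:nonlinear-first-residuals} are not lower order. The terms $x^{-1}\partial_x\widehat\ell$ in the $\widehat s$-equation, and $x^{-1}\partial_x(\widehat\ell+\widehat s)$, $y^{-1}\partial_y(\widehat\ell-\widehat s)$ in the $\widehat a$-equation, cost two derivatives by \eqref{eq:divided-even-norm}. So if all four coefficients are only known in $C^{k,\alpha}$, those equations return $C^{k,\alpha}$ and nothing is gained. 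The paper uses the triangular structure, in this order:
\begin{enumerate}
\item $D_2\widehat\ell=-|d\widehat\ell|_0^2-6e^{2\widehat a}\in C^{k-1,\alpha}_{\mathrm{ev}}$ contains no divided derivative, and $\mathcal L_2$ gives $\widehat\ell\in C^{k+1,\alpha}_{\mathrm{ev}}$.
\item The divided derivatives of $\widehat\ell$ are now in $C^{k-1,\alpha}$, and $\mathcal L_1$ gives $\widehat s\in C^{k+1,\alpha}_{\mathrm{ev}}$.
\item By \eqref{eq:connection-regularity}, $D_\perp\widehat\omega\in C^{k-1,\alpha}_{\mathrm{odd}}$, which gives $\widehat\omega\in C^{k+1,\alpha}_{\mathrm{odd}}$.
\item Only now is the right-hand side of the $\widehat a$-equation in $C^{k-1,\alpha}_{\mathrm{ev}}$, and even reflection gives $\widehat a\in C^{k+1,\alpha}_{\mathrm{ev}}$.
\end{enumerate}
With this ordering, and the Neumann base case above, your fallback becomes a complete proof. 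The harmonic-coordinate detour is then unnecessary.
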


\begin{proof}
We treat a corner. By Proposition~\ref{prop:recovery}, $\partial_x\widehat a=0$ on $\{x=0\}$ and $\partial_y\widehat a=0$ on $\{y=0\}$, so the even extension of $\widehat a$ lies in $C^{2,\alpha}$: its second derivatives are the even extensions of $\partial_x^2\widehat a$ and $\partial_y^2\widehat a$ and the odd extension in each variable of $\partial_x\partial_y\widehat a$, which vanishes on both axes since $\partial_x\widehat a$ and $\partial_y\widehat a$ do. All further estimates are interior Schauder estimates \cite[Theorem~6.2]{GT01} for \eqref{cmp:nonlinear-first-residuals} with zero left-hand sides, after the lifts of Lemma~\ref{lem:division-lifts}(b) or even reflection.

We prove that the even extensions of $\widehat\ell,\widehat s,\widehat a$ and the odd extension of $\widehat\omega$ are smooth by induction on $k$. Assume $\widehat\ell,\widehat s,\widehat a\in C^{k,\alpha}_{\mathrm{ev}}$ and $\widehat\omega\in C^{k,\alpha}_{\mathrm{odd}}$, initially with $k=2$, so that $A_\Phi\in C^{k-1,\alpha}$ by \eqref{eq:connection-regularity}. We improve the regularities of the components in this order:
\begin{enumerate}[label=\textup{(\arabic*)}]
\item $D_2\widehat\ell=-|d\widehat\ell|_0^2-6e^{2\widehat a}\in C^{k-1,\alpha}_{\mathrm{ev}}$, and the lift $\mathcal L_2$ gives $\widehat\ell\in C^{k+1,\alpha}_{\mathrm{ev}}$,
\item by (1) and Lemma~\ref{lem:division-lifts}(a), the divided derivatives $x^{-1}\partial_x\widehat\ell$ and $y^{-1}\partial_y\widehat\ell$ in the $D_1$-equation lie in $C^{k-1,\alpha}$, and the lift $\mathcal L_1$ gives $\widehat s\in C^{k+1,\alpha}_{\mathrm{ev}}$,
\item $D_\perp\widehat\omega=-e^{\widehat s}\langle d\widehat\ell-2d\widehat s,A_\Phi\rangle_0\in C^{k-1,\alpha}_{\mathrm{odd}}$, by \eqref{eq:connection-regularity}, since $\Delta\mathcal L_\perp\widehat\omega=\mathcal L_\perp(D_\perp\widehat\omega)$, the angular lift gives $\widehat\omega\in C^{k+1,\alpha}_{\mathrm{odd}}$, and
\item the right-hand side of the equation for $\widehat a$ now lies in $C^{k-1,\alpha}_{\mathrm{ev}}$, and even reflection gives $\widehat a\in C^{k+1,\alpha}_{\mathrm{ev}}$.
\end{enumerate}
 By the regularized form of the smoothness criterion at the end of Section~\ref{sec:smoothness}, these parities and the angle equalities \eqref{eq:angle-equalities}, which hold by  Lemma~\ref{lem:regularized-chart}, give the smooth extension of $g$. 

\end{proof}

\subsection{Central regularity}\label{subsec:central-regularity}

We use the following weighted Schauder estimate  from \cite[Section~4.3, Proposition~11]{MW17}.

\begin{lemma}\label{lem:cone-Schauder}
Equip the punctured disk $\{0<R<R_0,\ \theta\in\mathbb R/(2\pi\mathbb Z)\}$ with the reference metric $\gamma_\beta=\frac{1}{3}(dR^2+\beta^2R^2d\theta^2)$ of \eqref{eq:fixed-cone-identification}. Let $u$ be bounded, and suppose
\[
 \Delta_{\gamma_\beta}u=\mathfrak f
\]
weakly on the punctured disk. Assume that $u(R,\theta+\pi)=u(R,\theta)$. For $0<\delta<\frac{2}{\beta}$, if $\mathfrak f\in R^{\delta-2}C^{0,\alpha}_{\mathrm b}$, then $u$ has a limit $u_o$ at the vertex and
\begin{equation}\label{eq:cone-schauder}
 |u_o|+\|u-u_o\|_{R^\delta C^{2,\alpha}_{\mathrm b}(R<\frac{1}{2}R_0)}\leqslant C\bigl(\|u\|_\infty+\|\mathfrak f\|_{R^{\delta-2}C^{0,\alpha}_{\mathrm b}(R<R_0)}\bigr),
\end{equation}
where $C$ depends only on $\alpha$, $R_0$, upper and positive lower bounds for $\beta$, a positive lower bound for $\delta$, and a positive lower bound for $\frac{2}{\beta}-\delta$. The statement also applies componentwise to vector-valued functions. Moreover, for every integer $j\geqslant0$ and every $\delta\in\mathbb R$, if $\Delta_{\gamma_\beta}u=\mathfrak f$ on $\{0<R<R_0\}$, then we have
\begin{equation}\label{eq:annular-estimate}
 \|u\|_{R^\delta C^{j+2,\alpha}_{\mathrm b}(R<\frac{1}{2}R_0)}\leqslant C_j\bigl(\|u\|_{R^\delta L^\infty(R<R_0)}+\|\mathfrak f\|_{R^{\delta-2}C^{j,\alpha}_{\mathrm b}(R<R_0)}\bigr),
\end{equation}
where $\|u\|_{R^\delta L^\infty}:=\sup R^{-\delta}|u|$ and $C_j$ depends only on $j$, $\alpha$, $R_0$, bounds for $\beta^{\pm1}$ and an upper bound for $|\delta|$. The same holds for $\Delta_{\gamma_\beta}+C_1^a\partial_a+C_0$ if $RC_1$ and $R^2C_0$ are bounded in $C^{j,\alpha}_{\mathrm b}$, with $C_j$ depending also on the bounds for $RC_1$ and $R^2C_0$.
\end{lemma}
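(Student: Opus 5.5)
The approach is to reduce Lemma~\ref{lem:cone-Schauder} to the standard b-calculus and scaling arguments of \cite[Section~4]{MW17}, handling the annular estimate \eqref{eq:annular-estimate} first and then the sharper statement \eqref{eq:cone-schauder}.

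\emph{Annular estimate.} For \eqref{eq:annular-estimate}, we rescale. For $0<\lambda<R_0/4$, set $u_\lambda(R',\theta):=u(\lambda R',\theta)$ on the annulus $\{1/2<R'<4\}$. Then $\lambda^2\Delta_{\gamma_\beta}u$ at $R=\lambda R'$ equals $\Delta_{\gamma_\beta}u_\lambda$ in the variable $R'$. The operator $\Delta_{\gamma_\beta}=3(\partial_R^2+R^{-1}\partial_R+\beta^{-2}R^{-2}\partial_\theta^2)$ has coefficients on this annulus that are bounded in every $C^j$ norm and uniformly elliptic, with constants controlled by $\beta^{\pm1}$. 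Interior Schauder estimates \cite[Theorem~6.2]{GT01} on $A_o\subset\{1/2<R'<4\}$ therefore give
\[
\|u_\lambda\|_{C^{j+2,\alpha}(A_o)}\leqslant C\bigl(\|u_\lambda\|_{L^\infty}+\lambda^2\|\mathfrak f(\lambda\,\cdot)\|_{C^{j,\alpha}}\bigr).
\]
Multiplying by $\lambda^{-\delta}$ and taking the supremum over $\lambda$ yields \eqref{eq:annular-estimate}. The lower-order perturbation is absorbed in the same way: after rescaling, the terms $C_1\partial$ and $C_0$ become $\lambda C_1$ and $\lambda^2C_0$, which are bounded in $C^{j,\alpha}$ exactly when $RC_1$ and $R^2C_0$ are.

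\emph{Estimate at the vertex.} The main work is \eqref{eq:cone-schauder}, and the key point is to identify the limit $u_o$ and show that $u-u_o=O(R^\delta)$ in sup norm. The $C^{2,\alpha}_{\mathrm b}$ bound then follows from \eqref{eq:annular-estimate} applied to $u-u_o$.

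First, weak solutions that are bounded near the vertex are genuine. The punctured point has zero capacity, so $u$ solves the equation weakly across $o$, which excludes the modes $\log R$ and $R^{-|j|/\beta}$, as in Remark~\ref{rem:friedrichs}.

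Next, expand in Fourier modes $e^{ij\theta}$. The $\pi$-periodicity gives $j\in2\mathbb Z$, and each mode solves an ODE whose indicial roots are given by \eqref{eq:central-indicial-equation}. For $j=0$, solve by variation of parameters,
\[
u_0(R)=u_0(R_1)+\int_R^{R_1}\frac{1}{3s}\int_0^{s}t\,\mathfrak f_0(t)\,dt\,ds+c\log R,
\]
with $c=0$ by boundedness. Since $|\mathfrak f_0|\lesssim t^{\delta-2}$ and $\delta>0$, the limit $u_o$ exists and $|u_0-u_o|\lesssim R^\delta$. For $|j|\geqslant2$, the indicial roots $\pm|j|/\beta$ satisfy $|j|/\beta\geqslant2/\beta>\delta$. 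The bounded solution is the Green representation with kernel built from $R^{|j|/\beta}$ and $R^{-|j|/\beta}$, which gives $|u_j|\lesssim R^\delta\|\mathfrak f\|/(|j|/\beta-\delta)\cdot\mathrm{poly}(j)^{-1}$ plus a term $c_jR^{|j|/\beta}$ controlled by $\|u\|_\infty$.

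Summing over modes requires decay in $j$. To get it, I would use the $L^2$ version of these bounds, then pass to sup norms using the rescaled annular estimate, instead of summing pointwise. This gives $\sup R^{-\delta}|u-u_o|\leqslant C(\|u\|_\infty+\|\mathfrak f\|)$, with $C$ blowing up only as $\delta\to0$ or $\delta\to2/\beta$.

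\emph{Main obstacle.} The hardest step is the uniformity of constants in $\beta$ and $\delta$ while controlling the infinite sum over Fourier modes. I expect to handle it by a contradiction/blow-up argument. Suppose $\sup R^{-\delta}|u-u_o|$ cannot be bounded. Normalize so that this quantity equals $1$ and is nearly attained at radii $R_k$, rescale by $R_k$, and extract a limit, which is a harmonic function on the exact cone of angle $2\pi\beta_\infty$. This limit is $\pi$-periodic, vanishes at the vertex, and grows at most like $R^\delta$ with $0<\delta<2/\beta_\infty$. Liouville on the cone, via the mode decomposition, then forces it to vanish, contradicting the normalization.

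The vector-valued case follows componentwise.
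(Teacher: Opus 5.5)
Your proposal is correct and follows the paper's route. The annular estimate is the same rescaled interior Schauder argument, and your mode-by-mode ODE analysis on the exact cone is an explicit version of the scalar expansion that the paper imports from \cite[Section~4.3, Proposition~11]{MW17}, after which both apply the rescaled estimate to $u-u_o$.

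The blow-up step is not needed. With $\mu_j=|j|/\beta$, the particular parts are bounded by $R^\delta\|\mathfrak f\|_{R^{\delta-2}L^\infty}/(3(\mu_j^2-\delta^2))$ and the homogeneous parts by $C(\|u\|_\infty+\|\mathfrak f\|)(R/R_1)^{\mu_j}$. Both sum directly over $j\in2\mathbb Z\setminus\{0\}$ with the stated dependence of the constants.

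There are two small slips. The integral term in your $j=0$ formula has the wrong sign. Also, what excludes $\log R$ and $R^{-\mu_j}$ is boundedness of each mode, not a capacity argument, which would require $u\in H^1$ near $o$.
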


\begin{proof}
This is the scalar expansion in \cite[Section~4.3, Proposition~11]{MW17}, applied to the exact reference cone (their $\beta$ is our $\beta-1$). Schauder estimates on rescaled annuli give \eqref{eq:cone-schauder}. Similarly, \eqref{eq:annular-estimate} is the interior Schauder estimate on two nested fixed annuli for $u_\lambda(R',\theta):=u(\lambda R',\theta)$, which satisfies $\Delta_{\gamma_\beta}u_\lambda=\lambda^2\mathfrak f_\lambda$. See also \cite[Section~4]{MW17}.
\end{proof}

\emph{Setting.} Let $I\subset(0,1)$ be a compact interval, put $\beta_I:=\max I$, fix $\varepsilon$ in the interval $\bigl(0,\min\{2,\frac{2}{\beta_I}-2\}\bigr)$, and let $\beta\in I$ and $M>0$. On the punctured disk $\{0<R<R_0\}$, let $\rho>0$ and $a$ be $\mathcal G$-invariant functions, and let $\Phi$ be a $\mathcal G$-equivariant map to $\mathbb H^2$. Put $h:=e^{2a}\gamma_\beta$, $g:=h+\rho\Phi_{ij}\,d\phi_i\,d\phi_j$ and $U:=\Pi_{\Phi_o}\Phi$, and assume that
\begin{enumerate}[label=\textup{(H\arabic*)}]
\item the reduced equations \eqref{eq:matrix-reduced}, in the form \eqref{eq:central-matrix-system} below, hold in the sense of distributions on $\{0<R<R_0\}$, and
\item we have the bound
\begin{equation}\label{eq:central-quantitative-input}
 \|a\|_{L^\infty}+\|\rho\|_{L^\infty}+\|\rho^{-1}\|_{L^\infty}+\|d\rho\|_{L^\infty(\gamma_\beta)}+\|d\Phi\|_{L^\infty(\gamma_\beta)}\leqslant M.
\end{equation}
\end{enumerate}

\Needspace{12\baselineskip}
\begin{lemma}\label{lem:center-expansion}

Assume (H1) and (H2). Then, on $\{0<R<\frac{1}{2}R_0\}$, we have
\begin{equation}\label{eq:center-refined}
\begin{gathered}
 \rho=\rho(o)-\tfrac12e^{2a(o)}\rho(o)R^2+O_{\mathrm b}(R^{2+\varepsilon}),\\
 a-a(o)=O_{\mathrm b}(R^{2+\varepsilon}),\qquad\text{and}\qquad U=O_{\mathrm b}(R^{2+\varepsilon}),
\end{gathered}
\end{equation}
where, for every $j\geqslant0$, the three remainders are bounded in $R^{2+\varepsilon}C^{j,\alpha}_{\mathrm b}$ by constants depending only on $j$, $I$, $\varepsilon$, $M$, $R_0$ and $\alpha$. Consequently, with the flat product $g^0:=e^{2a(o)}\gamma_\beta+G_{ij}(o)\,d\phi_i\,d\phi_j$ and a constant $C$ depending only on $I$, $\varepsilon$, $M$, $R_0$ and $\alpha$, we have
\begin{equation}\label{eq:center-regularity}
 g-g^0=O_{\mathrm b}(R^2),\qquad C^{-1}g^0\leqslant g\leqslant Cg^0,\qquad |\Rm(g)|_g\leqslant C,\qquad\text{and}\qquad K_h=O_{\mathrm b}(R^2).
\end{equation}
\end{lemma}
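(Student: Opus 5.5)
We bootstrap on the three reduced equations in the form \eqref{eq:matrix-reduced}, written with the reference cone metric $\gamma_\beta$, using Lemma~\ref{lem:cone-Schauder} and the Fourier restrictions of Lemma~\ref{lem:central-modes}.

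\emph{Initial regularity.} By (H2), $\rho$, $\rho^{-1}$ and $a$ are bounded and $d\ell$, $d\Phi$ are $\gamma_\beta$-bounded. Hence the right-hand sides
\[
 \Delta_{\gamma_\beta}\rho=-6e^{2a}\rho,\qquad
 \Delta_{\gamma_\beta}U=\Pi_{\Phi_o}\bigl(|d\Phi|^2_{\gamma_\beta}\Phi-d\Phi(\nabla\ell)\bigr),\qquad
 \Delta_{\gamma_\beta}a=K_{\gamma_\beta}-\tfrac14(|d\Phi|^2-|d\ell|^2)
\]
are bounded, with $K_{\gamma_\beta}=0$ on the exact cone. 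So they lie in $R^{\delta-2}L^\infty$ with $\delta=2$. Bounded $u$ with $L^\infty$ forcing should first give $u-u_o\in R^{\delta}C^{1,\alpha}_{\mathrm b}$ for any $\delta<\min(2,2/\beta)$; here we need a preliminary $L^\infty$-to-Hölder step on rescaled annuli. This gives Hölder control of the forcing in $R^{\delta-2}C^{0,\alpha}_{\mathrm b}$. The first estimate \eqref{eq:cone-schauder} then gives limits $\rho(o)$, $a(o)$, $U(o)$ and $u-u_o\in R^{\delta}C^{2,\alpha}_{\mathrm b}$. The $\pi$-periodicity hypothesis holds by $\mathcal G$-invariance. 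For $U$, Lemma~\ref{lem:central-modes} kills the zero mode, so $U(o)=0$. Iterating \eqref{eq:annular-estimate} upgrades all of this to every $C^{j,\alpha}_{\mathrm b}$.

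\emph{Improvement to order $2+\varepsilon$.} Once $d\rho,da,dU=O_{\mathrm b}(R^{\delta-1})$, the quadratic terms $|d\Phi|^2$, $|d\ell|^2$ and $d\Phi(\nabla\ell)$ are $O_{\mathrm b}(R^{2\delta-2})$. For $\delta$ close to $\min(2,2/\beta)$, this is $O_{\mathrm b}(R^{\varepsilon})$ for our choice of $\varepsilon$ below $\min\{2,2/\beta_I-2\}$. The equation for $\rho$ has forcing $-6e^{2a(o)}\rho(o)+O_{\mathrm b}(R^{\delta})$. We subtract the explicit radial solution of $\Delta_{\gamma_\beta}(R^2)=12$ from \eqref{eq:central-indicial-equation}, which gives the coefficient $-\tfrac12e^{2a(o)}\rho(o)R^2$.

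For the remainders, apply \eqref{eq:cone-schauder} with weight $2+\varepsilon$. This is allowed since $2+\varepsilon<2/\beta$ for $\beta\in I$. The index-$0$ modes of $a$ and $\rho$ could a priori contain $R^2$-type particular solutions, but their forcings have constant part only in the $\rho$ equation. For $a$ the forcing is $O_{\mathrm b}(R^{\varepsilon})$, and the constant part of the forcing of $U$ vanishes by Lemma~\ref{lem:central-modes}. Each step of this bootstrap raises the weight by a fixed positive amount, so finitely many steps reach $2+\varepsilon$. All constants depend only on $M$, $I$, $\varepsilon$, $R_0$ and $\alpha$, which gives \eqref{eq:center-refined}.

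\emph{Consequences.} From \eqref{eq:center-refined}, $G-G(o)=O_{\mathrm b}(R^2)$ and $e^{2a}-e^{2a(o)}=O_{\mathrm b}(R^{2})$, which give $g-g^0=O_{\mathrm b}(R^2)$ and the two-sided comparison. The curvature $K_h=e^{-2a}(-\Delta_{\gamma_\beta}a)$ is $O_{\mathrm b}(R^{2\delta-2})$, and feeding back the improved expansions makes it $O_{\mathrm b}(R^2)$. For $|\Rm(g)|$, use the coframe $(dR,R\,d\theta,d\phi_i)$. The flat cone $g^0$ has zero curvature, and second derivatives of $O_{\mathrm b}(R^2)$ perturbations measured in $g^0$ are $O(R^{2}\cdot R^{-2})=O(1)$. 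Alternatively, use Proposition~\ref{prop:Ricci}-type formulas together with the bounds on $\Hess\ell$ and $d\Phi$.

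\emph{Main obstacle.} The main obstacle is the first step: passing from mere $L^\infty$ bounds with distributional equations to Hölder-weighted forcing, and verifying that the $\log R$ and negative-power modes are excluded by boundedness. I would handle this via the Friedrichs/boundedness argument of Remark~\ref{rem:friedrichs} on each Fourier mode, then rescaled-annulus Schauder estimates.
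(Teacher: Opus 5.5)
Your proposal is correct, and its architecture matches the paper's proof. You apply Lemma~\ref{lem:cone-Schauder} at weight $2$ using the mode restrictions of Lemma~\ref{lem:central-modes}. You subtract the radial solution $-\tfrac12e^{2a(o)}\rho(o)R^2$ from the $\rho$-equation and rerun the estimate at weight $2+\varepsilon<\min\{4,2/\beta\}$. You iterate \eqref{eq:annular-estimate}, then read off the curvature bound from $|\nabla^j_{g^0}(g-g^0)|_{g^0}\leqslant CR^{2-j}$. One pass at weight $2+\varepsilon$ already suffices, so no multi-step bootstrap is needed.

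The real difference is the step you call the main obstacle. You plan a Fourier-mode analysis with $L^\infty$ forcing, using boundedness to exclude $\log R$ and $R^{-|j|/\beta}$, to get $R^\delta$ decay of $u-u_o$ before any rescaled estimate. This can be carried out, but it takes work: sum the mode-by-mode Green's function bounds, for instance in $L^2(d\theta)$ on circles, then use local estimates on annuli to get back to $L^\infty$.

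The paper avoids this altogether. Integrating the Lipschitz bounds in \eqref{eq:central-quantitative-input} along radial segments gives $|\rho-\rho(o)|\leqslant MR/\sqrt3$ and $|U|_o=\sinh d_{\mathbb H^2}(\Phi,\Phi_o)\leqslant CR$ directly, and $a$ is bounded by hypothesis. Rescaled $W^{2,p}$ estimates on annuli then give $\rho-\rho(o),U\in RC^{1,\alpha}_{\mathrm b}$ and $a\in C^{1,\alpha}_{\mathrm b}$. So all three forcings lie in $C^{0,\alpha}_{\mathrm b}$, and Lemma~\ref{lem:cone-Schauder} applies immediately at weight exactly $2$, with no need for $\delta<2$. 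That lemma also supplies the limit $a(o)$, so no separate mode argument is required. Your route buys nothing extra here; it is simply more labor for the same conclusion.

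One small point needs fixing. $g-g^0=O_{\mathrm b}(R^2)$ gives $C^{-1}g^0\leqslant g\leqslant Cg^0$ only for small $R$. On all of $\{R<\tfrac12R_0\}$, take the comparison from (H2) directly: $|a-a(o)|\leqslant2M$, $M^{-2}\leqslant\rho/\rho(o)\leqslant M^2$, and $e^{-d}\Phi_o\leqslant\Phi\leqslant e^{d}\Phi_o$ with $d=d_{\mathbb H^2}(\Phi,\Phi_o)\leqslant MR_0/\sqrt3$.
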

\begin{remark}
Every metric $g_{\beta,q}$ with $\mathcal E(\beta,q)=0$ satisfies (H1) and (H2) near $o$, by \eqref{eq:central-domain}, since $\nu>1$.
\end{remark}

\begin{proof}
By \eqref{eq:central-quantitative-input}, $\rho$ and $\Phi$ are Lipschitz for $\gamma_\beta$, so they have limits $\rho(o)\geqslant M^{-1}$ and $\Phi(o)$ at $o$, and $\Phi(o)=\Phi_o$ by the symmetries, as in \eqref{eq:central-torus-metric}. Hence $\rho$, $a$ and $U$ are bounded, and Lemma~\ref{lem:central-modes} and Lemma~\ref{lem:cone-Schauder} apply to $\rho$, $a$ and $U$.

Applying the constant projection $\Pi_{\Phi_o}$ to the second equation of \eqref{eq:matrix-reduced}, with $\tau_\gamma(\Phi)$ given by \eqref{eq:matrix-tension}, gives the central system:
\begin{equation}\label{eq:central-matrix-system}
\begin{aligned}
 \Delta_\gamma\rho&=-6e^{2a}\rho,\\
 \Delta_\gamma U&=-\langle d\ell,dU\rangle_\gamma+|d\Phi|_\gamma^2U,\\
 \Delta_\gamma a&=\tfrac14\bigl(|d\ell|_\gamma^2-|d\Phi|_\gamma^2\bigr),\qquad\text{with }\gamma=\gamma_\beta.
\end{aligned}
\end{equation}

\emph{Step 1: first-order bounds.} Integrating \eqref{eq:central-quantitative-input} along radial segments yields
\[
\begin{gathered}
 |\rho-\rho(o)|\leqslant MR/\sqrt3,\qquad d_{\mathbb H^2}(\Phi,\Phi_o)\leqslant MR/\sqrt3,\\
 |U|_o=\sinh d_{\mathbb H^2}(\Phi,\Phi_o)\leqslant C(M,R_0)R,\qquad\text{and}\qquad |dU|_\gamma\leqslant C(M,R_0)|d\Phi|_\gamma.
\end{gathered}
\]
Thus the right-hand sides of \eqref{eq:central-matrix-system} are bounded in $L^\infty$ by \eqref{eq:central-quantitative-input}. Interior $C^{1,\alpha}$ regularity and estimates for weak solutions with bounded right-hand sides, from the $W^{2,p}$ estimate \cite[Theorem~9.11]{GT01} with $p>\frac{2}{1-\alpha}$ and the Sobolev embedding, on rescaled annuli, turn these $L^\infty$ bounds into the conical bounds
\[
 \|\rho-\rho(o)\|_{RC^{1,\alpha}_{\mathrm b}}+\|U\|_{RC^{1,\alpha}_{\mathrm b}}+\|a\|_{C^{1,\alpha}_{\mathrm b}}\leqslant C(I,M,R_0,\alpha)
\]
on $\{R<\frac{3}{4}R_0\}$. Hence the three right-hand sides of \eqref{eq:central-matrix-system} are bounded in $C^{0,\alpha}_{\mathrm b}$.

\emph{Step 2: weight $2$.} By Lemma~\ref{lem:central-modes}, $\rho$, $a$ and $U$ are $\pi$-periodic in $\theta$,  every allowed nonconstant harmonic degree is at least $\frac{2}{\beta}>2$, and $U$ has no constant mode. Lemma~\ref{lem:cone-Schauder} applies at weight $2$, with the uniform gap $\frac{2}{\beta}-2\geqslant\frac{2}{\beta_I}-2>0$, and gives $\rho-\rho(o),\ a-a(o),\ U\in R^2C^{2,\alpha}_{\mathrm b}$, with bounds depending only on $I$, $M$, $R_0$ and $\alpha$.

\emph{Step 3: removing the quadratic term.} Only the $\rho$-equation has nonzero forcing at the center, so we subtract its radial quadratic solution before improving the weight. Since $\Delta_{\gamma_\beta}R^2=12$, the first equation of \eqref{eq:central-matrix-system} gives
\[
 \Delta_{\gamma_\beta}\bigl(\rho-\rho(o)+\tfrac12e^{2a(o)}\rho(o)R^2\bigr)=-6\bigl(e^{2a}\rho-e^{2a(o)}\rho(o)\bigr).
\]
By Step~2 and the product estimates \eqref{eq:weighted-product}, this right-hand side and those of the equations of \eqref{eq:central-matrix-system} for $a-a(o)$ and $U$ lie in $R^2C^{0,\alpha}_{\mathrm b}$. So \eqref{eq:cone-schauder} at weight $2+\varepsilon$, allowed since $2+\varepsilon<\min\{4,\frac{2}{\beta}\}$, puts the three remainders of \eqref{eq:center-refined} in $R^{2+\varepsilon}C^{2,\alpha}_{\mathrm b}$, and iterating \eqref{eq:annular-estimate} at weight $2+\varepsilon$ on \eqref{eq:central-matrix-system} puts them in $R^{2+\varepsilon}C^{j,\alpha}_{\mathrm b}$ for every $j$, which is \eqref{eq:center-refined}.

Since $\frac{2}{\beta}-(2+\varepsilon)\geqslant\frac{2}{\beta_I}-2-\varepsilon>0$ for $\beta\in I$, the constants of Lemma~\ref{lem:cone-Schauder} depend only on $I$ and $\varepsilon$, which gives the bounds on the remainders. Next, \eqref{eq:center-regularity} follows from \eqref{eq:center-refined} and these bounds. Indeed, $R^{2+\varepsilon}\leqslant R_0^{\varepsilon}R^2$ gives $g-g^0=O_{\mathrm b}(R^2)$, with bounds depending only on $I$, $M$, $R_0$ and $\alpha$. As the vector fields $R\partial_R$ and $\partial_\theta$ have $g^0$-length comparable to $R$, this means
\[
 |\nabla_{g^0}^j(g-g^0)|_{g^0}\leqslant CR^{2-j}\qquad\text{for }j=0,1,2,
\]
and, with $\Rm(g^0)=0$ and the comparison of $g$ with $g^0$ below, these bounds give $|\Rm(g)|_g\leqslant C$. The comparison of $g$ with $g^0$ follows from \eqref{eq:central-quantitative-input}, which gives $|a-a(o)|\leqslant2M$, $M^{-2}\leqslant\rho/\rho(o)\leqslant M^2$ and $e^{-d}\Phi_o\leqslant\Phi\leqslant e^d\Phi_o$ for $d:=d_{\mathbb H^2}(\Phi,\Phi_o)\leqslant MR_0/\sqrt3$. Finally, $4K_h=|d\Phi|_h^2-|d\log\rho|_h^2$ by Proposition~\ref{prop:reduced}, and \eqref{eq:center-refined} gives $|d\Phi|_h^2=O_{\mathrm b}(R^{2+2\varepsilon})$ and $|d\log\rho|_h^2=O_{\mathrm b}(R^2)$.
\end{proof}

\section{Invertibility and local continuation}
\label{sec:kernel}

Throughout this section,
$g_*$ is as in Section~\ref{sec:spaces} with $0<\beta_*<1$, or $g_*=g_1$ with $\beta_*=1$, and we
linearize at $(\beta,q)=(\beta_*,0)$.
To simplify notation, we drop the stars: $\beta,\rho,\Phi,a$
are those of $g_*$. We first show that the linearization $L$ of \eqref{eq:interior-linear} has no kernel, by integration by parts (Section~\ref{subsec:kernel}). We then show that it is Fredholm of index zero, by comparison with an operator $P$, defined in \eqref{cmp:comparison-operator}, whose three equations are solved successively (Section~\ref{subsec:comparison}). We use the following notation throughout the section:
\begin{equation}\label{cmp:background}
 h:=e^{2a}\gamma,\qquad\text{and}\qquad \gamma:=\gamma_\beta.
\end{equation}

\subsection{The linearization}\label{subsec:linearization}

Here $\Phi:\Omega\to\mathbb H^2$ is the matrix-valued map of the fixed Einstein metric $g_*$, and the variations $\xi$ are sections of $\Phi^*T\mathbb H^2$. Let $\nabla$ denote the pullback connection and set
\begin{equation}\label{eq:weighted-Jacobi}
 L_\Phi\xi
 :=\operatorname{tr}_\gamma\nabla^2\xi+\nabla_{\nabla_\gamma\ell}\xi
       +\sum_j R^{\mathbb H^2}(\xi,d\Phi(e_j))d\Phi(e_j),
\end{equation}
where $(e_j)$ is $\gamma$-orthonormal and
$R^{\mathbb H^2}(\xi,V)V=-|V|^2\xi+\langle\xi,V\rangle V$.
This is the derivative in $\Phi$ of the second equation in
\eqref{eq:matrix-reduced}, with $\rho$ and $\gamma$ fixed.

Since $g_*$ satisfies \eqref{eq:matrix-reduced}, the projection
\eqref{eq:matrix-projection} acts as the identity on the first variation of the target equation $\mathcal E_2=0$ of \eqref{eq:conic-map}. The derivative $L:=D_q\mathcal E(\beta,0)$ is
\begin{equation}\label{eq:interior-linear}
 \begin{aligned}
 L_1q&=\Delta_\gamma f+2\langle d\ell,df\rangle_\gamma
                      +12e^{2a}b,\\
 L_2q&=L_\Phi\xi+d\Phi(\nabla_\gamma f),\\
 L_3q&=\Delta_\gamma b-\tfrac{1}{2}\langle d\ell,df\rangle_\gamma
                +\tfrac{1}{2}\langle d\Phi,\nabla\xi\rangle_\gamma,
 \qquad\text{for }q=(f,\xi,b),
 \end{aligned}
\end{equation}
where
$\langle d\Phi,\nabla\xi\rangle_\gamma:=\sum_j\langle d\Phi(e_j),\nabla_{e_j}\xi\rangle$.

The main result of this section is the following. 
\begin{proposition}\label{prop:invertibility}
At the background metric in \eqref{cmp:background}, the operator
\[
 L:\mathcal X\longrightarrow\mathcal Y
\]
is an isomorphism.
\end{proposition}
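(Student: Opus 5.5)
The proof splits into injectivity and Fredholmness of index zero, as announced at the start of Section~\ref{sec:kernel}.

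\emph{Injectivity.} Let $q=(f,\xi,b)\in\mathcal X$ with $Lq=0$. Multiply the three equations of \eqref{eq:interior-linear} by suitable multiples of $\rho$ and integrate over $\Omega$ with respect to $dA_\gamma$. By \eqref{eq:geometric-laplacian}, $\Delta_\gamma+\langle d\ell,\cdot\rangle_\gamma$ is symmetric for the weight $\rho\,dA_\gamma$, and the weight $\rho$ vanishes at the rods. I would look for a combination, for instance $\int\rho(L_2q\cdot\xi)$ together with a pairing of $L_1q$ against $b$ or $f$ and of $L_3q$ against $f$, in which the mixed terms $\langle d\Phi(\nabla f),\xi\rangle$ and $\langle d\Phi,\nabla\xi\rangle$ cancel after integration by parts. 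The result should be an identity of the form
\[
 0=\int_\Omega\Bigl(|\nabla\xi|^2+\sum_j\bigl(|\xi|^2|d\Phi(e_j)|^2-\langle\xi,d\Phi(e_j)\rangle^2\bigr)\Bigr)\rho\,dA_\gamma+\mathcal Q(f,b),
\]
where the $\Phi$-part is nonnegative because $\mathbb H^2$ has curvature $-1$. Using $\Delta_h\rho=-6\rho$, the quadratic form $\mathcal Q(f,b)$ should reduce to $\int\rho(|df|_h^2-c\,f^2)\,dA_h$ for some explicit constant $c$. Its nonnegativity is exactly the singular Lichnerowicz--Obata inequality, Proposition~\ref{prop:spectrum}, for the weighted Laplacian $\Delta_g$ acting on $\mathcal G$-invariant functions on the conic Einstein manifold. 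The constant modes, and the modes generated by $\rho$ itself, must be excluded by the symmetries or by the compatibility condition at the corners.

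The boundary terms come from three places. At $o$, they vanish because $f-f(o)$, $b-b(o)$ and $\xi^o$ lie in $R^\nu C^{2,\alpha}_{\mathrm b}$ with $\nu>1$, so that $R|d\cdot|\to0$ (Remark~\ref{rem:weight-restrictions}). At the rods and corners, the weight $\rho=O(x)$ kills most terms. The unweighted $\Delta_\gamma b$ term leaves a boundary integral $\int\partial_nb\cdot(\cdots)$, which I would eliminate using the linearized angle condition $2b=f-\xi_s$ of \eqref{eq:conic-boundary}. This is where the angle equalities enter, as the introduction announces. Equality in the identity then forces $\nabla\xi=0$, $\xi\parallel d\Phi$ and $f$ in the equality case, and the symmetry constraints ($\xi(o)=0$ and $f$ invariant) then give $q=0$.

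\emph{Fredholm of index zero.} Define the comparison operator $P$ by dropping the coupling terms in \eqref{eq:interior-linear}, keeping a triangular structure. Its equations are solved in order: first $\Delta_\gamma f+2\langle d\ell,df\rangle_\gamma-\lambda f$; then $L_\Phi\xi$, which is invertible by convexity of $\mathbb H^2$ and uses the lifts of Lemma~\ref{lem:division-lifts} at the axes; finally the Dirichlet problem for $b$, with the boundary data $\tfrac12(f\mp\xi_s)$ and the corner estimate Lemma~\ref{lem:grisvard}, for which the compatibility condition $F_3(c)=0$ is available in $\mathcal Y$. At the center, Lemma~\ref{lem:cone-Schauder} applies with weight $\nu$, valid by Remark~\ref{rem:weight-restrictions}. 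Each step is a coercive energy solution followed by Schauder regularity, so $P$ is an isomorphism. The difference $L-P$ consists of lower-order terms with coefficients in $R^{\nu-1}$ or better, so it is compact via the compact weighted embedding. Hence $L$ is Fredholm of index $0$, and injectivity gives the isomorphism.

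\emph{Main obstacle.} I expect the hardest step to be choosing the combination in the kernel identity so that the cross terms cancel and the rod boundary terms vanish exactly under \eqref{eq:conic-boundary}. It must also reduce the $f$-part to the sharp Obata constant, where the index-zero obstruction from the conformal factor $b$ would otherwise appear.
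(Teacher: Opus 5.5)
Your two-step architecture (an integral identity using Proposition~\ref{prop:spectrum} and the curvature of $\mathbb H^2$, then Fredholm index zero via a triangular comparison operator) matches the paper's. However, both steps have a concrete gap.

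\emph{The kernel.} The identity you predict cannot work as stated. For any $c>0$ (the computation gives the sharp value $c=4$), $\int_\Omega\rho(|df|_h^2-cf^2)\,dA_h$ is negative on constant $f$. Constants are $\mathcal G$-invariant, so the symmetries do not exclude them. Nor does \eqref{eq:corner-compatibility}, which constrains the target $\mathcal Y$ and is vacuous for kernel elements. You are missing two ingredients.
\begin{enumerate}
\item With $\eta:=f+2b$ and $u:=2b-f$, the sum of the first and third equations is exactly a divergence plus $3\eta$: $L^\mu_1q+L^\mu_3q=\operatorname{div}_\mu W+3\eta$. Integrating gives $\int_\Omega\eta\,d\mu=0$.
\item The test $(2b-3f,-\xi,-2f)$, which is where your guessed pairings point, yields $0=3\|df\|_\mu^2-12\|f\|_\mu^2+6\|u\|_\mu^2+\mathcal Q_\Phi(\xi)$. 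This contains a square in $u$ that your form lacks.
\end{enumerate}
Only after writing $f=f_0+\langle f\rangle_\mu$ and $u=(\eta-2f_0)-2\langle f\rangle_\mu$, and using $\int_\Omega\eta\,d\mu=0$, does the constant mode acquire the positive coefficient $12\mu(\Omega)$ of \eqref{cmp:kernel-nonnegative}. Obata is then applied only to the mean-zero part $f_0$.

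The angle conditions also do not enter through a Neumann integral of $b$, which is uncontrolled for kernel elements. Instead, everything is weighted by $\rho$, and $e^{-2a}\Delta_\gamma b$ is written as $\Delta_\mu b-\langle d\ell,db\rangle_h$. The singular parts $-\frac{u+\xi_s}{2x}dx$ and $-\frac{u-\xi_s}{2y}dy$ of $W$ then become $C^{1,\alpha}$ by \eqref{cmp:kernel-angle}, so $\rho W$ and $\rho V$ vanish on $\partial\Omega$.

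\emph{The comparison operator.} The first-order couplings are not lower order at the axes, because $d\ell$ and $ds$ contain $\pm dx/x$. So $\langle ds,df\rangle_\gamma$ in $L_2$, and $-\frac12\langle d\ell,df\rangle_\gamma+\frac12\langle d\Phi,\nabla\xi\rangle_\gamma$ in $L_3$, contain divided derivatives such as $\partial_xf/x$. These equal $\partial_x^2f$ on the axis and are not compact from $C^{2,\alpha}_{\mathrm{ev}}$ to $C^{0,\alpha}$; the compact weighted embedding only helps at $o$.

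Moreover, if the third line is reduced to $\Delta_\gamma b$, then $P$ does not map $\mathcal X$ into $\mathcal Y$. The boundary conditions of \eqref{eq:corner-domain} force $e^{2c_\gamma(c)}\Delta_\gamma b(c)=\frac12\bigl(\partial_x^2(f+\xi_s)+\partial_y^2(f-\xi_s)\bigr)(c)$, which is generally nonzero. It is precisely the coupling terms of $L_3$ that cancel it; this is the linearization of the computation in Lemma~\ref{lem:reduced-map}.

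The paper handles this as follows:
\begin{enumerate}
\item It keeps $d\Phi(\nabla_\gamma f)$ and all of $L_3$ in $P$, so $L-P=(f+12e^{2a}b,\xi,0)$ is zeroth order.
\item It shifts to $A_2=L_\Phi-1$ so that Lax--Milgram applies directly; nonnegativity of $-L_\Phi$ alone is not coercivity.
\item It solves the third line with an extension $b_0$ of the boundary values, consistent at the corners because $\xi_s(c)=0$ by \eqref{eq:corner-symmetry}. It then applies Lemma~\ref{lem:grisvard} to $\Delta_\gamma v=F_3-L_3(f,\xi,b_0)$, whose right-hand side vanishes at $c$ since $L$ maps $\mathcal X$ into $\mathcal Y$.
\end{enumerate}
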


\subsection{The kernel vanishes}\label{subsec:kernel}

Write $\eta:=f+2b$, and recall $d\mu=\rho\,dA_h$ from \eqref{eq:conic-chart}.
By \eqref{eq:scalar-volume}, $\eta$ is the first variation of $\log dV_g$ at $q=0$.
We first prove that $L(f,\xi,b)=0$ implies $(f,\xi,b)=0$, by an integral identity whose terms are nonnegative, \eqref{cmp:kernel-nonnegative}.  The proof below relies on a singular version of the Lichnerowicz-Obata inequality, Proposition~\ref{prop:spectrum}, to control the scalar variations $f$ and $b$, while the negative curvature of $\mathbb H^2$ controls the shape variation $\xi$.

\begin{proposition}\label{prop:spectrum}
For every metric $g=g_{\beta,q}$ satisfying $\mathcal E(\beta,q)=0$ with $0<\beta\leqslant1$, and every
$f\in H^1(M,g)$ with $\int_M f\,dV_g=0$, we have
\begin{equation}\label{eq:spectral-inequality}
 \int_M|df|_g^2\,dV_g\geqslant4\int_M f^2\,dV_g.
\end{equation}
Here $H^1(M,g)$ is the
completion of $C_c^\infty(M\setminus\Sigma)$ in the norm
$\|f\|_{H^1}^2:=\int_M(f^2+|df|_g^2)\,dV_g$.
Equality in \eqref{eq:spectral-inequality} implies $f=0$.
\end{proposition}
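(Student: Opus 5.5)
The plan is to run the Bochner–Lichnerowicz argument on the Einstein manifold $(M\setminus\Sigma,g)$, with $\Ric_g=3g$ giving $\lambda_1\geqslant\frac{n}{n-1}\cdot 3=4$ for $n=4$, and to justify all integrations by parts near the singular torus $\Sigma$ using the central expansion of Lemma~\ref{lem:center-expansion} and the cone angle $2\pi\beta\leqslant 2\pi$.

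\emph{Existence of a minimizer.} Let $\lambda$ be the infimum of the Rayleigh quotient $\int|df|^2/\int f^2$ over mean-zero $f\in H^1(M,g)$. First, $H^1(M,g)\hookrightarrow L^2$ is compact. Away from $\Sigma$ this is standard, since $g$ is smooth across the rods and corners by Corollary~\ref{cor:fixed-set-regularity}. Near $\Sigma$, \eqref{eq:center-regularity} makes $g$ uniformly equivalent to the flat cone product $g^0=e^{2a(o)}\gamma_\beta+G(o)$. For that cone, Rellich compactness holds: one can use a bi-Lipschitz map to a smooth disk times $\mathbb T^2$, or simply note that the cone is a $2$-dimensional cone with $\Delta$ having discrete spectrum (Mazzeo–Weiss, Friedrichs extension as in Remark~\ref{rem:friedrichs}). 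Hence the infimum is attained by an eigenfunction $f$, and $\Delta_gf=-\lambda f$ weakly on $M\setminus\Sigma$ for test functions in $C_c^\infty(M\setminus\Sigma)$. Elliptic regularity makes $f$ smooth on $M\setminus\Sigma$. Averaging over $\mathbb T^2$ and $\mathcal G$ is not needed, but the Friedrichs condition is: near $\Sigma$, $f$ decomposes into torus Fourier modes and angular modes. Using the conic expansion (the analogue of Lemma~\ref{lem:cone-Schauder} for each torus mode, where nonzero torus frequencies only add a positive potential), $f$ is bounded, and $df$ is $O(R^{\min(1,1/\beta)-1})$, hence bounded because $\beta\leqslant1$. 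The Hessian satisfies $|\nabla^2f|=O(R^{1/\beta-2})$ at worst, from the modes $R^{|j|/\beta}e^{ij\theta}$ with $|j|=1$.

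\emph{Bochner identity with a cutoff.} On $M\setminus\Sigma$ we have
\[
\tfrac12\Delta|df|^2=|\nabla^2f|^2+\langle df,d\Delta f\rangle+\Ric(df,df)=|\nabla^2f|^2-\lambda|df|^2+3|df|^2 .
\]
Multiply by a cutoff $\chi_\epsilon$ equal to $0$ on $\{R<\epsilon\}$ and to $1$ on $\{R>2\epsilon\}$, with $|d\chi_\epsilon|\lesssim\epsilon^{-1}$, and integrate. The boundary error is
\[
\int|d\chi_\epsilon|\,|df|\,|\nabla^2f|\,dV\lesssim\epsilon^{-1}\cdot\epsilon^{1/\beta-2}\cdot\epsilon^{2}=\epsilon^{1/\beta-1},
\]
because the volume of the tube is $\sim\epsilon^2$. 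For $\beta<1$ this tends to $0$. For $\beta=1$ the metric $g_1$ is smooth and the statement is classical. In fact the bounded $|df|$ and $\nabla^2 f\in L^2$ already suffice; I expect $\beta\le 1$ to be exactly what makes this work, which is the main obstacle and the point where the cone angle restriction enters.

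\emph{Conclusion.} Using $|\nabla^2f|^2\geqslant\frac14(\Delta f)^2$ and $\int|df|^2=\lambda\int f^2$, we obtain $0\geqslant\frac{\lambda^2}{4}\int f^2-(\lambda-3)\lambda\int f^2$, that is, $\lambda\geqslant4$. Suppose equality holds in \eqref{eq:spectral-inequality} for some $f\neq0$. Then $f$ is a minimizer with $\lambda=4$, and $\nabla^2f=-\frac{\lambda}{4}fg=-fg$ on $M\setminus\Sigma$. An Obata-type argument then applies. The gradient $\nabla f$ is a nontrivial conformal, in fact gradient-homothetic, vector field, and along unit-speed geodesics $f''=-f$. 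Therefore $g=dt^2+\sin^2t\,g_{N}$ on $M\setminus\Sigma$ with $g_N$ round, and $M\setminus\Sigma$ would be locally isometric to the round $\mathbb S^4$ of curvature $1$, which has $\Ric=3g$. This contradicts the cone singularity with $\beta<1$: a metric of constant curvature $1$ near $\Sigma$ would make the orbit torus flat with $K_h$ forced by \eqref{eq:center-refined}. More simply, a constant-curvature metric cannot have a principal torus orbit of cone angle $2\pi\beta\neq2\pi$ while $G$ extends positively, because the $\rho$-expansion $\rho=\rho(o)(1-\frac12e^{2a(o)}R^2)+\dots$ combined with sectional curvature $1$ yields $\beta=1$. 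For $\beta=1$ the product metric and the hexagon Kähler–Einstein metric are not round, as seen from their topology. Hence $f=0$.
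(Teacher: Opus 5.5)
\textbf{The inequality: a different route from the paper.} The paper never integrates by parts near $\Sigma$ itself. It uses \eqref{eq:center-regularity} to check that $(M,g)$ satisfies the stratified-space hypotheses of Mondello \cite{Mon18}, and cites his Lichnerowicz theorem for \eqref{eq:spectral-inequality}. For rigidity it cites his stratified Obata theorem, which gives $g=dt^2+\sin^2t\,\bar g$ with $\Ric_{\bar g}=2\bar g$, hence $\sec_g\equiv1$ off $\Sigma$.

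Your direct argument (minimizer, conic asymptotics of the eigenfunction, Bochner with a cutoff) is sound in outline. Its advantage is that it shows exactly where the angle enters. Since an eigenfunction need not be $\mathcal G$-invariant, the modes $R^{1/\beta}e^{\pm i\theta}$ do occur. Their Hessian, of size $\beta^{-1}|\beta^{-1}-1|R^{1/\beta-2}$, is square integrable exactly when $\beta\leqslant1$, which is the cone-angle restriction.

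The price is the regularity of non-invariant eigenfunctions at $\Sigma$, which you only sketch; Lemma~\ref{lem:cone-Schauder} as stated assumes $\pi$-periodicity in $\theta$. A clean justification goes as follows. The eigenspace is finite-dimensional and invariant under the isometric torus action, so it is spanned by functions $u\,e^{in\cdot\boldsymbol\phi}$ with finitely many $n\in\mathbb Z^2$. Each profile solves $\Delta_{\gamma_\beta}u+\langle d\ell,du\rangle_{\gamma_\beta}+e^{2a}\bigl(\lambda-G^{-1}(n,n)\bigr)u=0$, a conic equation on the orbit space with bounded lower-order coefficients. The Mazzeo--Weiss expansion applies to it with all angular indices.

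\textbf{The equality case: a genuine gap.} Your reduction to $\nabla^2f=-fg$ and then to $\sec_g\equiv1$ on $M\setminus\Sigma$ is fine. It holds locally on the dense set $\{df\neq0\}$, which suffices; note that $\nabla f$ is closed conformal, not homothetic. But you then only assert a contradiction, and the mechanism you offer is not an argument. The expansion of $\rho$ in \eqref{eq:center-refined} holds for every $\beta$, and nothing you write turns $\sec_g=1$ into the relation $\beta=1$. In fact the obstruction has nothing to do with the cone angle: it must equally exclude roundness at $\beta=1$.

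The missing link is that a sectional curvature of $g$ equals $K_h$. The surfaces $\{\boldsymbol\phi=\phi_0\}$, $\phi_0\in\{0,\pi\}^2$, are fixed by the isometry $\mathsf c$, hence totally geodesic. They are orthogonal to the torus orbits, so $\pi$ maps them isometrically onto $(\Omega\setminus\{o\},h)$, and $\sec_g=K_h$ on their tangent planes. Equivalently, use O'Neill's formula, since the horizontal distribution is integrable. Then $\sec_g\equiv1$ forces $K_h\equiv1$, contradicting $K_h=O_{\mathrm b}(R^2)$ from \eqref{eq:center-regularity}. This is exactly how the paper concludes.
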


\begin{proof}
For $\beta=1$, this is the classical Lichnerowicz-Obata theorem \cite{Oba62}. Let $\beta<1$. Corollary~\ref{cor:fixed-set-regularity} gives smoothness away from $\Sigma$, and \eqref{eq:center-regularity} shows that we satisfy the assumptions of \cite[Definition~1.2]{Mon18}, so \cite[Theorem~1.1]{Mon18} gives \eqref{eq:spectral-inequality}. If we had equality, the stratified Obata theorem \cite[Theorem~3.1]{Mon18} would give
\[
 g=dt^2+\sin^2t\,\bar g,\qquad\text{and}\qquad \Ric_{\bar g}=2\bar g,
\]
hence $\sec_g=1$ away from $\Sigma$, since the three-dimensional Einstein metric $\bar g$ has constant curvature. But the surfaces $\{\boldsymbol\phi=\phi_0\}$, $\phi_0\in\{0,\pi\}^2$, are fixed by the isometry $\mathsf c$, hence totally geodesic, and they are orthogonal to the torus orbits by \eqref{eq:matrix-ansatz}. So $\pi$ maps them isometrically onto $(\Omega\setminus\{o\},h)$, and $\sec_g=K_h$ on them, while $K_h$ tends to $0$ at $o$ by \eqref{eq:center-regularity}. 
\end{proof}

\begin{proposition}\label{prop:kernel}
At the background in \eqref{cmp:background}, the kernel of $L:\mathcal X\to\mathcal Y$ is $\{0\}$.
\end{proposition}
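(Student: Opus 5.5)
The plan is to show that a kernel element $q=(f,\xi,b)\in\mathcal X$ is the torus-invariant part of an infinitesimal Einstein deformation at fixed cone angle. I would then kill it with a weighted energy identity for $\eta=f+2b$ and $\xi$. The first term is controlled by Proposition~\ref{prop:spectrum}, and the second by the nonpositive curvature of $\mathbb H^2$.

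\emph{Step 1: linearized recovery.} Linearize the proof of Proposition~\ref{prop:recovery} along $q$. The variation $\dot T$ of $T=\rho(\Ric_g-3g)|_{T\Omega}$ is trace-free and divergence-free for the background $h$, because $Lq=0$ means that the vertical equations and the scalar equation hold to first order. The linearized angle conditions \eqref{eq:conic-boundary} and \eqref{eq:corner-domain} give $\dot T(\partial_x,\partial_y)=0$ on the rods, and the weight $\nu>1$ gives $|\dot T|_0=O(|\zeta|^{-2+\beta\nu})$ at $o$. The harmonic-conjugate argument of Step~3 then gives $\dot T=0$. The linearized second identity in \eqref{eq:residual-boundary} gives the Neumann condition $\partial_n b=0$ on the rods. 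Hence $\dot g(q)$ solves the linearized equation $\Ric'(\dot g)=3\dot g$ on $M\setminus\Sigma$, is smooth across the rods, and is $O(R^{\nu})$-close to a constant-coefficient tensor at $\Sigma$.

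\emph{Step 2: the integral identity.} First I would derive a scalar equation for $\eta$ with respect to $d\mu=\rho\,dA_h$. The combination $\rho\cdot(L_1+2L_3)$ gives $\Delta_g\eta$ plus terms in $\langle d\ell,df\rangle$ and $\langle d\Phi,\nabla\xi\rangle$. Combining it with the linearized trace-free equation from Step~1 and the background relation $\Delta_h\rho=-6\rho$, I expect the trace of the linearized Einstein equation to take the form
\[
\Delta_g\eta+4\eta=\text{(terms in $\nabla\xi$ and $df$ that integrate to zero against $\eta\,d\mu$)}.
\]
This should follow from the standard computation that $\operatorname{tr}\dot g$ of a Ricci-flat-type deformation of an Einstein metric with $\lambda=3$ satisfies a $\Delta+2\lambda$-type equation once the divergence is eliminated. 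Integrating $L_1$ against $d\mu$ and using $\int\Delta_g=0$ shows that $\eta$ has mean zero.

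Separately, I would pair $L_2q=0$ with $\xi$ in $L^2(d\mu)$. This gives
\[
\int|\nabla\xi|^2\,d\mu-\int\sum_j\langle R^{\mathbb H^2}(\xi,d\Phi e_j)d\Phi e_j,\xi\rangle\,d\mu
=\int\langle d\Phi(\nabla f),\xi\rangle\,d\mu .
\]
Both terms on the left are nonnegative since $\mathbb H^2$ has curvature $-1$. Adding these identities with suitable coefficients should produce
\[
\Bigl(\int|d\eta|^2_g-4\int\eta^2\Bigr)+c\int\bigl(|\nabla\xi|^2+|\xi\wedge d\Phi|^2\bigr)d\mu+\dots=0,
\]
with all terms nonnegative as in \eqref{cmp:kernel-nonnegative}. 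Proposition~\ref{prop:spectrum} then gives $\eta=0$ and $\nabla\xi=0$. Since $\xi(o)=0$, we get $\xi=0$. Then $L_3q=0$ and $L_1q=0$ reduce to the system $\Delta_\gamma f+2\langle d\ell,df\rangle-6e^{2a}f=0$ with $b=-f/2$. Pairing this with $f\rho\,dA_\gamma$ yields $\int|df|^2\,d\mu+6\int e^{2a}f^2\,d\mu=0$, so $f=0$ and therefore $b=0$.

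\emph{Justifying the integrations by parts.} Near $o$, the classes \eqref{eq:central-domain} with $\nu>1$ give $|dq|_\gamma=O(R^{\nu-1})$. The boundary integrals over $\{R=\epsilon\}$ are therefore $O(\epsilon^{\nu})\to0$, and $q\in H^1$, which is needed for Proposition~\ref{prop:spectrum}. On the rods and corners the weight $\rho$ vanishes to first, respectively second, order. Most boundary fluxes therefore vanish automatically; the only dangerous ones come from $\Delta_\gamma b$, and the Neumann condition of Step~1 kills them.

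\emph{Main obstacle.} The hardest step is the algebra of Step~2. The coupling terms $\langle d\ell,df\rangle$ and $d\Phi(\nabla f)$ must combine into exact divergences or into the nonnegative $\xi$-terms, and this may require the trace-free linearized equation from Step~1, not only $L$ itself. I would try first to work directly with $\dot g$ on the $4$-manifold. I would put $\dot g$ in divergence-free gauge using the torus-invariant vector field $\nabla\eta$ tangent to $\Omega$, and then apply the classical Lichnerowicz/Obata argument for infinitesimal Einstein deformations. This is justified on $M\setminus\Sigma$ by the decay at $\Sigma$, and the scalar part of that argument is exactly Proposition~\ref{prop:spectrum}.
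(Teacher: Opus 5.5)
\textbf{There is a genuine gap, and it is in Step~2.} That step carries the entire content of the proposition. You assert that ``suitable coefficients'' produce a nonnegative identity, but the ingredients you list do not yield one.

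Pairing $L_2q=0$ with $\xi$ leaves the sign-indefinite term $\int_\Omega\langle d\Phi(\nabla_hf),\xi\rangle\,d\mu$. The equations $L_1,L_3$ also carry the cross terms $\langle d\ell,df\rangle$, $\langle d\ell,db\rangle$ and $\langle d\Phi,\nabla\xi\rangle$. Your plan never turns these into divergences.

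Your guessed trace equation is also not what your starting combination gives. One has $e^{-2a}(L_1+2L_3)q=\operatorname{div}_\mu\bigl(\nabla_h\eta-2b\nabla_h\ell+\langle\xi,d\Phi\rangle^\sharp\bigr)$. This is an exact $\mu$-divergence with no zeroth-order term in $\eta$, and you do not say how the trace-free equation would supply one.

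The missing idea is the specific test $(2b-3f,-\xi,-2f)$ of \eqref{cmp:kernel-test}:
\begin{itemize}
\item The weight $-2f$ on $L_3$ is matched to the weight $-1$ on $L_2$. Using $\operatorname{div}_\mu d\Phi=0$, this gives $-f\langle d\Phi,\nabla\xi\rangle_h-\langle\xi,d\Phi(\nabla_hf)\rangle=-\operatorname{div}_\mu\bigl(f\langle\xi,d\Phi\rangle^\sharp\bigr)$.
\item The weight $2b-3f$ on $L_1$ then collapses the scalar cross terms into $3|df|_h^2-12f^2+6u^2$, with $u=2b-f$.
\item Together with the mean identity \eqref{cmp:kernel-mean}, this gives \eqref{cmp:kernel-nonnegative}.
\end{itemize}
In \eqref{cmp:kernel-nonnegative}, Proposition~\ref{prop:spectrum} is applied to $f_0=f-\langle f\rangle_\mu$, not to $\eta$. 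The function $\eta$ only appears inside the square $6\|\eta-2f_0\|_\mu^2$.

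Your fallback does not close the gap either. After divergence gauge fixing, Lichnerowicz's bound only forces the trace of an infinitesimal Einstein deformation to vanish. The transverse-traceless part solves $\nabla^*\nabla k-2\mathring{\Rm}k=0$, and this operator has no positivity at positive Einstein metrics. Already $g_1$ on $\mathbb S^2\times\mathbb S^2$ is unstable, as are the metrics of Proposition~\ref{prop:square-instability}. The rigidity therefore has to come from the reduced structure: the fixed conformal class of $h$ and the negative curvature of $\mathbb H^2$. That is exactly what the explicit identity exploits. Your Step~1 is plausible but not needed.

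\textbf{Your account of the boundary terms also misplaces the mechanism.} With the weight $d\mu=\rho\,dA_h$, fluxes of $\nabla_hb$ and $\nabla_hf$ vanish automatically on $\partial\Omega$, because $\rho$ does. So the Neumann condition is irrelevant. The dangerous fluxes come from $d\ell=\frac{dx}{x}+d\bar\ell$ and $ds=-\frac{dx}{x}+d\bar s$, which stay finite after multiplication by $\rho\sim x$.

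For example, $\int_\Omega L^\mu_1q\,d\mu=6\int_\Omega\eta\,d\mu+\int_{\partial\Omega}f\,\partial_\nu\rho\,ds_h$, and $\partial_\nu\rho\neq0$ on the rods. So integrating $L_1$ alone does not give $\int_\Omega\eta\,d\mu=0$, contrary to your claim.

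The paper instead integrates $L^\mu_1+L^\mu_3=\operatorname{div}_\mu W+3\eta$. The singular part of $W$ is $-\frac{u+\xi_s}{2x}dx-\frac{u-\xi_s}{2y}dy$. It is killed by the \emph{Dirichlet} angle conditions \eqref{cmp:kernel-angle} built into $\mathcal X$. The same cancellation makes $\rho V$ vanish on $\partial\Omega$ in the main identity.
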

\begin{proof}
Let $q=(f,\xi,b)\in\ker L$, and put $u:=2b-f=\eta-2f$. With $\langle f\rangle_\mu:=\mu(\Omega)^{-1}\int_\Omega f\,d\mu$, $f_0:=f-\langle f\rangle_\mu$, $\|v\|_\mu^2:=\int_\Omega|v|^2\,d\mu$ and
\[
 \mathcal Q_\Phi(\xi):=\int_\Omega\Bigl[|\nabla\xi|_h^2
       +\sum_j|\xi\wedge d\Phi(e_j)|^2\Bigr]d\mu,
\]
where $|\xi\wedge V|^2:=|\xi|^2|V|^2-\langle\xi,V\rangle^2$ and $(e_j)$ is $h$-orthonormal, we prove the identity
\begin{equation}\label{cmp:kernel-nonnegative}
 0=3\bigl(\|df_0\|_\mu^2-4\|f_0\|_\mu^2\bigr)
       +6\|\eta-2f_0\|_\mu^2+\mathcal Q_\Phi(\xi)
       +12\mu(\Omega)\langle f\rangle_\mu^{\,2}.
\end{equation}
All its terms are nonnegative, the first one by Proposition~\ref{prop:spectrum}. The test $(2b-3f,-\xi,-2f)$ below turns the mixed second-order terms into divergences, and, together with the mean identity \eqref{cmp:kernel-mean}, it leaves the scalar spectral form, a square controlling the relative scale $u$, and the harmonic-map form $\mathcal Q_\Phi(\xi)$.

\emph{Two identities.} By \eqref{eq:conic-boundary} and \eqref{eq:corner-domain}, $u$ satisfies
\begin{equation}\label{cmp:kernel-angle}
 u+\xi_s=0\quad\text{on }\{x=0\},\qquad\text{and}\qquad u-\xi_s=0\quad\text{on }\{y=0\}.
\end{equation}
Write $L^\mu_iq:=e^{-2a}L_iq$ and $\Delta_\mu:=\operatorname{div}_\mu\nabla_h$, where $\operatorname{div}_\mu B:=\rho^{-1}\operatorname{div}_h(\rho B)$, with the pullback connection for target-valued $B$. Then \eqref{eq:interior-linear} reads
\[
\begin{aligned}
 L^\mu_1q&=\Delta_\mu f+\langle d\ell,df\rangle_h+12b,\\ L^\mu_2q&=e^{-2a}L_\Phi\xi+d\Phi(\nabla_hf),\\
 L^\mu_3q&=\Delta_\mu b-\langle d\ell,db\rangle_h-\tfrac12\langle d\ell,df\rangle_h+\tfrac12\langle d\Phi,\nabla\xi\rangle_h.
\end{aligned}
\]
The reduced equations \eqref{eq:matrix-reduced} give $\operatorname{div}_\mu\nabla_h\ell=-6$ and $\operatorname{div}_\mu d\Phi=0$, hence $\operatorname{div}_\mu\langle\xi,d\Phi\rangle^\sharp=\langle\nabla\xi,d\Phi\rangle_h$. Since $\langle\xi,R^{\mathbb H^2}(\xi,V)V\rangle=-|\xi\wedge V|^2$, we also have
\[
 -\langle\xi,e^{-2a}L_\Phi\xi\rangle=|\nabla\xi|_h^2+\sum_j|\xi\wedge d\Phi(e_j)|^2-\operatorname{div}_\mu\langle\xi,\nabla\xi\rangle^\sharp.
\]
Define the vector fields
\begin{equation}\label{cmp:kernel-mean-flux}
 W^\flat:=d(f+b)-\tfrac u2\,d\ell+\tfrac12\langle\xi,d\Phi\rangle,\qquad\text{and}\qquad
 V:=u\nabla_hf-2fW-\langle\xi,\nabla\xi\rangle^\sharp.
\end{equation}
We use two combinations of the equations $L^\mu_iq=0$: the sum
\begin{equation}\label{cmp:kernel-mean}
 L^\mu_1q+L^\mu_3q=\operatorname{div}_\mu W+3\eta,
\end{equation}
and their pairing with the test $(2b-3f,-\xi,-2f)$, which satisfies
\begin{equation}\label{cmp:kernel-test}
 (2b-3f)L^\mu_1q-\langle\xi,L^\mu_2q\rangle-2fL^\mu_3q
 =3|df|_h^2-12f^2+6u^2+|\nabla\xi|_h^2+\sum_j|\xi\wedge d\Phi(e_j)|^2+\operatorname{div}_\mu V.
\end{equation}
Indeed, $\operatorname{div}_\mu\nabla_h\ell=-6$, $\operatorname{div}_\mu\langle\xi,d\Phi\rangle^\sharp=\langle\nabla\xi,d\Phi\rangle_h$, $du=2\,db-df$ and $12b-3u=3\eta$ give \eqref{cmp:kernel-mean}. For \eqref{cmp:kernel-test}, write $2b-3f=u-2f$ and $-2fL^\mu_3q=-2f(L^\mu_1q+L^\mu_3q)+2fL^\mu_1q$, and use \eqref{cmp:kernel-mean}, $\operatorname{div}_\mu(fW)=\langle df,W^\flat\rangle_h+f\operatorname{div}_\mu W$ and the identity for $L_\Phi$ above. The remaining terms are divergences or cancel by the two algebraic identities
\[
 2W^\flat-du+u\,d\ell-\langle\xi,d\Phi\rangle=3\,df\qquad\text{and}\qquad 12ub-6f\eta=6u^2-12f^2.
\]

\emph{Regularity at rods and corners.} At a corner, $d\ell=\frac{dx}x+\frac{dy}y+d\widehat\ell$ and, in the frame \eqref{eq:target-rod-frame}, $\langle\xi,d\Phi\rangle=\xi_s\,ds+\xi_\perp A_\Phi$ with $ds=-\frac{dx}x+\frac{dy}y+d\widehat s$. Hence we have
\[
 W^\flat=d(f+b)-\tfrac u2\,d\widehat\ell+\tfrac12\xi_s\,d\widehat s+\tfrac12\xi_\perp A_\Phi-\frac{u+\xi_s}{2x}\,dx-\frac{u-\xi_s}{2y}\,dy,
\]
and \eqref{cmp:kernel-angle} with Lemma~\ref{lem:division-lifts}(a) makes the last two coefficients $C^{1,\alpha}$, while the other terms are $C^{1,\alpha}$ by the local norms \eqref{eq:side-domain-norm}, \eqref{eq:corner-domain} and the smoothness of the regularized background. At a rod, where $d\ell=\frac{dx}x+d\bar\ell$ and $ds=-\frac{dx}x+d\bar s$, the same holds without the singular term $-\frac{u-\xi_s}{2y}\,dy$. So $W$ and $V$ are $C^{1,\alpha}$ up to the rods and corners, the latter since $\langle\xi,\nabla\xi\rangle=\xi_s\,d\xi_s+\xi_\perp\,d\xi_\perp$ in the orthonormal frame. Consequently $\rho W$ and $\rho V$ vanish on $\partial\Omega$.

\emph{Integration by parts.} Integrate \eqref{cmp:kernel-mean} and \eqref{cmp:kernel-test}, whose left-hand sides vanish, against $d\mu=\rho\,dA_h$ on $\Omega\setminus\{R<\varepsilon\}$. Since $\operatorname{div}_\mu W=\rho^{-1}\operatorname{div}_h(\rho W)$, similarly for $V$, and $\rho W$ and $\rho V$ are $C^{1,\alpha}$ and vanish on $\partial\Omega$, the divergence theorem leaves only the integrals of $\rho\,W\cdot n$ and $\rho\,V\cdot n$ over $\{R=\varepsilon\}$. There, \eqref{eq:central-domain} and \eqref{eq:background-central} give $|W|_h+|V|_h=O(\varepsilon^{\nu-1})$, on a circle of length $O(\varepsilon)$, so these integrals are $O(\varepsilon^\nu)$, and consequently, letting $\varepsilon\downarrow0$ gives
\begin{equation}\label{cmp:kernel-energy}
 \int_\Omega\eta\,d\mu=0,\qquad\text{ and }\qquad
 0=3\|df\|_\mu^2-12\|f\|_\mu^2
      +6\|u\|_\mu^2+\mathcal Q_\Phi(\xi).
\end{equation}
Since $\int_\Omega\eta\,d\mu=0$ and $\int_\Omega f_0\,d\mu=0$, writing $u=(\eta-2f_0)-2\langle f\rangle_\mu$ and $f=f_0+\langle f\rangle_\mu$ turns \eqref{cmp:kernel-energy} into \eqref{cmp:kernel-nonnegative}.

The torus-invariant lift of $f_0$ belongs to $H^1(M,g)$, has zero mean and satisfies
\[
 \int_M(|df_0|_g^2-4f_0^2)\,dV_g
     =4\pi^2\bigl(\|df_0\|_\mu^2-4\|f_0\|_\mu^2\bigr).
\]
So every term of \eqref{cmp:kernel-nonnegative} vanishes. Proposition~\ref{prop:spectrum} gives $f_0=0$, the last term gives $\langle f\rangle_\mu=0$, hence $f=0$, and the second term gives $\eta=0$, hence $b=0$. Finally, $\mathcal Q_\Phi(\xi)=0$ gives $\nabla\xi=0$, hence $\xi=0$, since $\xi(o)=0$ by \eqref{eq:central-domain}.
\end{proof}

\subsection{A comparison operator of index zero}\label{subsec:comparison}

 Divided derivatives such as $\partial_xf/x$ are not compact from $C^{2,\alpha}_{\mathrm{ev}}$ to $C^{0,\alpha}$, so they must remain in the comparison operator $P$, which only changes zeroth-order terms. We define $P$, lower triangular as
\begin{equation}\label{cmp:comparison-operator}
\begin{gathered}
 Pq:=\bigl(A_1f,\ A_2\xi+d\Phi(\nabla_\gamma f),\ L_3q\bigr),\qquad\text{whose diagonal blocks are}\\
 A_1:=\Delta_\gamma+2\langle d\ell,d\,\cdot\rangle_\gamma-1,\qquad A_2:=L_\Phi-1,\qquad\text{and}\qquad A_3:=\Delta_\gamma.
\end{gathered}
\end{equation}
Then $\mathcal Kq:=(L-P)q=\bigl(f+12e^{2a}b,\ \xi,\ 0\bigr)$. Since $L$ and $\mathcal K$ map $\mathcal X$ to $\mathcal Y$, by Lemma~\ref{lem:reduced-map}, so does $P$. At a corner, after multiplication by $e^{2c_\gamma}$, the diagonal blocks of $P$ have the singular models of Remark~\ref{rem:conformal-parity}: $D_2$ for $A_1$, from the weight $\rho^2\sim x^2y^2$ of $\mathfrak a_1$ below, $D_1$ and $D_\perp$ for the two components of $A_2$, from the weight $\rho\sim xy$ and the first angular modes, and $\Delta_0$ for $A_3$. The nonlinear models are those of Lemma~\ref{lem:regularized-equations}, and \eqref{cmp:frame-jacobi} gives those of $A_2$.

\emph{The shifted equations.} The quadratic forms of the operators $-A_1,-A_2,-A_3$ are
\begin{equation}\label{cmp:positive-forms}
 \begin{aligned}
 \mathfrak a_1(f,f)&:=\int_\Omega(|df|_\gamma^2+f^2)\rho^2dA_\gamma,\\
 \mathfrak a_2(\xi,\xi)&:=\int_\Omega\left(
 |\nabla\xi|_\gamma^2+\sum_j|\xi\wedge d\Phi(e_j)|^2+|\xi|^2
 \right)\rho\,dA_\gamma,\\
 \mathfrak a_3(b,b)&:=\int_\Omega|db|_\gamma^2dA_\gamma.
 \end{aligned}
\end{equation}
All functions and sections below have the prescribed finite symmetries. Let $\mathscr H_i$ be the $L^2$ space for the measure in the $i$-th line of \eqref{cmp:positive-forms}, and let $\mathscr V_i$ be the completion, for the energy norm $\mathfrak a_i(u,u)^{1/2}$, of the components $f$, $\xi$, respectively $b$ with $b|_{\partial\Omega}=0$, with the regularity required in \eqref{eq:deformation-domain}. The forms are coercive: $\|u\|_{\mathscr H_i}^2\leqslant C\mathfrak a_i(u,u)$, which is immediate for $i=1,2$ and follows from the Poincar\'e inequality for $i=3$. So, for $F\in\mathscr H_i$, the Lax-Milgram theorem \cite[Theorem~5.8]{GT01} gives a unique weak solution $u\in\mathscr V_i$ of $A_iu=F$, that is, $\mathfrak a_i(u,v)=-(F,v)_{\mathscr H_i}$ for every $v\in\mathscr V_i$, with the energy estimate $\mathfrak a_i(u,u)\leqslant C\|F\|_{\mathscr H_i}^2$. For $F\in\mathcal Y$, every component $F_i$ belongs to $\mathscr H_i$, since the three measures are comparable near $o$ and $\nu>1$ gives the integrability of Remark~\ref{rem:weight-restrictions}(1).

\begin{lemma}\label{lem:shifted-regularity}
Let $F=(F_1,F_2,F_3)\in\mathcal Y$. The weak solutions of $A_1f=F_1$, $A_2\xi=F_2$ and $A_3b=F_3$, with $b|_{\partial\Omega}=0$, have the regularity and symmetries required for the components of $\mathcal X$ in \eqref{eq:deformation-domain},  with the following componentwise estimates:
\[
 \|f\|_{\mathcal X}\leqslant C\|F_1\|_{\mathcal Y},\qquad
 \|\xi\|_{\mathcal X}\leqslant C\|F_2\|_{\mathcal Y},\qquad\text{and}\qquad
 \|b\|_{\mathcal X}\leqslant C\|F_3\|_{\mathcal Y}.
\] Here the $\mathcal X$- and $\mathcal Y$ -norms denote the sums of the local norms involving the indicated component, without the coupled angle conditions.

\end{lemma}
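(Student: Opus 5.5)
The plan is to bootstrap each weak solution from the energy space to the weighted Hölder spaces of \eqref{eq:deformation-domain}, chart by chart over $\mathfrak U$, and then to remove the lower-order term on the right of each local estimate. The ordering follows the triangular structure: $f$ first, then $\xi$, then $b$. For $\xi$, the coupling term $d\Phi(\nabla_\gamma f)$ is already absorbed into $F_2$ in the statement, so the three problems are treated separately.

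\emph{Symmetries and interior charts.} The weak solutions are unique in $\mathscr V_i$, and the forms $\mathfrak a_i$ are $\mathcal G$-invariant, so averaging over $\mathcal G$ shows that the solutions inherit the required invariance or equivariance. In interior charts the operators are uniformly elliptic with smooth coefficients. Interior $L^2\to C^{2,\alpha}$ regularity (De Giorgi--Nash or $W^{2,p}$ estimates, then Schauder) gives
\[
\|u\|_{C^{2,\alpha}}\leqslant C(\|F\|_{C^{0,\alpha}}+\|u\|_{L^2}),
\]
and the energy estimate $\mathfrak a_i(u,u)\leqslant C\|F_i\|_{\mathscr H_i}^2\leqslant C\|F_i\|_{\mathcal Y}^2$ bounds the $L^2$ term.

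\emph{Rods and corners.} After multiplying by $e^{2c_\gamma}$, the blocks have the model operators of Remark~\ref{rem:conformal-parity}.
\begin{itemize}
\item For $A_1$ the model is $D_2$; for the two components of $A_2$ the models are $D_1$ and $D_\perp$, plus bounded zeroth-order terms and first-order terms with smooth coefficients coming from the regularized background.
\item Using Lemma~\ref{lem:division-lifts}, lift $f$ radially to $\mathbb R^3\times\mathbb R^3$, lift $\xi_s$ radially to $\mathbb R^2\times\mathbb R^2$, and lift $\xi_\perp$ as a first angular mode. The equations then become uniformly elliptic on Euclidean domains.
\item The weak formulation with weights $\rho^2\sim x^2y^2$ and $\rho\sim xy$ is exactly the $H^1$ energy of the lifts, so the lifts are weak solutions there and standard Schauder estimates apply. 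This yields $C^{2,\alpha}_{\mathrm{ev}}$ and $C^{2,\alpha}_{\mathrm{odd}}$ control. Since $F_{2,\perp}$ is odd, its lift is $C^{0,\alpha}$ by Lemma~\ref{lem:division-lifts}(b).
\item For $b$, the zero Dirichlet data and the corner compatibility $F_3(c)=0$ from \eqref{eq:corner-compatibility} let Lemma~\ref{lem:grisvard} apply at corners. Boundary Schauder estimates apply along the rods.
\end{itemize}

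\emph{The central chart (expected main obstacle).} One must show that $f-f(o)$, $b-b(o)$ and $\xi^o$ lie in $R^\nu C^{2,\alpha}_{\mathrm b}$ and exclude the $\log R$ mode.
\begin{itemize}
\item Finite energy gives $H^1$ near $o$. Rescaled annular estimates \eqref{eq:annular-estimate}, combined with a Moser/De Giorgi $L^\infty$ bound on the cone (the Sobolev inequality holds on the cone), show that $u$ is bounded.
\item Bounded solutions correspond to the Friedrichs domain (Remark~\ref{rem:friedrichs}), so $\log R$ and negative powers are absent.
\item Treat the first-order terms $2\langle d\ell,du\rangle_\gamma$ and $\nabla_{\nabla\ell}\xi$, together with the zeroth-order terms, as perturbations. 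By \eqref{eq:background-central}, $d\ell=O(R^{\nu-1})$ and $|d\Phi|=O(R^{\nu-1})$. First obtain $du\in R^{-\epsilon}$-type bounds, then move them to the right-hand side in $R^{\nu-2}C^{0,\alpha}_{\mathrm b}$.
\item For $\xi$, pass to the fixed-space variable $\xi^o=\Pi_{\Phi_o}\xi$; the connection terms are of the same order. Lemma~\ref{lem:central-modes} removes the constant mode, so $\xi^o(o)=0$.
\item Apply Lemma~\ref{lem:cone-Schauder} with $\delta=\nu$, which is allowed since $\nu<2/\beta$. This may require a bootstrap through intermediate weights, since the first-order perturbation initially gives a right-hand side only in a weaker weight.
\end{itemize}
The coupling of the drift terms with the conic weight is the delicate point.

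\emph{Final estimate.} Sum the local estimates over the charts. Each has the form $C(\|F_i\|_{\mathcal Y}+\|u\|_{L^2})$, and the $L^2$ term is controlled by the coercive energy estimate from Lax--Milgram. This gives the three componentwise bounds.
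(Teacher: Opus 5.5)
Your proposal is correct and follows essentially the paper's proof. The ingredients match: energy solutions from Lax--Milgram; the Euclidean lifts of Lemma~\ref{lem:division-lifts} at rods and corners, with the coupled lifted system of Lemma~\ref{lem:lifted-target} for $\xi$; Lemma~\ref{lem:grisvard} and boundary Schauder estimates for $b$; and at $o$ an $L^\infty$ bound from \cite[Theorem~8.17]{GT01}, followed by \eqref{eq:annular-estimate} at $\delta=0$ and \eqref{eq:cone-schauder} at $\delta=\nu$. For that $L^\infty$ bound you should say explicitly, as the paper does, that the weak equation is first extended across $o$ by Lemma~\ref{lem:B-cutoffs} and that test functions are averaged over $\mathcal G$.

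No intermediate bootstrap is needed at the center. The paper uses \eqref{eq:center-regularity} to get $|C_1|_\gamma=O_{\mathrm b}(R)$ and $C_0=O_{\mathrm b}(1)$. Even with only your weaker input \eqref{eq:background-central}, the bound $|du|_\gamma=O(R^{-1})$ from the $\delta=0$ estimate already places $\langle d\ell,du\rangle_\gamma$ and the zeroth-order terms in $R^{\nu-2}C^{0,\alpha}_{\mathrm b}$.
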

\begin{proof}
Write $u$ for $f$, $\xi$ or $b$, and $F_i$ for the corresponding right-hand side. Each step below gives an estimate of $u$ on a smaller chart by the norm of $F_i$ in $\mathcal Y$, $\mathfrak a_i(u,u)^{1/2}$ and the norm of $u$ in $\mathscr H_i$.

\emph{The center.} By \eqref{eq:center-regularity}, respectively \eqref{eq:seed-bounds} at $g_1$, in the fixed trivialization $\xi^o$, each $A_i$ has the form
\begin{equation}\label{cmp:center-operator}
 A_i=\Delta_\gamma\operatorname{Id}+C_1^a\partial_a+C_0,
 \qquad\text{with}\qquad |C_1|_\gamma=O_{\mathrm b}(R)\quad\text{and}\quad C_0=O_{\mathrm b}(1).
\end{equation}
The finite-energy bound and $F_i\in\mathscr H_i$ give $\Delta_\gamma u\in L^2$. Lemma~\ref{lem:B-cutoffs} extends this weak equation over $o$, and averaging over $\mathcal G$ removes the symmetry restriction on the test functions, since $u$ and $F$ have the symmetries (the averaging argument of Lemma~\ref{lem:division-lifts}(c)). In the coordinates $(R\cos\theta,R\sin\theta)$, $\Delta_\gamma$ is a uniformly elliptic divergence-form operator with bounded coefficients, so the $L^\infty$ estimate of \cite[Theorem~8.17]{GT01}, applied componentwise, with the coupling terms treated as right-hand sides, gives $u\in L^\infty$. Then \eqref{eq:annular-estimate} with $\delta=0$ gives $u\in C^{2,\alpha}_{\mathrm b}$, so $C_1^a\partial_au+C_0u\in C^{0,\alpha}_{\mathrm b}\subset R^{\nu-2}C^{0,\alpha}_{\mathrm b}$, and \eqref{eq:cone-schauder}, applied to $\Delta_\gamma u=F_i-C_1^a\partial_au-C_0u$, gives $u-u(o)\in R^\nu C^{2,\alpha}_{\mathrm b}$, which is \eqref{eq:central-domain}. Indeed,  the weight is below every allowed nonconstant degree,  by Remark~\ref{rem:weight-restrictions}(3), and $\xi^o_0=0$ by Lemma~\ref{lem:central-modes}.

\emph{The scalar equations at rods and corners.} At a corner, we have
\[
 e^{2c_\gamma}A_1f
 =D_2f+2\langle d\widehat\ell,df\rangle_0-e^{2c_\gamma}f.
\]
The lift $\mathcal L_2$ of Section~\ref{subsec:division-lifts} and Lemma~\ref{lem:division-lifts} turn this into an ordinary weak elliptic equation with smooth coefficients on $\mathbb R^3\times\mathbb R^3$. Interior regularity and Schauder estimates \cite[Theorem~8.8, Lemma~9.16 and Theorems~9.19 and~6.2]{GT01} give the required even $C^{2,\alpha}$ regularity. For $A_3$, ordinary boundary Schauder estimates apply along the open rods. At the corners, the zero boundary values and $F_3(c)=0$ are exactly the hypotheses of the Grisvard estimate in Lemma~\ref{lem:grisvard}.

\emph{The equation for $\xi$ at rods and corners.} 
Lemma~\ref{lem:lifted-target} gives the even, respectively odd, $C^{2,\alpha}$ regularity of $\xi_s$ and $\xi_\perp$ at rods and corners, with the associated estimate.

A finite covering of $\Omega$,
the energy estimate, the bound $\|u\|_{\mathscr H_i}^2\leqslant C\mathfrak a_i(u,u)$ and the bound $\|F_i\|_{\mathscr H_i}\leqslant C\|F_i\|_{\mathcal Y}$ checked above give the asserted estimates.
\end{proof}

\begin{proposition}\label{cmp:comparison-isomorphism}
The operator $P:\mathcal X\to\mathcal Y$ is an isomorphism, with
\begin{equation}\label{cmp:comparison-estimate}
 \|P^{-1}F\|_{\mathcal X}\leqslant C\|F\|_{\mathcal Y}.
\end{equation}
The difference $\mathcal K=L-P$ is compact. Consequently $L$ is Fredholm of index zero.
\end{proposition}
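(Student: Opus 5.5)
Since $P$ is lower triangular with diagonal blocks $A_1,A_2,A_3$, I will invert it by solving the three equations in order, using Lemma~\ref{lem:shifted-regularity} at each step.

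\emph{Step 1: solving $Pq=F$.} Given $F\in\mathcal Y$, first solve $A_1f=F_1$. This gives $f$ with $\|f\|_{\mathcal X}\leqslant C\|F_1\|_{\mathcal Y}$.

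Next solve $A_2\xi=F_2-d\Phi(\nabla_\gamma f)$. For this I must check that $d\Phi(\nabla_\gamma f)$ lies in the second component of $\mathcal Y$, with norm at most $C\|f\|_{\mathcal X}$:
\begin{itemize}
\item At the center, $f-f(o)\in R^\nu C^{2,\alpha}_{\mathrm b}$ and $d\Phi=O_{\mathrm b}(R^{\nu-1})$ give a product in $R^{2\nu-2}\subset R^{\nu-2}$.
\item At rods and corners, in the frame \eqref{eq:target-rod-frame}, the term is $(ds(\nabla f),A_\Phi(\nabla f))$ with $ds=\mp dx/x+\dots$. Evenness of $f$ makes $x^{-1}\partial_xf$ a $C^{0,\alpha}$ function by Lemma~\ref{lem:division-lifts}(a). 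The transverse part $A_\Phi(\nabla f)$ is odd because $A_\Phi=O(x)$ has the right parity, by \eqref{eq:connection-regularity}.
\end{itemize}

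\emph{Step 2: the conformal equation.} Now solve for $b$. Write $b=b_0+b_1$, where $b_1$ is an extension of the boundary data $2b=f\mp\xi_s$ prescribed on the axes. I would take $b_1$ to be an explicit $C^{2,\alpha}$ function built from the corner charts: for instance, $\tfrac12(f-\xi_s)$ near $x=0$ and $\tfrac12(f+\xi_s)$ near $y=0$, glued with cutoffs. This is consistent at each corner because $\xi_s(c)=0$ by \eqref{eq:corner-symmetry}.

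Then $b_0$ vanishes on $\partial\Omega$ and solves
\[
A_3b_0=F_3-\bigl(L_3(f,\xi,b_1)-\Delta_\gamma b_0\text{-free part}\bigr),
\]
that is, $\Delta_\gamma b_0=F_3-L_3(f,\xi,b_1)+\Delta_\gamma b_1$. For Lemma~\ref{lem:grisvard} this right-hand side must vanish at each corner. This is where Lemma~\ref{lem:reduced-map} enters: $L$ maps $\mathcal X$ into $\mathcal Y$, so $L_3$ of any element of $\mathcal X$ vanishes at corners. I would therefore choose $b_1$ so that $(f,\xi,b_1)\in\mathcal X$; then $L_3(f,\xi,b_1)(c)=0$, and $\Delta_\gamma b_1$ should be absorbed so that the compatibility $F_3(c)=0$ transfers. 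I expect this bookkeeping of the corner compatibility, together with choosing the extension $b_1$, to be the main obstacle. If it fails, the fallback is a direct corner computation analogous to the proof of Lemma~\ref{lem:reduced-map}.

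\emph{Step 3: injectivity and the estimate.} Injectivity follows from the triangular structure together with uniqueness in Lax--Milgram for each block. For the $b$-block, the difference of two solutions has zero boundary values. Summing the componentwise estimates gives \eqref{cmp:comparison-estimate}.

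\emph{Step 4: compactness of $\mathcal K$.} The operator $\mathcal K q=(f+12e^{2a}b,\xi,0)$ is zeroth order, so I will show that $\mathcal X\hookrightarrow\mathcal Y$ is compact on these components:
\begin{itemize}
\item In the interior, on rods and at corners, the embedding $C^{2,\alpha}\to C^{0,\alpha}$ on compact charts is compact by Arzel\`a--Ascoli; the parity classes are preserved, since $\xi_\perp$ is odd.
\item At the center, $R^\nu C^{2,\alpha}_{\mathrm b}$ plus constants embeds compactly into $R^{\nu-2}C^{0,\alpha}_{\mathrm b}$ because $\nu>\nu-2$ leaves a gain of $R^2$, which is small near $o$ (Remark~\ref{rem:weight-restrictions}(2)).
\end{itemize}
The constants $f(o),b(o)$ span a finite-dimensional space, and $\xi^o$ lies in the weighted space, so this part is also compact.

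\emph{Conclusion.} Hence $L=P+\mathcal K$ is a compact perturbation of an isomorphism, and is therefore Fredholm of index zero.
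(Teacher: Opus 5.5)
Your architecture is the paper's: solve $A_1f=F_1$, then $A_2\xi=F_2-d\Phi(\nabla_\gamma f)$ by Lemma~\ref{lem:shifted-regularity}, extend the boundary data of $b$, solve a Dirichlet problem for the remainder with Lemma~\ref{lem:grisvard}, and get compactness of $\mathcal K$ from the weighted embedding near $o$. Your version of that embedding, constants plus $R^\nu C^{2,\alpha}_{\mathrm b}$ into $R^{\nu-2}C^{0,\alpha}_{\mathrm b}$, is equivalent to the paper's.

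\textbf{Step~2 is wrong as written, so surjectivity is not established.} Since $L_3$ depends on $b$ only through $\Delta_\gamma b$, you have $L_3(f,\xi,b_0+b_1)=L_3(f,\xi,b_1)+\Delta_\gamma b_0$. The equation to solve is therefore $\Delta_\gamma b_0=F_3-L_3(f,\xi,b_1)$, with no extra $+\Delta_\gamma b_1$.

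With your extra term, the right-hand side becomes $F_3+\tfrac12\langle d\ell,df\rangle_\gamma-\tfrac12\langle d\Phi,\nabla\xi\rangle_\gamma$. Up to a positive factor, its value at a corner is $\tfrac12\partial_x^2(f+\xi_s)(c)+\tfrac12\partial_y^2(f-\xi_s)(c)$, which is generically nonzero. So Lemma~\ref{lem:grisvard} does not apply. Even if you found a solution, the third component of $Pq$ would be $F_3+\Delta_\gamma b_1\neq F_3$.

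With the correct equation there is nothing to ``absorb''. $F_3(c)=0$ because $F\in\mathcal Y$, and $L_3(f,\xi,b_1)(c)=0$ because $(f,\xi,b_1)\in\mathcal X$ and $L$ maps $\mathcal X$ into $\mathcal Y$ by Lemma~\ref{lem:reduced-map}. This one line is exactly the paper's argument, and no fallback corner computation is needed.

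\textbf{Your cutoff gluing does not give $(f,\xi,b_1)\in\mathcal X$.} Near a corner, cutoffs that separate $\{x=0\}$ from $\{y=0\}$ must depend on the polar angle. Then $\chi(\theta)\xi_s$, with $\xi_s$ vanishing to second order at $c$, has second derivatives homogeneous of degree $0$, hence discontinuous at $c$. The condition $\xi_s(c)=0$ of \eqref{eq:corner-symmetry} is only a $C^0$ matching.

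Instead, build $b_1$ as follows:
\begin{enumerate}
\item Near each corner, take $b_1:=\tfrac12[f(x,y)+\xi_s(x,0)-\xi_s(0,y)]$. It is $C^{2,\alpha}$ and has the traces $\tfrac12(f\mp\xi_s)$ precisely because $\xi_s(c)=0$.
\item Near open rods, take $\tfrac12(f-\xi_s)$.
\item Patch these with a partition of unity subordinate to the charts, and set $b_1=0$ near $o$ so that \eqref{eq:central-domain} holds.
\item Average over $\mathcal G$, which preserves the prescribed boundary values.
\end{enumerate}
This gives $\|b_1\|_{\mathcal X}\leqslant C(\|f\|_{\mathcal X}+\|\xi\|_{\mathcal X})$, and together with Lemma~\ref{lem:grisvard} for $b_0$, the estimate \eqref{cmp:comparison-estimate}.
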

\begin{proof}
Let $F\in\mathcal Y$. By Lemma~\ref{lem:shifted-regularity}, the weak solution $f$ of $A_1f=F_1$, and then the weak solution $\xi$ of $A_2\xi=F_2-d\Phi(\nabla_\gamma f)$, have the regularity of the components of $\mathcal X$, with $\|f\|_{\mathcal X}+\|\xi\|_{\mathcal X}\leqslant C\|F\|_{\mathcal Y}$. Indeed, $d\Phi(\nabla_\gamma f)$ is bounded in the second component of $\mathcal Y$ by $C\|f\|_{\mathcal X}$: at $o$ by \eqref{eq:central-domain}, and at the axes by \eqref{eq:divided-even-norm} for $\langle ds,df\rangle_\gamma$ and by the parities of \eqref{eq:connection-form-local} for the transverse component $\langle A_\Phi,df\rangle_\gamma$.

Next, let $b_0$ be an extension of the boundary values $\frac{1}{2}(f\mp\xi_s)$ prescribed by \eqref{eq:conic-boundary} and \eqref{eq:corner-domain}. We choose it as follows: near a corner, $b_0(x,y):=\frac{1}{2}\bigl[f(x,y)+\xi_s(x,0)-\xi_s(0,y)\bigr]$, which has these boundary values since $\xi_s(c)=0$ by \eqref{eq:corner-symmetry}, and near an open rod, $b_0:=\frac{1}{2}(f-\xi_s)$. Patch these extensions near $\partial\Omega$ by a partition of unity, so that $b_0=0$ near $o$, and average over $\mathcal G$. The average keeps the prescribed boundary values, which are $\mathcal G$-invariant because $\mathcal G$ maps rod frames to rod frames. Then $(f,\xi,b_0)\in\mathcal X$, with $\|b_0\|_{\mathcal X}\leqslant C(\|f\|_{\mathcal X}+\|\xi\|_{\mathcal X})$. Since $L$ maps $\mathcal X$ to $\mathcal Y$, $L_3(f,\xi,b_0)$ vanishes at the corners, so $F_3-L_3(f,\xi,b_0)$ belongs to the third component of $\mathcal Y$. Lemma~\ref{lem:shifted-regularity} gives the solution $v$ of $A_3v=F_3-L_3(f,\xi,b_0)$ with $v|_{\partial\Omega}=0$, and $q:=(f,\xi,b_0+v)\in\mathcal X$ satisfies $Pq=F$ and \eqref{cmp:comparison-estimate}.

Conversely, if $Pq=0$ for some $q=(f,\xi,b)\in\mathcal X$, then $f=0$ and $\xi=0$, by uniqueness of the weak solutions, so $b$ vanishes on $\partial\Omega$ and $A_3b=0$, hence $b=0$. So $P$ is an isomorphism.

Away from $o$, compactness of $\mathcal K$ follows from the compact inclusion $C^{2,\alpha}\hookrightarrow C^{0,\alpha}$. Near $o$, the formula for $\mathcal Kq$ and \eqref{eq:central-domain} give a bounded map into $C^{2,\alpha}_{\mathrm b}$, since the background coefficient $e^{2a}$ is bounded in $C^{2,\alpha}_{\mathrm b}$. Since $\nu<2$, the embedding
\begin{equation}\label{cmp:compact-weighted-embedding}
 C^{2,\alpha}_{\mathrm b}\hookrightarrow R^{\nu-2}C^{0,\alpha}_{\mathrm b}
\end{equation}
is compact \cite[Lemma~2.2]{PR00}, so $\mathcal K:\mathcal X\to\mathcal Y$ is compact.
\end{proof}
Together with Proposition~\ref{prop:kernel}, this proves Proposition~\ref{prop:invertibility}.

\subsection{Local continuation}\label{subsec:local-continuation}

\begin{corollary}\label{cor:local}
Let
$g_*$ be as in Section~\ref{sec:spaces}, so that $\mathcal E(\beta_*,0)=0$
with $0<\beta_*<1$, or $g_*=g_1$ with $\beta_*=1$. There are an open
interval $I_*$ containing $\beta_*$ and a smooth map
$q:I_*\to\mathcal X$, with $q(\beta_*)=0$, such that, for $\beta\in I_*$, we have
\[
 \mathcal E(\beta,q(\beta))=0.
\]
For $(\beta,\widetilde q)$ sufficiently close to $(\beta_*,0)$,
$\mathcal E(\beta,\widetilde q)=0$ implies $\widetilde q=q(\beta)$.
For $g_*=g_1$, the metrics $g_{\beta,q(\beta)}$, $\beta\in I_*\cap(0,1]$, form an Einstein branch starting at $g_1$, with the prescribed symmetries and smooth rods and corners, by Proposition~\ref{prop:recovery} and Corollary~\ref{cor:fixed-set-regularity}.
\end{corollary}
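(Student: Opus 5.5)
The plan is to apply the implicit function theorem in Banach spaces to the map $\mathcal E:J\times\mathcal U_0\to\mathcal Y$ of Lemma~\ref{lem:reduced-map}. Fix the compact interval $J\ni\beta_*$ with $\beta_*$ interior and the weight $\nu$ of \eqref{eq:weight-choice}. Lemma~\ref{lem:reduced-map} gives that $\mathcal E$ is smooth jointly in $(\beta,q)$. Here the $\beta$-dependence enters only through the reference metrics $\gamma_\beta$ and the pullbacks by $\Psi_\beta$, which depend smoothly on $\beta$ in the fixed coordinates. Moreover $\mathcal E(\beta_*,0)=0$ because $g_*$ is Einstein. Proposition~\ref{prop:invertibility} states that $D_q\mathcal E(\beta_*,0)=L:\mathcal X\to\mathcal Y$ is an isomorphism. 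The implicit function theorem then yields an open interval $I_*\subset J$ around $\beta_*$, a neighborhood $\mathcal V\subset\mathcal U_0$ of $0$, and a smooth map $q:I_*\to\mathcal V$ with $q(\beta_*)=0$. For $(\beta,\widetilde q)\in I_*\times\mathcal V$, the equation $\mathcal E(\beta,\widetilde q)=0$ holds exactly when $\widetilde q=q(\beta)$, and this gives both the existence and the local uniqueness claims.

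The point to check carefully is that the same spaces $\mathcal X,\mathcal Y$ serve for all $\beta\in J$. The weight $\nu$ satisfies $\nu<2/\beta_+$ uniformly on $J$, so the mapping property in Lemma~\ref{lem:reduced-map} holds on all of $J$, and the corner compatibility $\mathcal E_3(c)=0$ holds for every $(\beta,q)$. This is why the domain and target do not move with $\beta$, which is what the implicit function theorem requires. At $\beta_*=1$ we take $J$ with $\beta_+<2$, and $g_1$ satisfies \eqref{eq:background-central} by \eqref{eq:seed-bounds}.

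For the final assertion with $g_*=g_1$, take $\beta\in I_*\cap(0,1]$ and set $g=g_{\beta,q(\beta)}$. By construction $g$ is invariant under the torus and under $\mathcal G$, since $q(\beta)\in\mathcal X$ has the prescribed symmetries. Proposition~\ref{prop:recovery} upgrades the reduced equations to $\Ric_g=3g$ on $\pi^{-1}(\Omega\setminus\{o\})$. Corollary~\ref{cor:fixed-set-regularity} gives smooth extension across the rods and corners, and hence $\Ric_g=3g$ on $M\setminus\Sigma$ by continuity. The cone angle along $\Sigma$ is $2\pi\beta$: near $o$ we have $h=e^{2a}\gamma_\beta$ with $a-a(o)\in R^\nu C^{2,\alpha}_{\mathrm b}$, and $G$ extends positively to $o$ by \eqref{eq:central-domain}. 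So, to leading order, $g$ is a constant multiple of $dR^2+\beta^2R^2d\theta^2$ plus the flat $g_\Sigma$.

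I expect no serious obstacle here, since the analytic content lies in Proposition~\ref{prop:invertibility} and Lemma~\ref{lem:reduced-map}. The one delicate point is the smoothness of $\mathcal E$ in $\beta$ near the center. The coefficients of $\Delta_{\gamma_\beta}$ in the fixed coordinates are explicit and smooth in $\beta$, and $K_{\gamma_\beta}$ vanishes on $\{R<R_0\}$, so this should follow directly.
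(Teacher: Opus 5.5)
Your proposal is correct and follows the paper's proof. It applies the implicit function theorem to the smooth map $\mathcal E:J\times\mathcal U_0\to\mathcal Y$ of Lemma~\ref{lem:reduced-map}, using the invertibility of $D_q\mathcal E(\beta_*,0)=L$ from Proposition~\ref{prop:invertibility}, and then invokes Proposition~\ref{prop:recovery} and Corollary~\ref{cor:fixed-set-regularity} for the Einstein branch.

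One minor correction: the condition $\nu<2/\beta_+$ is not what makes $\mathcal E$ map into $\mathcal Y$ on all of $J$. That mapping property only uses $\nu>0$. The bound on $\nu$ enters through the invertibility of $L$ (Remark~\ref{rem:weight-restrictions}(3)), which the implicit function theorem needs only at $\beta_*$.
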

\begin{proof}
By Proposition~\ref{prop:invertibility}, the derivative in $q$
is invertible. The implicit function theorem applied to the smooth map
\eqref{eq:reduced-map} gives the curve and its local uniqueness.
\end{proof}

\begin{remark}
The strict inequality $\nu<2$ is what allows one pair of spaces
$\mathcal X,\mathcal Y$ for all $\beta$ near $1$. The target modes
$R^{2/\beta}e^{\pm2i\theta}$ have degree $\frac{2}{\beta}$, which crosses $2$ at
$\beta=1$. With a weight $\nu<2$, they and their $\beta$-derivatives stay
in $R^\nu C^{2,\alpha}_{\mathrm b}$: for instance
$\partial_\beta R^{2/\beta}=-2\beta^{-2}R^{2/\beta}\log R$ equals
$-2R^2\log R$ at $\beta=1$.
\end{remark}

\section{Compactness and continuation}
\label{sec:compactness}

We continue the branch of Corollary~\ref{cor:local} at $g_1$ to
every angle in $(0,1)$ by proving compactness on each
compact interval $I\subset(0,1)$.
A priori, the continuation could fail because the metrics collapse, because their curvature concentrates, or because the coordinates degenerate, and we rule out these three possibilities.

In this section and in Section~\ref{sec:collapse}, $g$ denotes a metric \eqref{eq:matrix-ansatz}, with the symmetries of Section~\ref{sec:geometry} or their lifts to the orbit space $\Omega^{(k)}$ of Section~\ref{sec:proof-main}, in one of the following two classes:
\begin{enumerate}[label=\textup{(C\arabic*)}]
\item $g=g_{\beta,q}$ as in \eqref{eq:perturbed-metric}, with $0<\beta<1$ and $\mathcal E(\beta,q)=0$, or
\item $g=g^{(k)}$, $k\geqslant1$, one of the smooth Einstein metrics of Section~\ref{sec:proof-main}, with $g^{(1)}=g_1$.
\end{enumerate}
Class (C2) is used only in Section~\ref{sec:collapse}, once these metrics have been constructed. We use the normalized orbit area $\psi:=\rho/\rho(o)$.
In both classes, $\psi$ is continuous on the closed disk $\overline\Omega$, positive on $\Omega$ and zero on $\partial\Omega$, by \eqref{eq:matrix-smooth-side} and \eqref{eq:matrix-smooth-corner}, and $\Phi(o)=\Phi_o$ by \eqref{eq:central-torus-metric}. In class (C2), the Fourier modes of $\ell$ and $\Phi$ in the normal angle $\beta\theta$ at $o$, $\beta=\frac{2}{k+1}$, have indices $j/\beta$ with $j$ as in Lemma~\ref{lem:central-modes}, so none has index $\pm1$, and $d\ell(o)=0$, $d\Phi(o)=0$.

All statements about convergence of coefficients use the fixed coordinates defined by $\Psi_\beta$ in \eqref{eq:fixed-cone-identification}.  Write
\[
 g=h+G_{ij}\,d\phi_i d\phi_j,\qquad
 h=e^{2a}\gamma_\beta,\qquad\text{and}\qquad G=\rho\Phi.
\]
 We use the superlevel sets $E_t:=\{\psi\geqslant t\}$, $t\in(0,1]$.
By Corollary~\ref{cor:concavity} below, $E_t$ is, for $t<1$, a closed disk with smooth boundary $\partial E_t=\{\psi=t\}$ and interior $\{\psi>t\}$. We call $E_t$ a \emph{core}, and note that $E_1 = \{o\}$. The normalization separates the central torus area scale $\rho(o)$ from the relative orbit area $\psi$, and the core estimates of Section~\ref{sec:intrinsic-cores} are independent of $\rho(o)$; for instance, $\psi=\cos x\cos y$ for $g_1$ in the square case, and $\psi$ tends to $r^2$ in Theorem~\ref{thm:collapsed-limit}.

\subsection{Geometric estimates and noncollapsing}
\label{sec:noncollapse-first}
 The results in this section use only \eqref{eq:matrix-reduced} and the regularity proved in Section~\ref{sec:spaces}.

\subsubsection{Estimates on the curvature and orbit areas}

Set
$B:=\Phi^*g_{\mathbb H^2}$ and recall the notation $\ell=\log\rho$. Proposition~\ref{prop:recovery} and
\eqref{eq:matrix-ricci} to \eqref{eq:matrix-reduced} give
\begin{equation}\label{eq:Einstein-identities}
 \begin{gathered}
 \Delta_h\rho=-6\rho,\qquad
 \tau_h\Phi+d\Phi(\nabla_h\ell)=0,\\
 \operatorname{Hess}_h\ell=(K_h-3)h-\tfrac12(d\ell\otimes d\ell+B),
 \qquad\text{and}\qquad 4K_h=|d\Phi|_h^2-|d\ell|_h^2.
 \end{gathered}
\end{equation}

\begin{lemma}\label{lem:bochner}
For $g$ in (C1) or (C2), we have $K_h\leqslant1$ on $\Omega$.
\end{lemma}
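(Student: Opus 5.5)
The plan is to prove $K_h\leqslant1$ by a maximum principle for $K_h$, using Bochner-type identities for $\ell$ and $\Phi$ derived from \eqref{eq:Einstein-identities}, after reducing to a situation where the maximum is attained at an interior point. I would first note, as in the proof of Proposition~\ref{prop:spectrum}, that $K_h$ is the Gauss curvature of the totally geodesic surfaces fixed by $\mathsf c$. These surfaces complete across the rods and corners to the surface $\bar S$ made of copies of $\overline\Omega$ (Section~\ref{sec:strategy}). Since $g$ is smooth across $\pi^{-1}(\partial\Omega)$ by Corollary~\ref{cor:fixed-set-regularity}, or by construction in class (C2), $K_h$ extends continuously, even smoothly, across the rods and corners: these become interior points of $\bar S$. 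Near $o$, \eqref{eq:center-regularity} gives $K_h\to0$ in class (C1); in class (C2), the point $o$ is a smooth point. Hence if $\sup K_h>1$, the supremum is attained at a point $p$ which is either in $\Omega\setminus\{o\}$ or a smooth interior point of $\bar S$ over $\partial\Omega$. In the latter case I would run the same computation in the regularized variables, or in the lifted Cartesian coordinates \eqref{eq:cartesian}, where the equations are smooth.

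Next I would derive a differential inequality for $K_h$ with respect to the drift Laplacian $\Delta_\mu=\Delta_h+\langle d\ell,d\,\cdot\rangle_h$. Write $4K_h=|d\Phi|_h^2-|d\ell|_h^2$. For the $\Phi$ part, I would use the Bochner formula for the weighted harmonic map $\tau_h\Phi+d\Phi(\nabla_h\ell)=0$ into $\mathbb H^2$, whose target curvature is $-1$:
\[
 \tfrac12\Delta_\mu|d\Phi|^2\geqslant|\nabla d\Phi|^2+\bigl(K_h h+\tfrac12\,\text{(the Ricci-type term from the drift)}\bigr)(d\Phi,d\Phi).
\]
For the $\ell$ part, I would use the Hessian equation $\operatorname{Hess}_h\ell=(K_h-3)h-\tfrac12(d\ell^2+B)$ directly: differentiating it and contracting produces an exact expression for $\Delta_\mu|d\ell|^2$ involving $dK_h$, $|d\ell|^2$ and $\langle B,d\ell^2\rangle$. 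Combining the two, I expect an identity of the form $\Delta_\mu K_h=(\text{nonnegative squares})+c\,(K_h-1)\,Q+\langle X,dK_h\rangle$. Here $Q\geqslant0$ is built from $|d\ell|_h^2$ and $|d\Phi|_h^2$, and the factor $K_h-1$ arises from $K_h-3$ in the Hessian equation combined with the $+2K_h$ contributions of the Bochner curvature terms. At an interior maximum with $K_h(p)>1$ we have $dK_h=0$ and $\Delta_\mu K_h\leqslant0$, which contradicts the identity unless $Q(p)=0$. In the degenerate case $d\ell(p)=0=d\Phi(p)$, the relation $4K_h=|d\Phi|^2-|d\ell|^2$ gives $K_h(p)=0$, which is also a contradiction.

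The main obstacle is the algebra of the combined Bochner identity: checking that the zeroth-order terms really factor with sign $(K_h-1)$ times a nonnegative quantity. In particular, the mixed term $\langle B,d\ell\otimes d\ell\rangle$ and the trace-free part of $\operatorname{Hess}_h\ell$ must be absorbed, using $|\operatorname{Hess}\ell|^2\geqslant\tfrac12(\Delta\ell)^2$ and Cauchy--Schwarz on $B$. If this direct factorization fails, the fallback I would try first is to apply the maximum principle to $\rho^{\kappa}(K_h-1)$, or to $K_h$ plus a multiple of $|d\ell|_h^2$, for a suitable exponent or multiple. This uses $\Delta_h\rho=-6\rho$ to generate the extra good-sign terms that absorb the cross terms.
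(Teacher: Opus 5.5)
\textbf{Gap: the key identity for $K_h$ is anticipated but not derived, and the tools you propose for it would not produce it.} Your architecture is the paper's: subtract the drift Bochner formula for $\ell$ from the Eells--Sampson formula for $\Phi$, then apply a maximum principle. Write $\Delta_g=\Delta_h+\langle d\ell,d\,\cdot\,\rangle_h$, so that $2\Delta_gK_h=\frac12\Delta_g|d\Phi|_h^2-\frac12\Delta_g|d\ell|_h^2$.

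\begin{itemize}
\item The term $|\Hess_h\ell|^2$ therefore enters with a minus sign. The inequality $|\Hess_h\ell|^2\geqslant\frac12(\Delta_h\ell)^2$ goes the wrong way. No estimate is needed there anyway, since \eqref{eq:Einstein-identities} gives $\Hess_h\ell$ exactly.
\item Inserting $\Hess_h\ell$ exactly gives, with norms for $h$,
\[
 2\Delta_gK_h=|\nabla d\Phi|^2+4K_h^2-2(K_h-3)^2+2\det\nolimits_hB+\tfrac14|B|^2+2(K_h-3)|d\ell|^2-\tfrac34|d\ell|^4-\tfrac12B(\nabla_h\ell,\nabla_h\ell).
\]
\item Take $B$ of rank one along $d\ell$, which is the equality case of Cauchy--Schwarz, and put $L:=|d\ell|^2$, so $|d\Phi|^2=4K_h+L$. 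All terms except $|\nabla d\Phi|^2$ then sum to $8K_h^2-2(K_h-3)^2+2(K_h-3)L-L^2$. This is negative for large $L$ even when $K_h>1$.
\end{itemize}
So the zeroth-order terms do not factor as $(K_h-1)Q$ plus squares. At a maximum of $K_h$ you need a lower bound for $|\nabla d\Phi|^2$ in terms of first-order quantities, and nothing in your sketch supplies it.

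\textbf{The missing idea is a completed square.} Put $\mathcal D:=\rho^{-1}\operatorname{Sym}\nabla(\rho\,d\Phi)=\nabla d\Phi+\operatorname{Sym}(d\ell\otimes d\Phi)$. Its trace $\tau_h\Phi+d\Phi(\nabla_h\ell)$ vanishes by the shape equation. Expanding,
\[
 |\mathcal D|^2=|\nabla d\Phi|^2+\langle d\ell,d|d\Phi|^2\rangle+\tfrac12\bigl(|d\ell|^2|d\Phi|^2+B(\nabla_h\ell,\nabla_h\ell)\bigr).
\]
Since $d|d\Phi|^2=d|d\ell|^2+4\,dK_h$, we have $\langle d\ell,d|d\Phi|^2\rangle=2\Hess_h\ell(\nabla_h\ell,\nabla_h\ell)+4\langle d\ell,dK_h\rangle$. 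Substituting this turns the formula above into
\[
 \Delta_gK_h+2\langle d\ell,dK_h\rangle_h=\tfrac12|\mathcal D|_h^2+3(K_h-1)(K_h+3)+\tfrac34\det\nolimits_hB,
\]
where the factor comes from $8K_h^2-2(K_h-3)^2=6(K_h-1)(K_h+3)$. Your $Q$ is thus $K_h+3$, which is positive wherever $K_h>1$. An interior maximum with $K_h>1$ is then excluded at once, and neither your degenerate case nor the auxiliary weights are needed.

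\textbf{Boundary treatment.} With this identity your argument on $\bar S$ also closes, but it needs one observation you leave implicit. The left-hand side is $(\Delta_h+3\langle d\ell,d\,\cdot\,\rangle_h)K_h$, which has the singular drift $3x^{-1}\partial_x$ in a reflected rod chart. Since $K_h$ is smooth and even there, the left-hand side extends to the axis with value $e^{-2\bar a}(4\partial_x^2K_h+\partial_y^2K_h)\leqslant0$ at a maximum on the axis. The same holds at corners. Meanwhile the right-hand side is at least $3(K_h-1)(K_h+3)$.

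The paper avoids any analysis on the axes by applying the minimum principle on $\Omega$ to $F=\rho(1-K_h)$. This function vanishes on $\partial\Omega$, equals $\rho(o)>0$ at $o$, and satisfies $\bigl[\Delta_g-\bigl(3+\tfrac14|d\ell|_h^2+\tfrac34|d\Phi|_h^2\bigr)\bigr]F\leqslant0$; the weight $\rho$ absorbs the extra drift term.
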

\begin{proof}
Define a $\Phi^*T\mathbb H^2$-valued symmetric $2$-tensor as follows
\[
 \mathcal D:=\rho^{-1}\operatorname{Sym}\nabla(\rho\,d\Phi),
 \qquad\text{so that}\qquad
 \operatorname{tr}_h\mathcal D
 =\tau_h\Phi+d\Phi(\nabla_h\ell)=0.
\]
 We first prove the identity
 \begin{equation}\label{eq:curvature-identities}
 \Delta_gK_h+2\langle d\ell,dK_h\rangle_h=\tfrac12|\mathcal D|_h^2+3(K_h-1)(K_h+3)+\tfrac34\det\nolimits_hB,
\end{equation}
where $\det_hB=\det(h^{-1}B)\geqslant0$.
The Bochner formula for the drift Laplacian $\Delta_g=\Delta_h+\langle d\ell,d\,\cdot\,\rangle_h$ of \eqref{eq:geometric-laplacian}, applied to $\ell$, and the Eells-Sampson Bochner formula \cite{ES64} for $\Phi$, whose tension field is $\tau_h\Phi=-d\Phi(\nabla_h\ell)$ and whose target has curvature $-1$, give (see \cite[(2.2) to (2.4)]{CZ17} for the drift terms)
\begin{equation}\label{eq:bochner-pair}
\begin{aligned}
 \tfrac12\Delta_g|d\ell|^2&=|\Hess_h\ell|^2+K_h|d\ell|^2-\Hess_h\ell(\nabla_h\ell,\nabla_h\ell),\\
 \tfrac12\Delta_g|d\Phi|^2&=|\nabla d\Phi|^2+K_h|d\Phi|^2-\langle\Hess_h\ell,B\rangle+|d\Phi|^4-|B|^2,
\end{aligned}
\end{equation}
where we used $\Delta_h\ell=-6-|d\ell|^2$ and all norms are taken with respect to $h$. In \eqref{eq:bochner-pair}, we substitute  \eqref{eq:Einstein-identities} and the identity $|d\Phi|^4-|B|^2=2\det_hB$, valid for a symmetric form $B$ on a surface.
The definition of $\mathcal D$ gives by expanding
\[
\begin{aligned}
 |\mathcal D|^2
 &=|\nabla d\Phi|^2+\langle d\ell,d|d\Phi|^2\rangle\\
 &\quad+\tfrac12\bigl(|d\ell|^2|d\Phi|^2+
                   B(\nabla_h\ell,\nabla_h\ell)\bigr).
\end{aligned}
\]
Subtracting the two Bochner identities \eqref{eq:bochner-pair} and using
$d|d\Phi|^2=d|d\ell|^2+4\,dK_h$ therefore gives
\eqref{eq:curvature-identities}.

\emph{The bound $K_h\leqslant1$.}  Put $F:=\rho(1-K_h)$. Multiplying $1-K_h$ by $\rho$ gives zero boundary values at the rods and corners and absorbs the extra drift term into the zeroth-order coefficient. Since $\Delta_g\rho=(|d\ell|_h^2-6)\rho$, by \eqref{eq:Einstein-identities} and \eqref{eq:geometric-laplacian}, the identity \eqref{eq:curvature-identities} and $4K_h=|d\Phi|_h^2-|d\ell|_h^2$ give
\[
 \Bigl[\Delta_g-\Bigl(3+\tfrac14|d\ell|_h^2+\tfrac34|d\Phi|_h^2\Bigr)\Bigr]F=-\tfrac\rho2|\mathcal D|_h^2-\tfrac{3\rho}4\det\nolimits_hB\leqslant0.
\]
The  smooth rod and corner models make $K_h$  bounded up to $\partial\Omega$, so $F$ extends continuously by $0$ to $\partial\Omega$. Moreover $F(o)=\rho(o)>0$, since $K_h(o)=0$, by \eqref{eq:center-regularity} in class (C1), and by \eqref{eq:Einstein-identities} and $d\ell(o)=d\Phi(o)=0$ in class (C2). A negative minimum of $F$ would therefore lie in $\Omega\setminus\{o\}$, where the left-hand side would be positive. Hence $F\geqslant0$, that is, $K_h\leqslant1$.

\end{proof}

\begin{corollary}\label{cor:concavity}
Let $g$ be in (C1) or (C2). Then, we have
\begin{equation}\label{eq:concavity}
 \frac{\Hess_h\sqrt\psi}{\sqrt\psi}=\tfrac12(K_h-3)h-\tfrac14B\leqslant-h.
\end{equation}
Consequently $o$ is the only critical point of $\psi$, $\psi(o)=\max\psi=1$, every superlevel set $E_t$ with $0<t<1$ is a closed disk with smooth boundary $\partial E_t=\{\psi=t\}$, and, for $q\in\Omega\setminus\{o\}$, we have
\begin{equation}\label{eq:base-radius}
\begin{gathered}
 4\psi(1-\psi)\leqslant|d\psi|_h^2\leqslant4(1-\psi^2),\qquad
 d_h(o,q)\leqslant\arccos\sqrt{\psi(q)},\qquad d_h(q,\partial\Omega)\leqslant\arcsin\sqrt{\psi(q)},\\
 \text{and}\qquad d_h(q,E_\delta)\leqslant\arcsin\sqrt\delta-\arcsin\sqrt{\psi(q)}\qquad\text{if }0<\psi(q)<\delta<1.
\end{gathered}
\end{equation}
In particular, for $q,q'\in\Omega$, we have
\begin{equation}\label{eq:coord-psi-distance}
 |\psi(q)-\psi(q')|\leqslant2d_h(q,q'),\qquad\text{and}\qquad
 \psi(q)\geqslant\min\bigl\{\tfrac12,\tfrac12d_h(q,\partial\Omega)^2\bigr\}.
\end{equation}
\end{corollary}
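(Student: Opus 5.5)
The identity \eqref{eq:concavity} is a direct computation. Write $\sqrt\psi=\rho(o)^{-1/2}e^{\ell/2}$, so
\[
\frac{\Hess_h\sqrt\psi}{\sqrt\psi}=\tfrac12\Hess_h\ell+\tfrac14 d\ell\otimes d\ell .
\]
Substituting the Hessian identity of \eqref{eq:Einstein-identities} cancels the $d\ell\otimes d\ell$ terms and leaves $\tfrac12(K_h-3)h-\tfrac14 B$. Since $B=\Phi^*g_{\mathbb H^2}\geqslant0$ and $K_h\leqslant1$ by Lemma~\ref{lem:bochner}, we get $\frac{\Hess_h\sqrt\psi}{\sqrt\psi}\leqslant -h$.

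From this, $u:=\sqrt\psi$ is continuous on $\overline\Omega$, vanishes on $\partial\Omega$, is positive inside, and satisfies $\Hess_h u\leqslant -u\,h$. Near $o$, the expansion \eqref{eq:center-refined} in class (C1), or smoothness together with $d\ell(o)=0$ in class (C2), shows that $o$ is a strict local maximum with $\psi(o)=1$.

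For the gradient bounds I would use the auxiliary function $Q:=|du|_h^2+u^2$. From $\Hess u\leqslant -uh$ we get
\[
\langle\nabla Q,\nabla u\rangle=2\Hess u(\nabla u,\nabla u)+2u|\nabla u|^2\leqslant0,
\]
so $Q$ is nonincreasing along the gradient flow of $u$. The flow lines of $\nabla u$ run from $\partial\Omega$, or from critical points, up to maxima. Along them $Q$ decreases toward the top, so $Q\geqslant Q(\text{max})=u_{\max}^2$. This gives the lower bound $|du|^2\geqslant u_{\max}^2-u^2$, which excludes critical points other than global maxima.

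Uniqueness of the maximum should come from the symmetries and a sweepout argument. Strict concavity $\Hess u<0$ at any critical point makes every critical point a nondegenerate maximum. A disk whose superlevel sets evolve by Morse theory with only maxima and boundary value $0$ has exactly one maximum, by an Euler characteristic count, provided the boundary is a regular level set. That regularity follows from the rod models: $\psi\sim x$ at a rod and $\psi\sim xy$ at corners, so $\sqrt\psi$ behaves like $\sqrt{x}$, hence $|du|\to\infty$ at the boundary. So $o$ is the unique critical point and $\max\psi=1$. The sets $E_t$ are then disks with smooth boundary by the Morse lemma.

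Rewriting $|du|^2\geqslant 1-u^2$ in terms of $\psi$ gives $|d\psi|^2=4\psi|du|^2\geqslant4\psi(1-\psi)$. For the upper bound, apply the same monotonicity downward from the boundary side, where $Q$ is largest. We need to control $Q$ at the rods: there $|du|_h^2\to e^{\bar\ell-2\bar a}\rho(o)^{-1}/4$ in the rod charts, and the angle equality \eqref{eq:angle-equalities} fixes this to be $1$ after normalization. That gives $Q\leqslant1$, though I am unsure about that constant, which I would expect to work out via the corner values. It yields $|d\psi|^2\leqslant4\psi(1-\psi)\cdot$ const. The stated $4(1-\psi^2)$ suggests a cruder bound of the form $|du|^2\leqslant (1-u^4)/u^2$. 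I would try comparing along flow lines with the ODE $u''=-u$, which I expect to be the main obstacle.

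The distance bounds come from integrating along integral curves of $\nabla u/|\nabla u|$. In the variable $\theta=\arcsin u$ we have
\[
|d\theta|=\frac{|du|}{\sqrt{1-u^2}}\geqslant1,
\]
so moving from $q$ up to $o$ costs length at most $\arccos\sqrt{\psi(q)}$. Moving down to $\partial\Omega$ costs at most $\arcsin\sqrt{\psi(q)}$, and moving to $E_\delta$ costs at most $\arcsin\sqrt\delta-\arcsin\sqrt{\psi(q)}$.

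Finally, \eqref{eq:coord-psi-distance} follows from $|d\psi|\leqslant2$ together with the last inequality inverted: $d_h(q,\partial\Omega)\leqslant\arcsin\sqrt{\psi}\leqslant\tfrac{\pi}{2}\sqrt\psi$, which gives $\psi\gtrsim d^2$, while the cap at $\tfrac12$ handles points near $o$.
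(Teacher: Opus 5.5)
Your identity \eqref{eq:concavity} is correct. So is the lower bound $|d\psi|_h^2\geqslant4\psi(1-\psi)$ via monotonicity of $Q=|du|_h^2+u^2$ along the ascending flow; this is the paper's $\frac{d}{dt}(\mathsf w/t)\leqslant-4$ in disguise, with $\mathsf w=|d\psi|_h^2$ along trajectories parametrized by $t=\psi$. The three distance bounds are also correct, and they use only this lower bound.

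The gap is the upper bound $|d\psi|_h^2\leqslant4(1-\psi^2)$, which you also need for $|\psi(q)-\psi(q')|\leqslant2d_h(q,q')$. Controlling $Q$ from $\partial\Omega$ cannot work. At a rod, $\psi=xe^{\bar\ell}/\rho(o)$, so $|du|_h^2\sim e^{\bar\ell-2\bar a}/(4\rho(o)x)\to+\infty$. You dropped the factor $x^{-1}$, which contradicts your own earlier remark that $|du|\to\infty$. The angle equality only turns $e^{\bar\ell-2\bar a}$ into $e^{\bar s}$, which is not normalized. More fundamentally, $\Hess_hu\leqslant-u\,h$ bounds all second derivatives from above. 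Along flow lines it therefore controls $Q$ only by its values further down, which blow up at the rods. What an upper gradient bound integrated from the top needs is a \emph{lower} bound on the normal second derivative, and concavity does not provide one. Comparison with $u''=-u$ will not help either.

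The missing ingredient is the trace equation $\Delta_h\psi=-6\psi$ from \eqref{eq:Einstein-identities}.
\begin{itemize}
\item For a unit tangent $T$ to a level set, $d\psi(T)=0$, so $\Hess_h\psi(T,T)=2\sqrt\psi\,\Hess_h\sqrt\psi(T,T)\leqslant-2\psi$.
\item Hence, for the unit normal $n$, $\Hess_h\psi(n,n)=-6\psi-\Hess_h\psi(T,T)\geqslant-4\psi$.
\item Along a trajectory with $t=\psi$, this reads $\frac{d\mathsf w}{dt}=2\Hess_h\psi(n,n)\geqslant-8t$.
\item Integrate from $t$ up to $1$, where $\mathsf w\to0$ at $o$, to get $\mathsf w\leqslant4(1-t^2)$.
\end{itemize}
So the upper bound comes from the top, not from the boundary.

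Two smaller points.
\begin{itemize}
\item Your claim that $|du|_h\to\infty$ on all of $\partial\Omega$ fails at the corners. There $\psi\sim xy$, $d\psi$ vanishes in the chart, and $|du|_h$ stays bounded along the diagonal. You need neither boundary regularity nor the symmetries for uniqueness of the maximum. All critical points are nondegenerate maxima, so components of $\{\psi\geqslant t\}$ never merge as $t$ decreases. A path in $\Omega$ from $o$ to a second maximum lies in $\{\psi\geqslant t\}$ for small $t>0$, which is a contradiction.
\item The estimate $\arcsin\sqrt\psi\leqslant\frac\pi2\sqrt\psi$ only gives $\psi\geqslant\frac{4}{\pi^2}d_h(q,\partial\Omega)^2$. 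To get the constant $\frac12$, use $\arcsin\sqrt\psi\leqslant\sqrt{2\psi}$ for $\psi\leqslant\frac12$.
\end{itemize}
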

\begin{proof}
The identity in \eqref{eq:concavity} is the Hessian formula in \eqref{eq:Einstein-identities}, since $\frac{\Hess_h\sqrt\psi}{\sqrt\psi}=\frac{1}{2}\Hess_h\ell+\frac{1}{4}d\ell\otimes d\ell$, and the inequality follows from $K_h\leqslant1$ and $B\geqslant0$.
\medskip

\noindent\emph{Critical points.} At a critical point $p\neq o$, \eqref{eq:concavity} gives $\Hess_h\psi=2\sqrt\psi\,\Hess_h\sqrt\psi\leqslant-2\psi h$, so $p$ is a nondegenerate local maximum. When $o$ is conic, \eqref{eq:center-refined} gives $\psi=1-\frac{1}{2}e^{2a(o)}R^2+O_{\mathrm b}(R^{2+\varepsilon})$, so that $\partial_R\psi<0$ for small $R>0$. When $o$ is smooth, $K_h(o)=0$ and $\Hess_h\psi(o)=-3h(o)$ by \eqref{eq:Einstein-identities}. In both cases, $o$ is a strict local maximum, and the superlevel sets of $\psi$ near $o$ are disks.
\medskip

\noindent
\emph{Superlevel sets are disks.} Since $\psi>0$ on $\Omega$ and $\psi$ vanishes continuously on $\partial\Omega$, every superlevel set $\{\psi\geqslant t\}$, $t>0$, is compact. As all critical points are local maxima, decreasing $t$ only adds disjoint disks, one at each critical point, and never merges two components \cite[Theorems~3.1 and~3.2]{Mil63}. If $\psi$ had a critical point $p\neq o$, then $o$ and $p$ would lie in distinct components of $\{\psi\geqslant t\}$ for $t$ below $\min\{\psi(o),\psi(p)\}$, hence for every smaller $t>0$. This is impossible for $t$ below the positive minimum of $\psi$ on a path joining $o$ to $p$ in $\Omega$. Hence $o$ is the only critical point of $\psi$, $\max\psi=\psi(o)=1$, and every $E_t$, $0<t<1$, is a closed disk with smooth boundary $\{\psi=t\}$.
\medskip

\noindent
\emph{The level set flow.} On the open set where $d\psi\neq0$, define
\begin{equation}\label{eq:level-flow}
 \mathsf w:=|d\psi|_h^2,\qquad V_\psi:=\frac{\nabla_h\psi}{\mathsf w},\qquad\text{and}\qquad n:=\frac{\nabla_h\psi}{\sqrt{\mathsf w}}.
\end{equation}
Along a trajectory $\gamma$ of $V_\psi$, parametrized so that $\psi(\gamma(t))=t$, write $\mathsf w(t)$ for $\mathsf w(\gamma(t))$. As $o$ is the only critical point and the superlevel sets are compact, $\gamma$ is defined for $0<t<1$ and converges to $o$, the only point where $\psi=1$, as $t\to1$. For a unit tangent $T$ to the level sets, $\Hess_h\psi=2\sqrt\psi\,\Hess_h\sqrt\psi+\frac{1}{2\psi}d\psi\otimes d\psi$, \eqref{eq:concavity} and $\Delta_h\psi=-6\psi$ give
\begin{equation}\label{eq:level-flow-derivative}
 \Hess_h\psi(T,T)\leqslant-2t,\qquad\text{and}\qquad -8t\leqslant\frac{d\mathsf w}{dt}=2\Hess_h\psi(n,n)\leqslant\frac{\mathsf w}t-4t.
\end{equation}
\medskip

\noindent
\emph{Gradient bounds and distances.} By the expansions at $o$ in the paragraph on critical points, $\mathsf w(t)\to0$ as $t\to1$, and integrating \eqref{eq:level-flow-derivative} from $t$ to $1$ gives $4t(1-t)\leqslant\mathsf w\leqslant4(1-t^2)$. Since $|\gamma'|_h=\mathsf w^{-1/2}$, the length of $\gamma $ between the level sets $\{\psi=t_1\}$ and $\{\psi=t_2\}$, $t_1<t_2$, is at most
\[
 \int_{t_1}^{t_2}\frac{d\tau}{2\sqrt{\tau(1-\tau)}}=\arcsin\sqrt{t_2}-\arcsin\sqrt{t_1}.
\]
The trajectories therefore have finite length as $t\to0$ and as $t\to1$, and, by continuity of $\psi$, their endpoints lie on $\partial\Omega$ and at $o$. This proves \eqref{eq:base-radius} and the bounds extend to $q\in\partial\Omega$ by continuity. Finally, $|d\psi|_h\leqslant2$ gives the first inequality of \eqref{eq:coord-psi-distance} and the second follows from $d_h(q,\partial\Omega)\leqslant\arcsin\sqrt{\psi(q)}\leqslant\sqrt{2\psi(q)}$ when $\psi(q)\leqslant\frac{1}{2}$ and is immediate if $\psi(q)\geqslant\frac{1}{2}$.

\end{proof}

 \begin{lemma}\label{lem:core-retraction}

Let $g$ be in (C1) or (C2). 
For $\delta\in(0,1)$, the  flow of $V_\psi$ in \eqref{eq:level-flow} defines a map $P_\delta$ which does not increase lengths and extends to a retraction of $\overline\Omega$ onto $E_\delta$. Consequently, for $p,q\in E_\delta$, the length distance $d_{E_\delta}$ of $E_\delta$ satisfies
\begin{equation}\label{eq:core-retraction}
 d_{E_\delta}(p,q)=d_{\overline\Omega}(p,q).
\end{equation}
\end{lemma}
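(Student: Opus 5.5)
The retraction will come from the gradient-like flow of $V_\psi=\nabla_h\psi/|d\psi|_h^2$ of \eqref{eq:level-flow}, which increases $\psi$ at unit speed. For $q\in\overline\Omega\setminus E_\delta$, the trajectory through $q$ reaches $\{\psi=\delta\}$ after time $\delta-\psi(q)$, and we call that point $P_\delta(q)$. On $E_\delta$ we set $P_\delta=\operatorname{id}$, and on $\partial\Omega$ we define $P_\delta$ by continuity, using the endpoints of trajectories.

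\textbf{Step 1: $P_\delta$ is well defined and continuous.} Since $o$ is the only critical point of $\psi$ (Corollary~\ref{cor:concavity}), $V_\psi$ is smooth on $\Omega\setminus\{o\}$, so $P_\delta$ is smooth on $\Omega\setminus E_\delta$. It agrees with the identity on $\partial E_\delta$, so it is continuous on $\Omega$. To extend to $\partial\Omega$, use the bound $|\gamma'|_h=\mathsf w^{-1/2}\leqslant(4t(1-t))^{-1/2}$ from the proof of Corollary~\ref{cor:concavity}. This gives uniform continuity of $P_\delta$ near $\partial\Omega$: points with small $\psi$ are moved by a uniformly Lipschitz flow in $t$, up to an error of order $\arcsin\sqrt{t}$ near $t=0$. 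Hence $P_\delta$ extends continuously to $\overline\Omega$.

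\textbf{Step 2: $P_\delta$ does not increase lengths.} This is the key computation. Let $\Phi_s$ be the flow of $V_\psi$. For a vector $X$ tangent to a level set, transported by $\Phi_s$, we compute
\[
\tfrac{d}{ds}|X|_h^2=2h(\nabla_XV_\psi,X)=2\,\mathsf w^{-1}\Hess_h\psi(X,X).
\]
The term involving $X(\mathsf w^{-1})\,d\psi(X)$ vanishes because $d\psi(X)=0$, and $\Phi_s$ preserves the level sets, so the transported vector stays tangent. By \eqref{eq:level-flow-derivative}, $\Hess_h\psi(T,T)\leqslant-2t<0$, so tangential lengths decrease along the flow.

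For a general curve $c$ in $\overline\Omega\setminus E_\delta$, write $\dot c=\lambda n+X$ with $X$ tangent to the level set. The image $P_\delta\circ c$ lies on $\{\psi=\delta\}$, and its velocity is $d\Phi_{\delta-\psi}(X)$ plus a component along $V_\psi$. That extra component is tangent to the image level set after projection, because $P_\delta\circ c$ stays in $\{\psi=\delta\}$. So only $d\Phi(X)$ survives, and
\[
|(P_\delta\circ c)'|_h\leqslant|X|_h\leqslant|\dot c|_h.
\]
For curves that cross $\partial E_\delta$, split them into pieces inside and outside $E_\delta$. For curves touching $\partial\Omega$, approximate from the interior and use continuity.

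\textbf{Step 3: equality of the distances.} Let $p,q\in E_\delta$ and take any curve in $\overline\Omega$ joining them. Its image under $P_\delta$ joins $p$ to $q$ inside $E_\delta$ and is no longer, so $d_{E_\delta}\leqslant d_{\overline\Omega}$. The reverse inequality holds trivially, which gives \eqref{eq:core-retraction}.

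\textbf{Main obstacle.} The delicate point is Step 2, in particular checking that the variation formula correctly discards the normal component. This needs care because $V_\psi$ is not a Killing or gradient flow of constant speed, so the decomposition of $\dot c$ must be handled as above rather than by a naive Lipschitz estimate. A secondary issue is behaviour at the conic point. It causes no difficulty because $o$ lies in the interior of $E_\delta$ and the flow is only used outside $E_\delta$. The corners and $\partial\Omega$ are handled by the continuity argument of Step 1.
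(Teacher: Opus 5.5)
Your proposal is correct and follows essentially the same route as the paper. The paper writes $h=\mathsf w^{-1}dt^2+|\partial_\Theta|_h^2d\Theta^2$ in the flow coordinates $(t,\Theta)$, obtains $\partial_t|\partial_\Theta|_h^2<0$ from exactly your Hessian computation, and sets $P_\delta(t,\Theta)=(\max\{t,\delta\},\Theta)$; this makes your cancellation of the normal component and your splitting of curves at $\partial E_\delta$ automatic.

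The one point to reorder is Step 1. The trajectory-length bound there does not by itself give uniform continuity of $P_\delta$ at $\partial\Omega$. Deduce the extension to $\overline\Omega$ instead, as the paper does, from the $1$-Lipschitz property of Step 2, together with the fact that $d_{\overline\Omega}$ is the infimum of the lengths of interior paths.
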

\begin{proof}

Fix $\tau\in(0,1)$, parametrize the  circle $\{\psi=\tau\}$  by $\Theta\in\mathbb R/2\pi\mathbb Z$, and transport $\Theta$ along the  flow of $V_\psi$ of  \eqref{eq:level-flow}. This identifies $\Omega\setminus\{o\}$ with $(0,1)\times\mathbb R/2\pi\mathbb Z$, with $t:=\psi$ and $\partial_t=V_\psi$. The flow maps level sets to level sets, so  $h(V_\psi,\partial_\Theta)=0$  and $h=\mathsf w^{-1}dt^2+|\partial_\Theta|_h^2\,d\Theta^2$, since $|V_\psi|_h^2=\mathsf w^{-1}$. As $[\partial_t,\partial_\Theta]=0$  and $\Hess_h\psi(\partial_\Theta,\partial_\Theta)=2\sqrt\psi\,\Hess_h\sqrt\psi(\partial_\Theta,\partial_\Theta)<0$ by \eqref{eq:concavity}, we get
\begin{equation}\label{eq:core-coordinates}
 \tfrac12\,\partial_t|\partial_\Theta|_h^2=h(\nabla_{\partial_\Theta}V_\psi,\partial_\Theta)=\frac{\Hess_h\psi(\partial_\Theta,\partial_\Theta)}{\mathsf w}<0.
\end{equation}

Put $P_\delta(t,\Theta):=(\max\{t,\delta\},\Theta)$. For a path $c=(t(s),\Theta(s))$, the image of its part in $\{t<\delta\}$ is $(\delta,\Theta(s))$, of speed $|\dot\Theta\,\partial_\Theta|_h(\delta,\Theta)\leqslant|\dot\Theta\,\partial_\Theta|_h(t,\Theta)\leqslant|\dot c|_h$, by \eqref{eq:core-coordinates}. So $P_\delta$ is $1$-Lipschitz for the length distance of $\Omega$, and it extends to the completion $\overline\Omega$. Paths meeting the rods or corners are limits of interior paths  whose lengths converge to that of the original path, by the smooth rod and corner models \eqref{eq:matrix-smooth-side} and \eqref{eq:matrix-smooth-corner}, so the distance of $\overline\Omega$ is the infimum of the lengths of paths in $\Omega$. Projecting these paths by $P_\delta$ gives $d_{E_\delta}\leqslant d_{\overline\Omega}$ on $E_\delta$, and the reverse inequality is immediate.
\end{proof}

\begin{lemma}\label{lem:scalar}
Let $g$ be in (C1) or (C2). Recall that $0<\psi\leqslant1$ and $\Phi(o)=\Phi_o$. On $\Omega\setminus\{o\}$:

\begin{enumerate}[label=\textup{(\alph*)}]
\item $1-\psi^{-2}\leqslant K_h\leqslant1$ and $|d\Phi|_h\leqslant2\psi^{-1}$,
\item $d_{\mathbb H^2}(\Phi,\Phi_o)\leqslant2\operatorname{artanh}\sqrt{1-\psi}$, and
\item $\rho(o)\bigl(1-\sqrt{1-\psi}\bigr)^2\Phi_o\leqslant G\leqslant\rho(o)\bigl(1+\sqrt{1-\psi}\bigr)^2\Phi_o\leqslant4\rho(o)\Phi_o$. In particular, every torus orbit has diameter at most $2\operatorname{diam}(\Sigma)$.
\end{enumerate}
\end{lemma}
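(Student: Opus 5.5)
The plan is to derive all three bounds from the gradient estimate $|d\psi|_h^2\leqslant 4(1-\psi^2)$ of \eqref{eq:base-radius}, combined with a lower bound on $|d\ell|_h$ in terms of $|d\Phi|_h$. Part (a) is the key step; (b) and (c) then follow by integration along the level-set flow of Corollary~\ref{cor:concavity}.

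\emph{Step 1: part (a).} The upper bound $K_h\leqslant1$ is Lemma~\ref{lem:bochner}. For the lower bound, write $4K_h=|d\Phi|_h^2-|d\ell|_h^2$ and note that
\[
|d\ell|_h^2=\frac{|d\psi|_h^2}{\psi^2}\leqslant\frac{4(1-\psi^2)}{\psi^2}=4\bigl(\psi^{-2}-1\bigr).
\]
This gives $4K_h\geqslant-|d\ell|_h^2\geqslant4(1-\psi^{-2})$. For the gradient of $\Phi$, combine the same identity with $K_h\leqslant1$:
\[
|d\Phi|_h^2=4K_h+|d\ell|_h^2\leqslant4+4\psi^{-2}-4=4\psi^{-2},
\]
that is, $|d\Phi|_h\leqslant2\psi^{-1}$.

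\emph{Step 2: part (b).} Here we integrate $|d\Phi|_h$ along a flow line of $V_\psi$ running from $q$ to $o$. The speed of this curve is $\mathsf w^{-1/2}\leqslant(4t(1-t))^{-1/2}$ in the parameter $t=\psi$. This yields
\[
d_{\mathbb H^2}(\Phi(q),\Phi_o)\leqslant\int_{\psi(q)}^1\frac{2}{t}\cdot\frac{dt}{2\sqrt{t(1-t)}}=\int_{\psi(q)}^1\frac{dt}{t^{3/2}\sqrt{1-t}}.
\]
Unfortunately this integral equals $2\sqrt{(1-\psi)/\psi}$, which is larger than $2\operatorname{artanh}\sqrt{1-\psi}$. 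The crude product of the two bounds is therefore not sharp enough, and this is the main obstacle. My first attempt to fix it is to retain the coupling between the two bounds: along the flow, $|d\Phi|^2\leqslant4K_h+|d\ell|^2$ and the bound on $\mathsf w=\psi^2|d\ell|^2$ are not saturated simultaneously. I would use the sharper inequality $\mathsf w\geqslant4t(1-t)$ from the ODE \eqref{eq:level-flow-derivative}, together with $d\mathsf w/dt\leqslant\mathsf w/t-4t$, to bound the quantity $\int|d\Phi|\,\mathsf w^{-1/2}\,dt$.

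A cleaner route that I would try is a maximum principle. Set $u=d_{\mathbb H^2}(\Phi,\Phi_o)$. Since $\mathbb H^2$ has negative curvature, the composition of the harmonic-type map $\Phi$ with a convex function is a subsolution of the drift Laplacian $\Delta_g$. Concretely, $\cosh u$ or $\tanh(u/2)$ should satisfy a differential inequality that we can compare with the explicit barrier $\sqrt{1-\psi}$, using $\Delta_g\psi=(|d\ell|^2-6)\psi$. The barrier is expected to be sharp because $\tanh(u/2)=\sqrt{1-\psi}$ is exactly the claimed bound. Both sides vanish at $o$, and on $\partial\Omega$ the right-hand side equals $1$, so the comparison should close on $\Omega\setminus\{o\}$.

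\emph{Step 3: part (c).} With $d=d_{\mathbb H^2}(\Phi,\Phi_o)$, we have $e^{-d}\Phi_o\leqslant\Phi\leqslant e^{d}\Phi_o$. Part (b) gives
\[
e^{\pm d}\leqslant\frac{1+\sqrt{1-\psi}}{1-\sqrt{1-\psi}}.
\]
Multiplying by $\rho=\rho(o)\psi=\rho(o)\bigl(1-\sqrt{1-\psi}\bigr)\bigl(1+\sqrt{1-\psi}\bigr)$ produces exactly the two matrix bounds in (c), and the final inequality uses $\sqrt{1-\psi}\leqslant1$. Finally, the bound $G\leqslant4\rho(o)\Phi_o=4G(o)$ compares the flat orbit metric with $4g_\Sigma$. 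The diameter of each orbit is therefore at most $2\operatorname{diam}(\Sigma)$, and this extends to the degenerate orbits over $\partial\Omega$ by taking limits.
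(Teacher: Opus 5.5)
\textbf{Verdict: genuine gap in part (b).} Your parts (a) and (c) coincide with the paper's argument. In (c), the statement you need is $e^{d}\leqslant\frac{1+\sqrt{1-\psi}}{1-\sqrt{1-\psi}}$, equivalently $e^{-d}\geqslant\frac{1-\sqrt{1-\psi}}{1+\sqrt{1-\psi}}$, rather than ``$e^{\pm d}\leqslant\cdots$''; this is a harmless slip.

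The gap is part (b). Since (c) is deduced from (b), (c) is not established either. You correctly compute that multiplying $|d\Phi|_h\leqslant2\psi^{-1}$ by the speed bound $\mathsf w^{-1/2}\leqslant(4t(1-t))^{-1/2}$ only gives $2\sqrt{(1-\psi)/\psi}$. However, neither remedy you propose is carried out.
\begin{enumerate}
\item The ``coupling'' remark is never turned into an inequality.
\item The maximum-principle route is only a hope: you give neither the differential inequality for $\tanh(u/2)$ nor an operator for which $\sqrt{1-\psi}$ is a supersolution. In fact, near $o$ the function $\sqrt{1-\psi}$ behaves like a multiple of the distance to $o$, with $\Delta_g\sqrt{1-\psi}\approx\frac{3}{2\sqrt{1-\psi}}\to+\infty$. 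Meanwhile $\tanh(u/2)$ vanishes at least quadratically at $o$ and has bounded Laplacian there. So a direct $\Delta_g$-comparison has the wrong sign near $o$.
\end{enumerate}

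The missing idea is elementary: do not bound $|d\Phi|_h$ and $\mathsf w^{-1/2}$ separately. Your bound $|d\Phi|_h\leqslant2\psi^{-1}$ uses the upper bound $\mathsf w\leqslant4(1-\psi^2)$, while the speed bound uses the lower bound $\mathsf w\geqslant4\psi(1-\psi)$, and these cannot both be saturated at once. Bound the ratio instead. Along the trajectory of $V_\psi$ parametrized by $t=\psi$, you have $\bigl|\tfrac{d}{dt}\Phi\bigr|^2\leqslant|d\Phi|_h^2/\mathsf w$. The exact identity $|d\Phi|_h^2=4K_h+|d\ell|_h^2=4K_h+\mathsf w\psi^{-2}$ then gives
\[
 \frac{|d\Phi|_h^2}{\mathsf w}=\frac{1}{\psi^2}+\frac{4K_h}{\mathsf w}\leqslant\frac{1}{\psi^2}+\frac{1}{\psi(1-\psi)}=\frac{1}{\psi^2(1-\psi)}.
\]
Here you use $K_h\leqslant1$ and $\mathsf w\geqslant4\psi(1-\psi)$ when $K_h\geqslant0$; when $K_h<0$ the second term is negative anyway. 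Since $\Phi\to\Phi_o$ as $t\to1$, integrating yields
\[
 d_{\mathbb H^2}(\Phi,\Phi_o)\leqslant\int_\psi^1\frac{dt}{t\sqrt{1-t}}=2\operatorname{artanh}\sqrt{1-\psi},
\]
which is exactly (b). The ODE $\frac{d\mathsf w}{dt}\leqslant\frac{\mathsf w}{t}-4t$ is not needed. This is the paper's proof, and (c) then follows as you wrote.
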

\begin{proof}
By \eqref{eq:Einstein-identities}, Lemma~\ref{lem:bochner} and \eqref{eq:base-radius}, we have $4K_h\geqslant-|d\ell|_h^2=-\mathsf w\psi^{-2}\geqslant-4(\psi^{-2}-1)$ and $|d\Phi|_h^2=4K_h+\mathsf w\psi^{-2}\leqslant4\psi^{-2}$, which is (a).

Let $\gamma$ be the trajectory through $q\in\Omega\setminus\{o\}$ of the flow of $V_\psi$ in \eqref{eq:level-flow}, parametrized by $t=\psi$, so that $\gamma(t)\to o$ as $t\to1$. Along $\gamma$, $\bigl|\frac{d}{dt}\Phi(\gamma(t))\bigr|^2\leqslant\frac{|d\Phi|_h^2}{\mathsf w}=\psi^{-2}+\frac{4K_h}{\mathsf w}\leqslant\frac{1}{\psi^2(1-\psi)}$, and since $\Phi\to\Phi_o$ at $o$, we get
\[
 d_{\mathbb H^2}(\Phi,\Phi_o)\leqslant\int_\psi^1\frac{dt}{t\sqrt{1-t}}=2\operatorname{artanh}\sqrt{1-\psi},
\]
which is (b).

If $\Phi'\in\mathbb H^2$ lies at distance $D'$ from $\Phi_o$, then $e^{-D'}\Phi_o\leqslant\Phi'\leqslant e^{D'}\Phi_o$, since the eigenvalues of $\Phi_o^{-1/2}\Phi'\,\Phi_o^{-1/2}$ are $e^{\pm D'}$ for the normalization \eqref{eq:intro-matrix-ansatz}. Since $e^{2\operatorname{artanh}\sqrt{1-\psi}}=\frac{1+\sqrt{1-\psi}}{1-\sqrt{1-\psi}}$ and $\psi=\bigl(1-\sqrt{1-\psi}\bigr)\bigl(1+\sqrt{1-\psi}\bigr)$, $G=\rho(o)\psi\Phi$ gives (c). 
\end{proof}

\subsubsection{The volume variation}

We now prove a Schläfli type formula for the variation of the angle and volume.

\begin{proposition}\label{prop:Schlafli}

Let $t\mapsto(\beta(t),q(t))$ be a $C^1$ curve in
$\mathbb R\times\mathcal X_\nu^{2,\alpha}$ satisfying
$\mathcal E(\beta(t),q(t))=0$, and let
$g(t)=g_{\beta(t),q(t)}$ be as in
\eqref{eq:perturbed-metric}. Put $V(t):=\Vol(g(t))$ and denote
$\frac{d}{dt}$ by a dot. Then, we have
\begin{equation}\label{eq:Schlafli}
 \dot V=\frac{2\pi}{3}\Area(\Sigma,g)\,\dot\beta.
\end{equation}
\end{proposition}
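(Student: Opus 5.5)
The natural plan is to compute $\dot V$ in two ways: once from the scalar variations, and once as a boundary flux at the cone point via the linearized equations. Note first that $\beta$ enters the metric only through the reference $\gamma_\beta$ in the fixed coordinates. Writing $\dot g$ as the derivative of $g(t)$, we have $\dot V=\tfrac12\int_M\operatorname{tr}_g\dot g\,dV_g$. By \eqref{eq:scalar-volume}, $dV_g=\rho\,e^{2a}dA_{\gamma_\beta}\,d\phi_1d\phi_2$, so
\[
 \dot V=4\pi^2\int_\Omega(\dot f+2\dot b+2\dot\beta\,\partial_\beta\log\sigma_\beta)\,d\mu,
\]
where $dA_{\gamma_\beta}=\sigma_\beta\,dA_{\mathrm{fixed}}$. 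To avoid the explicit $\beta$-dependence of $\gamma_\beta$, it is cleaner to work in the unpulled-back picture, where the conformal class $[h_1]$ is fixed. Write $h=e^{2a_w}|dw|^2$ as in \eqref{eq:conformal-coefficients}. The variation is then $\dot\rho$ together with $\dot a_w=\dot a+\dot\beta\,\chi\log r$, which carries the logarithmic singularity $\dot\beta\log r$ at $o$. Thus $\dot\eta:=\dot\ell+2\dot a_w$ is the variation of $\log dV_g$, and $\dot V=4\pi^2\int_\Omega\dot\eta\,d\mu$.

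Next, differentiate the reduced equations \eqref{eq:matrix-reduced} along the curve. In the form $\rho^{-1}\diver_h(\rho\nabla_h\ell)=-6$ and $\Delta_ha_w$-type equations, this gives exactly the linearization $L$ of \eqref{eq:interior-linear}, with $(f,b)$ replaced by $(\dot\ell,\dot a_w)$, except that $\dot a_w$ now has a $\dot\beta\log r$ term at $o$. The key algebraic identity is \eqref{cmp:kernel-mean}: $L^\mu_1+L^\mu_3=\operatorname{div}_\mu W+3\eta$, with $W^\flat=d(f+b)-\tfrac u2d\ell+\tfrac12\langle\xi,d\Phi\rangle$. Since the linearized system vanishes, we get $3\dot\eta=-\operatorname{div}_\mu W$. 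Integrating against $d\mu$ over $\Omega\setminus\{R<\varepsilon\}$ then leaves only boundary terms. The rod and corner fluxes vanish exactly as in the proof of Proposition~\ref{prop:kernel}, because the angle conditions \eqref{cmp:kernel-angle} still hold for the variation: the rod charts are fixed and $\beta$ does not enter there. What remains is the flux through the small circle around $o$.

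The main obstacle is evaluating this central flux. All terms of $W$ coming from $\dot\ell$, $\dot\xi$ and the regular part of $\dot a$ are $O(\varepsilon^{\nu-1})$, so their contribution is $O(\varepsilon^\nu)\to0$. The only surviving piece is $d\dot b\supset\dot\beta\,d\log r$. Its flux is
\[
 \int_{\{r=\varepsilon\}}\rho\,\partial_n(\dot\beta\log r)\,ds_h,
\]
which is conformally invariant in dimension two and, in $w$, equals $-2\pi\rho(o)\dot\beta+o(1)$, with sign according to the outward normal of the removed disk. Combining, $3\dot V/(4\pi^2)=2\pi\rho(o)\dot\beta$, that is, $\dot V=\tfrac{8\pi^3}{3}\rho(o)\dot\beta$. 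Since $\Area(\Sigma,g)=4\pi^2\rho(o)$, this gives \eqref{eq:Schlafli}.

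Care is needed on two points. First, justify the differentiation in the unpulled-back coordinates: equivalently, account for $\partial_\beta\Psi_\beta$ as a diffeomorphism contribution, which integrates to zero. Second, check the sign of the flux, which should be consistent with the volume decreasing as $\beta$ decreases.
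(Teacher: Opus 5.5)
Your argument is correct. It is the torus-reduced form of the paper's proof rather than a new idea. By \eqref{eq:scalar-volume} and \eqref{eq:matrix-reduced}, $S_g-12=-2e^{-2a}(\mathcal E_1+\mathcal E_3)$. So at a solution $L^\mu_1+L^\mu_3=-\tfrac12 DS_g$, and \eqref{cmp:kernel-mean} is the reduction of the identity $0=-3\operatorname{tr}_g\dot g+\operatorname{div}_g\Upsilon^\sharp$ that the paper integrates, with $\operatorname{tr}_g\dot g=2\eta$ and $\operatorname{div}_\mu W=-\tfrac12\operatorname{div}_g\Upsilon^\sharp$.

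The real difference is the gauge in which the angle variation appears. The paper stays in the fixed coordinates \eqref{eq:fixed-cone-identification}. There the angle variation is the non-conformal tensor $\dot\beta\,e^{2a}\partial_\beta\gamma_\beta=\tfrac23\beta\dot\beta\,e^{2a}R^2d\theta^2$, of constant trace $2\dot\beta/\beta$, and the central flux comes from $\Upsilon(\partial_R)=-2\dot\beta/(\beta R)$. Since \eqref{cmp:kernel-mean} only handles conformal variations of $h$ at a fixed reference metric, your passage to the conformal chart $w$ is what makes the reduced identity usable. In that chart the same variation becomes the harmonic mode $\dot\beta\log r$, with flux $-2\pi\rho(o)\dot\beta$ for the outward normal of $\Omega\setminus\{R<\varepsilon\}$, and your constant comes out right.

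Each route has an advantage. You get the rod and corner terms for free from the weight $\rho$ and the linearized angle conditions \eqref{cmp:kernel-angle}, exactly as in Proposition~\ref{prop:kernel}. The paper instead must check that $\dot g$ is $C^1$ across $\pi^{-1}(\partial\Omega)$, which uses $\partial_n\dot b=0$ from Proposition~\ref{prop:recovery}. On the other hand, the paper needs no reparametrization, while you must differentiate the coefficients composed with $\Psi_{\beta(t)}^{-1}$. That step is harmless:
\begin{itemize}
\item $\Psi_\beta=\operatorname{id}$ near $\partial\Omega$.
\item Near $o$ the extra terms are $\dot\beta\,d\ell(Y_\beta)$, $\dot\beta\,da(Y_\beta)$ and $\dot\beta\,d\Phi(Y_\beta)$, with $Y_\beta=\partial_\beta\Psi_\beta^{-1}=\beta^{-1}(\log(\beta R)-1)R\,\partial_R$.
\item These terms are $O_{\mathrm b}(R^\nu\log R)$, so they contribute $O(\varepsilon^\nu\log\frac1\varepsilon)$ to the flux through $\{R=\varepsilon\}$.
\item In the interior, elliptic regularity makes the differentiated equations hold classically.
\end{itemize}

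One small slip: in your first, abandoned formula, $dV_g$ contains $dA_{\gamma_\beta}$ only to the first power. So the coefficient of $\dot\beta\,\partial_\beta\log\sigma_\beta$ should be $1$, not $2$; this does not affect the rest.
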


\begin{proof}
Proposition~\ref{prop:recovery} gives
$\Ric_{g(t)}=3g(t)$, hence $D\Ric_g[\dot g]=3\dot g$ and
$D S_g[\dot g]=0$. The volume variation is $\dot V=\tfrac12\int_M\operatorname{tr}_g\dot g\,dV_g$ while the scalar-curvature variation is
\[
 0=-3\operatorname{tr}_g\dot g+\operatorname{div}_g\Upsilon^\sharp,
 \qquad \text{ with } \qquad \Upsilon:=\operatorname{div}_g\dot g-d\operatorname{tr}_g\dot g.
\]
This is the first-variation identity underlying the Schl\"afli formula
\cite[Theorem~8]{RS00}, see also \cite{RS99}. The cone corollary following \cite[Theorem~9]{RS00} is stated for nonpositive scalar curvature, so we give a direct proof in our setting.

Each $g(t)$ is smooth across $\pi^{-1}(\partial\Omega)$, with rod angles $2\pi$ (Corollary~\ref{cor:fixed-set-regularity}), and $t\mapsto q(t)$ is $C^1$ into $\mathcal X$. Moreover $\partial_n\dot b=0$ on the rods, by Proposition~\ref{prop:recovery} applied to each $q(t)$, so the even extension of $\dot b$ is $C^1$ and $\dot g$ is a $C^1$ tensor across $\pi^{-1}(\partial\Omega)$, and the divergence theorem on $M\setminus\{R<\varepsilon\}$ produces no boundary term at rods and corners.
Only the tube about
$\Sigma$ contributes to the boundary integral.
From \eqref{eq:fixed-cone-identification} and
\eqref{eq:central-domain}, on its boundary $R=\varepsilon$, we have
\[
 \begin{aligned}
 \Upsilon(\partial_R)=-\frac{2\dot\beta}{\beta R}+O(R^{\nu-1}),\qquad
 n=-\sqrt3e^{-a}\partial_R,\qquad\text{and}\qquad
 dA_g=\frac{e^a\beta R}{\sqrt3}(1+O(R^\nu))\,d\theta\,dA_{g|_\Sigma}.
 \end{aligned}
\]
Integration over $M\setminus\{R<\varepsilon\}$ gives
\eqref{eq:Schlafli}:
\[
 6\dot V
 =\lim_{\varepsilon\downarrow0}\int_{R=\varepsilon}\Upsilon(n)\,dA_g
 =4\pi\dot\beta\Area(\Sigma,g).
\]
\end{proof}

\begin{proposition}\label{prop:noncollapse}
Every metric $g=g_{\beta,q}$ as in \eqref{eq:perturbed-metric} with $0<\beta<1$ and $\mathcal E(\beta,q)=0$
satisfies
\begin{equation}\label{eq:area-volume}
\frac{2\pi\beta}{15}\Area(\Sigma,g)
 \leqslant\Vol(g)\leqslant\frac{2\pi\beta}{3}\Area(\Sigma,g).
\end{equation}
\end{proposition}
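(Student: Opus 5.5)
We reduce both inequalities in \eqref{eq:area-volume} to a two-sided bound on $\int_\Omega\psi\,dA_h$, and prove that bound in the level-set coordinates of Lemma~\ref{lem:core-retraction}, using the one-dimensional inequalities from the proof of Corollary~\ref{cor:concavity}. By \eqref{eq:scalar-volume} and \eqref{eq:central-torus-metric}, $\Vol(g)=4\pi^2\int_\Omega\rho\,dA_h$ and $\Area(\Sigma,g)=4\pi^2\rho(o)$. Since $\psi=\rho/\rho(o)$, \eqref{eq:area-volume} is equivalent to
\[
 \frac{2\pi\beta}{15}\leqslant\int_\Omega\psi\,dA_h\leqslant\frac{2\pi\beta}{3}.
\]
The upper constant is the value of $\int\psi\,dA$ in the model case of equality in \eqref{eq:concavity}. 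There $K_h=1$, $B=0$ and $\psi=\cos^2r$ on a spherical cone of angle $2\pi\beta$, giving $2\pi\beta\int_0^{\pi/2}\cos^2r\sin r\,dr=\frac{2\pi\beta}{3}$. This suggests a comparison argument along the flow of $V_\psi$.

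\emph{Coordinates.} As in Lemma~\ref{lem:core-retraction}, identify $\Omega\setminus\{o\}$ with $(0,1)\times\mathbb R/2\pi\mathbb Z$, where $t=\psi$ and
\[
 h=\mathsf w^{-1}dt^2+\lambda^2d\Theta^2,\qquad \lambda:=|\partial_\Theta|_h.
\]
Then
\[
 \int_\Omega\psi\,dA_h=\int_0^{2\pi}\!\!\int_0^1 t\,\lambda\,\mathsf w^{-1/2}\,dt\,d\Theta.
\]
By \eqref{eq:core-coordinates}, $\partial_t\log\lambda=\Hess_h\psi(T,T)/\mathsf w$. Since $\Delta_h\psi=-6t$ and $\Hess_h\psi(n,n)=\frac12\frac{d\mathsf w}{dt}$, we have $\Hess_h\psi(T,T)=-6t-\frac12\frac{d\mathsf w}{dt}$. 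Combined with \eqref{eq:level-flow-derivative} this gives
\[
 -2t-\frac{\mathsf w}{2t}\leqslant\Hess_h\psi(T,T)\leqslant-2t.
\]
So $\lambda$ satisfies explicit differential inequalities in $t$ with coefficients controlled by $\mathsf w$. Moreover $4t(1-t)\leqslant\mathsf w\leqslant4(1-t^2)$ by \eqref{eq:base-radius}.

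\emph{Normalization at $o$.} Near $o$, \eqref{eq:center-refined} gives $1-\psi\sim\frac12e^{2a(o)}R^2$ and $h\sim e^{2a(o)}\frac13(dR^2+\beta^2R^2d\theta^2)$. If $\Theta$ is chosen to agree with $\theta$ near $o$, then as $t\to1$,
\[
 \lambda^2/(1-t)\longrightarrow\tfrac23\beta^2.
\]
This is the only place where the cone angle enters, and it produces the factor $\beta$ in both bounds. Integrating the inequalities for $\log\lambda$ from $t$ to $1$ then bounds $\lambda(t,\Theta)$ above and below by $\beta$ times explicit functions of $t$. In the model case, the upper bound should reproduce $\lambda=\beta\sin r$ with $t=\cos^2r$.

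\emph{Conclusion.} Inserting these bounds on $\lambda$ and the bounds on $\mathsf w$ into the integral gives two one-variable integrals. For the upper bound I would pair the largest $\lambda$ with the smallest $\mathsf w$, and aim to show that the combination $\lambda\mathsf w^{-1/2}$, rather than each factor separately, is dominated by its model value; this should give exactly $\frac{2\pi\beta}{3}$. For the lower bound, pairing the smallest $\lambda$ with the largest $\mathsf w=4(1-t^2)$ should give a Beta-type integral, which I expect to evaluate to $\frac{2\pi\beta}{15}$ or a larger constant.

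\emph{Main obstacle.} The crux is the upper bound with the sharp constant. The two one-sided inequalities for $\lambda$ and $\mathsf w$ are not independently sharp, so they must be combined, for instance by proving monotonicity in $t$ of a ratio such as $\lambda\mathsf w^{-1/2}/\sqrt{(1-t)/t}$. If this fails, a fallback for the upper bound is to integrate the Schl\"afli formula, Proposition~\ref{prop:Schlafli}, from $\beta=0$, provided one can show that $\Vol/\Area(\Sigma)$ tends to $0$ and that $\Area(\Sigma)$ is monotone in $\beta$ along the branch.
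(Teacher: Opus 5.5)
The reduction to $\frac{2\pi\beta}{15}\leqslant\int_\Omega\psi\,dA_h\leqslant\frac{2\pi\beta}{3}$ is correct, but you then prove neither inequality, and the mechanism you propose cannot prove them.

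First, a slip. $\Delta_h\psi=-6t$ and $\frac{d\mathsf w}{dt}\leqslant\frac{\mathsf w}{t}-4t$ give $\Hess_h\psi(T,T)\geqslant-4t-\frac{\mathsf w}{2t}$, not $-2t-\frac{\mathsf w}{2t}$.

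More seriously, these one-dimensional inequalities are not sharp at $o$, which is exactly where you want to feed in the angle. Near $o$ one has $\Hess_h\psi(T,T)\to-3$ and $\mathsf w\sim6(1-t)$. Hence $\partial_t\log\lambda\sim-\frac{1}{2(1-t)}$, and this is what produces $\lambda\sim\sqrt{2/3}\,\beta\sqrt{1-t}$. Integrating the available bounds $-4t-\frac{\mathsf w}{2t}\leqslant\Hess_h\psi(T,T)\leqslant-2t$ and $4t(1-t)\leqslant\mathsf w\leqslant4(1-t^2)$ gives only, for $t<t_1<1$, $\lambda(t)\leqslant\lambda(t_1)\frac{1-t}{1-t_1}\sqrt{t_1/t}$ and $\lambda(t)\geqslant\lambda(t_1)\bigl(\frac{1-t^2}{1-t_1^2}\bigr)^{1/4}$. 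With $\lambda(t_1)\sim\sqrt{2/3}\,\beta\sqrt{1-t_1}$, the first bound tends to $+\infty$ and the second to $0$ as $t_1\to1$. The exact identity $\partial_t\log(\lambda\sqrt{\mathsf w})=-6t/\mathsf w$ degenerates in the same way. So both bounds are vacuous. Since the constants must be universal, you also cannot stop at a metric-dependent $t_1<1$.

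Your extremal model does not help either. $B=0$ forces $4K_h=-|d\ell|_h^2\leqslant0$, so $K_h=1$ is impossible, and $\cos^2r$ does not satisfy $\Delta\psi=-6\psi$.

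The missing idea is to use the angle globally, through Gauss--Bonnet on the cores $E_t$. These are closed disks with smooth boundary by Corollary~\ref{cor:concavity}, and
$2\pi\beta=\int_{E_t}K_h\,dA_h+\int_{\partial E_t}k_t\,ds_h$, where $k_t=-\Hess_h\psi(T,T)/|d\psi|_h=\frac{t}{|d\psi|_h}\bigl(3-K_h+\frac12B(T,T)\bigr)$ by \eqref{eq:concavity}.
In your coordinates $\int_{\partial E_t}k_t\,ds_h=-\int_0^{2\pi}\sqrt{\mathsf w}\,\partial_t\lambda\,d\Theta$. So this is the combination you were looking for, but it is an identity over a whole level set and it involves $\int_{E_t}K_h\,dA_h$.

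Integrating it against $p\,t^{p-1}dt$ on $(0,1)$ and using the coarea formula gives $2\pi\beta=\int_\Omega\psi^p\bigl(3p+(1-p)K_h+\frac p2B(T,T)\bigr)dA_h$.
For $p=1$ the curvature terms cancel, and $B\geqslant0$ gives the upper bound with its sharp constant.
For $p=2$, combine $B(T,T)\leqslant\operatorname{tr}_hB=4K_h+|d\log\psi|_h^2$, $K_h\leqslant1$, $\int_\Omega|d\psi|_h^2\,dA_h=6\int_\Omega\psi^2\,dA_h$ and $\psi\leqslant1$. This gives $2\pi\beta\leqslant15\int_\Omega\psi^2\,dA_h\leqslant15\int_\Omega\psi\,dA_h$, which is the lower bound.

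Your Schl\"afli fallback is not viable, for three reasons. The statement concerns every solution, not only the branch. Monotonicity of $\Area(\Sigma)$ along the branch is not available. And the paper needs this very proposition, in Corollary~\ref{cor:volume-branch}, to bound the volume along the branch and hence to continue it towards $\beta=0$, so integrating from $\beta=0$ would be circular.
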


\begin{proof}
 By \eqref{eq:scalar-volume}, we have
\begin{equation}\label{eq:volume-psi}
  \Vol(g)=\Area(\Sigma,g)\int_\Omega\psi\,dA_h.
\end{equation}
By Corollary~\ref{cor:concavity}, $E_t$ is a closed disk with smooth boundary. If $T$ is the unit tangent of $\partial E_t=\{\psi=t\}$ and $n=\nabla\psi/|d\psi|_h$
its inward normal, the geodesic curvature $k_t$ of the level set $\{\psi=t\}$ is then
\[
\begin{aligned}
 k_t=\langle\nabla_TT,n\rangle_h
 &=-\frac{\Hess_h\psi(T,T)}{|d\psi|_h}
 =\frac{t}{|d\psi|_h}\left(3-K_h+\tfrac12B(T,T)\right).
\end{aligned}
\]
Gauss-Bonnet, with the central cone angle $2\pi\beta$, reads
\begin{equation}\label{eq:level-Gauss-Bonnet}
 2\pi\beta=\int_{E_t}K_h\,dA_h+
 \int_{\psi=t}\frac{t}{|d\psi|_h}
       \left(3-K_h+\tfrac12B(T,T)\right)ds_h.
\end{equation}

\emph{Two weighted identities.} The weight $dt$ cancels the curvature contribution, and the weight $2t\,dt$ converts the logarithmic-gradient contribution $\psi^2|d\log\psi|_h^2$ into the Dirichlet energy $|d\psi|_h^2$ of $\psi$. For $p\in\{1,2\}$, integrating \eqref{eq:level-Gauss-Bonnet} against $pt^{p-1}dt$ on $(0,1)$, the coarea formula and Fubini's theorem give
\begin{equation}\label{eq:level-flux}
 2\pi\beta=\int_\Omega\psi^p\bigl(3p+(1-p)K_h+\tfrac p2B(T,T)\bigr)dA_h.
\end{equation}
Here we first integrate \eqref{eq:level-Gauss-Bonnet} for $t\in(\varepsilon,1-\varepsilon)$ and then let $\varepsilon\downarrow0$. The expansions \eqref{eq:matrix-smooth-side} at the rods, \eqref{eq:matrix-smooth-corner} at the corners and \eqref{eq:center-refined} at the center justify the limits.
For $p=1$, $B(T,T)\geqslant0$ gives $2\pi\beta\geqslant3\int_\Omega\psi\,dA_h$, which is the upper bound of \eqref{eq:area-volume}, by \eqref{eq:volume-psi}. For $p=2$, integration by parts, using $\psi=0$ on $\partial\Omega$, \eqref{eq:center-refined} at $o$ and $\Delta_h\psi=-6\psi$, gives $\int_\Omega|d\psi|_h^2\,dA_h=6\int_\Omega\psi^2\,dA_h$. With $B(T,T)\leqslant\operatorname{tr}_hB=4K_h+|d\log\psi|_h^2$, $K_h\leqslant1$ and $\psi\leqslant1$, it follows that
\[
 2\pi\beta\leqslant\int_\Omega\psi^2(6+3K_h)\,dA_h+\int_\Omega|d\psi|_h^2\,dA_h\leqslant15\int_\Omega\psi^2\,dA_h\leqslant15\int_\Omega\psi\,dA_h,
\]
which is the lower bound of \eqref{eq:area-volume}, by \eqref{eq:volume-psi}.

\end{proof}

\begin{corollary}\label{cor:volume-branch}
Along the branch from $g_1$, the function $\beta\mapsto\beta^{-1}\Vol(g_\beta)$ is nondecreasing, and, with $V_1:=\Vol(g_1)$,
\begin{equation}\label{eq:volume-lower}
 V_1\beta^5\leqslant\Vol(g_\beta)\leqslant \beta V_1.
\end{equation}
\end{corollary}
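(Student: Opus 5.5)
The plan is to combine the Schläfli formula of Proposition~\ref{prop:Schlafli} with the two-sided area-volume inequality of Proposition~\ref{prop:noncollapse}, which turns the Schläfli formula into a differential inequality for $V(\beta):=\Vol(g_\beta)$ along the branch.

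First, regularity. By Corollary~\ref{cor:local}, around each point of the branch the curve $\beta\mapsto q(\beta)$ is smooth into $\mathcal X$, so $V$ is $C^1$ in $\beta$. Applying Proposition~\ref{prop:Schlafli} with $t=\beta$ gives
\[
 \frac{dV}{d\beta}=\frac{2\pi}{3}\Area(\Sigma,g_\beta).
\]
For $0<\beta<1$, Proposition~\ref{prop:noncollapse} turns this into
\[
 \frac{V}{\beta}\leqslant\frac{dV}{d\beta}\leqslant\frac{5V}{\beta}.
\]

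Next, integrate. The left inequality means $\frac{d}{d\beta}\log(V/\beta)\geqslant0$, so $\beta^{-1}V$ is nondecreasing. The right inequality means $\frac{d}{d\beta}\log(V/\beta^5)\leqslant0$, so $\beta^{-5}V$ is nonincreasing. Comparing each $\beta<1$ with $\beta=1$ then gives $V(\beta)\leqslant\beta V_1$ and $V(\beta)\geqslant\beta^5V_1$, which is \eqref{eq:volume-lower}.

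The main subtlety is the endpoint $\beta=1$ together with the global nature of the branch.
\begin{itemize}
 \item Proposition~\ref{prop:noncollapse} is stated only for $\beta<1$. At $\beta=1$ the monotone quantities are continuous, because $V$ is continuous on $I_*\cap(0,1]$ by Corollary~\ref{cor:local}, so passing to the limit $\beta\uparrow1$ suffices. Alternatively, the proof of Proposition~\ref{prop:noncollapse} also works at $\beta=1$, with the smooth expansion \eqref{eq:seed-bounds} at $o$.
 \item Proposition~\ref{prop:Schlafli} is proved for curves in $\mathcal X$ based at some $g_*$. The branch is a union of such local curves, and $V$ does not depend on the chosen parametrization, so the differential inequality holds on the whole interval on which the branch is defined.
 \item Monotonicity of a $C^1$ function with a signed derivative on an interval then gives the claim.
\end{itemize}
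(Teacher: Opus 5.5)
Your proposal is correct and follows essentially the same argument as the paper. Proposition~\ref{prop:Schlafli} combined with Proposition~\ref{prop:noncollapse} gives $\beta^{-1}\leqslant\frac{d}{d\beta}\log\Vol(g_\beta)\leqslant 5\beta^{-1}$ along the branch; this is integrated on $[\beta,\beta']$ with $\beta'<1$, and then $\beta'\uparrow1$ using the continuity at $g_1$ from Corollary~\ref{cor:local}, of which your monotonicity of $\beta^{-1}V$ and $\beta^{-5}V$ is simply a repackaging.
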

\begin{proof}
Combining \eqref{eq:Schlafli} with \eqref{eq:area-volume} gives, along the branch,
\begin{equation}\label{eq:log-volume-derivative}
 \frac{1}{\beta}\leqslant\frac{d}{d\beta}\log\Vol(g_\beta)=\frac{2\pi}{3}\frac{\Area(\Sigma,g_\beta)}{\Vol(g_\beta)}\leqslant\frac{5}{\beta}.
\end{equation}
Integrate \eqref{eq:log-volume-derivative} from $\beta$ to $\beta'<1$ on the branch and let
$\beta'\uparrow1$. Corollary~\ref{cor:local} at $g_1$ gives
$\Vol(g_{\beta'})\to V_1$, proving \eqref{eq:volume-lower}. 
\end{proof}

\subsection{Control of the geometry near the central torus}\label{sec:intrinsic-cores}
The estimates in this subsection require no lower bound for the volume. Along the branch, Corollary~\ref{cor:volume-branch} gives one, but Lemma~ \ref{lem:smooth-cores} is also applied in Section~\ref{sec:collapse} to the metrics $g^{(k)}$ of class (C2), whose volumes tend to zero by Corollary~\ref{cor:volumes}.
We work on the superlevel sets $E_t=\{\psi\geqslant t\}$ defined at the beginning of this section. The prescribed coordinate $R$ is used only to check the regularity of $\sigma$ at $o$, in Lemma~\ref{lem:flattened-cores}.

Lemma~\ref{lem:scalar} controls $K_h$ and the first derivatives of $\psi$ and $\Phi$ on $E_t$ in terms of $t$ only. To obtain coordinates of fixed size, we flatten $h$ on $E_t$ by a conformal factor $e^{2\sigma}$ with $|\sigma|\leqslant\frac{1}{4}\log\frac{1}t$: in the resulting flat, respectively conic, coordinates the reduced equations \eqref{eq:matrix-reduced} have bounded coefficients.
\begin{lemma}
\label{lem:flattened-cores}
For $0<t<1$, the Friedrichs Dirichlet problem
\begin{equation}\label{eq:core-flattening}
 \Delta_h\sigma=K_h\quad\text{on }E_t,\qquad\text{and}\qquad \sigma|_{\partial E_t}=0
\end{equation}
has a unique solution, invariant under the central symmetries, with
\begin{equation}\label{eq:core-flattening-bound}
 |\sigma|\leqslant\tfrac14\log(\psi/t)\leqslant\tfrac14\log(1/t).
\end{equation}
The metric $\widetilde h=e^{2\sigma}h$ is flat away from $o$, has the same
cone angle at $o$, and satisfies
$t^{1/2}h\leqslant\widetilde h\leqslant t^{-1/2}h$.
For a smooth center, $\widetilde h$ is smooth and flat on all of $E_t$.
\end{lemma}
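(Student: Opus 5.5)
The plan is to solve \eqref{eq:core-flattening} variationally and then bound it with a maximum principle against an explicit barrier built from $\log\psi$. The key identity is the Hessian/Laplacian formula of \eqref{eq:Einstein-identities}. Since $\Delta_h\rho=-6\rho$, we have
\[
 \Delta_h\log\psi=-6-|d\log\psi|_h^2 .
\]
Combining this with $4K_h=|d\Phi|_h^2-|d\ell|_h^2$ gives
\[
 \Delta_h\bigl(\tfrac14\log\psi\bigr)=-\tfrac32-\tfrac14|d\ell|_h^2\leqslant K_h-\tfrac14|d\Phi|_h^2-\tfrac32\leqslant K_h .
\]
Hence $\sigma_+:=\frac14\log(\psi/t)$ satisfies $\Delta_h\sigma_+\leqslant K_h$ and vanishes on $\partial E_t$; it is a supersolution. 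For a lower barrier I would use $\sigma_-:=-\frac14\log(\psi/t)$. Then
\[
 \Delta_h\sigma_-=\tfrac32+\tfrac14|d\ell|_h^2\geqslant K_h
\]
holds iff $4K_h\leqslant6+|d\ell|_h^2$, and this follows from $K_h\leqslant1$ (Lemma~\ref{lem:bochner}). So $\sigma_-\leqslant\sigma\leqslant\sigma_+$, which gives $|\sigma|\leqslant\frac14\log(\psi/t)\leqslant\frac14\log(1/t)$ because $\psi\leqslant1$.

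\emph{Existence and uniqueness.} On the disk $E_t$ with conic point $o$, take the Friedrichs extension of $\Delta_h$ with Dirichlet condition on the smooth boundary $\partial E_t$ (smooth by Corollary~\ref{cor:concavity}). Lax--Milgram with the Poincar\'e inequality gives a unique $H^1_0$ solution, since $K_h$ is bounded on $E_t$ by Lemma~\ref{lem:scalar}(a). Uniqueness forces invariance under the central symmetries, as they preserve $h$, $\psi$ and $E_t$.

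\emph{Maximum principle at the cone point.} The comparison must be justified across $o$. Near $o$ the functions $\sigma_\pm$ are bounded and smooth in $O_{\mathrm b}$ sense by \eqref{eq:center-refined}. Lemma~\ref{lem:cone-Schauder} shows that $\sigma$ is bounded with a limit at $o$, and $K_h=O_{\mathrm b}(R^2)$ by \eqref{eq:center-regularity}. A point has zero capacity in dimension two, so I can apply the weak maximum principle in $H^1_0$ to $(\sigma-\sigma_+)_+$ and $(\sigma_--\sigma)_+$, using cutoffs near $o$ of the kind in Lemma~\ref{lem:B-cutoffs}. I expect this step, together with confirming that $\sigma$ lies in the Friedrichs domain without $\log R$ terms, to be the main technical obstacle.

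\emph{Consequences.} Flatness follows from $K_{\widetilde h}=e^{-2\sigma}(K_h-\Delta_h\sigma)=0$. The cone angle is preserved because $\sigma$ has a finite limit with $\sigma-\sigma(o)\in R^\nu C^{2,\alpha}_{\mathrm b}$ by Lemma~\ref{lem:cone-Schauder}. From $e^{\pm2|\sigma|}\leqslant t^{\mp1/2}$ we get $t^{1/2}h\leqslant\widetilde h\leqslant t^{-1/2}h$. For a smooth center, elliptic regularity makes $\sigma$ smooth, so $\widetilde h$ is smooth and flat on all of $E_t$.
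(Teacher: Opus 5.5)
Your proposal is correct and follows the paper's proof essentially step for step: a Lax--Milgram/Riesz solution in $H^1_0(E_t)$, and the barriers $\tfrac14\log(\psi/t)\pm\sigma$, which are weakly superharmonic by $\Delta_h\log\psi=-6-|d\ell|_h^2$, $4K_h=|d\Phi|_h^2-|d\ell|_h^2$ and $K_h\leqslant1$. At $o$ you then use Lemma~\ref{lem:cone-Schauder} to show the cone angle is preserved. Two small slips: Lemma~\ref{lem:cone-Schauder} assumes $\sigma$ is bounded rather than proving it (boundedness comes from your barrier bound, whose weak minimum principle needs only $H^1$ and the zero capacity of $o$), and the metric comparison should read $t^{1/2}\leqslant e^{-2|\sigma|}\leqslant e^{2\sigma}\leqslant e^{2|\sigma|}\leqslant t^{-1/2}$.
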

This is a local counterpart of the flattening of a doubled orbit space in \cite[Section~2.4, proof of Proposition~2.3, (2.37)]{Chr08}.
\begin{proof}
Corollary~\ref{cor:concavity} makes $E_t$ a closed disk with smooth boundary,
and Lemma~\ref{lem:scalar}(a) bounds its curvature: $|K_h|\leqslant t^{-2}$ on $E_t$. Hence $K_h\in L^2(E_t,dA_h)$, and the Riesz representation theorem in $H^1_0(E_t)$, normed by the Dirichlet energy, gives a unique weak solution $\sigma\in H^1_0(E_t)$ of \eqref{eq:core-flattening}. 

The estimate on $\sigma$ comes from barriers based on the conformal metrics $\psi^{\pm1/2}h$. By \eqref{eq:Einstein-identities}, $\Delta_h\log\psi=\psi^{-1}\Delta_h\psi-|d\ell|_h^2=-6-|d\ell|_h^2$ and $4K_h=|d\Phi|_h^2-|d\ell|_h^2$, so we have
\[
 K_h-\tfrac14\Delta_h\log\psi=\tfrac32+\tfrac14|d\Phi|_h^2>0,\qquad\text{and}\qquad
 K_h+\tfrac14\Delta_h\log\psi\leqslant-\tfrac12:
\]
the metrics $\psi^{1/2}h$ and $\psi^{-1/2}h$ have curvatures of opposite signs. Consequently, as $\Delta_h\sigma=K_h$, the functions $\frac{1}{4}\log(\psi/t)\pm\sigma\in H^1(E_t)$ are weakly superharmonic and vanish on $\partial E_t$. The weak minimum principle \cite[Theorem~8.1]{GT01} gives $\frac{1}{4}\log(\psi/t)\pm\sigma\geqslant0$, which is \eqref{eq:core-flattening-bound}. Near $o$, $\Delta_{\gamma_\beta}\sigma=e^{2a}K_h$ is bounded in $C^{0,\alpha}_{\mathrm b}$ by \eqref{eq:center-regularity}, and $\sigma$ is invariant, so Lemma~\ref{lem:cone-Schauder} with $\delta=2$ shows that $\sigma$ has a limit $\sigma(o)$ and $\sigma-\sigma(o)\in R^2C^{2,\alpha}_{\mathrm b}$, and in particular $\sigma$ has no logarithmic term. Thus the conformal change preserves the cone angle, while
$K_{\widetilde h}=e^{-2\sigma}(K_h-\Delta_h\sigma)=0$ away from $o$.
Uniqueness preserves the symmetries. 
\end{proof}

\emph{A local estimate.} Let $U'$ and $U$ be charts with $\overline{U'}\subset U$, and write $h=e^{2u}\gamma$, where $\gamma$ is a fixed smooth metric on $U$. By Proposition~\ref{prop:reduced}, \eqref{eq:matrix-reduced} reads $\Delta_\gamma\psi=-6e^{2u}\psi$, $\Delta_\gamma\Phi=|d\Phi|_\gamma^2\Phi-d\Phi(\nabla_\gamma\log\psi)$ and $\Delta_\gamma u=K_\gamma-\frac{1}{4}(|d\Phi|_\gamma^2-|d\log\psi|_\gamma^2)$. For $p>\frac{2}{1-\alpha}$, the interior $W^{2,p}$ estimate and then Schauder estimates \cite[Theorems~9.11 and~6.2]{GT01} therefore give
\begin{equation}\label{eq:local-regularity}
 C^{-1}\gamma\leqslant h\leqslant C\gamma,\quad
 \psi+\psi^{-1}+|d\psi|_\gamma+|d\Phi|_\gamma+d_{\mathbb H^2}(\Phi,\Phi_o)\leqslant C\ \text{ on }U
 \ \Longrightarrow\ \|(u,\psi,\Phi)\|_{C^{j,\alpha}(U')}\leqslant C_j,
\end{equation}
where $C_j$ depends only on $C$, $U$, $U'$, the coefficients of $\gamma$, $j$ and $\alpha$. Bounds for $\rho=\rho(o)\psi$ also use the bound for $\rho(o)$.
\begin{lemma}\label{lem:curv-central}
 For every compact interval $I\subset(0,1)$, there are $r_*,C>0$, depending only on $I$ and the symmetry class, such that every metric $g=g_{\beta,q}$ as in \eqref{eq:perturbed-metric} with $\beta\in I$ and $\mathcal E(\beta,q)=0$ satisfies
\begin{equation}\label{eq:curv-central}
 \sup_{\{0<d_g(\Sigma,\cdot)<r_*\}}|\Rm(g)|\leqslant C.
\end{equation}
 \end{lemma}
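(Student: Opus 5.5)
Our plan is to reduce \eqref{eq:curv-central} to the quantitative central regularity of Lemma~\ref{lem:center-expansion}, applied in coordinates of fixed size produced by the flattening of Lemma~\ref{lem:flattened-cores}. The first step fixes a core. Take $t=\frac12$. On $E_{1/2}$, Lemma~\ref{lem:scalar} gives $|K_h|\leqslant 3$, $|d\Phi|_h\leqslant4$ and $|d\psi|_h\leqslant2$, and $d_{\mathbb H^2}(\Phi,\Phi_o)\leqslant C$. Lemma~\ref{lem:flattened-cores} then gives $\widetilde h=e^{2\sigma}h$, flat with cone angle $2\pi\beta$ at $o$, with $|\sigma|\leqslant\frac14\log2$. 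In particular $\widetilde h$ and $h$ are uniformly comparable. By \eqref{eq:base-radius}, $E_{1/2}$ contains the $h$-ball of radius $\arcsin\sqrt{1/2}-\arcsin\sqrt{1/2}$; this is zero, so we use instead $d_h(q,\partial E_{1/2})\geqslant c>0$ for $q\in E_{3/4}$, which follows from $|d\psi|_h\leqslant2$ via \eqref{eq:coord-psi-distance}. Since $d_h(o,q)\leqslant\arccos\sqrt{\psi(q)}$, the set $E_{3/4}$ also contains an $h$-ball of fixed radius about $o$.

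The second step introduces conic coordinates. A flat metric with a single cone point of angle $2\pi\beta$ on a disk is locally isometric to $dR^2+\beta^2R^2d\theta^2$. So on the $\widetilde h$-ball of fixed radius $r_0$ about $o$ (fixed size by the comparability) we obtain coordinates with $\widetilde h=3\gamma_\beta$. These coordinates are $\mathcal G$-equivariant by uniqueness of $\sigma$, and the rotation acts by $\theta\mapsto\theta+2\pi/m$. In these coordinates $h=e^{2a}\gamma_\beta$ with $a=-\sigma+\frac12\log3$, which is bounded. We also normalize $\rho$ by $\rho(o)$, i.e.\ we work with $\psi$, since $\Rm$ does not see the scale of the torus beyond the flat factor. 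Hypothesis (H2) then holds with $M$ depending only on $I$: $\psi\in[\frac12,1]$, and $|d\psi|_{\gamma_\beta}$ and $|d\Phi|_{\gamma_\beta}$ are bounded by the bounds above and the comparability. Hypothesis (H1) holds because these are the reduced equations \eqref{eq:matrix-reduced}, which are conformally covariant as written in Proposition~\ref{prop:reduced} with $\gamma=\gamma_\beta$.

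The third step concludes. Apply Lemma~\ref{lem:center-expansion} with $\varepsilon$ chosen from $I$ and $R_0=r_0$. Replacing $\rho$ by $\psi$ is harmless: the equations are invariant under constant rescaling of $\rho$, and $\Rm$ of $g$ equals $\Rm$ of the metric with $G$ replaced by $G/\rho(o)$ up to rescaling only the flat torus directions. This is the point to check carefully. The curvature involves only $d\ell$, $d\Phi$ and $h$, which are scale invariant, so the formulas of Proposition~\ref{prop:Ricci} confirm that $\Rm$ is independent of $\rho(o)$. Lemma~\ref{lem:center-expansion} gives $|\Rm(g)|_g\leqslant C$ on $\{R<\frac12r_0\}$. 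Since $g$-distance to $\Sigma$ equals the $h$-distance to $o$ in the base, the comparability of $h$ with $\gamma_\beta$ converts $\{R<\frac12r_0\}$ into a tube $\{d_g(\Sigma,\cdot)<r_*\}$.

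The main obstacle is the uniformity in the second step, namely obtaining a conic chart of fixed size. Flatness of $\widetilde h$ alone only gives the chart up to the injectivity scale. We need the developing map from the cone point to be injective on a disk of radius comparable to $d_{\widetilde h}(o,\partial E_{1/2})$. We would argue that the exponential map from the vertex of a flat cone with $\beta<1$ is injective on the largest metric ball avoiding the boundary, since flat geodesics from the vertex cannot meet again without a boundary point or additional cone point in between.
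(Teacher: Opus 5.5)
Your proposal is correct and follows essentially the paper's proof: flatten $h$ on $E_{1/2}$ by Lemma~\ref{lem:flattened-cores}, take a conic chart of fixed radius about $o$, apply Lemma~\ref{lem:center-expansion} to $(\psi,\Phi,-\sigma)$ after passing to the fiber coordinates $y_a=\sqrt{\rho(o)}\,\phi_a$, and convert the chart into a tube. For the injectivity you flag, the paper's precise form of your heuristic is that $\widetilde h$ has no conjugate points, and a first cut point would give a flat geodesic bigon in the disk $E_{1/2}$ whose interior angles sum to $0$ by Gauss--Bonnet. Two small corrections: the fixed-radius ball about $o$ inside $E_{1/2}$ comes from the Lipschitz bound $|\psi(q)-\psi(o)|\leqslant 2d_h(o,q)$ of \eqref{eq:coord-psi-distance}, which gives $d_h(o,\partial E_{1/2})\geqslant\frac14$, and not from $d_h(o,q)\leqslant\arccos\sqrt{\psi(q)}$, which bounds the distance in the wrong direction. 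Also, the independence of $|\Rm(g)|$ from $\rho(o)$ is the local isometry given by that fiber rescaling, not a consequence of Proposition~\ref{prop:Ricci}, which only computes the Ricci tensor.
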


\begin{proof}
\emph{Flat balls.} Let $0<t<1$, and let $\widetilde h=e^{2\sigma}h$ be the flat metric of Lemma~\ref{lem:flattened-cores} on the core $E_t$, smooth in class (C2), and with one cone point $o$ of angle $2\pi\beta<2\pi$ in class (C1). For $q\in E_t$ in class (C2), and for $q=o$ in class (C1), the $\widetilde h$-exponential map at $q$ is an isometry from the Euclidean disk, respectively the flat cone of angle $2\pi\beta$, of radius $d_{\widetilde h}(q,\partial E_t)$ onto $B_{\widetilde h}(q,d_{\widetilde h}(q,\partial E_t))$. Indeed, $\widetilde h$ has no conjugate points, and a first point in the cut locus would give a flat geodesic bigon in $E_t$, whose two interior angles would sum to $0$ by Gauss-Bonnet.

On the universal cover $\mathbb R^2$ of the torus orbits put $y_a:=\sqrt{\rho(o)}\,\phi_a$.
Then
$g=h+\psi\Phi_{ab}\,dy_ady_b$. This change is only for local estimates
and does not change the generators $X_1,X_2$ of \eqref{eq:matrix-ansatz}, whose flows are $2\pi$-periodic.

\emph{ The conic center.} Apply Lemma~\ref{lem:flattened-cores} on
$E_{1/2}$. Since $|d\psi|_h\leqslant2$, this yields
\[
 |\sigma|\leqslant\tfrac14\log2,\qquad
 d_h(o,\partial E_{1/2})\geqslant\tfrac14,\qquad\text{and}\qquad
 d_{\widetilde h}(o,\partial E_{1/2})\geqslant\tfrac14\,2^{-1/4}.
\]
Put $r_0:=\frac{1}{4}\,2^{-1/4}$. The ball $B_{\widetilde h}(o,r_0)$ is a flat cone of angle $2\pi\beta$, by the paragraph on flat balls and $d_{\widetilde h}(o,\partial E_{1/2})\geqslant r_0$. Write there
\[
 \widetilde h=\tfrac13(d\widetilde R^2+\beta^2\widetilde R^2d\theta^2),\qquad\text{and}\qquad
 h=e^{-2\sigma}\widetilde h.
\]
Here $\widetilde R$ is the radius of this local chart, which is a priori distinct from the
prescribed $R$. The central symmetries act by their original rotations and reflections in the chart $(\widetilde R,\theta)$, so the mode restrictions
of Lemma~\ref{lem:central-modes} are unchanged.

Apply Lemma~\ref{lem:center-expansion} with $(\gamma_\beta,\rho,\Phi,a,R)$ replaced by $(\widetilde h,\psi,\Phi,-\sigma,\widetilde R)$. Proposition~\ref{prop:reduced} verifies (H1), since the reference metric may be changed and $\rho/\psi=\rho(o)$ is constant. On $B_{\widetilde h}(o,r_0)\subset E_{1/2}$, Lemmas~\ref{lem:scalar} and~\ref{lem:flattened-cores} give
\[
 |\sigma|\leqslant\tfrac14\log2,\qquad \tfrac12\leqslant\psi\leqslant1,\qquad\text{and}\qquad
 |d\psi|_{\widetilde h}+|d\Phi|_{\widetilde h}\leqslant e^{|\sigma|}\bigl(|d\psi|_h+|d\Phi|_h\bigr)\leqslant6\cdot2^{1/4},
\]
which gives \eqref{eq:central-quantitative-input} with a universal $M$. Lemma~\ref{lem:center-expansion}, with $g^0=e^{-2\sigma(o)}\widetilde h+(\Phi_o)_{ab}\,dy_ady_b$, gives $|\Rm(g)|\leqslant C$ and $C^{-1}g^0\leqslant g\leqslant Cg^0$ on the chart.
Since $C=C(I)$, this chart contains a tube $\{0<d_g(\Sigma,\cdot)<r_*\}$, proving \eqref{eq:curv-central}.

\end{proof}

\begin{lemma}\label{lem:smooth-cores}
For every $g$ in (C2) and $\delta\in(0,1)$, the core $E_\delta$ admits a normal chart at $o$ and, where needed, a cylinder chart in the coordinates $(t,\Theta)$ of the proof of Lemma~\ref{lem:core-retraction}, on coordinate domains independent of $g$. In these charts, $h$ is uniformly positive and $h$, $\psi$ and $\Phi$ have uniform $C^{j,\alpha}$ bounds, and $\sup_{\pi^{-1}(E_\delta)}|\Rm(g)|\leqslant C_\delta$. The constants depend only on $\delta$, $j$, $\alpha$ and the symmetry class, not on $\rho(o)$.
\end{lemma}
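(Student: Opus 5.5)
We follow the flat-ball strategy of the proof of Lemma~\ref{lem:curv-central}, now in the smooth setting of class (C2), where $o$ is a smooth point with $d\ell(o)=d\Phi(o)=0$, and with the core $E_\delta$ replacing $E_{1/2}$. First we apply Lemma~\ref{lem:flattened-cores} on a slightly larger core $E_{\delta'}$, say $\delta'=\delta/2$. This gives a flat metric $\widetilde h=e^{2\sigma}h$ on $E_{\delta'}$ with $|\sigma|\leqslant\frac14\log(2/\delta)$, and the metric is smooth in class (C2). On $E_{\delta'}$, Lemma~\ref{lem:scalar} bounds $|K_h|$, $|d\psi|_h$, $|d\Phi|_h$ and $d_{\mathbb H^2}(\Phi,\Phi_o)$ by constants depending only on $\delta$. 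Corollary~\ref{cor:concavity} gives $|d\psi|_h\leqslant2$, and hence, by \eqref{eq:coord-psi-distance}, a lower bound $d_h(E_\delta,\partial E_{\delta'})\geqslant(\delta-\delta')/2$. Therefore every point $q\in E_\delta$ has an $\widetilde h$-flat disk of radius $r_\delta>0$ around it, depending only on $\delta$. As in Lemma~\ref{lem:curv-central}, this disk is an isometric image of a Euclidean disk, since there are no conjugate points and no flat bigons.

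We next produce the charts. At $o$ we take $\widetilde h$-normal coordinates on a disk of radius $r_\delta$, which is the normal chart. Away from $o$, covering $E_\delta\setminus E_{1-\epsilon}$ by finitely many such disks would give constants depending on $g$ through the number of disks. Instead, we use the cylinder coordinates $(t,\Theta)$ of Lemma~\ref{lem:core-retraction} on $\{\delta\leqslant\psi\leqslant1-\epsilon\}$, where $\epsilon$ is chosen so that $E_{1-\epsilon}$ lies inside the normal chart. This is possible since $d_h(o,\partial E_{1-\epsilon})\leqslant\arccos\sqrt{1-\epsilon}$ by \eqref{eq:base-radius}. In these coordinates $h=\mathsf w^{-1}dt^2+|\partial_\Theta|_h^2d\Theta^2$, and $\mathsf w$ is pinched by $4t(1-t)\leqslant\mathsf w\leqslant4(1-t^2)$. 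The length of the level circles $\{\psi=t\}$ is controlled from above by the Gauss--Bonnet identity \eqref{eq:level-Gauss-Bonnet}, which holds with $\beta=1$ on each sheet of the cover. It is controlled from below by the isoperimetric/flat-disk bound. The monotonicity \eqref{eq:core-coordinates} then gives two-sided bounds on $|\partial_\Theta|_h$ after normalizing $\Theta$ by arclength on a fixed level.

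With $h$ uniformly comparable to the chart metric $\gamma$ on domains independent of $g$, and with $\psi^{\pm1}$, $d\psi$, $d\Phi$ and $d_{\mathbb H^2}(\Phi,\Phi_o)$ bounded, the local estimate \eqref{eq:local-regularity} gives $C^{j,\alpha}$ bounds on $(u,\psi,\Phi)$. None of these bounds involves $\rho(o)$, because the equations are written in $\psi$.

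For the curvature bound, we pass on the universal cover of the orbits to the rescaled coordinates $y_a=\sqrt{\rho(o)}\phi_a$, in which $g=h+\psi\Phi_{ab}dy_ady_b$. Its coefficients are bounded in $C^{j,\alpha}$ with $\psi\geqslant\delta$, so $|\Rm(g)|\leqslant C_\delta$ on $\pi^{-1}(E_\delta)$, and curvature is local, so the covering does not affect it.

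The main obstacle is making the cylinder chart domains and the regularity of $\partial_\Theta$ independent of $g$. This requires upgrading the pinching of $|\partial_\Theta|_h$ to higher derivative bounds of the coordinate change. We expect to handle this by observing that $(t,\Theta)$ are built from $\psi$, which is already controlled in the flat disk charts. Hence the transition maps inherit $C^{j,\alpha}$ bounds from those of $\psi$, as long as $|d\psi|\geqslant2\sqrt{t(1-t)}$ stays bounded below on $\{\delta\leqslant\psi\leqslant1-\epsilon\}$.
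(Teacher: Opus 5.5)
Your proposal is correct and follows essentially the paper's proof: flattening by Lemma~\ref{lem:flattened-cores} on $E_{\delta/2}$, flat disks of radius $r_\delta=\frac{\delta}{4}(\frac{\delta}{2})^{1/4}$ feeding \eqref{eq:local-regularity}, the fiber coordinates $y_a=\sqrt{\rho(o)}\,\phi_a$ for the curvature bound, and a cylinder chart from the flow of $V_\psi$, whose derivatives are controlled because $|d\psi|_h^2\geqslant4t(1-t)$ is bounded below away from $o$. The differences are cosmetic. The paper uses an $h$-normal chart at $o$, and its reference level $\{\psi=t_*\}$ is a radial graph because $\Hess_h\psi(o)=-3h(o)$, so your Gauss--Bonnet length bound is not needed; the two-sided pointwise bounds on $|\partial_\Theta|_h$ come from integrating \eqref{eq:core-coordinates}, whose right-hand side is bounded there, not from monotonicity alone.
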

\begin{proof}
\emph{ Cores of metrics with a smooth center.} We use the flat balls and the fiber coordinates $y_a:=\sqrt{\rho(o)}\,\phi_a$ of the proof of Lemma~\ref{lem:curv-central}. Apply Lemma~\ref{lem:flattened-cores} on $E_{\delta/2}$. Then, we have
\[
 |\sigma|\leqslant\tfrac14\log(2/\delta),\qquad
 d_h(E_\delta,\partial E_{\delta/2})\geqslant\tfrac\delta4,\qquad\text{and}\qquad
 d_{\widetilde h}(E_\delta,\partial E_{\delta/2})
       \geqslant\tfrac\delta4(\delta/2)^{1/4}.
\]
 With $r_\delta:=\frac{\delta}{4}(\frac{\delta}{2})^{1/4}$,  the  flat balls of radius $r_\delta$  about the points of $E_\delta$ are Euclidean disks contained in $E_{\delta/2}$. In such a chart $h=e^{-2\sigma}|dx|^2$, which with the bound for $\sigma$ below gives the metric comparison. On $E_{\delta/2}$, Lemmas~\ref{lem:scalar} and~\ref{lem:flattened-cores} give $\frac{\delta}{2}\leqslant\psi\leqslant1$, $|\sigma|\leqslant\frac{1}{4}\log(2/\delta)$, $|d\psi|_h\leqslant2$, $|d\Phi|_h\leqslant4\delta^{-1}$ and $d_{\mathbb H^2}(\Phi,\Phi_o)\leqslant2\operatorname{artanh}\sqrt{1-\delta/2}$. These are the hypotheses of \eqref{eq:local-regularity} with $\gamma=|dx|^2$, $u=-\sigma$, $U=B(q,r_\delta)$ and $U'=B(q,\frac{1}{2}r_\delta)$, with constants depending only on $\delta$.

In the fiber coordinates $y_a$, the metric $g=e^{-2\sigma}|dx|^2+\psi\Phi_{ab}\,dy_ady_b$ then has coefficients bounded in every $C^{j,\alpha}$ and uniformly positive on $B(q,\frac{1}{2}r_\delta)\times\mathbb R^2$, which gives the curvature bound.

\emph{Core charts.} The bounds above with $\delta=\frac{1}{2}$ give a normal chart of $h$ at $o$ on a ball $B_h(o,r_1)$, with $r_1>0$ independent of $g$, in which $h$, $\psi$ and $\Phi$ are bounded in every $C^{j,\alpha}$. Since $\Hess_h\psi(o)=-3h(o)$ in class (C2), by \eqref{eq:Einstein-identities}, there is $t_*<1$ independent of $g$ such that $E_{t_*}$ lies in this chart and $\{\psi=t_*\}$ is a radial graph, parametrized by the normal angle $\Theta$. On $\{\delta\leqslant\psi\leqslant t_*\}$, \eqref{eq:base-radius} gives $|d\psi|_h^2\geqslant4\delta(1-t_*)$, so the derivative bounds on $E_{\delta/2}$ control all derivatives of $V_\psi$, and the coordinates $(t,\Theta)$ of Lemma~\ref{lem:core-retraction}, with reference level set $\{\psi=t_*\}$, are the cylinder chart, with the stated bounds.
\end{proof}

\subsection{Exclusion of curvature concentration}
\label{sec:curv-exclusion}

The edge Chern-Gauss-Bonnet formula \cite[Theorem~2.1]{AL13}, with $\chi(\Sigma)=0$ and $\chi(M)=m\in\{4,6\}$, gives, for every metric $g=g_{\beta,q}$ as in \eqref{eq:perturbed-metric} with $0<\beta<1$ and $\mathcal E(\beta,q)=0$,
\begin{equation}\label{eq:curv-energy}
 \int_{M\setminus\Sigma}|\Rm(g)|^2\,dV_g
 =32\pi^2\bigl(\chi(M)-(1-\beta)\chi(\Sigma)\bigr)
 =32\pi^2m.
\end{equation}

Killing vector fields are controlled along a path by their $1$-jets. Let $X$ be a Killing vector field, and $\gamma:[0,1]\to M$ a path of length at most $Ar$ along which $|\Rm|\leqslant Ar^{-2}$, for some $A,r>0$. Then, for a constant $C_A$ depending only on $A$, we have
\begin{equation}\label{eq:killing-transport}
 C_A^{-1}\bigl(r^{-1}|X|+|\nabla X|\bigr)(\gamma(0))\leqslant\bigl(r^{-1}|X|+|\nabla X|\bigr)(\gamma(1))\leqslant C_A\bigl(r^{-1}|X|+|\nabla X|\bigr)(\gamma(0)).
\end{equation}
 After scaling the metric by $r^{-2}$,  the Killing identities give a first-order linear system for $(X,\nabla X)$ along  $\gamma$, parametrized by arclength, with bounded coefficients, and Gronwall's inequality in both directions  gives \eqref{eq:killing-transport}, as in the proof of \cite[Proposition~16]{CLW08}, compare \cite[p.~404]{Don08}. No lower bound for the injectivity radius or the volume is used. Differentiating the Killing identities bounds the higher derivatives of $X$ in the same way where the derivatives of the curvature are bounded.

The following lemma shows that the rod vectors, and hence \eqref{eq:rod-determinant}, pass to geometric limits. It is also used for the Ricci-flat limits of Section~\ref{sec:collapse}.

Let $(U_i,g_i,p_i)$ be pointed Riemannian four-manifolds converging smoothly on compact subsets to a complete connected pointed Riemannian four-manifold $(\mathcal N,g_{\mathcal N},p)$. Assume that:
\begin{itemize}
\item $\mathbb T^2:=\mathbb R^2/2\pi\mathbb Z^2$ acts effectively and isometrically on each $U_i$, generated by Killing vector fields $X_{1,i},X_{2,i}$ with $2\pi$-periodic flows, and
\item the rods and corners of $U_i$ have the models \eqref{eq:matrix-smooth-side} and \eqref{eq:matrix-smooth-corner}, where a \emph{corner} is a point fixed by $\mathbb T^2$, and the rod vectors are, up to sign, $e_1,e_2$ in the square case and $e_1,e_2,e_1+e_2$ in the hexagon case.
\end{itemize}

\begin{lemma}\label{lem:torus-limit}

Let $(U_i,g_i,p_i)$ and $(\mathcal N,g_{\mathcal N},p)$ satisfy the two assumptions above.
\begin{enumerate}[label=\textup{(\alph*)}]
\item \emph{Limit action.} If corners $c_i\in U_i$ converge to $c\in\mathcal N$, then, after a subsequence, $X_{1,i},X_{2,i}$ converge smoothly on compact subsets to Killing vector fields $X_1,X_2$ of $g_{\mathcal N}$. They generate an effective isometric $\mathbb T^2$-action on $\mathcal N$, and on each ball about $c$ the convergence maps can be chosen equivariant. They then preserve the stabilizers, hence the rods, rod vectors and corners.
\item \emph{Existence of a corner.} If moreover $g_{\mathcal N}$ is nonflat and Ricci-flat, with finite curvature energy, then, after a subsequence, there are corners $c_i\in U_i$ converging to a point of $\mathcal N$.
\end{enumerate}
\end{lemma}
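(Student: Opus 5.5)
For (a), I will use the Killing transport estimate \eqref{eq:killing-transport} to get uniform bounds on $X_{1,i},X_{2,i}$ and their derivatives on compact subsets. At the corner $c_i$ both fields vanish, since $c_i$ is fixed by $\mathbb T^2$. Moreover, $\nabla X_{a,i}(c_i)$ is the infinitesimal rotation of the tangent plane normal to the corresponding axis; this follows from the corner model \eqref{eq:matrix-smooth-corner} in the Cartesian coordinates \eqref{eq:cartesian}, where the rod vectors form a lattice basis. Hence $|\nabla X_{a,i}|(c_i)=1$ for the adapted basis, and the rod vectors in the $(X_1,X_2)$ basis lie in a fixed finite set, so the two bases differ by one of finitely many matrices in $GL_2(\mathbb Z)$. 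Along paths from $c_i$ of bounded length, smooth convergence gives uniform curvature bounds, so \eqref{eq:killing-transport} bounds $(r^{-1}|X|+|\nabla X|)$ above and below, and differentiating the Killing identities bounds all higher derivatives. Arzel\`a--Ascoli then gives subsequential smooth convergence to Killing fields $X_a$ of $g_{\mathcal N}$, with $X_a(c)=0$ and $\nabla X_a(c)$ the limiting pair of commuting rotations in orthogonal planes, each with eigenvalues $\pm i$.

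The flows of $X_a$ exist for all time because $\mathcal N$ is complete. They are $2\pi$-periodic, since $\exp(2\pi\nabla X_a(c))=\mathrm{Id}$ and an isometry fixing $c$ with identity differential is the identity on connected $\mathcal N$. They commute as limits of commuting fields. The action is effective because the differential at $c$ realizes the standard faithful $\mathbb T^2$-representation on $\mathbb R^2\oplus\mathbb R^2$.

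Equivariant convergence maps on balls about $c$ I will build by averaging: take the given approximating diffeomorphisms $F_i$ and apply the center-of-mass construction (Grove--Karcher) to $t\mapsto \phi^{i}_{-t}\circ F_i\circ\phi_t$ over $\mathbb T^2$. These maps are $C^1$-close to $F_i$, so this yields equivariant diffeomorphisms close to $F_i$. Equivariant diffeomorphisms preserve stabilizers, hence rods, their rod vectors, and corners.

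For (b), I argue by contradiction: suppose no corners converge, i.e.\ corners escape every ball $B(p_i,D)$. The strategy is the one in \cite[p.~404]{Don08}. Near $p$, the fields $X_{a,i}$ after renormalization (dividing by $r^{-1}|X|+|\nabla X|$ at $p_i$) converge to a nontrivial Killing field of $g_{\mathcal N}$; if the span collapses, I would use a suitable lattice combination. Without corners, the limit action has no fixed points on bounded regions. The key step is then to show that $\mathcal N$ carries a nontrivial Killing field, or a free torus or $\mathbb R$-action, without fixed points. Combined with Ricci-flatness and finite $\int|\Rm|^2$, this forces flatness. I would try first the following route: a fixed-point-free isometric $\mathbb T^2$- or $S^1$-action on a complete Ricci-flat four-manifold has vanishing Euler characteristic contributions (Bott residue formula). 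Together with finite energy, which gives ALE/ALF-type ends with controlled boundary terms, the Chern--Gauss--Bonnet integral $\int|\Rm|^2=8\pi^2\chi$ plus boundary terms vanishes, contradicting nonflatness.

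The main obstacle is (b). Handling possibly collapsed orbits and orbit directions that converge to a subtorus (limit fields linearly dependent), and making the localization/boundary-term argument rigorous on a noncompact limit, will require care. The determinant condition \eqref{eq:rod-determinant} should rule out rod configurations producing nontrivial isotropy without a corner.
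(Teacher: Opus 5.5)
Your part (a) is correct and follows the paper's proof. You normalize the $1$-jets at $c_i$ using the adjacent rod vectors, which form a lattice basis. You then use Killing transport \eqref{eq:killing-transport} for uniform $C^j$ bounds, read periodicity and effectiveness off the limiting $1$-jet at $c$, and use the Grove--Karcher center of mass for equivariance.

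Part (b) has a genuine gap: the step ``no corners converge, hence the limit has a Killing field without zeros'' is unjustified, and it is where all the work lies. A zero of a limit Killing field does not by itself produce zeros of the approximants, still less torus-fixed points. Zeros of Killing fields are typically degenerate: on $\mathbb R^4$, $J\oplus0$ vanishes on a plane, but $J\oplus0+\epsilon\,\partial_{x_3}$ has no zero. The paper's argument runs as follows.
\begin{itemize}
\item Take limits $Y_1,Y_2$ of bases $X'_{1,i},X'_{2,i}$ that are orthonormal for the $1$-jet inner product at $p_i$. This, not a lattice combination, guarantees independence; periods play no role in (b).
\item By Weber \cite{Web19}, every Killing field of $g_{\mathcal N}$ vanishes somewhere, so the closures of the flows are compact and $\mathbb T':=K_1K_2$ is a torus. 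A field with dense flow in $\mathbb T'$ then vanishes at a point $c$ fixed by all of $\mathbb T'$.
\item Since $\nabla Y_1(c),\nabla Y_2(c)$ are independent commuting skew endomorphisms, some $Z=\lambda_1Y_1+\lambda_2Y_2$ has $\nabla Z(c)$ invertible. This nondegenerate zero persists: $Z_i=\lambda_1X'_{1,i}+\lambda_2X'_{2,i}$ has an isolated zero $c_i\to c$.
\item The torus of $U_i$ commutes with $Z_i$, so it preserves the zero set of $Z_i$. The connected orbit of $c_i$ lies in that set, where $c_i$ is isolated, so $c_i$ is fixed, i.e.\ a corner.
\end{itemize}
Without the last two steps, nothing turns information about the limit into corners of $U_i$.

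Your flatness route via Bott residues and Chern--Gauss--Bonnet also fails as stated. Finite curvature energy, with no volume-growth or decay hypothesis, does not give ALE/ALF ends, and the boundary terms need not vanish. A residue formula also needs a compact group action, which you do not have before knowing that the flow closures are compact. In this paper the lemma is applied to limits with Kasner ends (Theorem~\ref{thm:periodic-models}), so any argument based on ALE/ALF asymptotics misses the case that matters. The fact you need, that a Killing field of such $\mathcal N$ has a zero, is Weber's theorem and should be cited rather than rederived heuristically. The determinant condition \eqref{eq:rod-determinant} plays no role in (b).
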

\begin{proof}
\emph{(a)} An arbitrary normalization of the first jets of the limiting Killing fields need not preserve the original integral lattice $2\pi\mathbb Z^2$ of periods. At a corner, the primitive rod fields have unit rotation derivatives in the normal planes, whatever the scale, and this fixes their normalization.

 After a subsequence, the adjacent rod vectors $v_1,v_2$ at $c_i$ are fixed. By the corner model, $V_{a,i}:=(v_a)_1X_{1,i}+(v_a)_2X_{2,i}$ vanishes at $c_i$, and $\nabla V_{1,i}(c_i)$, $\nabla V_{2,i}(c_i)$ are the rotations $J\oplus0$ and $0\oplus J$ of an orthogonal splitting of $T_{c_i}U_i$ into two planes, with $J:=\bigl(\begin{smallmatrix}0&-1\\1&0\end{smallmatrix}\bigr)$ in orthonormal bases. As $(v_1,v_2)\in\mathrm{GL}(2,\mathbb Z)$, the $1$-jets of $X_{1,i},X_{2,i}$ at $c_i$ are therefore bounded and independent, uniformly in $i$, and  \eqref{eq:killing-transport}, with its differentiated form, gives, on each compact subset $Q$ of the converging regions and for $q\in Q$,
\begin{equation}\label{eq:killing-propagation}
 \|X\|_{C^{j+1}(Q)}\leqslant C_{Q,j}\bigl(|X(q)|+|\nabla X(q)|\bigr).
\end{equation}

 Hence $X_{a,i}\to X_a$ smoothly on compact subsets, and the limits, with $2\pi$-periodic flows and the above $1$-jets at $c$, generate an effective action with the lattice $2\pi\mathbb Z^2$ \cite[proof of Theorem~27]{CLW08}. 

 On each fixed ball about $c$, the center-of-mass construction \cite[Propositions~3.1 and~3.7]{GK73} modifies the convergence embeddings by a $C^\infty$-small amount to embeddings $F_i$ with $(F_i)_*X_a=X_{a,i}$, which intertwine the actions and therefore preserve the stabilizers.

\emph{(b)}  Bases $X'_{1,i},X'_{2,i}$ of the span of $X_{1,i},X_{2,i}$, orthonormal for the inner product of $1$-jets at $p_i$, converge, by \eqref{eq:killing-propagation} with $q=p_i$, to independent commuting Killing vector fields $Y_1,Y_2$ of $g_{\mathcal N}$. 

 By \cite[Theorem~1.1]{Web19}, every Killing vector field of $g_{\mathcal N}$ vanishes somewhere, and the isometries fixing a point form a compact group. So the closures $K_1,K_2$ of the flows of $Y_1,Y_2$ are compact, $\mathbb T':=K_1K_2$ is a torus, and a Killing vector field whose flow is dense in $\mathbb T'$ vanishes at a point $c$, again by \cite{Web19}, which is fixed by $\mathbb T'$.  The commuting skew endomorphisms $\nabla Y_1(c)$ and $\nabla Y_2(c)$ are independent, since a Killing vector field vanishing at $c$ is determined by its derivative there, so in a common orthonormal basis they are $\operatorname{diag}(a_1J,a_2J)$ and $\operatorname{diag}(b_1J,b_2J)$ with $(a_1,a_2)$ and $(b_1,b_2)$ independent. Choose $(\lambda_1,\lambda_2)$ with $\lambda_1a_j+\lambda_2b_j\neq0$ for $j=1,2$, so that $Z:=\lambda_1Y_1+\lambda_2Y_2$ has invertible derivative $\nabla Z(c)$, and put $Z_i:=\lambda_1X'_{1,i}+\lambda_2X'_{2,i}$. As $Z_i\to Z$ smoothly, $Z_i$ vanishes at a point $c_i$, isolated in the vanishing set of $Z_i$, with $c_i\to c$. The torus acting on $U_i$ commutes with $Z_i$, which is a constant combination of its generators, so it preserves the vanishing set of $Z_i$. The orbit of $c_i$ is connected and lies in this set, in which $c_i$ is isolated, so it is $\{c_i\}$: the point $c_i$ is fixed by the torus, that is, a corner.
\end{proof}

\begin{corollary}\label{cor:ale-exclusion}
Let $(\mathcal N,g_{\mathcal N})$ be a complete Ricci-flat four-manifold with finite curvature energy, Euclidean volume growth and an effective isometric action of $\mathbb R^2/2\pi\mathbb Z^2$, whose orbits are principal orbits, rods and corners, and whose primitive rod vectors $v,v'$ satisfy
\[
 |\det(v,v')|=1\qquad\text{whenever }v\neq\pm v'.
\] Then $(\mathcal N,g_{\mathcal N})$ is isometric to the Euclidean $\mathbb R^4$.
\end{corollary}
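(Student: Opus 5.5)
Since $\mathcal N$ is complete Ricci-flat with Euclidean volume growth and finite curvature energy, it is ALE by Bando--Kasue--Nakajima: outside a compact set it is asymptotic to a flat cone $\mathbb R^4/\Gamma$ with $\Gamma\subset O(4)$ finite, acting freely on $\mathbb S^3$. We argue by contradiction, assuming $g_{\mathcal N}$ is not flat. If it is flat, Euclidean volume growth and completeness make it $\mathbb R^4/\Gamma$ with $\Gamma$ acting freely on $\mathbb R^4$; any nontrivial finite $\Gamma$ fixes the origin, so $\Gamma$ is trivial and $\mathcal N$ is $\mathbb R^4$.

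\textbf{Step 1: topology of the orbit space.} By Lemma~\ref{lem:torus-limit}(b), or directly by \cite{Web19}, a nonflat such $\mathcal N$ has at least one corner. The $\mathbb T^2$-action extends to the asymptotic cone, so the orbit space $\mathcal N/\mathbb T^2$ is a noncompact surface with boundary. Its boundary is a union of rods and corners, and it ends in a single end modelled on the quotient of $\mathbb R^4/\Gamma$ by a $2$-torus. That model quotient is a sector, namely a quadrant bounded by exactly two rays. Hence $\mathcal N/\mathbb T^2$ is a closed half-plane, whose boundary line contains $N\geq1$ corners and $N+1$ rods with primitive rod vectors $v_0,\dots,v_N$. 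Adjacent vectors form lattice bases at the corners.

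\textbf{Step 2: topological invariants.} By the Orlik--Raymond description of torus manifolds, $\chi(\mathcal N)=N$, the number of fixed points. Also $H_2(\mathcal N)\cong\mathbb Z^{N-1}$, with intersection form the tridiagonal matrix with diagonal entries $-\det(v_{i-1},v_{i+1})$ for $1\leq i\leq N-1$ (signs up to orientation) and off-diagonal entries $1$. The hypothesis $|\det(v,v')|=1$ for all non-parallel pairs forces every $\det(v_{i-1},v_{i+1})$ to lie in $\{0,\pm1\}$, the value $0$ occurring only when $v_{i-1}=\pm v_{i+1}$. The boundary at infinity is the lens space $L(p,q)$, where $p=|\det(v_0,v_N)|$ and $\Gamma$ is the corresponding cyclic group. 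So $|\Gamma|=p\leq1$ unless $v_0=\pm v_N$, in which case $\mathbb S^3/\Gamma$ would be $\mathbb S^2\times\mathbb S^1$, which is impossible for a spherical space form. Hence $\Gamma$ is trivial and $\mathcal N$ is AE.

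\textbf{Step 3: rigidity.} An ALE Ricci-flat $4$-manifold with trivial group at infinity has volume ratio $1$. Bishop--Gromov then forces $\mathcal N$ to be flat, hence $\mathbb R^4$ by the first paragraph, which contradicts nonflatness.

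The main obstacle is Step 1. One must justify that the torus action induces the model quadrant on the cone at infinity and that the quotient has one end with exactly two infinite rods. This requires the Killing fields to converge along rescalings, using \eqref{eq:killing-transport} on annuli where $|\Rm|=o(r^{-2})$, to a linear $\mathbb T^2$-action on $\mathbb R^4/\Gamma$ commuting with $\Gamma$. I would first try identifying the limit action as a maximal torus of $U(2)$-type, lifted to $\mathbb R^4$, so that the lens-space determinant formula for $|\Gamma|$ applies.
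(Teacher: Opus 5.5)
\textbf{There is a gap: Step~1 is only a plan.} Your skeleton matches the paper's proof. Bando--Kasue--Nakajima gives ALE. The cross-section of the end is a $\mathbb T^2$-manifold whose orbit space is an interval, with endpoint stabilizers $v_0,v_N$, the rod vectors of the two unbounded rods. This gives $\Gamma\cong\mathbb Z^2/(\mathbb Zv_0+\mathbb Zv_N)$ and $|\Gamma|=|\det(v_0,v_N)|$. That determinant equals $1$ by hypothesis, since $v_0=\pm v_N$ is excluded by finiteness of $\Gamma$. Equality in Bishop--Gromov then concludes. However, the one step with real content is the one you flag yourself, and you leave it unproved. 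You never produce a $\mathbb T^2$-invariant cross-section of the end on which the stabilizers are known to be those of $\mathcal N$. Your asymptotic-cone route would need more than convergence of the Killing fields via \eqref{eq:killing-transport}. You would also have to show that the limiting linear action on $\mathbb R^4/\Gamma$ is effective. You would further have to show that the stabilizers of its two boundary rays coincide with the rod vectors of the unbounded rods of $\mathcal N$, which requires an equivariant convergence argument on annuli, as in Lemma~\ref{lem:torus-limit}(a). None of this is carried out.

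\textbf{How the paper fills it, without the cone.}
\begin{enumerate}
\item Average $r^2$, for an ALE radius $r$, over $\mathbb T^2$ to get an invariant proper function $H$.
\item Since the torus acts isometrically, $\Hess H=2g_{\mathcal N}+o(1)$ and $|dH|=O(r)$. So $\partial_rH=2r+o(r)>0$ along ALE radial rays, and the large level sets $Y$ of $H$ are invariant radial graphs over $\mathbb S^3/\Gamma$.
\item The action on $Y$ is effective: an element fixing $Y$ pointwise fixes its invariant unit normal, hence is the identity.
\item Orlik--Raymond's classification of $\mathbb T^2$-actions on closed $3$-manifolds, together with finiteness of $\pi_1(Y)\cong\Gamma$, forces $Y/\mathbb T^2$ to be an interval whose endpoints are circle orbits.
\item Because $Y\subset\mathcal N$, these circle orbits lie on rods of $\mathcal N$, namely the two unbounded ones. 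The lens-space formula then follows with no limiting argument.
\end{enumerate}

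\textbf{Unneeded material.} The global information in your Steps~1--2 plays no role: the existence of a corner, the orbit space being a half-plane, $\chi(\mathcal N)=N$, $H_2$, and the intersection form. Only the structure of the end is used.
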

\begin{proof}
 By \cite[Theorem~1.5]{BKN89}, $(\mathcal N,g_{\mathcal N})$ is ALE.

\emph{An invariant cross-section of the end.} The tangent cone $\mathbb R^4/\Gamma$ at infinity alone does not identify the stabilizers, so we construct a  $\mathbb T^2$-invariant cross-section of the end. Let $r$ be  an ALE radius, extended smoothly inside, and let $H$ be the average of $r^2\circ t$ over $t\in\mathbb T^2$, an invariant proper function. Since $\Hess r^2=2g_{\mathcal N}+o(1)$ and $\mathbb T^2$  acts by isometries, $\Hess H=2g_{\mathcal N}+o(1)$, and $|dH|=O(r)$. Along the radial rays of the ALE coordinates,  whose acceleration is $o(r^{-1})$, this gives $\partial_r^2H=2+o(1)$, hence $\partial_rH=2r+o(r)>0$. So the large level sets $Y$ of $H$ are invariant radial graphs over $\mathbb S^3/\Gamma$.

 The action on $Y$ is effective: an element  fixing $Y$ pointwise also fixes  its invariant unit normal, hence is the identity isometry.  Since $\pi_1(Y)\approx\Gamma$ is finite, the orbit space $Y/\mathbb T^2$ is an interval  whose endpoints are circle orbits, with stabilizers generated by the rod vectors  $v_-,v_+$ of the two unbounded rods of $\mathcal N$  \cite[Section~2.5]{OR70}. Thus
\[
 \Gamma\approx\mathbb Z^2/(\mathbb Zv_-+\mathbb Zv_+),\qquad\text{and}\qquad |\Gamma|=|\det(v_-,v_+)|,
\]
 compare \cite[Section~2.2, (2.6)]{CS04}. In the application to blow-ups, the two unbounded rods need not arise from adjacent sides of the orbit space, which is why the hypothesis concerns all pairs of distinct rod vectors. It gives $|\Gamma|=1$, so the asymptotic volume ratio of $g_{\mathcal N}$ is one, and the equality case of the Bishop-Gromov inequality gives $(\mathcal N,g_{\mathcal N})\approx\mathbb R^4$.
\end{proof}

For the remainder of Section~\ref{sec:compactness}, we assume a positive lower bound for the volume. For noncollapsed sequences, fix a compact interval $I\subset(0,1)$ and let $g_i=g_{\beta_i,q_i}$ be metrics as in \eqref{eq:perturbed-metric} with $\mathcal E(\beta_i,q_i)=0$ and
\begin{equation}\label{eq:coord-assumptions}
 \beta_i\in I\qquad\text{and}\qquad \Vol(g_i)\geqslant v>0.
\end{equation}
Constants then depend only on $I$, $v$ and the symmetry class. The results of Sections~\ref{sec:noncollapse-first} and~\ref{sec:intrinsic-cores} stated for class (C2), Lemma~\ref{lem:torus-limit} and Corollary~\ref{cor:ale-exclusion} use no such bound, and they are also used for the collapsing metrics of Section~\ref{sec:collapse}.

Since $\operatorname{Ric}(g_i)=3g_i$ and the cone angle along $\Sigma$ is $2\pi\beta_i<2\pi$, \cite[Theorem~A and Corollary~4.1]{BKMR21}  and the diameter estimate of \cite{Mon18} give, with $\mathsf V(r):=2\pi^2\int_0^r\sin^3t\,dt$ and for $0<r\leqslant\pi$, the Bishop-Gromov bounds
\begin{equation}\label{eq:local-noncollapse}
 \operatorname{diam}(M,g_i)\leqslant\pi,\qquad\text{ and }\qquad
 v\frac{\mathsf V(r)}{\mathsf V(\pi)}
 \leqslant\Vol_{g_i}B_{g_i}(p,r)\leqslant\mathsf V(r).
\end{equation}

\begin{proposition}\label{prop:curv-bound}
There is $C=C(I,v)$ such that every metric $g=g_{\beta,q}$ as in \eqref{eq:perturbed-metric} with $\mathcal E(\beta,q)=0$, $\beta\in I$ and $\Vol(g)\geqslant v$ satisfies
\[
 \sup_{M\setminus\Sigma}|\Rm(g)|\leqslant C.
\]
\end{proposition}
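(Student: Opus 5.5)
We argue by contradiction, with a blow-up argument in the spirit of Anderson and Bando-Kasue-Nakajima. Suppose there are metrics $g_i=g_{\beta_i,q_i}$ with $\beta_i\in I$, $\Vol(g_i)\geqslant v$ and $\lambda_i^2:=\sup_{M\setminus\Sigma}|\Rm(g_i)|\to+\infty$. The first task is to locate where the curvature concentrates.

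\emph{Locating the concentration.} By Lemma~\ref{lem:curv-central}, $|\Rm(g_i)|\leqslant C$ on the tube $\{0<d_{g_i}(\Sigma,\cdot)<r_*\}$. Hence the curvature concentrates at points $p_i$ with $d_{g_i}(p_i,\Sigma)\geqslant r_*$, where the metric is smooth; this includes rods and corners by Corollary~\ref{cor:fixed-set-regularity}. A point-picking argument, as in Perelman or Anderson, lets us choose $p_i$ with $|\Rm|(p_i)\geqslant\frac12\lambda_i'^2$. We also arrange that $|\Rm|\leqslant 4\lambda_i'^2$ on $B_{g_i}(p_i,A_i\lambda_i'^{-1})$, for some $A_i\to\infty$ and $\lambda_i'\to\infty$. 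These balls stay inside $\{d(\cdot,\Sigma)>r_*/2\}$ for large $i$.

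\emph{Passing to a limit.} Rescale to $\tilde g_i:=\lambda_i'^2g_i$. Then $|\Rm(\tilde g_i)|\leqslant4$ on balls of radius $A_i$, with $|\Rm(\tilde g_i)|(p_i)\geqslant\frac12$ and $\Ric(\tilde g_i)=3\lambda_i'^{-2}\tilde g_i\to0$. The relative volume bound \eqref{eq:local-noncollapse} persists under scaling: Bishop-Gromov gives $\Vol B_{\tilde g_i}(p_i,r)\geqslant c(v)r^4$ for $r\leqslant\lambda_i'\pi$. With bounded curvature and Cheeger's lemma this yields a uniform injectivity radius lower bound. By Einstein elliptic regularity we get all higher derivative bounds, and Cheeger-Gromov gives smooth pointed convergence to a complete limit $(\mathcal N,g_{\mathcal N},p)$. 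The limit is Ricci-flat, nonflat (since $|\Rm|(p)\geqslant\frac12$), and has Euclidean volume growth $\Vol B(p,r)\geqslant c(v)r^4$. It also has finite curvature energy, by the scale-invariant bound \eqref{eq:curv-energy} and Fatou.

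\emph{Transferring the torus action.} By Lemma~\ref{lem:torus-limit}(b), applied to the charts $U_i=B_{\tilde g_i}(p_i,A_i)$, there are corners $c_i$ converging in $\mathcal N$. Lemma~\ref{lem:torus-limit}(a) then gives an effective isometric $\mathbb T^2$-action on $\mathcal N$ with the lattice $2\pi\mathbb Z^2$. Under this action the stabilizers, rods, rod vectors and corners are limits of those of $g_i$. In particular all primitive rod vectors lie in $\{\pm e_1,\pm e_2,\pm(e_1+e_2)\}$, so \eqref{eq:rod-determinant} holds for every pair. Orbits are principal, rods or corners, since the models \eqref{eq:matrix-smooth-side} and \eqref{eq:matrix-smooth-corner} pass to the limit under equivariant convergence. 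Corollary~\ref{cor:ale-exclusion} then forces $\mathcal N\cong\mathbb R^4$, contradicting nonflatness.

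\emph{Main obstacle.} The delicate step is the last one: making sure the limit action has exactly the orbit types assumed in Corollary~\ref{cor:ale-exclusion}. In particular, no new rod vectors (non-primitive combinations) may appear in the limit. We handle this through the equivariant convergence maps of Lemma~\ref{lem:torus-limit}(a), which preserve stabilizers on every ball about the limit corner. Since $\mathcal N$ is connected and exhausted by such balls, this controls every rod of $\mathcal N$. A secondary point is checking that the unbounded rods used in Corollary~\ref{cor:ale-exclusion} are genuinely limits of rods of $g_i$, which follows from the same stabilizer preservation.
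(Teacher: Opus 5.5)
Your proof is correct and follows the paper's argument essentially step for step. The steps are: the curvature bound near $\Sigma$ from Lemma~\ref{lem:curv-central}, rescaling at the curvature scale, Anderson compactness using \eqref{eq:local-noncollapse} and \eqref{eq:curv-energy}, Lemma~\ref{lem:torus-limit}(b) and then (a), and finally Corollary~\ref{cor:ale-exclusion}. For the rescaling, the paper simply uses the attained maximum of $|\Rm|$, which exists by Lemma~\ref{lem:curv-central} and compactness, so your point-picking is unnecessary but harmless.

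There is one slip. Lemma~\ref{lem:torus-limit} should be applied, as in the paper, to the torus-invariant $U_i=M\setminus\Sigma$, not to the balls $B_{\tilde g_i}(p_i,A_i)$, which the torus action need not preserve. Your curvature bounds on balls of radius $A_i\to\infty$ already give the convergence on compact subsets that the lemma needs with this choice of $U_i$.
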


\begin{proof}
Suppose, towards a contradiction, that $\Lambda_i=\max|\Rm(g_i)|=|\Rm(g_i)|(p_i)\to+\infty$.
By \eqref{eq:curv-central}, $d_{g_i}(p_i,\Sigma)\geqslant r_*$.
Thus $\Sigma$ escapes after rescaling by $\Lambda_i$: $d_{\Lambda_ig_i}(p_i,\Sigma)\geqslant\Lambda_i^{1/2}r_*\to+\infty$.
Noncollapse \eqref{eq:local-noncollapse} and Einstein compactness \cite[Theorem~2.2 and Section~3]{And89} therefore give
\[
 (M\setminus\Sigma,\Lambda_i g_i,p_i)\longrightarrow(\mathcal N,g_{\mathcal N},p),
 \qquad \Ric(g_{\mathcal N})=0,\qquad\text{and}\qquad |\Rm(g_{\mathcal N})|(p)=1,
\]
where $\mathcal N$ has Euclidean volume growth, by \eqref{eq:local-noncollapse}, and finite curvature energy, by \eqref{eq:curv-energy}. Since $\Sigma$ escapes, Lemma~\ref{lem:torus-limit}(b), applied with $U_i:=M\setminus\Sigma$, gives corners $c_i$ converging to a point $c\in\mathcal N$. Lemma~\ref{lem:torus-limit}(a) then gives an effective isometric action of $\mathbb R^2/2\pi\mathbb Z^2$ on $\mathcal N$, generated by the limits of $X_1,X_2$, and convergence maps on balls about $c$ which preserve the stabilizers. So every orbit of $\mathcal N$ is the image of an orbit of $M\setminus\Sigma$ with the same stabilizer: a principal orbit, a corner, or a rod with one of the rod vectors of Section~\ref{sec:geometry}, which satisfy \eqref{eq:rod-determinant}. Corollary~\ref{cor:ale-exclusion} therefore gives $(\mathcal N,g_{\mathcal N})\approx\mathbb R^4$, which contradicts $|\Rm(g_{\mathcal N})|(p)=1$.
\end{proof}

\subsection{Compactness in the prescribed coordinates}
\label{sec:coord-convergence}

Before the identifications $\Psi_\beta$, every quotient metric is conformal to $h_1$ (Section~\ref{sec:common-coordinates}), so it suffices to bound the coefficients of the metrics in the fixed charts.

\paragraph{The fixed-point set of $\mathsf c$.}\label{sec:reflected-section}
At the rods and corners, controls of the conformal factor are delicate: maximum principles and elliptic estimates on the orbit space would require boundary conditions. We avoid them by working on the fixed-point set of the isometry $\mathsf c$, a totally geodesic surface whose Gauss curvature is a sectional curvature of $g$, and on which the rods and corners are interior points.

Let $g$ be in (C1) or (C2). It is smooth across $\pi^{-1}(\partial\Omega)$, by Corollary~\ref{cor:fixed-set-regularity} and, in class (C2), Section~\ref{subsec:smooth-covers}.
Following Donaldson \cite[Section~3.1, p.~401]{Don08}, we introduce
\begin{equation}\label{eq:reflected-section}
 S:=\operatorname{Fix}(\mathsf c)\subset M\setminus\Sigma,\qquad
 S\cap\pi^{-1}(\Omega\setminus\{o\})=\coprod_{\phi_0\in\{0,\pi\}^2}S_{\phi_0},\qquad\text{and}\qquad S_{\phi_0}:=\{\boldsymbol\phi=\phi_0\}.
\end{equation}
As the fixed-point set of the isometry $\mathsf c$, $S$ is a smooth totally geodesic surface, and $\pi$ maps each $S_{\phi_0}$ isometrically onto $(\Omega\setminus\{o\},h)$, so that, on $S$, we have
\begin{equation}\label{eq:reflected-facts}
 K_h=\sec_g(TS),\qquad |\nabla_h^jK_h|\leqslant|\nabla_g^j\Rm_g|,\qquad\text{and}\qquad G_{ab}=g(X_a,X_b)|_S,
\end{equation}
with $|\nabla_h^jG_{ab}|$ bounded by the $g$-norms of $\nabla_g^iX_a$, $0\leqslant i\leqslant j$. 

The point is that $S$ extends smoothly across the rods and corners, so that the bounds \eqref{eq:reflected-facts} hold there as well, and more importantly that its completion is a \textit{closed surface}.
In the Cartesian coordinates \eqref{eq:cartesian}, $S=\{\mathsf Y_1=\mathsf Y_2=0\}$ near a corner, so the signed coordinates $(x,y):=(\mathsf X_1,\mathsf X_2)$ near a corner, and $(x,y):=(\mathsf X_1,y)$ near a rod, are coordinates on $S$, the \emph{reflected charts}, in which
\begin{equation}\label{eq:reflected-chart}
 h=e^{2\widehat a}(dx^2+dy^2)\ \text{at a corner},\qquad\text{and}\qquad h=e^{2\bar a}(dx^2+dy^2)\ \text{at a rod},
\end{equation}
with $\widehat a$, respectively $\bar a$, smooth and even in each signed variable.
Since $x_a^2d\phi_a=\mathsf X_a\,d\mathsf Y_a$ on $S$, we have, along $S$ near a corner,
\begin{equation}\label{eq:reflected-metric}
 g=e^{2\widehat a}(d\mathsf X_1^2+d\mathsf X_2^2)+\widehat G_{ab}\,d\mathsf Y_a\,d\mathsf Y_b,
\end{equation}
where $\widehat G$ is extended with corresponding parity.

 In class (C1), the completion $\bar S$ of $S$ adds the four points $\{o\}\times\{0,\pi\}^2$, which are cone points of angle $2\pi\beta$.  It is a closed surface, the union of four copies of $\overline\Omega$ glued along the rods,  with four faces, $2m$ edges and $m$ vertices, so that $\chi(\bar S)=4-m$, compare \cite[Example~1.20]{DJ91}.

 We first bound $a$, in Lemma~\ref{lem:coord-conformal}, using $\Delta_{\gamma_\beta}a=K_{\gamma_\beta}-K_he^{2a}$ on  $\bar S$. The rods and corners are interior points of $\bar S$, so this bound requires no boundary condition, compare with the doubling of an orbit space in \cite[Proposition~2.1 and Section~2.4]{Chr08}. Once $a$ is bounded, the elliptic reduced equations, Lemma~\ref{lem:center-expansion} and \eqref{eq:local-regularity} bound all derivatives.

 \emph{Uniform geometry.} Let $g$  be as in \eqref{eq:coord-assumptions}.  By Lemma~\ref{lem:curv-central}  and
Proposition~\ref{prop:curv-bound}, $|\Rm(g)|\leqslant C$ on
$M\setminus\Sigma$. Hence $|K_h|\leqslant C$ on $S$ and on
$\Omega\setminus\{o\}$, by \eqref{eq:reflected-facts}. Fix $t>0$. Together with the noncollapsing
\eqref{eq:local-noncollapse}, the local estimate of
\cite[Theorem~4.3]{CGT82} bounds the injectivity radius of $g$ from
below on $\{d_g(\cdot,\Sigma)\geqslant t\}$, and the derivative estimates for Einstein metrics \cite{And89} give, with constants depending only on $I$, $v$ and $t$,
\begin{equation}\label{eq:uniform-geometry}
 \operatorname{inj}_g\geqslant c_t\qquad\text{and}\qquad|\nabla_g^j\Rm(g)|\leqslant C_j\qquad\text{on }\{d_g(\cdot,\Sigma)\geqslant t\}.
\end{equation}
These bounds do not use the upper bound for $a$ of Lemma~\ref{lem:coord-conformal} below. Since $S$ is totally geodesic, the restrictions $\exp_p^g|_{T_pS}$, for $p\in S\cap\{d_g(\cdot,\Sigma)\geqslant t\}$, give normal charts on $S$ of radius $c_t$, and \eqref{eq:reflected-facts} bounds all derivatives of $K_h$ there: $|\nabla_h^jK_h|\leqslant C_j$.

\begin{lemma}\label{lem:coord-conformal}
Every metric as in \eqref{eq:coord-assumptions} satisfies
$|a|\leqslant C$ on $\Omega$. Equivalently,
$e^{-2C}\gamma_\beta\leqslant h\leqslant e^{2C}\gamma_\beta$.
\end{lemma}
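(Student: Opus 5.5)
We bound $a$ by working on the closed surface $\bar S$ of \eqref{eq:reflected-section}, where the rods and corners are interior points. The conformal factor satisfies
\[
 \Delta_{\gamma_\beta}a=K_{\gamma_\beta}-K_he^{2a}.
\]
Here $\gamma_\beta$ is a fixed family of metrics on $\bar S$, smooth except at the four cone points, depending smoothly on $\beta\in I$. In the reflected charts it is conformal to $dx^2+dy^2$ with an even factor, so $K_{\gamma_\beta}$ is uniformly bounded on $\bar S$. The uniform geometry bounds give $|K_h|\leqslant C$ on $\bar S$ minus the cone points, by \eqref{eq:reflected-facts} together with Lemma~\ref{lem:curv-central} and Proposition~\ref{prop:curv-bound}.

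\emph{Upper bound.} We use the area. By \eqref{eq:base-radius}, $\operatorname{diam}_h\Omega\leqslant\pi$. Lemma~\ref{lem:scalar}(c) gives $G\leqslant4\rho(o)\Phi_o$, and \eqref{eq:area-volume} gives $\rho(o)\geqslant c\,\Vol(g)\geqslant cv$. Combined with Bishop--Gromov \eqref{eq:local-noncollapse} on $M$, these yield two-sided bounds on $\Area_h(\Omega)=\int e^{2a}dA_{\gamma_\beta}$. For the lower area bound, use $\Vol(g)=\Area(\Sigma)\int\psi\,dA_h\leqslant 4\pi^2\rho(o)\Area_h(\Omega)$ together with $\Vol\geqslant v$ and an upper bound on $\rho(o)$. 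That upper bound follows from \eqref{eq:area-volume} and $\Vol\leqslant\mathsf V(\pi)$.

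To turn the integral bound into a pointwise one, we would use a Brezis--Merle/Moser--Trudinger type argument on $\bar S$. Write $a=a_0+w$ with $\Delta_{\gamma_\beta}w=K_{\gamma_\beta}-K_he^{2a}$, where the right-hand side is bounded by $C(1+e^{2a})$ and its integral is bounded in $L^1$. By itself this gives $e^{pa}\in L^1$ only for small $p$. A cleaner route is local: at any point $p$ with $d_h(p,\text{cone points})$ bounded below, the normal chart of $h$ has radius $c_t$, by \eqref{eq:uniform-geometry}. An $h$-geodesic ball of radius $c_t$ has $h$-area at least $c>0$, since $|K_h|\leqslant C$. If $a$ were huge at $p$, then the $\gamma_\beta$-area of this ball would be tiny, and the region would be tiny in the fixed coordinates. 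We would rather compare the two metrics via harmonic coordinates. Since $h$ has bounded curvature and injectivity radius on that ball, Jost--Karcher/isothermal coordinates make $h$ uniformly comparable to $|dz|^2$. The conformal coordinate $z$ and the fixed coordinate are then related by a holomorphic map. Bounded $h$-area plus the bounded total $\gamma_\beta$-area force this map to have bounded derivative by Koebe distortion, which controls $a$ from above. Near the cone points, Lemma~\ref{lem:curv-central} and the flattened chart of its proof already give $h$ comparable to a flat cone $\tfrac13(d\widetilde R^2+\beta^2\widetilde R^2d\theta^2)$ of fixed size. The same Koebe argument applies to the holomorphic map $w\mapsto\widetilde w$ with $\widetilde w^{\beta}$ structure, since the symmetries fix its expansion.

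\emph{Lower bound.} This is the main obstacle. Collapse of the conformal factor at a point would mean that a fixed-size $\gamma_\beta$-region has tiny $h$-size. We would exclude it by the same conformal-map argument run in reverse. Cover $\bar S$ by the uniform $h$-charts above; the number needed is bounded by the area bound. The transition to the fixed conformal structure is then a conformal homeomorphism of the closed surface $\bar S$ (genus fixed by $\chi(\bar S)=4-m$, with marked cone points fixed by $\mathcal G$), and both structures have bounded geometry, so the conformal map is uniformly bi-Lipschitz. Alternatively, and perhaps more simply, one can run the maximum principle. At a minimum of $a$ on $\bar S$ away from the cone points, $0\leqslant\Delta a=K_{\gamma_\beta}-K_he^{2a}$. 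This alone does not bound $a$ below because $K_h$ may be negative, so the structural normalization is needed. We would use the fixed conformal structure and the $\mathcal G$-symmetry, which pins the corners at $e^{2\pi ij/m}$ via $\zeta$ in \eqref{eq:normalized-quotient}: a uniformly bi-Lipschitz $h$-geometry plus fixed positions of $o$ and the corners in $\zeta$ prevents degeneration of the conformal map. This is the step where the argument is most likely to need care.
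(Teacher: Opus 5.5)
Your proposal has a genuine gap: neither bound is established, and the one concrete mechanism you give points the wrong way.

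In an $h$-isothermal chart $z$ with $h\approx|dz|^2$, and a fixed chart $w=F(z)$ with $\gamma_\beta\approx|dw|^2$, you have $e^{a}\approx|F'|^{-1}$. Bounded total $\gamma_\beta$-area (area theorem or Koebe) bounds $|F'|$ from \emph{above}. That is a \emph{lower} bound on $a$, not an upper bound. An upper bound on $a$ would need $|F'|\geqslant c$, i.e.\ that $h$-balls of fixed radius contain $\gamma_\beta$-balls of fixed radius, and nothing in your area bounds gives this. Moreover, when $a$ is very negative, $F(\mathbb D)$ need not lie in one fixed chart, so planar Koebe does not apply directly.

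Your lower-bound paragraph asserts that the conformal map between the two structures is uniformly bi-Lipschitz. But that map is the identity of $\bar S$, and its bi-Lipschitz constant is exactly $e^{\sup|a|}$, so this is the statement to be proved.

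The missing idea is the one your maximum-principle remark nearly reaches: change the reference metric.

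\emph{Lower bound.} By Gauss--Bonnet on $\bar S$, with $\chi(\bar S)=4-m$ and four cone points of angle $2\pi\beta<2\pi$, the total curvature of $\gamma_\beta$ is $2\pi(4\beta-m)<0$. Let $c_\beta>0$ be minus its average, and solve $\Delta_{\gamma_\beta}v_\beta=K_{\gamma_\beta}+c_\beta$ with zero mean. Then $v_\beta$ is uniformly bounded, and $k_\beta:=e^{2v_\beta}\gamma_\beta$ has $K_{k_\beta}\in[-C_I,-c_I]$. Write $h=e^{2u}k_\beta$, so $\Delta_{k_\beta}u=K_{k_\beta}-K_he^{2u}$. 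Since $K_h\geqslant-C_1$, $u$ is superharmonic on $\{u<u_0\}$ whenever $C_1e^{2u_0}<c_I$. The weak minimum principle, valid across the cone points by Lemma~\ref{lem:B-cutoffs}, then gives $u\geqslant-C$. So the lower bound is the easy half, not the main obstacle.

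\emph{Upper bound.} This comes from the $h$-geometry, in three steps.
\begin{enumerate}
\item Once $u\geqslant-C$, you get $|\Delta_hu|=|e^{-2u}K_{k_\beta}-K_h|\leqslant C$.
\item The diameter bound $\operatorname{diam}(\bar S,h)\leqslant2\pi$, from \eqref{eq:base-radius}, lets a Harnack chain of boundedly many uniform balls give $u-\min u\leqslant C$. The balls are normal charts from \eqref{eq:uniform-geometry} and the cone charts of Lemma~\ref{lem:curv-central}.
\item Since $h\geqslant e^{2\min u}k_\beta$, you get $e^{\min u}\operatorname{diam}(\bar S,k_\beta)\leqslant\operatorname{diam}(\bar S,h)\leqslant2\pi$. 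With $\operatorname{diam}(\bar S,k_\beta)\geqslant c_I$, this bounds $\min u$ from above.
\end{enumerate}
As $|v_\beta|\leqslant C_I$, this bounds $a=u+v_\beta$.
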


\begin{proof}
We keep the notation $\gamma_\beta$ for the reflection of $\gamma_\beta$ to the closed reflected surface $\bar S$  just constructed. It is
smooth away from the cone points, because $\gamma_1$ is even across the
rods and at the corners (Section~\ref{sec:common-coordinates}), and $h=e^{2a}\gamma_\beta$ on every copy $S_{\phi_0}$. It remains to bound the solution $a$ of the curvature equation $\Delta_{\gamma_\beta}a=K_{\gamma_\beta}-K_he^{2a}$.

\emph{A negatively curved reference metric.} By Gauss-Bonnet on $\bar S$, with $\chi(\bar S)=4-m$ and four cone points of angle $2\pi\beta$, the following constant is positive:
\[
 c_\beta:=-\frac{\int_{\bar S}K_{\gamma_\beta}\,dA_{\gamma_\beta}}{\Area(\bar S,\gamma_\beta)}=\frac{2\pi(m-4\beta)}{\Area(\bar S,\gamma_\beta)}>0.
\]
Let $v_\beta$ be the solution with $\int_{\bar S}v_\beta\,dA_{\gamma_\beta}=0$ of
\[
 \Delta_{\gamma_\beta}v_\beta=K_{\gamma_\beta}+c_\beta.
\]
For $\beta\in I$, the metrics $\gamma_\beta$ have uniform divergence-form ellipticity in the cone charts and uniform Poincar\'e constants, so the $L^\infty$ estimate of \cite[Theorem~8.17]{GT01} and the normalization $\int_{\bar S}v_\beta\,dA_{\gamma_\beta}=0$ give $\|v_\beta\|_{L^\infty}\leqslant C_I$. The metric $k_\beta:=e^{2v_\beta}\gamma_\beta$ has cone angle $2\pi\beta$ at the four cone points and
\[
 \begin{gathered}
 K_{k_\beta}=e^{-2v_\beta}\bigl(K_{\gamma_\beta}-\Delta_{\gamma_\beta}v_\beta\bigr)=-c_\beta e^{-2v_\beta}\in[-C_I,-c_I],\\
 \text{and}\qquad \operatorname{diam}(\bar S,k_\beta)\geqslant c_I>0\qquad\text{for }\beta\in I.
 \end{gathered}
\]

\emph{Lower bound.} Write $h=e^{2u}k_\beta$, so that $u=a-v_\beta$ and
\[
 \Delta_{k_\beta}u=K_{k_\beta}-K_he^{2u}.
\]
The weak form of this equation holds across the four cone points by Lemma~\ref{lem:B-cutoffs}, since $h$ and $k_\beta$ have the same cone angle there, so that $u$ is continuous with finite energy \cite[Proposition~15]{MW17}. By Lemma~\ref{lem:curv-central}  and Proposition~\ref{prop:curv-bound},
$K_h\geqslant-C_1$.
For a constant $u_0$ with $C_1e^{2u_0}<c_I$, the right-hand side is negative on the open set $\{u<u_0\}$, so $u$ is weakly superharmonic there, with $u=u_0$ on its boundary, and the weak minimum principle \cite[Theorem~8.1]{GT01} makes this set empty. Hence $u\geqslant-C$.

\emph{Diameter.} By \eqref{eq:base-radius}, every point of the closure of a copy $S_{\phi_0}$ is joined to the cone point $o_{\phi_0}:=(o,\phi_0)$ by a path of $h$-length at most $\frac{\pi}{2}$ in that copy. For a corner $c$, the point $\tilde c$ of $S$ over $c$ lies in the closure of all four copies, so every point of $\bar S$ lies at distance at most $\pi$ from $\tilde c$, and
\begin{equation}\label{eq:reflected-diameter}
 \operatorname{diam}(\bar S,h)\leqslant2\pi.
\end{equation}

\emph{Upper bound.}  By the lower bound, $\Delta_hu=e^{-2u}K_{k_\beta}-K_h$ is bounded: $|\Delta_hu|\leqslant C$. Let $p$ be a minimum point of $u$, and put $u_p:=u-u(p)\geqslant0$. Cover $\bar S$ by balls of a fixed radius in two kinds of charts: normal coordinates of $S$ at $h$-distance at least $\frac{1}{2}r_*$ from the cone points, given by \eqref{eq:uniform-geometry}, and the cone charts $(\widetilde R\cos\theta,\widetilde R\sin\theta)$ constructed in the proof of Lemma~\ref{lem:curv-central}  near them. $\Delta_h$ is uniformly elliptic in both, for $\beta\in I$. By \eqref{eq:reflected-diameter}, every $q\in\bar S$ is joined to $p$ by a chain of at most $C(I,v)$ overlapping such balls. Applied to $u_p$ from ball to ball, starting from $u_p(p)=0$, the Harnack inequality \cite[Theorems~8.17 and~8.18]{GT01} gives $u_p(q)\leqslant C$. 

 $h\geqslant e^{2u(p)}k_\beta$ and \eqref{eq:reflected-diameter} give $e^{u(p)}c_I\leqslant e^{u(p)}\operatorname{diam}(\bar S,k_\beta)\leqslant2\pi$, so $u\leqslant u(p)+C\leqslant C$. Hence $|u|\leqslant C$ and $|a|\leqslant|u|+|v_\beta|\leqslant C$.

\end{proof}

\begin{proposition}\label{prop:coord-compactness}
Assume \eqref{eq:coord-assumptions} and $\beta_i\to\beta_*\in I$. Then, after passing to a subsequence, there is an Einstein metric $g_\infty$ with cone angle $2\pi\beta_*$ along $\Sigma$, smooth across the rods and corners and satisfying the assumptions made on $g_*$ in Section~\ref{sec:spaces}, such that, in the prescribed coordinates \eqref{eq:fixed-cone-identification} and for $1<\nu<2$, we have
\begin{equation}\label{eq:coord-deformation-convergence}
 q_i=\bigl(\ell_i-\ell_\infty,\ \Phi_\infty\log(\Phi_\infty^{-1}\Phi_i),\
 a_i-a_\infty\bigr)\longrightarrow0\quad\text{in }\mathcal X^{2,\alpha}_\nu,
\end{equation}
where the space $\mathcal X^{2,\alpha}_\nu$ is centered at $g_\infty$.
\end{proposition}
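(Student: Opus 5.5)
Since Lemma~\ref{lem:coord-conformal} gives $|a_i|\leqslant C$ uniformly, the plan is to obtain uniform higher-order bounds on the coefficients $(\ell_i,\Phi_i,a_i)$ in each fixed chart of $\mathfrak U$. We then extract a convergent subsequence by Arzel\`a--Ascoli, identify the limit as an admissible background $g_\infty$, and finally upgrade the convergence to the weighted space $\mathcal X^{2,\alpha}_\nu$ centered at $g_\infty$.

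\emph{Step 1: uniform local bounds.} The first task is a uniform bound on $\rho_i(o)$. Lemma~\ref{lem:scalar}(c) bounds $G_i\leqslant4\rho_i(o)\Phi_o$. The volume identity \eqref{eq:volume-psi}, together with \eqref{eq:area-volume}, $\Vol(g_i)\geqslant v$ and $\Vol(g_i)\leqslant\beta V_1$ from Corollary~\ref{cor:volume-branch} (or more directly from \eqref{eq:local-noncollapse}), gives two-sided bounds on $\Area(\Sigma,g_i)=4\pi^2\rho_i(o)$. We also need bounds on $\int_\Omega\psi\,dA_{h_i}$, which follow from $|a_i|\leqslant C$ and $\psi\leqslant1$ on one side and $\Vol\geqslant v$ on the other.

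With $a_i$ bounded, Lemma~\ref{lem:scalar}(a),(b) bound $|d\psi|$ and $|d\Phi|$ on every core $E_t$ in terms of $t$. Interior charts then fall under \eqref{eq:local-regularity}. The central chart is handled by Lemma~\ref{lem:center-expansion}, with $M$ controlled as in the proof of Lemma~\ref{lem:curv-central} but now in the prescribed coordinate $R$ and reference $\gamma_{\beta_i}$. This gives uniform bounds on the remainders of \eqref{eq:center-refined} in $R^{2+\varepsilon}C^{j,\alpha}_{\mathrm b}$.

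\emph{Step 2: rods and corners (the main obstacle).} Near the rods and corners $\psi\to0$, so \eqref{eq:local-regularity} does not apply directly. Here the plan is to use the reflected surface $S$ and \eqref{eq:uniform-geometry}. The metric $g_i$ has bounded curvature and all its derivatives there, with injectivity radius bounded below. Since $\widehat a_i$ is bounded, the reflected charts \eqref{eq:reflected-chart} are uniformly comparable to the fixed conformal charts.

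We then bound the Killing fields via \eqref{eq:killing-transport}, using their unit $1$-jets at the corners as in Lemma~\ref{lem:torus-limit}(a). Through \eqref{eq:reflected-facts} and the corner model \eqref{eq:reflected-metric}, this gives $C^j$ bounds on $\widehat G_i$ and a positive lower bound on $\det\widehat G_i$. The lower bound follows from the volume noncollapse \eqref{eq:local-noncollapse} together with the corner models, which control $\widehat\rho$ at $c$ via $\widehat\ell(c)=2\widehat a(c)$.

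As an alternative, one can run the regularized equations of Lemma~\ref{lem:regularized-equations} through the lifts of Lemma~\ref{lem:division-lifts} and bootstrap as in Corollary~\ref{cor:fixed-set-regularity}, starting from the $L^\infty$ bounds. Either route yields uniform $C^{j,\alpha}_{\mathrm{ev}}$ and $C^{j,\alpha}_{\mathrm{odd}}$ bounds on $\widehat\ell_i,\widehat s_i,\widehat a_i$ and on $\widehat\omega_i$. I expect the lower bound on $\widehat\rho_i$ near corners to be the delicate point.

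\emph{Step 3: limit and convergence in $\mathcal X$.} By Arzel\`a--Ascoli we pass to a subsequence converging in all $C^{j}$ on compact sets of each chart, and in $R^{2+\varepsilon'}C^{2,\alpha}_{\mathrm b}$ at the center for $\varepsilon'<\varepsilon$ by compact embedding. The $\beta_i$-dependence of $\gamma_{\beta_i}$ is smooth in the fixed coordinates.

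The limit solves \eqref{eq:matrix-reduced} with $\beta_*$, keeps the parities, the symmetries $\mathcal G$ and the angle equalities \eqref{eq:angle-equalities}, and satisfies \eqref{eq:background-central} because $\nu<2<2+\varepsilon$. Proposition~\ref{prop:recovery} and Corollary~\ref{cor:fixed-set-regularity} then make $g_\infty$ Einstein, with cone angle $2\pi\beta_*$ and smooth across the rods and corners.

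Finally, Lemma~\ref{lem:regularized-chart} converts convergence of the regularized coefficients into convergence of $q_i$ in the local norms. At the center, the convergence of $\rho_i(o)$, $a_i(o)$ and of the $R^{2+\varepsilon}$ remainders gives $q_i\to0$ in $R^\nu C^{2,\alpha}_{\mathrm b}$. This yields \eqref{eq:coord-deformation-convergence}.
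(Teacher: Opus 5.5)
Your outline matches the paper's overall structure: two-sided bounds on $\rho_i(o)$ from \eqref{eq:area-volume} and \eqref{eq:local-noncollapse}, interior charts via \eqref{eq:local-regularity}, the center via Lemma~\ref{lem:center-expansion}, the reflected surface and Killing fields at the axes, Arzel\`a--Ascoli, then Lemma~\ref{lem:regularized-chart}. But there is a genuine gap at the step you single out, and you locate the difficulty in the wrong place.

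At a corner the lower bound is immediate. The angle equalities and the oddness of $\widehat G_{12}$ give $\widehat G_i(c)=e^{2\widehat a_i(c)}I\geqslant e^{-2C}I$, and a uniform $C^1$ bound on $\widehat G_i$ propagates this to a neighborhood of fixed size. No volume argument is involved.

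The real issue is the \emph{open rods}, which your plan never treats. There the angle condition only fixes $\bar G_{11}|_{x=0}=e^{2\bar a}$. Nothing in your argument bounds $\det\bar G_i=(\rho_i/x)^2$ from below at rod points away from the corners. The Killing fields give uniform $C^j$ bounds on the entries of $\bar G_i$, but these do not by themselves exclude $\det\bar G_i\to0$ at some rod point. Without such a bound:
\begin{itemize}
\item the limit $\bar G_\infty$ is only positive semidefinite, so the regularized coordinates $\bar\ell,\bar s,\bar\omega$ of the limit are undefined;
\item the smoothness criterion for $g_\infty$ across the rods does not apply;
\item Lemma~\ref{lem:regularized-chart} cannot be used to obtain convergence in $\mathcal X$.
\end{itemize}
Your alternative bootstrap through Lemma~\ref{lem:regularized-equations} has the same defect. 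It needs $\bar\ell_i=\log(\rho_i/x)$ bounded to begin with, and, because of the quadratic gradient terms, more than $L^\infty$ control. Nor does \eqref{eq:local-noncollapse} settle it directly. If $\det\bar G_\infty$ vanished at a rod point $p$, smoothness of $\bar G_\infty$ alone only gives $\rho_\infty=O(r^2)$ on $B(p,r)$, hence volume $O(r^4)$, which is the same order as the noncollapsing lower bound.

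The missing idea is to control $\rho_i$ itself, independently of $\bar G_i$. Its odd extension across an open rod solves $\Delta_0\rho_i=-6e^{2\bar a_i}\rho_i$ in a reflected chart and is bounded by $\rho_i(o)\leqslant C$. Interior elliptic estimates therefore bound it in every $C^j$, with no lower bound on $\bar G_i$ needed. The limit $\rho_\infty$ satisfies $\Delta_0\rho_\infty\leqslant0$ on $\{x>0\}$, vanishes on $\{x=0\}$, and is positive on $\{x>0\}$ by \eqref{eq:coord-psi-distance} and $\rho_i(o)\geqslant c$. Hopf's lemma then gives $\partial_x\rho_\infty>0$ on the rod, that is, $\det\bar G_\infty>0$ up to $\{x=0\}$. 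Uniform convergence then gives uniform positivity of $\bar G_i$ for large $i$.

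With this $C^2$ control of $\rho_\infty$, your volume idea would also work. A zero at $p$ of the smooth, even, nonnegative function $\bar\rho_\infty=\rho_\infty/x$ would force $\rho_\infty=O(r^3)$ on $B(p,r)$, hence volume $O(r^5)$, contradicting noncollapsing.

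Two smaller points:
\begin{itemize}
\item The higher-order bounds on $\widehat a_i$ should come from the reflected curvature equation $\Delta_0\widehat a_i=-K_{h_i}e^{2\widehat a_i}$, using the derivative bounds \eqref{eq:uniform-geometry}.
\item The Einstein property of $g_\infty$ follows by passing $\Ric(g_i)=3g_i$ to the limit. It does not follow from Proposition~\ref{prop:recovery}, which is stated for variations of an already given Einstein background.
\end{itemize}
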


\begin{proof}[Proof of Proposition~\ref{prop:coord-compactness}]
\emph{Step 1: bounds in the fixed charts.} By
\eqref{eq:area-volume} and \eqref{eq:local-noncollapse},
$c\leqslant\rho_i(o)\leqslant C$. By
Lemma~\ref{lem:coord-conformal}, $e^{-2C}\gamma_{\beta_i}\leqslant h_i\leqslant e^{2C}\gamma_{\beta_i}$.

(i) \emph{Interior.} Let $U$ be a closed interior chart or the central
chart. By \eqref{eq:coord-psi-distance} and Lemma~\ref{lem:coord-conformal},
$\psi_i\geqslant\delta(U)>0$ on $U$. Lemma~\ref{lem:scalar} then bounds
$|d\rho_i|_\gamma$, $|d\Phi_i|_\gamma$ and $d_{\mathbb H^2}(\Phi_i,\Phi_o)$
on $U$. On interior charts, with Lemma~\ref{lem:coord-conformal}, these are the hypotheses of the regularity improvement \eqref{eq:local-regularity} with $\gamma=\gamma_{\beta_i}$ and $u=a_i$, with constants depending only on $I$, $v$ and $U$. Since $c\leqslant\rho_i(o)\leqslant C$, \eqref{eq:local-regularity}, applied to $(a_i,\psi_i,\Phi_i)$ with $\rho_i=\rho_i(o)\psi_i$, bounds $(\rho_i,\Phi_i,a_i)$ in every $C^k$ on interior charts.

(ii) \emph{Center.} On the central chart, (i) gives (H1) and (H2) of Lemma~\ref{lem:center-expansion} with a uniform constant $M$. Here we use
$|d\rho_i|_\gamma+|d\Phi_i|_\gamma=e^{a_i}(|d\rho_i|_h+|d\Phi_i|_h)$.
Lemma~\ref{lem:center-expansion}, with a fixed $\varepsilon$, therefore gives, uniformly in $i$
and in all $\mathrm b$ derivatives,
\begin{equation}\label{eq:coord-central-derivatives}
 a_i-a_i(o),\quad \rho_i-\rho_i(o),\quad
 \Pi_{\Phi_o}\Phi_i=O_{\mathrm b}(R^2),
\end{equation}
with the remainders of \eqref{eq:center-refined} uniformly bounded.

(iii) \emph{Rods and corners.}
We treat a corner $c$. By Lemma~\ref{lem:coord-conformal}, the fixed corner chart, extended by reflection, is a reflected chart \eqref{eq:reflected-chart} of fixed size, in which $h_i=e^{2\widehat a_i}(dx^2+dy^2)$ with $\widehat a_i$ even and $|\widehat a_i|\leqslant C$.

 The Gauss curvature of $h_i$ gives
\begin{equation}\label{eq:reflected-curvature-equation}
 \Delta_0\widehat a_i=-K_{h_i}e^{2\widehat a_i}.
\end{equation}
By \eqref{eq:uniform-geometry} and \eqref{eq:reflected-facts}, $|\nabla_h^jK_{h_i}|\leqslant C_j$, and successive $W^{2,p}$ and Schauder estimates \cite[Theorems~9.11 and~6.2]{GT01} for \eqref{eq:reflected-curvature-equation} give uniform $C^{j,\alpha}$ bounds for $\widehat a_i$.

 Let $X_v$ and $X_{v'}$ be the Killing fields of the two rod vectors $v$ and $v'$ at $c$, which vanish on $\{x=0\}$ and on $\{y=0\}$ respectively, and let $G$ be the Gram matrix in this basis.  These Killing fields are bounded by Lemma~\ref{lem:scalar}(c), hence in every $C^j$ on balls of fixed radius, since $\nabla^*\nabla X=\Ric(X)=3X$ for each of them, and interior elliptic estimates \cite[Theorems~9.11 and~6.2]{GT01} apply on the balls given by \eqref{eq:uniform-geometry}. By \eqref{eq:reflected-facts} and the bounds for $\widehat a_i$, the entries $G_{ab}$ are bounded in every $C^j$ in the chart $(x,y)$. They are even in each variable, because the reflections of $S$ across the lifts of the two rods at $c$ are the half-turns $\boldsymbol\phi\mapsto\boldsymbol\phi+\pi v$ and $\boldsymbol\phi\mapsto\boldsymbol\phi+\pi v'$, which fix $X_v$ and $X_{v'}$. Since $X_v=0$ on $\{x=0\}$ and $X_{v'}=0$ on $\{y=0\}$, the functions $G_{11},G_{12}$ vanish with their first $x$-derivatives on $\{x=0\}$, and $G_{22},G_{12}$ with their first $y$-derivatives on $\{y=0\}$. Lemma~\ref{lem:division-lifts}(a) with $(k,l)=(2,0)$, $(0,2)$ and $(2,2)$, which loses two, two and four derivatives, gives
\[
 G_{11}=x^2H_{11},\qquad G_{22}=y^2H_{22},\qquad G_{12}=x^2y^2H_{12},\qquad\text{and}\qquad
 \widehat G_i=\begin{pmatrix}H_{11}&xyH_{12}\\ xyH_{12}&H_{22}\end{pmatrix},
\]
with $H_{ab}$ even and bounded in every $C^j$. This bounds $e^{2\widehat a_i}$ and $\widehat G_i$, which are the coefficients of $g_i$ along $S$ in the Cartesian chart \eqref{eq:cartesian}, by \eqref{eq:reflected-metric}. On an open-rod chart, the same Killing vector field estimates and division by the single normal variable bound $\bar G_i$ in every $C^j$.

  Since $\rho_i=x\sqrt{\det\bar G_i}$ for $x\geqslant0$, the same formula defines a smooth odd extension of $\rho_i$ across an open rod, used in Step~3. The extensions satisfy $\Delta_0\rho_i=-6e^{2\bar a_i}\rho_i$ in a reflected rod chart, by \eqref{eq:Einstein-identities}. They are bounded by $\rho_i(o)\leqslant C$, and the same argument bounds $e^{2\bar a_i}$ in every $C^j$, so interior elliptic estimates bound them in every $C^j$ on a smaller reflected chart, without  a uniform positive lower bound for $\bar G_i$.

\emph{Step 2: convergence of the coefficients.} By Step~1 and the Arzel\`a-Ascoli theorem, a diagonal subsequence of the coefficients converges in every $C^k$ on each closed chart away from $o$: $(\rho_i,\Phi_i,a_i)$ on interior charts, and $\widehat a_i$, $\widehat G_i$, respectively $\bar a_i$, $\bar G_i$ and the odd extensions of $\rho_i$, extended with their parities, on corner, respectively rod, charts. At $o$, $a_i(o)$ and $\rho_i(o)$ converge, and \eqref{eq:coord-central-derivatives} gives uniform convergence across $o$. In particular, $a_i$, $\rho_i$ and the entries of $G_i$ converge uniformly on $\overline\Omega$ in the prescribed charts.

\emph{Step 3: positivity of the limit.} The limits of the positive definite matrices $\widehat G_i$ and $\bar G_i$ are a priori only positive semidefinite, and at a rod $\det\bar G_i=(\rho_i/x)^2$ could tend to zero; Hopf's lemma, applied to the limit of the odd extensions of $\rho_i$, excludes this. On interior charts and on the central chart, $\psi_i\geqslant\delta(U)$, $\rho_i(o)\geqslant c$ (Step~1) and Lemma~\ref{lem:scalar}(c) bound $G_i$ from below by a positive multiple of $\Phi_o$, uniformly in $i$. At a corner $c$, the angle conditions of \eqref{eq:matrix-smooth-corner} and the oddness of $\widehat G_{12}$ give $\widehat G_\infty(c)=e^{2\widehat a_\infty(c)}I\geqslant e^{-2C}I$. At an open rod, the limit $\rho_\infty$ of the odd extensions is positive on $\{x>0\}$, by \eqref{eq:coord-psi-distance} and $\rho_i(o)\geqslant c$, and it vanishes on $\{x=0\}$. Since $\Delta_0\rho_\infty\leqslant0$ on $\{x>0\}$, Hopf's lemma \cite[Lemma~3.4]{GT01} gives $\partial_x\rho_\infty>0$ on $\{x=0\}$, so $\det\bar G_\infty=(\rho_\infty/x)^2>0$ up to $\{x=0\}$, with $\rho_\infty/x:=\partial_x\rho_\infty$ on $\{x=0\}$. As a limit of positive definite matrices, $\bar G_\infty$ is positive semidefinite, hence positive definite. So $\widehat G_\infty$ and $\bar G_\infty$ are uniformly positive definite on smaller closed corner and rod charts, and so are $\widehat G_i$ and $\bar G_i$ for large $i$, by uniform convergence.

\emph{Step 4: the limit metric and convergence in $\mathcal X$.} Uniform positivity, the parity and angle conditions \eqref{eq:matrix-smooth-side} and \eqref{eq:matrix-smooth-corner}, the finite symmetries and $\Ric(g_i)=3g_i$ pass to the limit. So $g_\infty$ is Einstein on $\pi^{-1}(\Omega\setminus\{o\})$, smooth across the rods and corners and invariant under the torus and $\mathcal G$, and its cone angle along $\Sigma$ is $2\pi\beta_*$, since $h_\infty=e^{2a_\infty}\gamma_{\beta_*}$ with $a_\infty$ continuous at $o$. By \eqref{eq:coord-central-derivatives}, it satisfies \eqref{eq:background-central}, hence the assumptions made on $g_*$ in Section~\ref{sec:spaces}. Moreover, by Step~3, the regularized coordinates \eqref{eq:gram-regular-coordinates} of $\widehat G_i$ and $\bar G_i$ converge in every $C^k$, and the inverse map in  Lemma~\ref{lem:regularized-chart} converts this convergence into convergence of the components of $q_i$ in the norms \eqref{eq:side-domain-norm} and \eqref{eq:corner-domain}. On interior charts, it is immediate. At $o$, $f_i(o)$ and $b_i(o)$ tend to $0$, and the remainders $f_i-f_i(o)$, $b_i-b_i(o)$ and $\xi_i^o$ are bounded in $R^2C^{3,\alpha}_{\mathrm b}$ by \eqref{eq:coord-central-derivatives} and tend to zero away from $o$. As $\nu<2$, the embedding $R^2C^{3,\alpha}_{\mathrm b}\hookrightarrow R^\nu C^{2,\alpha}_{\mathrm b}$ is compact \cite[Lemma~2.2]{PR00}, which proves \eqref{eq:coord-deformation-convergence}.
\end{proof}

We can now prove the existence of our Einstein metrics with prescribed cone angle along $\Sigma$.

\emph{Proof of Theorem~\ref{thm:angle}.} Let $(\beta_0,1]$, with $\beta_0\in[0,1)$, be the maximal interval to which the branch of Corollary~\ref{cor:local} at $g_1$ extends as a curve of solutions. By the local uniqueness in Corollary~\ref{cor:local}, this extension is unique. Suppose, towards a contradiction, that $\beta_0>0$, fix $\beta_I\in(\beta_0,1)$ and let $\beta_i\downarrow\beta_0$. By \eqref{eq:volume-lower}, $\Vol(g_{\beta_i})\geqslant V_1\beta_0^5$, so Proposition~\ref{prop:coord-compactness}, with $I:=[\beta_0,\beta_I]$, gives an Einstein metric $g_\infty$ with cone angle $2\pi\beta_0$ along $\Sigma$, smooth across the rods and corners and satisfying the assumptions made on $g_*$ in Section~\ref{sec:spaces}, such that $q_i\to0$ in the space $\mathcal X$ centered at $g_\infty$. Corollary~\ref{cor:local} at $g_*:=g_\infty$ gives a curve of solutions on an open interval containing $\beta_0$. Since $q_i\to0$ in $\mathcal X$, local uniqueness shows that this curve passes through $g_{\beta_i}$ for large $i$, so it agrees with the branch above $\beta_0$ and extends it below $\beta_0$, a contradiction. Hence $\beta_0=0$. The remaining assertions of Theorem~\ref{thm:angle} follow from Proposition~\ref{prop:recovery}, Corollary~\ref{cor:fixed-set-regularity} and Corollary~\ref{cor:local}.

\section{Proof of \texorpdfstring{Theorem~\ref{thm:main}}{Theorem 1.1} and first consequences}\label{sec:proof-main}

Let $(g_\beta)_{0<\beta\leqslant1}$ be the family of Theorem~\ref{thm:angle}, in either symmetry class, and let $k\geqslant2$ be an integer.

\subsection{Smooth covers and topology}\label{subsec:smooth-covers}
\paragraph{Changing the angular period.}

The half-turn $(\theta,\boldsymbol\phi)\mapsto(\theta+\pi,\boldsymbol\phi)$ is an isometry of $g_\beta$: it is $\mathsf r_x\mathsf r_y$ of \eqref{eq:group-actions} in the square case and $\mathsf r^3\mathsf c$, with $\mathsf r$ and $\mathsf c$ of \eqref{eq:hexagon-lifts}, in the hexagon case, since $\mathsf A^3=-I$. So the coefficients $h,G$ of $g_\beta$ are $\pi$-periodic in the polar angle $\theta$ about $o$, and they may be read with any angular period $(k+1)\pi$. They define an orbit space $\Omega^{(k)}$, with $n_k:=\frac{m}{2}(k+1)$ rods, and on the resulting manifold $M^{(k)}$, which is $M_k$, respectively $N_k$, as shown below, a metric $g^{(k)}_\beta$ given by \eqref{eq:matrix-ansatz}. 

The rotation $\mathsf r$ of Section~\ref{sec:common-coordinates}, with its lift, is an isometry of $g^{(k)}_\beta$. It induces on $\Omega^{(k)}$ the rotation $\theta\mapsto\theta+\frac{2\pi}m$ by one rod, of order $n_k$. The metric $g^{(k)}_\beta$ is Einstein away from $\Sigma$, with cone angle $(k+1)\pi\beta$ along $\Sigma$. We choose $\beta_k:=\frac{2}{k+1}$, for which this angle is $2\pi$.

\paragraph{Smoothness.} Near the rods and corners, $g^{(k)}:=g^{(k)}_{\beta_k}$ coincides with $g_{\beta_k}$, so it is smooth there by Corollary~\ref{cor:fixed-set-regularity}. It is smooth across $\Sigma$ by the following statement.
\begin{corollary}\label{cor:smooth-cover}
Let $\mathcal E(\beta,q)=0$ with $\beta=\frac{2}{k+1}$ for
an integer $k\geqslant2$. Then $g^{(k)}$ extends smoothly across $\Sigma$.
The case $k=1$ is the prescribed smooth initial metric.
\end{corollary}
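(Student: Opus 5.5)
Near $\Sigma$, write $g^{(k)}$ in the normal polar coordinates of the cover. The angle $\theta$ now has period $(k+1)\pi$, so set $\vartheta:=\frac{2\theta}{k+1}=\beta\theta\in\mathbb R/2\pi\mathbb Z$. By \eqref{eq:fixed-cone-identification}, the reference metric becomes $\gamma_\beta=\frac13(dR^2+R^2d\vartheta^2)$, which is the flat metric in the Cartesian coordinates $(R\cos\vartheta,R\sin\vartheta)$. The plan is to show that $a$, $\rho$ and $U=\Pi_{\Phi_o}\Phi$ are smooth functions of these Cartesian coordinates. Then $h=e^{2a}\gamma_\beta$ and $G=\rho\bigl(\sqrt{1+|U|_o^2}\Phi_o+U\bigr)$ are smooth with $G(o)>0$, and $g^{(k)}=h+G_{ij}d\phi_id\phi_j$ is a smooth metric on a neighborhood $D^2\times\mathbb T^2$ of $\Sigma$.

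\textbf{Step 1: an elliptic system across the center.} The central system \eqref{eq:central-matrix-system} holds in the sense of distributions on the punctured disk. In the coordinates $\vartheta$ it is a system with the flat Laplacian $\Delta_0$ as principal part. By Lemma~\ref{lem:center-expansion}, $\rho$, $a$ and $U$ are bounded, with derivatives $O(R^{1+\varepsilon})$, so the right-hand sides of \eqref{eq:central-matrix-system} are bounded. The removable-singularity argument of Lemma~\ref{lem:B-cutoffs} (finite energy, bounded solutions, a point has zero capacity) then extends the weak equations across $o$.

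\textbf{Step 2: bootstrap.} Lemma~\ref{lem:center-expansion} gives $C^1$ regularity at $o$: the first derivatives are $O_{\mathrm b}(R^{1+\varepsilon})$ and vanish at $o$, so the right-hand sides are $C^{0,\alpha}$ in Cartesian coordinates. Schauder estimates \cite[Theorems~9.11 and~6.2]{GT01} then give $C^{2,\alpha}$, and iterating gives $C^\infty$, exactly as in \eqref{eq:local-regularity}.

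\textbf{Main obstacle: H\"older continuity of the right-hand sides.} The $\mathrm b$-bounds alone only give control of the form $R^{-j}$ on Cartesian derivatives, so the base case needs care. A cleaner route is through the Fourier structure of Lemma~\ref{lem:central-modes}. The allowed modes in $\theta$ are $e^{ij\theta}$ with $j$ even, which are $e^{i(j(k+1)/2)\vartheta}$ with integer index $n:=j(k+1)/2$. The homogeneous harmonic modes $R^{|j|/\beta}e^{ij\theta}=R^{|n|}e^{in\vartheta}$ are therefore harmonic polynomials. So $\Delta_0$ on the disk, acting on functions that are bounded with finite energy, has only smooth local solutions. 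To get the base case, I would show first that the right-hand sides are Lipschitz in Cartesian coordinates: products of terms $d F=O_{\mathrm b}(R^{1+\varepsilon})$ have all $\mathrm b$-derivatives $O(R^{2+2\varepsilon})$, which makes them $C^{1}$ in Cartesian coordinates. With this base case, the bootstrap of Step 2 proceeds, and the smooth extension of $g^{(k)}$ follows.
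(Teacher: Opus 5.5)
Your proposal is correct and follows the paper's argument: pass to the angle $\beta\theta$ of period $2\pi$ so that $\gamma_\beta$ is flat in Cartesian coordinates, obtain initial regularity at $o$, extend \eqref{eq:central-matrix-system} weakly across $o$ by Lemma~\ref{lem:B-cutoffs}, and bootstrap with interior Schauder estimates. The only difference is the base case, which the paper reads off directly from \eqref{eq:central-domain}: an $R^\nu C^{2,\alpha}_{\mathrm b}$ function with $\nu>1$ has Cartesian derivatives of order $j\leqslant2$ bounded by $CR^{\nu-j}$, hence is $C^{1,\delta}$, so your ``main obstacle'' and the Fourier detour are unnecessary, and the same observation applied to the $O_{\mathrm b}(R^{2+\varepsilon})$ remainders of Lemma~\ref{lem:center-expansion} already gives $C^{2,\delta}$ (the explicit $R^2$ term of $\rho$, which makes $d\rho=O(R)$ rather than $O(R^{1+\varepsilon})$, being smooth).
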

\begin{proof}
With $\Theta:=\beta\theta$, the normal angle has period $2\pi$.
By \eqref{eq:central-domain},
$\rho$, $a$ and $U$ are constants plus functions in
$R^\nu C^{2,\alpha}_{\mathrm b}$, hence $C^{1,\delta}$ functions of the
Cartesian coordinates $(R\cos\Theta,R\sin\Theta)$ for
$0<\delta<\min\{\alpha,\nu-1\}$. In these coordinates
$\gamma_\beta=\tfrac13|d(Re^{i\Theta})|^2$.
As $\rho$, $a$ and $U$ are in $H^1$ and the right-hand sides of \eqref{eq:central-matrix-system} are bounded, Lemma~\ref{lem:B-cutoffs} extends \eqref{eq:central-matrix-system} weakly across the origin. Interior elliptic regularity \cite[Chapters~6 and~9]{GT01} then makes $\rho$, $a$, $U$, and hence $g^{(k)}$, smooth.
\end{proof}

\paragraph{Topology.} Up to sign, the rod vectors of $\Omega^{(k)}$ alternate between $(1,0)$ and $(0,1)$ in the square case, and repeat the cycle $(1,0),(1,1),(0,1)$ in the hexagon case. These are the classical rod configurations of $M_k=\#_k({\mathbb S^2}\times {\mathbb S^2})$ and of $N_k=k\mathbb{CP}^2\#(2k+1)\overline{\mathbb{CP}}{}^2$ \cite{OR70}. This proves Theorem~\ref{thm:main}.

\begin{corollary}\label{cor:volumes}
$V_k:=\Vol(g^{(k)})\to0$ as $k\to+\infty$.
\end{corollary}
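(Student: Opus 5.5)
The plan is to relate $\Vol(g^{(k)})$ to $\Vol(g_{\beta_k})$, where $\beta_k=\frac{2}{k+1}$, and then use the upper bound of Corollary~\ref{cor:volume-branch}.

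The metric $g^{(k)}$ is defined by the same coefficients $h$ and $G$ as $g_{\beta_k}$. These coefficients are $\pi$-periodic in the polar angle $\theta$, and $g^{(k)}$ reads them with angular period $(k+1)\pi$ instead of $2\pi$. The volume is computed from \eqref{eq:scalar-volume}:
\[
 \Vol(g)=4\pi^2\int_\Omega\rho\,dA_h .
\]
The integrand is $\pi$-periodic in $\theta$, and the set $\Sigma$ has measure zero. Hence the integral over $\Omega^{(k)}$ consists of $k+1$ copies of the integral over a half-period sector, while $\Omega$ consists of $2$ such copies. This gives
\[
 V_k=\frac{k+1}{2}\Vol(g_{\beta_k})=\beta_k^{-1}\Vol(g_{\beta_k}).
\]

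Corollary~\ref{cor:volume-branch} states that $\beta\mapsto\beta^{-1}\Vol(g_\beta)$ is nondecreasing and bounded above by $V_1$. Taken alone, this only yields $V_k\leqslant V_1$, so an additional argument is needed to show decay.

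The missing ingredient is the Schläfli identity \eqref{eq:Schlafli} together with the area–volume bound \eqref{eq:area-volume}. These give
\[
 \frac{d}{d\beta}\log\bigl(\beta^{-1}\Vol(g_\beta)\bigr)=\frac{2\pi}{3}\frac{\Area(\Sigma)}{\Vol}-\frac1\beta\geqslant0,
\]
which is only monotonicity. To get a quantitative decay I would show that the ratio $\Vol/(\beta\Area(\Sigma))$ stays uniformly below $\frac{2\pi}{3}$ as $\beta\to0$. In the proof of Proposition~\ref{prop:noncollapse}, the upper bound came from the $p=1$ identity
\[
 2\pi\beta=\int_\Omega\psi\bigl(3+\tfrac12B(T,T)\bigr)dA_h .
\]
It therefore suffices to prove a lower bound of the form $\int\psi B(T,T)\,dA_h\geqslant c\int\psi\,dA_h$ with $c>0$ independent of $\beta$ small. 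If this holds, the logarithmic derivative is at least $\frac{1+\epsilon}{\beta}$, so $\beta^{-1}\Vol(g_\beta)\leqslant V_1\beta^{\epsilon}\to0$, and the corollary follows.

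This lower bound is the main obstacle. A possibly simpler route is to find a direct argument that $\int\psi\,dA_h$ grows sublinearly relative to $\beta$ times a bounded quantity. For instance, one could bound $\int_\Omega\psi\,dA_h$ using $d_h(o,\cdot)\leqslant\frac\pi2$ from \eqref{eq:base-radius} together with the cone angle $2\pi\beta$ and $K_h\leqslant1$. The idea is to compare with a model cone of angle $2\pi\beta$ and curvature $\geqslant$ some bound, which would give area $O(\beta)$ and hence only $V_k=O(1)$ again.

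So the key point is to show that $\Area(\Sigma,g_{\beta})/\bigl(\beta^{-1}\Vol(g_\beta)\bigr)$ blows up, that is, the central torus area dominates. I would attempt this by contradiction. If $\beta^{-1}\Vol(g_\beta)$ stayed bounded below along $\beta_k$, the periodic metrics $g^{(k)}$ would have volume bounded below, $\Ric=3g$ and energy $32\pi^2\chi$ growing linearly in $k$. I would then compare with the Bishop–Gromov and Gauss–Bonnet constraints to derive a contradiction.
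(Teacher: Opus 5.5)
Your reduction $V_k=\beta_k^{-1}\Vol(g_{\beta_k})$ is correct; the paper uses it in Step~4 of the proof of Theorem~\ref{thm:collapsed-limit}. You are also right that Corollary~\ref{cor:volume-branch} only shows that $V_k$ is nonincreasing and at most $V_1$. But none of your three routes is carried out, so the proposal does not prove the corollary. The first route rests on an unproved uniform inequality $\int\psi B(T,T)\,dA_h\geqslant c\int\psi\,dA_h$, and this is not a soft statement. At $\beta=\beta_k$, Step~1 of the proof of Theorem~\ref{thm:collapsed-limit}, which uses no volume bound, gives $\Phi\to\Phi_o$ smoothly on every core $E_\delta$. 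So $B\to0$ there, and any such bound would have to come entirely from near the rods and corners, where you have no estimates. The second route, as you note yourself, only gives $V_k=O(1)$.

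The third route is the right one, and it is the paper's, but the contradiction does not follow from Bishop--Gromov and Gauss--Bonnet. Suppose $V_k\geqslant v>0$. Then $\Ric=3g$, Bonnet--Myers ($\operatorname{diam}\leqslant\pi$) and Bishop--Gromov give noncollapsed unit balls and $V_k\leqslant\mathsf V(\pi)$, while \eqref{eq:CGB} gives $\int|\Rm|^2=32\pi^2\chi(M^{(k)})\to+\infty$. These facts are compatible unless you also have an a priori bound on $\int|\Rm|^2$, or on $|\Rm|$, for noncollapsed Einstein four-manifolds of bounded diameter. Nothing elementary provides such a bound.

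The bubble exclusion of Section~\ref{sec:curv-exclusion} does not supply it either. Lemma~\ref{lem:torus-limit}(b) and Corollary~\ref{cor:ale-exclusion} need finite curvature energy of the blow-up limit, and this is not controlled when the total energy grows linearly in $k$.

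The missing ingredient is the Cheeger--Naber finiteness theorem \cite[Theorem~1.12]{CN15}. Closed Einstein four-manifolds with bounded Einstein constant, diameter at most $\pi$ and unit balls of volume bounded below fall into finitely many diffeomorphism types. This contradicts $\chi(M_k)=2(k+1)$ and $\chi(N_k)=3(k+1)$. With it the argument takes three lines, and neither the reduction to $g_{\beta_k}$ nor the Schl\"afli monotonicity is needed.
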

\begin{proof}
By the Bonnet-Myers theorem, $\operatorname{diam}(g^{(k)})\leqslant\pi$. If $V_k\geqslant v>0$ along a subsequence, Bishop-Gromov bounds the volumes of unit balls from below, and the finiteness theorem \cite[Theorem~1.12]{CN15} leaves finitely many diffeomorphism types, whereas $\chi(M_k)=2(k+1)$ and $\chi(N_k)=3(k+1)$ are unbounded.
\end{proof}

\subsection{The square family}

\begin{proposition}
   \label{prop:square-instability}
Every smooth Einstein metric from Theorem~\ref{thm:main} on $M_k=\#_k({\mathbb S^2}\times {\mathbb S^2})$ is linearly unstable.
 \end{proposition}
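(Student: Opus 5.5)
The plan is to show that the Einstein operator $\Delta_E:=\nabla^*\nabla-2\mathring{\Rm}$ has a negative direction on transverse-traceless (TT) tensors. Here $\mathring{\Rm}(k)_{ij}=R_{iajb}k^{ab}$. For an Einstein metric with $\lambda=3$, linear instability (for the Einstein–Hilbert action restricted to TT tensors, equivalently $\nu$-instability) means that some nonzero TT tensor $k$ satisfies
\[
 \int_M\langle\nabla^*\nabla k-2\mathring{\Rm}(k),k\rangle\,dV_g<0 .
\]
A test tensor that is merely trace-free and $L^2$-orthogonal to Lie derivatives and conformal directions suffices, since projecting onto TT only lowers the form. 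The model is the destabilizing direction $g_{\mathbb S^2}\oplus(-g_{\mathbb S^2})$ of $\mathbb S^2\times\mathbb S^2$, for which $\Delta_E$ is $-2\cdot 3\,k$. In the reduced picture this direction is ``inflate the $(x,\phi_1)$ block, deflate the $(y,\phi_2)$ block.''

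First I would build the test tensor from the structure available in the square family. This structure consists of the orthonormal coframe $e^1,e^2$ for $h=e^{2a}\gamma_\beta$, together with $e^3=u\,d\phi_1$ and $e^4=v\,d\phi_2$, and the pure curvature operator in $(e_i\wedge e_j)$. I would take
\[
 k:=\varpi\,(e^3\otimes e^3-e^4\otimes e^4)+k^{\rm hor},
\]
where $\varpi$ is a torus-invariant weight and $k^{\rm hor}$ is a trace-free horizontal part. Both are chosen so that $k$ is divergence-free. The vertical part is odd under the exchange $\mathsf e$ of \eqref{eq:group-actions}, and $k$ is built from invariant data, so its trace and its pairings with $\mathcal G$-invariant conformal and Lie-derivative directions vanish. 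The divergence condition reduces, through \eqref{eq:matrix-ricci}, to a first-order ODE/PDE on $\Omega$ for $k^{\rm hor}$, analogous to the equation $\diver_hT=0$ in the proof of Proposition~\ref{prop:recovery}. As a fallback, I would simply drop the TT requirement: compute the quadratic form on the orthogonal complement of these gauge and conformal directions, and rely on the fact that the TT projection does not increase it.

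Next I would compute the quadratic form using pure curvature. Because $\Rm$ is diagonal on $e_i\wedge e_j$, $\mathring{\Rm}$ acts on $e^3\otimes e^3-e^4\otimes e^4$ through sectional curvatures alone. These are $K_{34}=-\rho^{-1}\cdot(\text{terms in }d\ell)$ and the mixed curvatures $K_{13}$, $K_{24}$, which can be read from \eqref{eq:matrix-ricci} in terms of $\Hess_h\log u$ and $\Hess_h\log v$. The Einstein condition turns these into identities involving $\Delta_h\rho=-6\rho$ and $4K_h=|d\Phi|_h^2-|d\ell|_h^2$. With $\varpi$ chosen well (first guess: $\varpi\equiv1$, then $\varpi=\psi^{\alpha}$ for some power $\alpha$), the integrand should become $-c\,\rho\,(\cdot)^2$ plus divergences, integrated against $d\mu=\rho\,dA_h$ as in Proposition~\ref{prop:kernel}. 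The boundary terms at the rods, corners and $\Sigma$ vanish by the smoothness models \eqref{eq:matrix-smooth-side}, \eqref{eq:matrix-smooth-corner} and, for $g^{(k)}$, by Corollary~\ref{cor:smooth-cover}. A useful consistency check is $k=1$, where the computation must give exactly $-6\|k\|^2$.

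The main obstacle will be the sign of the final integral. The terms $|\nabla\varpi|^2$ and the connection contributions of $k^{\rm hor}$ are positive. They have to be beaten by $-2\mathring{\Rm}$, which on these directions is controlled by $K_{13}+K_{24}-2K_{34}$ and related combinations. My first attempt would be to show that the integral equals $-6\int|k|^2$ plus terms proportional to $\int|d\Phi|_h^2\,d\mu$ with a good sign, mimicking the role of the negative curvature of $\mathbb H^2$ in \eqref{cmp:kernel-nonnegative}. Failing that, I would abandon the explicit tensor. Instead I would use harmonic $2$-forms: $M_k$ has $b_2^\pm=k\geqslant2$, and one would take an $L^2$-harmonic anti-self-dual form $\omega$ diagonal in the frame $e^{12}\pm e^{34}$. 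One then tests with $k=\omega\circ\omega'$, where $\omega'$ is an orthogonal harmonic form, following the Cao–Hamilton–Ilmanen argument. Pure curvature is what makes the Weitzenböck terms computable in that route.
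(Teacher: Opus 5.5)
Your proposal has a genuine gap: neither route actually establishes that the quadratic form is negative, and the shortcut you offer for avoiding the TT condition does not work.

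In the main route, two steps are only conjectured. The first is the existence of a divergence-free completion $k^{\rm hor}$, a global first-order system on $\Omega$ whose behaviour at the rods and corners you would have to control. The second is the reduction of the integrand to ``$-c\rho(\cdot)^2$ plus divergences''. You yourself flag the sign as open.

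The TT shortcut fails for two reasons.
\begin{enumerate}
\item An $\mathsf e$-odd tensor is automatically $L^2$-orthogonal to every $\mathsf e$-invariant tensor. So orthogonality to the $\mathcal G$-invariant gauge and conformal directions says nothing about its divergence.
\item Projecting onto TT need not lower the form. On an Einstein manifold, $\Delta_E=\nabla^*\nabla-2\mathring{\Rm}$ preserves the splitting of trace-free tensors into TT tensors and trace-free Lie derivatives. For $\Delta_gf=-\mu f$ one has $\Delta_E(\Hess f)^\circ=(\mu-2\lambda)(\Hess f)^\circ$, and $(\Hess f)^\circ\neq0$ by Obata since $g$ is not round. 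Hence the gauge component contributes negatively as soon as $\Delta_g$ has an eigenvalue $\mu<2\lambda=6$, which nothing here excludes.
\end{enumerate}

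Your fallback is the right idea, and it is the route the paper takes, but it misses the step that makes it work.
\begin{enumerate}
\item For $\omega\circ\omega'$ to be symmetric and trace-free, one form must be self-dual and the other anti-self-dual. Two forms of the same duality give a multiple of $g$ plus a skew part. Once $\omega_\pm$ are harmonic, $H:=\omega_+\circ\omega_-$ is automatically divergence-free, so no projection is needed.
\item The Cao--Hamilton--Ilmanen computation uses that the self-dual form is parallel (a K\"ahler form), and that is not available here. For general harmonic $\omega_\pm$ on an Einstein manifold, with $|H|=|\omega_+||\omega_-|$, the Weitzenb\"ock formulas give
\[
 \int_M\langle\nabla^*\nabla H-2\mathring{\Rm}H,H\rangle\,dV_g=-2\lambda\int_M|\omega_+|^2|\omega_-|^2\,dV_g-\tfrac12\int_M\bigl\langle d|\omega_+|^2,d|\omega_-|^2\bigr\rangle\,dV_g.
\]
The last term has no sign.
\end{enumerate}

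The paper removes that term by choosing $\omega_-:=\mathsf c_1^*\omega_+$, where $\mathsf c_1:\phi_1\mapsto-\phi_1$ is an orientation-reversing isometry. The torus acts trivially on cohomology, so the harmonic form $\omega_+$ is torus-invariant and $|\omega_+|$ is constant on each orbit. Since $\mathsf c_1$ preserves every orbit, $|\omega_-|=|\omega_+|$ pointwise. The cross term then becomes $-\tfrac12\int|d|\omega_+|^2|^2\leqslant0$, and the whole form is at most $-2\lambda\int|\omega_+|^4<0$.

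Neither pure curvature nor a special shape such as $e^{12}\pm e^{34}$ for the harmonic form is needed, and that shape is not justified in any case. As a check, at $k=1$ this $H$ is, up to sign and normalization, exactly your model direction $g_{\mathbb S^2}\oplus(-g_{\mathbb S^2})$.
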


\begin{proof}[Proof of Proposition~\ref{prop:square-instability}]
Write $\Ric(g)=\lambda g$, with $\lambda>0$.
An orientation-reversing isometry is given by $\mathsf c_1$ of \eqref{eq:group-actions} which extends across the rods and corners. Since $b_2^+(M_k)=k\geqslant 1$, we can choose a nonzero smooth harmonic
self-dual two-form $\omega_+$ on $M_k$.
The torus acts trivially on cohomology, so $\omega_+$ is torus-invariant and $|\omega_+|_g$ is constant on each torus orbit, by uniqueness of the harmonic representative.
The form $\omega_-:=\mathsf c_1^*\omega_+$ is harmonic and anti-self-dual,
and, since $\mathsf c_1$ preserves every torus orbit, we have
\[
 |\omega_-|_g^2=|\omega_+|_g^2.
\]

Define $H_{ij}:=(\omega_+)_i{}^p(\omega_-)_{jp}$ and $(\mathring{\Rm}_gH)_{ij}:=R_{ikjl}H^{kl}$. By \cite[Proposition~2.1, equation~(12)]{Ozu26}, $H$ is trace-free and divergence-free, $|H|_g^2=|\omega_+|_g^2|\omega_-|_g^2=|\omega_+|_g^4$, and, since $\Ric_g=\lambda g$, we have
\[
 \int_{M_k}\bigl\langle\nabla^*\nabla H-2\mathring{\Rm}_gH,H\bigr\rangle_g\,dV_g
 =-2\lambda\int_{M_k}|\omega_+|_g^4\,dV_g-\frac{1}{2}\int_{M_k}\bigl|d(|\omega_+|_g^2)\bigr|_g^2\,dV_g<0.
\]

Thus the smooth tensor $H$, which is trace-free and divergence-free, is a destabilizing direction.
\end{proof}

We call a Riemannian metric \emph{static}
with respect to a circle generator when it has the local form
$N^2d\phi_1^2+g_B$, with $N$ and $g_B$ independent of $\phi_1$. It is moreover \textit{axisymmetric} when it has the form \eqref{eq:ansatz}.
Replacing $d\phi_1^2$ by $-dt^2$ gives a Lorentzian static axisymmetric metric.

\begin{proposition}
    \label{prop:wick}
Wick rotation of the metrics on $M_k=\#_k({\mathbb S^2}\times {\mathbb S^2})$ of Theorem~\ref{thm:main} gives static axisymmetric Lorentzian Einstein metrics with cosmological constant $3$ and $k+1$ horizons, all with the same surface gravity. Equivalently, their Hawking temperatures agree \cite{GH77}.
\end{proposition}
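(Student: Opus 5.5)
The plan is to exploit the square ansatz \eqref{eq:ansatz}: $g^{(k)}=h+u^2d\phi_1^2+v^2d\phi_2^2$, with $h$, $u$, $v$ independent of $\phi_1,\phi_2$ and no cross terms. This is exactly static with respect to $\phi_1$, with lapse $N=u$ and base $g_B=h+v^2d\phi_2^2$. The Wick-rotated metric is $g_L:=-u^2dt^2+h+v^2d\phi_2^2$. The Ricci tensor of a static metric $N^2d\phi_1^2+g_B$ is given by $\Ric(\partial_{\phi_1},\partial_{\phi_1})=-N\Delta_{g_B}N$ and $\Ric|_{B}=\Ric_{g_B}-N^{-1}\Hess_{g_B}N$, with vanishing mixed terms. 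Replacing $N^2d\phi_1^2$ by $-N^2dt^2$ flips the sign of the $tt$ metric coefficient and of $\Ric_{tt}$ simultaneously, so $\Ric(g)=3g$ is equivalent to $\Ric(g_L)=3g_L$. Formally this is the substitution $\phi_1=it$, under which polynomial identities in the metric coefficients are preserved. Hence $g_L$ is a static axisymmetric Einstein metric with $\Lambda=3$, and it is axisymmetric with Killing field $\partial_{\phi_2}$.

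The horizons are the components of the zero set of $N=u$, that is, the lifts of the rods of $\Omega^{(k)}$ with rod vector $(1,0)$. In the square case the rod vectors alternate between $(1,0)$ and $(0,1)$, and there are $n_k=2(k+1)$ rods. Therefore exactly $k+1$ rods have rod vector $(1,0)$, and these give the $k+1$ Killing horizons of $\partial_t$.

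The rotation $\mathsf r^2$ (a rotation by two rods) maps each such rod to the next, and its lift preserves $\phi_1$ and $\phi_2$. Consequently all these horizons are isometric, and the data $(u,h)$ along them agree.

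For the surface gravity, work at an open rod $\{x=0\}$ with rod vector $(1,0)$. By \eqref{eq:matrix-smooth-side}, $u^2=x^2\bar G_{11}$ with $\bar G_{11}|_{x=0}=e^{2\bar a}|_{x=0}$. The Lorentzian surface gravity of $\partial_t$ is $\kappa^2=\lim |dN|^2_{g}$ on the horizon, and this equals $|d u|_h^2=e^{-2\bar a}\bar G_{11}=1$ at $x=0$; equivalently, in the $\phi_1$-normalization, it is exactly the rod angle ratio \eqref{eq:rod-angle-ratio} divided by $2\pi$. Thus $\kappa=1$ on every horizon, independently of the symmetry, because Euclidean regularity across each rod with period $2\pi$ forces $\kappa=1$. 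The Hawking temperature is then $\kappa/2\pi=1/2\pi$ for every horizon, as in \cite{GH77}.

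The main care point is nondegeneracy of the horizons. I expect to check that $|du|_h\neq0$ on the rods, which follows from $\bar G_{11}>0$ there. At the corners, the $(1,0)$-horizon meets the $(0,1)$-axis; this is the standard bifurcation where the axis meets the horizon and should not affect the count.
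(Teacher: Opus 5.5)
Your proposal is correct and follows the paper's proof. You use the static splitting $N=u$, $g_B=h+v^2d\phi_2^2$, observe that the reduced equations $\Delta_{g_B}N=-3N$ and $\Ric(g_B)=N^{-1}\Hess_{g_B}N+3g_B$ are the same in both signatures, identify the horizons with the $k+1$ rods of rod vector $(1,0)$, and obtain $|dN|_{g_B}=1$ from the rod angle condition $\bar G_{11}=e^{2\bar a}$; the symmetry $\mathsf r^2$ you invoke is correct but not needed, since the angle condition alone gives $\kappa=1$ on each horizon.
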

\begin{proof}
Write $g^{(k)}=N^2d\phi_1^2+g_B$, with $N:=u$ and $g_B:=h+v^2d\phi_2^2$, by \eqref{eq:ansatz}. For such a static metric, $\Ric=3g$ is equivalent to $\Ric(g_B)=N^{-1}\Hess_{g_B}N+3g_B$ and $\Delta_{g_B}N=-3N$, which are also the equations $\Ric=3\mathbf g$ for $\mathbf g:=-N^2dt^2+g_B$. The horizons of $\mathbf g$ are the components of $\{N=0\}$, which lie over the $k+1$ rods of $\Omega^{(k)}$ with rod vector $(1,0)$, and the surface gravity of $\partial_t$ there is $|dN|_{g_B}$. By \eqref{eq:rod-angle-ratio} with angle $2\pi$, $N=\sqrt{G_{11}}$ grows like the distance to these rods, so $|dN|_{g_B}=1$ on all of them.
\end{proof}

\subsection{Free cyclic quotients}\label{subsec:free-quotients}

By \eqref{eq:CGB}, $\int_{M^{(k)}}|\Rm|^2=32\pi^2\chi(M^{(k)})=16\pi^2m(k+1)$, which grows linearly in $k$. Before blowing up, we therefore pass to free cyclic quotients with fixed curvature energy.

In the polar coordinates $(R,\theta)$ about $o$ and the torus angles $\boldsymbol\phi$, put 
\[
 \mathsf r_k(\theta,\boldsymbol\phi):=(\theta+\pi,\boldsymbol\phi)\qquad\text{and}\qquad
 \mathsf t_k(\theta,\boldsymbol\phi):=\bigl(\theta,\boldsymbol\phi+\tfrac{2\pi}{k+1} e_1\bigr).
\]
By the lift of the half-turn above, $\mathsf r_k$ is an isometry of $g^{(k)}$ of order $k+1$, and so is the torus translation $\mathsf t_k$. They commute, so $\varpi_k:=\mathsf t_k\circ\mathsf r_k$ is an isometry of order $k+1$, and we put
\[
 Z^{(k)}:=M^{(k)}/\langle\varpi_k\rangle,
\]
the quotient by the group generated by $\varpi_k$,

with the quotient metric $g_Z^{(k)}$. For $0<j<k+1$, $\mathsf r_k^j$ fixes only the points over $o$, where it acts trivially, whereas $\mathsf t_k^j$ is a nontrivial translation of $\Sigma$, hence  $\varpi_k^j$ has no fixed point. Both maps preserve the orientation. They commute with the torus action, which descends to $Z^{(k)}$. Away from the image $\Sigma_Z$ of $\Sigma$ in $Z^{(k)}$, the stabilizers are those of $M^{(k)}$, while a point of $\Sigma_Z$ has stabilizer $\langle\mathsf t_k\rangle\approx\mathbb Z_{k+1}$, since $\mathsf r_k$ fixes $\Sigma$ pointwise.  Thus $\Sigma_Z$ is the  unique orbit with \emph{nontrivial finite stabilizer}. The orbit space $\Omega_Z^{(k)}:=Z^{(k)}/\mathbb T^2$, the quotient of $\Omega^{(k)}$ by the rotation $\theta\mapsto\theta+\pi$ induced by  $\varpi_k$, has $\frac{m}{2}$ rods and $\frac{m}{2}$ corners.  

\begin{corollary}
\label{cor:finite-fundamental-groups}
For $k\geqslant1$, the quotients $(Z^{(k)},g_Z^{(k)})$ are closed
oriented Einstein four-manifolds satisfying
\[
 \Ric(g_Z^{(k)})=3g_Z^{(k)},\qquad
 \pi_1(Z^{(k)})=\mathbb Z_{k+1},\qquad
 \chi(Z^{(k)})=\tfrac m2,\qquad\text{and}\qquad \tau(Z^{(k)})=2-\tfrac m2.
\]
Their integral homology groups are
\[
 H_1(Z^{(k)};\mathbb Z)\approx\mathbb Z_{k+1},\qquad
 H_2(Z^{(k)};\mathbb Z)\approx\mathbb Z^{m/2-2}\oplus\mathbb Z_{k+1},\qquad\text{and}\qquad
 H_3(Z^{(k)};\mathbb Z)=0.
\]
Moreover, their curvature energy, volume and diameter satisfy
\[
 \begin{gathered}
 \int_{Z^{(k)}}|\Rm(g_Z^{(k)})|^2\,dV_{g_Z^{(k)}}=16\pi^2m,
 \\
 \Vol(Z^{(k)})=\frac{V_k}{k+1}=o(k^{-1}),
 \qquad\text{and}\qquad
 \operatorname{diam}(Z^{(k)},g_Z^{(k)})\leqslant\pi.
 \end{gathered}
\]
In particular, the square quotients are rational homology four-spheres with the minimum curvature energy $64\pi^2$ for closed oriented positive Einstein four-manifolds, and the hexagon quotients have
the rational homology of $\overline{\mathbb{CP}}{}^2$ and curvature
energy $96\pi^2$.
\end{corollary}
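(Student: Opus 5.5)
The quotient $Z^{(k)}=M^{(k)}/\langle\varpi_k\rangle$ is by a free, orientation-preserving isometric action of $\mathbb Z_{k+1}$, as shown just before the statement. So $g_Z^{(k)}$ is smooth, closed, oriented and Einstein with $\Ric=3g$. Multiplicativity gives $\chi(Z^{(k)})=\chi(M^{(k)})/(k+1)=\frac{m}{2}$ (using $\chi(M_k)=2(k+1)$, $\chi(N_k)=3(k+1)$). Likewise $\tau(Z^{(k)})=\tau(M^{(k)})/(k+1)$, which gives $0$ for $M_k$ and $(k-(2k+1))/(k+1)=-1$ for $N_k$; in both cases this is $2-\frac m2$. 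The curvature energy is $\frac{1}{k+1}\cdot 16\pi^2m(k+1)=16\pi^2m$ by \eqref{eq:CGB}. The volume is $V_k/(k+1)$, and Corollary~\ref{cor:volumes} gives $o(k^{-1})$. The diameter bound follows from Bonnet--Myers for $\Ric=3g$, since the quotient map does not increase distances.

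For the fundamental group, $M^{(k)}$ is simply connected, so $\pi_1(Z^{(k)})\approx\mathbb Z_{k+1}$ as a free quotient; hence $H_1\approx\mathbb Z_{k+1}$. Poincaré duality and universal coefficients then give $H_3\approx H^1=\operatorname{Hom}(H_1,\mathbb Z)=0$. The torsion of $H^2$ is $\operatorname{Ext}(H_1,\mathbb Z)\approx\mathbb Z_{k+1}$, so by duality $H_2\approx H^2$ has torsion $\mathbb Z_{k+1}$. Its rank is $b_2=\chi-2+2b_1=\frac m2-2$. This gives $H_2\approx\mathbb Z^{m/2-2}\oplus\mathbb Z_{k+1}$. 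The only step needing care is confirming that $\pi_1(M^{(k)})=1$; this holds because the orbit space is a disk with at least one corner (standard for such $\mathbb T^2$-manifolds \cite{OR70}), so it is consistent with the identification $M^{(k)}=M_k$, respectively $N_k$.

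For the final claims, when $m=4$ we have $b_2=0$, so $Z^{(k)}$ is a rational homology sphere with energy $64\pi^2$. The minimality then comes from \eqref{eq:CGB}: for Einstein metrics, $\int|\Rm|^2=32\pi^2\chi$. Since a closed oriented four-manifold with positive Ricci curvature has $b_1=0$, we get $\chi=2+b_2\geqslant2$, so the energy is at least $64\pi^2$. When $m=6$, we get $b_2=1$ with $\tau=-1$, which is the rational homology of $\overline{\mathbb{CP}}{}^2$, and energy $96\pi^2$.

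I expect no serious obstacle. The only subtle point is the torsion bookkeeping in $H_2$; I would handle it via universal coefficients and Poincaré duality as above, rather than through an explicit cell structure.
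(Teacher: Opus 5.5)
Your proposal is correct and follows essentially the same route as the paper: identify $M^{(k)}\to Z^{(k)}$ as the universal cover, use multiplicativity of $\chi$ and $\tau$, obtain the homology from Poincar\'e duality and universal coefficients (with torsion $\operatorname{Ext}(H_1,\mathbb Z)\approx\mathbb Z_{k+1}$), get the curvature energy from \eqref{eq:CGB}, and get the minimality from Bochner's $b_1=0$. The only cosmetic difference is that you obtain the energy by dividing that of $M^{(k)}$ by $k+1$, whereas the paper applies \eqref{eq:CGB} directly on $Z^{(k)}$ with $\chi(Z^{(k)})=\frac m2$; the two are equivalent.
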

\begin{proof}
The free action above has order $k+1$, and $M^{(k)}$ is simply connected,
so $M^{(k)}\to Z^{(k)}$ is the universal cover and
$\pi_1(Z^{(k)})=\mathbb Z_{k+1}$.
The Einstein equation descends to the quotient. Multiplicativity of
Euler characteristic and signature gives
\[
 \chi(Z^{(k)})=\frac{\frac{m}2(k+1)}{k+1}=\frac{m}{2},\qquad\text{and}\qquad
 \tau(Z^{(k)})=\frac{(k+1)(2-\frac{m}2)}{k+1}=2-\frac{m}{2}.
\]
Since $H_1(Z^{(k)};\mathbb Z)=\mathbb Z_{k+1}$, Poincar\'e duality gives
$H_3(Z^{(k)};\mathbb Z)=H^1(Z^{(k)};\mathbb Z)=0$ and hence
$b_2(Z^{(k)})=\frac{m}{2}-2$. Poincar\'e duality and the universal coefficient
theorem identify the torsion subgroup of $H_2$ with
$\operatorname{Ext}(H_1,\mathbb Z)\approx\mathbb Z_{k+1}$, proving the
homology formulae.

As $g_Z^{(k)}$ is Einstein, \eqref{eq:CGB} and $\chi(Z^{(k)})=\frac{m}{2}$ give
\[
 \int_{Z^{(k)}}|\Rm(g_Z^{(k)})|^2\,dV_{g_Z^{(k)}}=32\pi^2\chi(Z^{(k)})=16\pi^2m.
\] The volume is divided by the covering
degree $k+1$, and $V_k\to0$ by Corollary~\ref{cor:volumes}. The diameter bound $\operatorname{diam}\leqslant\pi$ is the Bonnet-Myers theorem. Finally, any closed connected oriented positive
Einstein four-manifold $Y$ has $b_1(Y)=0$ by Bochner's formula, so \eqref{eq:CGB} gives
\[
 \int_Y|\Rm|^2\,dV=32\pi^2\chi(Y)
 =32\pi^2\bigl(2+b_2(Y)\bigr)\geqslant64\pi^2.
\]
Thus the square quotients attain the minimum. Orientability is needed since the round $\mathbb{RP}^4$ has energy $32\pi^2$.
\end{proof}

\section{Collapse and Ricci-flat limits}\label{sec:collapse}

This section studies two limits as $k\to+\infty$. The compact manifolds $(M^{(k)},g^{(k)})$ collapse to an explicit disk (Theorem~\ref{thm:collapsed-limit}), and curvature rescalings of their quotients $(Z^{(k)},g_Z^{(k)})$, whose curvature energy is fixed, converge to complete Ricci-flat metrics (Theorem~\ref{thm:periodic-models}).

Let $g^{(k)}$ be the smooth metric on $M^{(k)}$ of Section~\ref{sec:proof-main}, obtained from $g_{\beta_k}$ with $\beta_k=\frac{2}{k+1}$; it is in class (C2) of Section~\ref{sec:compactness}. Its orbit space $\Omega^{(k)}$ is a disk with $\frac{m}{2}(k+1)$ rods and as many corners about a smooth central point $o$.  We write $\psi_k:=\rho_k/\rho_k(o)$, and  we omit the index $k$ of $\rho$, $\psi$ and of the cores $E_\delta$  when no confusion arises.

\subsection{The collapsed disk}
The limits of this subsection are collapsed limits of manifolds with a uniform
Ricci lower bound, as in the general framework of Cheeger-Colding
\cite{CC97}. Here the reduced equations \eqref{eq:Einstein-identities} identify the limit explicitly.

Let $F(r):=\frac{1}r-r^2$, and
\begin{equation}\label{eq:kottler-orbit}
 h_\infty:=\frac{dr^2}{F(r)}+\frac{4}{9}F(r)\,d\vartheta^2,\qquad \text{for }0<r\leqslant1\text{ and }\vartheta\in\mathbb R/2\pi\mathbb Z.
\end{equation}
 Since $F(1)=0$ and $F'(1)=-3$, the point $r=1$ is a smooth central point. The distance to the center is $\frac{2}{3}\arccos r^{3/2}$, so the incomplete end $r\to0$ lies at distance $\frac{\pi}{3}$.

\begin{theorem}\label{thm:collapsed-limit}
For either family, as $k\to+\infty$, $\rho_k(o)\to0$, and the reduced data $(h,\psi,\Phi)$ converge to $(h_\infty,r^2,\Phi_o)$ smoothly on each core $E_\delta$, $0<\delta<1$, in the charts of Lemma~ \ref{lem:smooth-cores}. Consequently, $(M^{(k)},g^{(k)})$ converges in the Gromov-Hausdorff sense to the metric completion $B_\infty$ of $h_\infty$, and $\operatorname{diam}(M^{(k)},g^{(k)})\to\frac{2\pi}{3}$.
\end{theorem}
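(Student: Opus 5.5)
The plan is to extract a subsequential limit of the reduced data on each core from Lemma~\ref{lem:smooth-cores}, identify it through rotational symmetry and the reduced equations \eqref{eq:Einstein-identities}, and then pass to Gromov--Hausdorff convergence using the fiber bounds of Lemma~\ref{lem:scalar}(c).

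\textbf{Step 1: limits on the cores.} For fixed $\delta\in(0,1)$, Lemma~\ref{lem:smooth-cores} gives uniform $C^{j,\alpha}$ bounds on $(h,\psi,\Phi)$ in charts of fixed size, independent of $k$ and of $\rho_k(o)$. A diagonal subsequence therefore converges smoothly on every $E_\delta$ to a limit $(h',\psi',\Phi')$, which still solves \eqref{eq:Einstein-identities}, since these equations involve $\rho$ only through $\ell=\log\rho$ up to an additive constant. The limit also keeps the normalizations $\psi'(o)=1$, $\Phi'(o)=\Phi_o$ and $\Hess\psi'(o)=-3h'(o)$.

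\textbf{Step 2: rotational symmetry.} The rotation $\mathsf r$ by one rod acts on $\Omega^{(k)}$ by an angle $\frac{2\pi}{n_k}\to0$ in the normal chart at $o$. The limit is therefore invariant under a dense subgroup of rotations, hence under all of $SO(2)$.
\begin{itemize}
\item For $\Phi'$, invariance gives $\Phi'\circ R_\vartheta=C_\vartheta\Phi'$, where $C_\vartheta$ is the limiting linear action. In the hexagon case the action of $\mathsf r$ on $T_{\Phi_o}\mathbb H^2$ has eigenvalues $e^{\pm 2\pi i/3}$, not close to $1$. Combined with the mode restriction of Lemma~\ref{lem:central-modes} (indices $j/\beta_k\to\infty$), this forces every nonconstant Fourier mode of $U=\Pi_{\Phi_o}\Phi$ to vanish in the limit.
\item To make this robust, I would use the $R^{2/\beta}$-type decay: Schauder bounds on a fixed ball together with $U$ containing only modes of degree at least $2/\beta_k\to\infty$ give $U\to0$ on compact subsets of the interior. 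The step I expect to be most delicate is extending this to every core. There I would rely on $\Phi'$ being a harmonic-type map with rotationally symmetric data, invariant under rotations by angles $2\pi/n_k$ with $n_k\to\infty$, so that every nonzero angular mode is excluded and $\Phi'$ is radial.
\item The equation for $\Phi'$ then becomes an ODE, $\tau+d\Phi(\nabla\ell)=0$. By uniqueness it has the constant solution $\Phi_o$, given $\Phi'(o)=\Phi_o$ and smoothness at the center, which requires $d\Phi'(o)=0$ for a radial map.
\end{itemize}

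\textbf{Step 3: solving the ODE system.} With $\Phi'\equiv\Phi_o$, we have $B=0$, so $4K=-|d\ell|^2$ and $\Hess\ell=(K-3)h-\frac12 d\ell^2$. Set $\psi'=r^2$ and write $h'=dr^2/F+H\,d\vartheta^2$. Then $\Delta\psi=-6\psi$ together with the Hessian equation in the angular direction gives a first-order system. Its solution with a smooth center at $r=1$ and $\Hess\psi(o)=-3h(o)$ is $F=\frac1r-r^2$ and $H=\frac49F$; this is the Kottler orbit metric \eqref{eq:kottler-orbit}, and the check is a routine integration. Uniqueness of the limit then upgrades subsequential convergence to full convergence.

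\textbf{Step 4: $\rho_k(o)\to0$ and Gromov--Hausdorff convergence.} By \eqref{eq:volume-psi}, $V_k=4\pi^2\rho_k(o)\int\psi\,dA_h$, and $\int_{E_{1/2}}\psi\,dA_h$ is bounded below uniformly by Step~1. Corollary~\ref{cor:volumes} then forces $\rho_k(o)\to0$. Lemma~\ref{lem:scalar}(c) bounds every orbit diameter by $C\sqrt{\rho_k(o)}\to0$, so $M^{(k)}$ is Gromov--Hausdorff close to $(\overline\Omega^{(k)},d_h)$.

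It remains to show that $(\overline\Omega^{(k)},d_h)$ converges to $B_\infty$. By Lemma~\ref{lem:core-retraction}, $E_\delta$ is isometrically embedded, with its intrinsic distance, in $\overline\Omega^{(k)}$. By \eqref{eq:base-radius}, every point lies within $\arcsin\sqrt\delta$ of $E_\delta$. Letting $\delta\to0$ after $k\to\infty$, and using smooth convergence on $E_\delta$ to the corresponding core $\{r^2\geqslant\delta\}$ of $h_\infty$, gives Gromov--Hausdorff convergence to the completion $B_\infty$.

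The diameter of $B_\infty$ is twice the distance $\frac\pi3$ from the center to the boundary. This is realized by a diametral pair: every boundary point lies at distance exactly $\frac{\pi}{3}$ from the center, and the limit is a rotationally symmetric radially geodesic cone with boundary points antipodal. This gives $\operatorname{diam}\to\frac{2\pi}3$.
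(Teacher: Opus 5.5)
Your plan is sound except for the last step: the claim $\operatorname{diam}(B_\infty)=\frac{2\pi}{3}$ is not proved. Rotational symmetry and $d_{h_\infty}(o,\partial B_\infty)=\frac{\pi}{3}$ give only $\operatorname{diam}(B_\infty)\leqslant\frac{2\pi}{3}$. They do not rule out a shortcut between opposite boundary points. For $d\tau^2+f(\tau)^2d\vartheta^2$ on a disk of radius $\tau_0$, go radially in to $\{\tau=s\}$, halfway around that circle, and back out. This path has length $2(\tau_0-s)+\pi f(s)$, which is shorter than $2\tau_0$ whenever $\pi f(s)<2s$; this happens, for instance, on a spherical cap larger than a hemisphere. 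Also, $B_\infty$ is not a cone, so the phrase ``radially geodesic cone'' does not settle the question.

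The missing input is the sign of the curvature, which your Step~3 already gives. With $\Phi_\infty\equiv\Phi_o$ you have $4K_{h_\infty}=-|d\ell|^2\leqslant0$, explicitly $K_{h_\infty}=1-r^{-3}$. Hence $f''\geqslant0$, and with $f'(0)=1$ this gives $f(\tau)\geqslant\tau$, that is, $h_\infty\geqslant d\tau^2+\tau^2d\vartheta^2$ (Rauch comparison). So every path between boundary points with opposite angles is at least as long as the flat distance $\frac{2\pi}{3}$. This is how the paper obtains the lower bound.

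The rest is correct, partly by a different route.

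\emph{Shape of the limit.} The paper passes \eqref{eq:iota-equivariance} to the limit directly. In both charts $\mathsf r$ acts by $\Theta\mapsto\Theta+\frac{4\pi}{m(k+1)}$, which tends to the identity, while $\iota$ is a fixed isometry whose only fixed point is $\Phi_o$ (on the diagonal geodesic in the square case). So $\Phi_\infty=\iota\circ\Phi_\infty$ forces $\Phi_\infty\equiv\Phi_o$, with no Fourier analysis or ODE. Your Fourier version is equivalent, but drop the ``limiting linear action $C_\vartheta$''. The target action is by powers of the fixed finite-order map $\iota$, not by a continuous family.

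\emph{The step you flag as delicate.} It is not delicate. Since $\mathsf r$ commutes with the flow of $V_\psi$, it also rotates $\Theta$ in the cylinder charts. Moreover $U\circ\mathsf r$ is the linear action of $\iota$ on $U$, which has no eigenvalue $1$. So the index $0$ is always excluded, and each fixed nonzero index is excluded for large $k$. Hence $U_\infty\equiv0$ on every core, and your radial ODE uniqueness argument is unnecessary.

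\emph{Identification and $\rho_k(o)\to0$.} Your integration in Step~3 is the same computation as the paper's first integral $d(r|dr|^2+r^3)=0$ combined with the Killing field $J\nabla r$. Your bound $\rho_k(o)\leqslant CV_k$ is also valid: bound $\int_{E_{1/2}}\psi\,dA_h$ from below via Lemma~\ref{lem:smooth-cores}, then use Corollary~\ref{cor:volumes}. It is a slightly more direct replacement for the paper's use of \eqref{eq:area-volume} applied to $g_{\beta_k}$, whose volume is $\beta_kV_k$.
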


\begin{proof}[Proof of Theorem~\ref{thm:collapsed-limit}]
\emph{Step 1: compactness and limit of the  torus shape.}
By Lemma~ \ref{lem:smooth-cores}, the coefficients of $(h,\psi,\Phi)$ in the
normal chart at $o$ and in the cylinder are bounded in every $C^j$. A diagonal
subsequence therefore converges smoothly on every $E_\delta$.

The  lift of $\mathsf r$ acts on the torus angles by $\mathsf B$ of \eqref{eq:hexagon-symmetries} in the square case, where it exchanges the two circles, and by $\mathsf A$ in the hexagon case, by \eqref{eq:hexagon-lifts}. So the invariance of $g$ gives
\begin{equation}\label{eq:iota-equivariance}
 \Phi\circ\mathsf r=\iota\circ\Phi,\qquad\text{where}\qquad\iota(\Phi'):=\mathsf B\Phi'\mathsf B,\quad\text{respectively}\quad\iota(\Phi'):=\mathsf A^{-\mathsf T}\Phi'\mathsf A^{-1},
\end{equation}
and $\iota$ is an isometry of $\mathbb H^2$ fixing $\Phi_o$. In the square case, $\Phi_k$ lies on the diagonal geodesic, which $\iota$ reverses, with unique fixed point $\Phi_o$ on that geodesic. In the hexagon case, $\iota$ is a rotation of order three, with unique fixed point $\Phi_o$. The rotation $\mathsf r$  fixes $o$ and $\psi$, so it commutes with $\exp_o$ and with the flow of $V_\psi$, and it acts in both charts by $\Theta\mapsto\Theta+\frac{4\pi}{m(k+1)}$, which tends to the identity. Passing \eqref{eq:iota-equivariance} to the limit gives $\Phi_\infty=\iota\circ\Phi_\infty$, with $\Phi_\infty$ on the diagonal geodesic in the square case, so $\Phi_\infty\equiv\Phi_o$.

\emph{Step 2: identification of the limit.} Put $r:=\sqrt{\psi_\infty}$. Since $\Phi_\infty=\Phi_o$, \eqref{eq:concavity} and \eqref{eq:Einstein-identities} give
 \[
 \Hess_{h_\infty}r=-\frac{|dr|_{h_\infty}^2+3r^2}{2r}\,h_\infty,\qquad\text{hence}\qquad d\bigl(r|dr|_{h_\infty}^2+r^3\bigr)=0.
\]
 Since $r(o)=1$ and $dr(o)=0$, we get $|dr|_{h_\infty}^2=\frac{1}r-r^2=F(r)$. As the Hessian of $r$  is pure trace, $J\nabla r$ is a Killing field, where $J$ is the rotation by a right angle, and  $-\frac{2}{3}J\nabla r$ has period $2\pi$, since $\Hess_{h_\infty}r(o)=-\frac{3}{2}h_\infty(o)$. With the corresponding angle $\vartheta$, $h_\infty$ is \eqref{eq:kottler-orbit}, independently of the subsequence.

\emph{Step 3: convergence of the orbit spaces.} By \eqref{eq:core-retraction}, the distance of $E_\delta$ is the restriction of the distance of $\overline\Omega^{(k)}$, so, by Step~2, the cores $E_\delta$ converge as metric spaces to $\{r^2\geqslant\delta\}\subset B_\infty$. By \eqref{eq:base-radius}, $E_\delta$ is $\arcsin\sqrt\delta$-dense in $\overline\Omega^{(k)}$, and $\{r^2\geqslant\delta\}$ has the same property in $B_\infty$ by \eqref{eq:kottler-orbit}, since $F(r)\geqslant1-r^2$.
Letting $k\to+\infty$ and then $\delta\to0$, the orbit spaces $(\overline\Omega^{(k)},d_h)$ converge to $B_\infty$. 

\emph{Step 4: collapse of the torus orbits.} The torus orbits collapse:  \eqref{eq:area-volume} for $g_{\beta_k}$, whose central torus is that of $g^{(k)}$ and whose volume is $\beta_kV_k$, Corollary~\ref{cor:volumes} and Lemma~\ref{lem:scalar}(c) give
 \[
 \rho_k(o)=\frac{\Area(\Sigma)}{4\pi^2}\leqslant CV_k\longrightarrow0,\qquad\text{and}\qquad
 \sup_{q\in\Omega^{(k)}}\operatorname{diam}\pi^{-1}(q)\leqslant2\operatorname{diam}(\Sigma)\leqslant C\sqrt{\rho_k(o)}\longrightarrow0,
\]
where the last bound uses that the central shape $\Phi_o$ is fixed. Since $\pi$ is a submetry onto $(\overline\Omega^{(k)},d_h)$, with fibers of diameter tending to zero, $(M^{(k)},g^{(k)})$ converges to $B_\infty$ in the Gromov-Hausdorff sense.

Finally, the metric \eqref{eq:kottler-orbit} has curvature $K_{h_\infty}=-\frac{1}{2}F''(r)=1-r^{-3}\leqslant0$, so,  with $\tau:=d_{h_\infty}(o,\cdot)$, the Rauch comparison theorem gives $h_\infty\geqslant d\tau^2+\tau^2d\vartheta^2$. Two boundary points with opposite angles $\vartheta$ are therefore at $h_\infty$-distance at least $\frac{2\pi}{3}$, the flat distance, and every point lies within $\frac{\pi}{3}$ of the center, so $\operatorname{diam}(B_\infty)=\frac{2\pi}{3}$. The dimension of $\partial B_\infty$ is computed in Remark~\ref{rem:four-thirds}.
\end{proof}

 \begin{remark}\label{rem:kottler}
The metric $h_\infty$ of \eqref{eq:kottler-orbit} is the orbit metric of $g_{\mathrm{Kot},\varepsilon}:=h_\infty+\varepsilon^2r^2(\Phi_o)_{ij}\,d\phi_i\,d\phi_j$, the Riemannian Kottler metric with flat toroidal horizon, cosmological constant $3$ and \textit{negative} mass $\mathfrak m=-\frac{1}{2}$, so that $F=-2\mathfrak m/r-r^2$ \cite{Kot18}, which satisfies $\Ric(g_{\mathrm{Kot},\varepsilon})=3g_{\mathrm{Kot},\varepsilon}$ for every $\varepsilon>0$. The parameter $\varepsilon$ is the size of the flat torus orbits. Its end $r\to0$ is asymptotic to a Kasner metric with exponents $(-\frac{1}{3},\frac{2}{3},\frac{2}{3})$, with the exponent $-\frac{1}{3}$ belonging to the $\vartheta$-circle.
\end{remark}

\begin{remark}\label{rem:four-thirds}
Let $t:=\frac{\pi}{3}-\frac{2}{3}\arccos r^{3/2}$ be the distance to $\partial B_\infty$. Then we have $r^{3/2}=\sin\frac{3t}{2}$, so $\frac{4}{9}F(r)=\frac{4}{9}\cos^2\bigl(\frac{3t}{2}\bigr)\sin^{-2/3}\bigl(\frac{3t}{2}\bigr)$, and, with the $2\pi$-periodic coordinate $\vartheta$ of \eqref{eq:kottler-orbit}, the metric reads
\[
 h_\infty=dt^2+\Bigl(\frac{2}{3}\Bigr)^{8/3}t^{-2/3}\bigl(1+O(t^2)\bigr)\,d\vartheta^2\qquad \text{ as }t\to 0.
\]
This is a Grushin-type metric: in the notation $dr^2+r^{-4\alpha}dv^2$ of \cite[Section~3.1]{DHPW23}, $\alpha=\frac{1}{6}$. By \cite[Theorem~3.5]{DHPW23}, $\partial B_\infty$ is of Hausdorff dimension $\frac{4}{3}$, while $B_\infty$ itself has dimension $2$. Collapsed Ricci limits of non-integer dimension also appear in \cite{PW22,Pan23}.
\end{remark}

\begin{remark}\label{rem:segment-limit}
The generator  $\varpi_k$ of Section~\ref{sec:proof-main} acts on $\Omega^{(k)}$ by $\theta\mapsto\theta+\pi$, that is, by $\vartheta\mapsto\vartheta+\frac{2\pi}{k+1}$ in the normal angle of the limit in Theorem~\ref{thm:collapsed-limit}. Since the rotations of $\vartheta$ are isometries of $h_\infty$, equivariant Gromov-Hausdorff convergence shows that the quotients $(Z^{(k)},g_Z^{(k)})$ of Corollary~\ref{cor:finite-fundamental-groups} converge to $B_\infty/{\mathbb S^1}=[0,\frac{\pi}{3}]$, parametrized by the distance to the center. By Lemma~ \ref{lem:smooth-cores}, $|\Rm(g_Z^{(k)})|$ stays bounded over each core, while $\max|\Rm(g_Z^{(k)})|\to+\infty$ by Lemma~\ref{lem:curvature-localization}(i), so the curvature diverges only at the end $\frac{\pi}{3}$.
\end{remark}

\subsection{The Ricci-flat models}\label{subsec:models}

We now blow up, at the scale of curvature, the quotients $(Z^{(k)},g_Z^{(k)})$ of Corollary~\ref{cor:finite-fundamental-groups}, whose curvature energy  does not depend on $k$. Under $g\mapsto\Lambda g$,
we have $h\mapsto\Lambda h$, $G\mapsto\Lambda G$,
$\rho\mapsto\Lambda\rho$ and $\Phi\mapsto\Phi$, while the flows of $X_1,X_2$ keep the period $2\pi$.

The rotation $\mathsf r$ by one rod induces on $\Omega_Z^{(k)}$ an orientation-preserving isometry $\mathsf s_k$ of $h$, of order $\frac{m}{2}$, which preserves $\rho$ and maps each corner to an adjacent one. We call the totally geodesic spheres over the rods, fixed by their circle stabilizers, the \emph{fixed spheres}; they are isometric since $\mathsf r$ is transitive on the rods. The quotient maps each fixed sphere of $M^{(k)}$ isometrically onto a fixed sphere of $Z^{(k)}$.

  For the isometry $\iota$ of \eqref{eq:iota-equivariance}, we have
\begin{equation}\label{eq:cyclic-displacement}
 d_{\mathbb H^2}(\Phi',\Phi_o)\leqslant d_{\mathbb H^2}(\Phi',\iota\Phi'),
\end{equation}
for $\Phi'$ on the diagonal geodesic in the square case, and for every $\Phi'\in\mathbb H^2$ in the hexagon case. Indeed, in the square case $\iota$ reverses this geodesic at $\Phi_o$, so $d_{\mathbb H^2}(\Phi',\iota\Phi')=2d_{\mathbb H^2}(\Phi',\Phi_o)$. In the hexagon case, write $\Phi'=\operatorname{Exp}_{\Phi_o}V$. The differential of $\iota$ at $\Phi_o$ is the rotation by $\frac{2\pi}{3}$, and $\operatorname{Exp}_{\Phi_o}$ does not decrease distances, $\mathbb H^2$ being nonpositively curved, so $d_{\mathbb H^2}(\Phi',\iota\Phi')\geqslant|V-d\iota_{\Phi_o}V|=\sqrt3|V|\geqslant d_{\mathbb H^2}(\Phi',\Phi_o)$.

 To the best of our knowledge, the hexagonal limit $(X_{\mathrm{hex}},g_{\mathrm{hex}})$ is a new gravitational instanton.
\begin{theorem}\label{thm:periodic-models}

Put $\Lambda_k:=\max|\Rm(g_Z^{(k)})|$, and let $c_k$ be the corner of $Z^{(k)}$ at the end of a rod with rod vector $\pm e_1$, for the boundary orientation of $\Omega_Z^{(k)}$. Then $\Lambda_k\to+\infty$, and the whole sequence satisfies
\[
 (Z^{(k)},\Lambda_kg_Z^{(k)},c_k)\longrightarrow(X,g_X,c)
\]
smoothly on compact subsets. The limit $(X,g_X)$ has the following properties.
\begin{enumerate}[label=\textup{(\alph*)}]
\item \emph{Curvature.} It is complete, nonflat and Ricci-flat, with $|\Rm|\leqslant1$ and $\int_X|\Rm|^2\,dV=32\pi^2\chi(X)=16\pi^2m$, the curvature energy of every $Z^{(k)}$.
\item \emph{Torus action.} It has an effective isometric action of $\mathbb R^2/2\pi\mathbb Z^2$ generated by the limits of $X_1,X_2$.  Its orbit space is  a closed half-cylinder, whose boundary circle  consists of $\frac{m}{2}$ rods, with rod vectors $(e_1,e_2)$ in the square case and $(e_1,e_1+e_2,e_2)$ in the hexagon case, in cyclic order, permuted cyclically by an isometry of $X$.
\item \emph{Kasner end.}  Its end is asymptotic to  a Kasner metric with exponents $(-\frac{1}{3},\frac{2}{3},\frac{2}{3})$.
\end{enumerate}
We write $X_{\mathrm{sq}}$ and $X_{\mathrm{hex}}$ for $X$, and $g_{\mathrm{sq}}$ and $g_{\mathrm{hex}}$ for $g_X$, in the two cases.
\begin{enumerate}[label=\textup{(\roman*)}]
\item $X_{\mathrm{sq}}\approx {\mathbb S^4}\setminus {\mathbb T^2}$, and $g_{\mathrm{sq}}$ is a
one-period quotient of an equal-rod Euclidean
Myers-Korotkin-Nicolai metric \cite{My87,KN94,KRWY22}.
\item $X_{\mathrm{hex}}\approx\overline{\mathbb{CP}}{}^2\setminus {\mathbb T^2}$,
$\pi_1(X_{\mathrm{hex}})=\mathbb Z$, and $X_{\mathrm{hex}}$ contains compact fixed spheres of self-intersection $-1$. In particular, it is not spin.
The metric $g_{\mathrm{hex}}$ is not locally Hermitian in either orientation, and it is not static for any choice of generators of the torus action.
\end{enumerate}
In both cases the removed torus is a principal orbit of the standard
torus action.
\end{theorem}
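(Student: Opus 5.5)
I would organize the proof in four stages: blow-up compactness, identification of the torus action and orbit space, the Kasner end, and topology and rigidity of the two limits.

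\emph{Stage 1: the rescaled limit exists.} First, $\Lambda_k\to+\infty$, since $\Vol(Z^{(k)})\to0$ while $\operatorname{diam}\leqslant\pi$ and $\int|\Rm|^2=16\pi^2m$. If curvature stayed bounded, Lemma~\ref{lem:smooth-cores} together with bounded collapse would force an F-structure whose Euler characteristic vanishes, contradicting $\chi(Z^{(k)})=\frac m2\neq0$ via the energy identity. This is also the content of the forward-referenced Lemma~\ref{lem:curvature-localization}(i), which I would prove here.

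Next, the maximum of $|\Rm|$ has to be located near the corners. By Lemma~\ref{lem:smooth-cores}, $|\Rm|$ is bounded on $\pi^{-1}(E_\delta)$, so maxima occur where $\psi\to0$, that is, near the rods. On the fixed spheres the geometry is controlled by the reflected surface $\bar S$ as in Section~\ref{sec:coord-convergence}.

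To get a noncollapsed limit at the curvature scale, I would pass to the $\mathsf s_k$-invariant picture. Near a corner, the two circles collapsing on the adjacent rods have lengths comparable to the distance from the rods. This gives local noncollapsing at scale $\Lambda_k^{-1/2}$ around the corner points $c_k$, via a volume lower bound on balls that uses the corner model \eqref{eq:matrix-smooth-corner}. Einstein compactness \cite{And89} then gives a complete Ricci-flat limit with $|\Rm|\leqslant1$.

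I would apply Lemma~\ref{lem:torus-limit}(a) at the corners $c_k$ to pass the torus action, rods and corners to the limit. The orbit space is a limit of $\Omega_Z^{(k)}$ rescaled near its boundary, so it is a half-cylinder bounded by the $\frac m2$ rods. Cyclic permutation comes from $\mathsf s_k$ passing to the limit.

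\emph{Stage 2: curvature energy and nonflatness.} Energy is $16\pi^2m$, with no loss, because the cores carry vanishing energy: $|\Rm|$ is bounded there and the volume tends to zero. Any bubble elsewhere would be ALE with the same rod structure, which is excluded by Corollary~\ref{cor:ale-exclusion}. The equality $\int|\Rm|^2=32\pi^2\chi(X)$ then follows from Chern--Gauss--Bonnet with a vanishing boundary term on the Kasner end. Nonflatness follows from $|\Rm|(c)$ being comparable to $1$ after choosing the base point within bounded distance of the maximum; the cyclic symmetry makes all corners equivalent.

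\emph{Stage 3: the Kasner end.} This is the main obstacle. I would match the far region of the blow-up with the Kasner asymptotics of the Kottler end in Remark~\ref{rem:kottler}. In the limit, the reduced equations become $\Delta_h\rho=0$ with the harmonic map $\Phi$. I would use $\rho$ as a harmonic coordinate and its conjugate $z$ to write the metric in Weyl--Papapetrou form. Rotational averaging, through the asymptotic invariance under $\vartheta$-rotations inherited from $\varpi_k$ acting nearly trivially, would force the leading term to be the $(-\frac13,\frac23,\frac23)$ Kasner solution, with decay of corrections from the linearized harmonic-map equation on the half-cylinder. Uniqueness of the whole sequence limit, without passing to a subsequence, requires uniqueness of the Ricci-flat solution with this rod structure and asymptotics. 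I would first try the known classification of such Ricci-flat solutions with given rod data (Li--Sun \cite{LS25}).

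\emph{Stage 4: topology and rigidity.} Topology is read off from the rod sequences \cite{OR70}. In the square case, the static diagonal structure passes to the limit. The equal-rod condition comes from the symmetry, which forces identification with the Myers--Korotkin--Nicolai periodic solution through the Weyl superposition formula. In the hexagon case, a fixed sphere with rod vectors $e_1,e_1+e_2,e_2$ has self-intersection $\pm\det$-type value $-1$, which gives non-spin. The claim $\pi_1=\mathbb Z$ comes from the gap in the rods at infinity.

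Non-Hermitian and non-static behaviour would be checked by contradiction. A Hermitian structure would give, via LeBrun's classification \cite{LeB12} with toric symmetry, Gibbons--Hawking or Kähler models incompatible with the Kasner exponents. Staticness would force $G$ diagonal in some integral basis, contradicting the $\mathsf A$-equivariance, since $\iota$ has order three.
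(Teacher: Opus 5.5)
Your outline has the right overall shape, but two central steps do not go through as proposed.

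\emph{Compactness with all corners retained (Stage~1).} The corner model \eqref{eq:matrix-smooth-corner} holds for each $k$ with no uniform scale, so it cannot give noncollapsing at scale $\Lambda_k^{-1/2}$. In the paper, noncollapsing at a point of maximal curvature comes from the Cheeger--Tian small-energy estimate \cite{CT06}, which needs no volume bound. Lemma~\ref{lem:torus-limit}(b) then supplies a corner at bounded distance (Lemma~\ref{lem:curvature-localization}).

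More seriously, your claim that the limit orbit space is a half-cylinder bounded by the $\frac m2$ rods assumes that all corners stay at bounded $\Lambda_kg_Z^{(k)}$-distance from each other. Nothing in your argument gives this. Since $\mathsf s_k$ makes all adjacent corner distances equal, the alternative is that they all diverge. That would leave a limit with one corner and two unbounded rods, which need not have Euclidean volume growth; self-dual Taub--NUT has exactly this rod structure. So Corollary~\ref{cor:ale-exclusion} cannot exclude it directly.

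Ruling this out is the content of Lemma~\ref{lem:corner-noncollapse} and Proposition~\ref{prop:corner-compactness}:
\begin{itemize}
\item the level set $\sigma_k$ of $\rho_k$ whose length $L_k$ equals $\min_{\sigma_k}|d\ell_k|_{h_k}^{-1}$ puts all corners within $CL_k$ of each other;
\item Killing transport from a regular rod, along a path avoiding the corners where \eqref{eq:curvature-away-corners} bounds the curvature, bounds the orbit areas below by $C^{-1}L_k^2$, so the sequence is noncollapsed at scale $L_k$;
\item only then does the ALE exclusion force $\Lambda_kL_k^2\leqslant C$.
\end{itemize}
That is where Corollary~\ref{cor:ale-exclusion} is needed, not for ``bubbles elsewhere''; in a collapsing sequence those need not be ALE. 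The energy identity follows instead, as you also note, from Chern--Gauss--Bonnet on $X$ with $\chi(X)=\frac m2$ once the end is controlled.

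\emph{The Kasner exponents (Stage~3).} Even with exact $z$-invariance, $z$-independent solutions of \eqref{eq:weyl-harmonic} satisfy $\rho\,\Phi^{-1}\partial_\rho\Phi=\mathrm{const}$. So $\Phi$ runs along a geodesic of $\mathbb H^2$ at speed $\kappa/\rho$, which produces the whole Kasner family, with exponents $(-\frac13,\frac23,\frac23)$ only for $\kappa=0$. Neither averaging nor matching with the Kottler end forces $\kappa=0$.

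Moreover, at the curvature scale the rotation by one rod is not nearly trivial: its limit is the translation $\mathsf s:z\mapsto z+\frac{2P_{\mathrm W}}m$. The missing idea is its twist $\Phi\circ\mathsf s=\iota\circ\Phi$ of \eqref{eq:weyl-shift}. Since $\Phi_o$ is the only fixed point of $\iota$ (on the diagonal geodesic in the square case), \eqref{eq:cyclic-displacement} and $|D\Phi|\leqslant\rho^{-1}$, which is $K_h\leqslant0$, give $d_{\mathbb H^2}(\Phi,\Phi_o)\leqslant\frac{2P_{\mathrm W}}{m\rho}$, as in \eqref{eq:end-shape}. This removes the $\log\rho$ mode, and derivative decay with the quadratures \eqref{eq:weyl-nu} then gives the Kasner end.

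The same bound makes the uniqueness needed for convergence of the whole sequence elementary, instead of relying on an unspecified classification. After matching the Weyl periods by a homothety, $d_{\mathbb H^2}(\Phi,\Phi')^2$ is $\Delta_3$-subharmonic, bounded near the axis, and tends to zero at infinity. The maximum principle of \cite{Wei95} then gives $\Phi=\Phi'$, and $\max|\Rm|=1$ fixes the homothety.

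\emph{Non-Hermitian claim.} \cite{LeB12} concerns compact manifolds and does not apply here. The argument needs Derdzi\'nski's dichotomy for $W^+$, the spin obstruction from a fixed sphere of odd self-intersection in the hyperk\"ahler case, and \cite{Li24} in the remaining case.
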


At the same scale, the blow-ups of the original manifolds $M^{(k)}$ converge to the infinite cyclic cover of $X$ (Remark~\ref{rem:covers}). Curvature concentrates only near the corners (Lemma~\ref{lem:curvature-localization}). Curvature-scale compactness retains at least one corner, but all corners must remain within bounded distance at the curvature scale. The region between two suitable level sets of $\rho$ lies within a controlled distance of all corners and is noncollapsed at its own scale $L_k$ (Lemma~\ref{lem:corner-noncollapse}). Corollary~\ref{cor:ale-exclusion} then shows that $L_k$ is comparable to the curvature scale, which gives compactness at the corners (Proposition~\ref{prop:corner-compactness}). The isometries $\mathsf s_k$ above give a periodic Weyl description with equal rods (Lemma~\ref{lem:weyl}), from which we obtain the Kasner asymptotics and identify the limits.

\subsubsection[Compactness at the corners]{Compactness at the corners}

Put $g_{Z,k}:=g_Z^{(k)}$, recall $\Lambda_k=\max|\Rm(g_{Z,k})|$ from Theorem~\ref{thm:periodic-models}, and let $\mathcal C_k$ be the set of corners of $Z^{(k)}$.

\begin{lemma}\label{lem:curvature-localization}
The following hold for the sequence $(g_{Z,k})$.
\begin{enumerate}[label=\textup{(\roman*)}]
\item We have $\Lambda_k\to+\infty$.
\item Let $y_k\in Z^{(k)}$ and $r_k>0$ satisfy $Q_k:=|\Rm(g_{Z,k})|(y_k)\to+\infty$, $Q_kr_k^2\to+\infty$ and $|\Rm(g_{Z,k})|\leqslant4Q_k$ on $B_{g_{Z,k}}(y_k,r_k)$. Then, after a subsequence, $(Z^{(k)},Q_kg_{Z,k},y_k)$ converges smoothly on compact subsets to a complete, nonflat and Ricci-flat limit with an effective isometric action of $\mathbb R^2/2\pi\mathbb Z^2$, which has the original generators, rod vectors and rod and corner models, and no orbit with nontrivial finite stabilizer. Moreover, some corners $c_k\in\mathcal C_k$ satisfy $\sup_kQ_k^{1/2}d_{g_{Z,k}}(y_k,c_k)<+\infty$.
\item There is $C$ such that, for every $x\in Z^{(k)}$,
\begin{equation}\label{eq:curvature-away-corners}
 |\Rm(g_{Z,k})|(x)\,d_{g_{Z,k}}(x,\mathcal C_k)^2\leqslant C.
\end{equation}
\end{enumerate}
\end{lemma}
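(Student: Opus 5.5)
We prove (i), (ii), (iii) in order; (iii) is deduced from (ii) by a point-picking argument.

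\emph{(i).} By Corollary~\ref{cor:finite-fundamental-groups}, $\int_{Z^{(k)}}|\Rm|^2\,dV=16\pi^2m$ is fixed, while $\Vol(Z^{(k)})=V_k/(k+1)=o(k^{-1})$. If $\Lambda_k$ stayed bounded, then $\int|\Rm|^2\leqslant\Lambda_k^2\Vol(Z^{(k)})\to0$, which is impossible. Hence $\Lambda_k\to+\infty$.

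\emph{(ii).} Rescale by $Q_k$. On $B_{Q_kg}(y_k,Q_k^{1/2}r_k)$ the curvature is bounded by $4$, and this radius tends to infinity. The sequence may collapse, so we pass to the universal covers of these balls, or equivalently use local covers in the sense of Cheeger--Fukaya--Gromov. There the Einstein equations with $\Ric=3Q_k^{-1}g\to0$ and the Killing transport \eqref{eq:killing-transport} give smooth subsequential convergence of the torus action together with the metric. The decisive point is to exclude a collapsed limit with a nontrivial local torus symmetry, for which there are only two sources.
\begin{itemize}
\item Near $\Sigma_Z$, the unique orbit with nontrivial finite stabilizer, the curvature is bounded by Lemma~\ref{lem:smooth-cores}, since $\Sigma_Z$ lies over a core. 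The condition $Q_k\to\infty$ therefore forces $d(y_k,\Sigma_Z)$ to be large at scale $Q_k^{-1/2}$, so no finite stabilizer survives.
\item The other source is small torus orbits. Here I would use Lemma~\ref{lem:torus-limit}(b). After normalizing orthonormal $1$-jets at $y_k$, the Killing fields converge to independent commuting Killing fields $Y_1,Y_2$ of a nonflat Ricci-flat limit $\mathcal N$ of bounded curvature, arising on the covers. Weber's result \cite{Web19} applies once $\mathcal N$ has finite energy, which follows from \eqref{eq:CGB} and the fixed energy. It gives a fixed point of a generic combination $Z$, and hence, as in that proof, corners $c_k$ with $Q_k^{1/2}d(y_k,c_k)$ bounded.
\end{itemize}
Once a corner stays at bounded distance, Lemma~\ref{lem:torus-limit}(a) shows that the $1$-jets at $c_k$ are normalized by the rod rotations. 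Then \eqref{eq:killing-propagation} bounds the Killing fields below near $y_k$, so the orbits through points near $c_k$ have length bounded below. This gives injectivity radius bounds, noncollapse, and convergence without covers. The limit carries the original generators with the lattice $2\pi\mathbb Z^2$, the rod vectors and the smooth models, with nontrivial finite stabilizers excluded as above. It is nonflat because $|\Rm|(y)=1$.

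\emph{(iii).} Argue by contradiction with the standard point-picking of Perelman type. Suppose $x_k$ satisfies $|\Rm|(x_k)d(x_k,\mathcal C_k)^2\to\infty$. Choose $y_k$ with $Q_k:=|\Rm|(y_k)$ such that $Q_kd(y_k,\mathcal C_k)^2\to\infty$, and such that $|\Rm|\leqslant4Q_k$ on $B(y_k,r_k)$ with $r_k:=\tfrac12 d(y_k,\mathcal C_k)$. Then $Q_kr_k^2\to\infty$, and $Q_k\to\infty$ because the curvature is bounded on cores and $\mathcal C_k$ avoids cores' complement only boundedly. Part (ii) then yields corners at distance $O(Q_k^{-1/2})=o(r_k)$ from $y_k$, contradicting the definition of $r_k$.

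\emph{Main obstacle.} The main obstacle is the collapsed case of (ii). Before a corner is found, the orbits near $y_k$ may be tiny at scale $Q_k^{-1/2}$, so we must work on local covers and verify that finite curvature energy, hence Weber's theorem, holds for the limit of the lifted metrics. If this fails, I would instead argue via the reduced equations on the orbit space: in that case the shape and area data would converge to a Weyl-type solution without rods, and the Hessian identity \eqref{eq:concavity} would force flatness.
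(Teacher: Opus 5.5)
\textbf{Verdict: there is a genuine gap in (ii), in the collapsed case.} Your (i) is the paper's argument. Your (iii) works. Taking $y_k$ to be a global maximizer of $|\Rm|\,d(\cdot,\mathcal C_k)^2$ on the compact $Z^{(k)}$ gives $|\Rm|\leqslant4Q_k$ on $B(y_k,\frac12d(y_k,\mathcal C_k))$. Moreover $Q_k\to+\infty$ follows directly from $Q_kr_k^2\to+\infty$ and $r_k\leqslant\operatorname{diam}\leqslant\pi$; your justification via cores is unnecessary. The bullet on $\Sigma_Z$ escaping is also fine.

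\textbf{Where (ii) fails.} You allow the balls $B(y_k,Q_k^{-1/2})$ to collapse and pass to local covers. You then assert that the limit $\mathcal N$ of the lifted metrics has finite curvature energy ``by \eqref{eq:CGB} and the fixed energy''. This does not follow. The energy of a ball in a local cover is bounded only by the energy of its image times the covering multiplicity, and in a collapsing sequence that multiplicity is unbounded. So the fixed energy $16\pi^2m$ of $Z^{(k)}$ gives no bound at all on the energy of $\mathcal N$. In fact, the limit of the deck groups acts isometrically on $\mathcal N$ with positive-dimensional orbits along which $|\Rm|$ is constant, so one should expect $\int_{\mathcal N}|\Rm|^2=+\infty$. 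Weber's theorem would then be unavailable.

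Two further problems follow from the same issue:
\begin{itemize}
\item Lemma~\ref{lem:torus-limit}(b) is stated for smooth convergence of the manifolds $U_i$ themselves, with effective $2\pi$-periodic torus actions. These need not lift as such to local covers.
\item Your later step, that the normalized Killing fields near $c_k$ give noncollapse, is unjustified. Long torus orbits do not exclude short loops transverse to them, and the step comes only after the corners have been found by the argument that needed noncollapse.
\end{itemize}

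\textbf{The missing idea.} There is no collapse at the curvature scale, by the $\varepsilon$-regularity of Cheeger--Tian \cite[Theorem~0.8]{CT06}, which assumes no volume lower bound. Take $\varepsilon_0,C_0$ from that theorem and $A>\max\{1,\sqrt{C_0}\}$. The metric $A^{-2}Q_kg_{Z,k}$ has Einstein constant at most $3$ and curvature $A^2>C_0$ at $y_k$. Hence the ball $B_{Q_kg_{Z,k}}(y_k,A)$ carries energy more than $\varepsilon_0$, by scale invariance of the energy. For large $k$ you also have $|\Rm(Q_kg_{Z,k})|\leqslant4$ on that ball. Bishop--Gromov for $\Ric\geqslant0$ then gives
\[
 \varepsilon_0<16\Vol_{Q_kg_{Z,k}}B(y_k,A)\leqslant16A^4\Vol_{Q_kg_{Z,k}}B(y_k,1).
\]
So Anderson's compactness applies to $(Z^{(k)},Q_kg_{Z,k},y_k)$ directly, with no covers. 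The limit has energy at most $16\pi^2m$ by scale invariance. Lemma~\ref{lem:torus-limit}(b) with $U_k:=Z^{(k)}$ then gives the corners $c_k$. Finally, the escape of $\Sigma_Z$ and Lemma~\ref{lem:torus-limit}(a) give the limiting action, with no nontrivial finite stabilizers. Your fallback through the reduced equations is too vague to replace this step.
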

\begin{proof}
(i) Since $\int_{Z^{(k)}}|\Rm|^2=16\pi^2m$ and $\Vol(Z^{(k)})=V_k/(k+1)\to0$ by Corollary~\ref{cor:volumes}, we have $\Lambda_k^2\geqslant16\pi^2m/\Vol(Z^{(k)})\to+\infty$.

(ii) Put $g_k':=Q_kg_{Z,k}$. The small-energy curvature estimate \cite[Theorem~0.8, p.~489]{CT06}, which assumes no lower volume bound, gives $\varepsilon_0,C_0>0$ such that every Einstein metric with $\Ric=\lambda g$, $|\lambda|\leqslant3$ and $\int_{B(p,1)}|\Rm|^2\leqslant\varepsilon_0$ satisfies $|\Rm|(p)\leqslant C_0$. Fix $A>\max\{1,\sqrt{C_0}\}$. For large $k$, the metric $A^{-2}g_k'$ has Einstein constant $3A^2Q_k^{-1}\leqslant3$ and curvature $A^2>C_0$ at $y_k$, while $|\Rm(g_k')|\leqslant4$ on $B_{g_k'}(y_k,A)$, since $Q_kr_k^2\to+\infty$. The scale invariance of the energy and the Bishop-Gromov inequality for $\Ric(g_k')\geqslant0$ therefore give
\[
 \varepsilon_0<\int_{B_{g_k'}(y_k,A)}|\Rm|^2\,dV\leqslant16\Vol_{g_k'}B(y_k,A)\leqslant16A^4\Vol_{g_k'}B(y_k,1).
\]
So Einstein regularity with \cite{And89} gives smooth subconvergence of $(Z^{(k)},g_k',y_k)$ to a complete Ricci-flat limit, with curvature energy at most $16\pi^2m$ by scale invariance, nonflat since $|\Rm(g_k')|(y_k)=1$. The points of $Z^{(k)}$ fixed by the torus are its corners (Section~\ref{subsec:free-quotients}), so Lemma~\ref{lem:torus-limit}(b), applied with $U_k:=Z^{(k)}$, gives corners $c_k$ with $\sup_kd_{g_k'}(y_k,c_k)<+\infty$. Since $\psi_k(c_k)=0$ and $\psi_k$ is $2$-Lipschitz for $g_{Z,k}$, by \eqref{eq:base-radius}, which passes to $\Omega_Z^{(k)}$, we have, for $0<\delta<1$,
\[
 d_{g_k'}\bigl(y_k,\{\psi_k\geqslant\delta\}\bigr)\geqslant\tfrac\delta2Q_k^{1/2}-d_{g_k'}(y_k,c_k)\longrightarrow+\infty.
\]
So $\Sigma_Z$, the only orbit of $Z^{(k)}$ other than principal orbits, rods and corners, escapes, and Lemma~\ref{lem:torus-limit}(a), with the corners $c_k$, gives the limiting action. Its convergence maps preserve the stabilizers, so it has no orbit with nontrivial finite stabilizer.

(iii)  Otherwise, there are $x_k\in Z^{(k)}$ with $|\Rm(g_{Z,k})|(x_k)\varrho_k^2\to+\infty$, where $\varrho_k:=\frac{1}{2}d_{g_{Z,k}}(x_k,\mathcal C_k)\leqslant\frac{\pi}{2}$, by Corollary~\ref{cor:finite-fundamental-groups}. Let $y_k$ maximize $|\Rm(g_{Z,k})|(y)\bigl(\varrho_k-d_{g_{Z,k}}(x_k,y)\bigr)^2$ over $\overline B_{g_{Z,k}}(x_k,\varrho_k)$, and put
 \[
 r_k:=\tfrac12\bigl(\varrho_k-d_{g_{Z,k}}(x_k,y_k)\bigr)\qquad\text{and}\qquad Q_k:=|\Rm(g_{Z,k})|(y_k).
\]
Then $Q_kr_k^2\to+\infty$, hence $Q_k\to+\infty$, and $|\Rm(g_{Z,k})|\leqslant4Q_k$ on $B_{g_{Z,k}}(y_k,r_k)$, while $d_{g_{Z,k}}(y_k,\mathcal C_k)\geqslant\varrho_k\geqslant2r_k$. So (ii) gives corners at  $g_{Z,k}$-distance at most $CQ_k^{-1/2}=o(r_k)$ from $y_k$,  a contradiction, which proves \eqref{eq:curvature-away-corners}.
\end{proof}

\begin{lemma}\label{lem:corner-noncollapse}
There are $L_k\to0$ and $C$ such that all corners $c,c'\in\mathcal C_k$ satisfy
\begin{equation}\label{eq:corner-noncollapse}
 d_{g_{Z,k}}(c,c')\leqslant CL_k\qquad\text{and}\qquad\Vol_{g_{Z,k}}B_{g_{Z,k}}(c,CL_k)\geqslant C^{-1}L_k^4.
\end{equation}
\end{lemma}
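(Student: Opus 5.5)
We plan to choose $L_k$ as the intrinsic scale of the level-set annulus of $\psi_k$ that passes near the corners, and to prove both bounds in \eqref{eq:corner-noncollapse} using the structure of the orbit space $\Omega_Z^{(k)}$ and the symmetry $\mathsf s_k$. On $\Omega_Z^{(k)}$ there are $\frac m2$ corners, and $\mathsf s_k$ permutes them cyclically. All corners are therefore at the same distance from one another, measured along the boundary. Take $L_k$ to be the $h$-length of a rod of $\Omega_Z^{(k)}$. Each rod is the image of a fixed sphere, and the fixed spheres are isometric. Consecutive corners are joined by a rod, so $d_{g_{Z,k}}(c,c')\leqslant \frac m2 L_k$; this gives the first inequality. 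To see $L_k\to0$, we lift back to $\Omega^{(k)}$. There the boundary consists of $\frac m2(k+1)$ rods of the same length. By Theorem~\ref{thm:collapsed-limit} and the concavity estimates \eqref{eq:base-radius}, all of $\partial\Omega^{(k)}$ lies at distance $\leqslant\frac\pi2$ from $o$. Step~3 of that theorem shows the boundary converges to $\partial B_\infty$, whose Hausdorff dimension $\frac43$ is finite. This suggests total boundary length $\sim(k+1)L_k$ with $L_k\to0$.

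A cleaner route to $L_k\to0$: a fixed sphere over a rod has area comparable to $L_k\cdot\sup\sqrt{G}$. Along the rod, Lemma~\ref{lem:scalar}(c) gives $\sup\sqrt G\leqslant C\sqrt{\rho_k(o)}\to0$. The fixed sphere is totally geodesic with Gauss curvature bounded in terms of $\Rm$, and Gauss--Bonnet gives it total curvature $4\pi$. This alone does not give smallness, however. We would instead use the identity $\rho=x e^{\bar\ell}$ together with Hopf's lemma: the rod length is controlled by the length of $\{\psi=\delta\}$ near it divided by $k+1$, and $\{\psi=\delta\}$ converges to a circle of finite length in $B_\infty$. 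This yields $L_k\leqslant C/(k+1)$ up to a $\delta$-dependent factor that we then fix.

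For the volume lower bound we work at scale $L_k$ and rescale to $L_k^{-2}g_{Z,k}$. In this metric the corners are within distance $C$ of each other. By \eqref{eq:curvature-away-corners}, the curvature is bounded by $C$ at distance $\geqslant1$ from the corners, hence on an annular region $A_k$ around the corners. Torus orbits in $A_k$ have size comparable to $1$, because the rescaled Killing fields $L_k^{-1}X_a$ have $1$-jets normalized at the corners (as in Lemma~\ref{lem:torus-limit}(a)), and \eqref{eq:killing-transport} transports this across distance $\leqslant C$ with bounded curvature. The orbit space region around $A_k$ also has unit size, since the rods have rescaled length $1$. We therefore find a point in $A_k$ whose unit ball has volume bounded below by (orbit area)$\times$(base area), with both factors $\geqslant c$. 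Bishop--Gromov, using $\Ric\geqslant0$ after rescaling, then transfers this to $B(c,C)$ and gives $\Vol B(c,CL_k)\geqslant C^{-1}L_k^4$.

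The main obstacle is the orbit-size lower bound. Killing transport only controls $|X|+L_k|\nabla X|$. The danger is that $|X_a|$ is tiny everywhere at distance $1$ from the corners, which would happen if the rescaled geometry collapsed onto a neighborhood of the boundary. We would first try to rule this out by comparing $\rho$ at distance $L_k$ from a corner, which is $\sim L_k^2$ by the corner model \eqref{eq:matrix-smooth-corner} with the angle normalization, against the Hopf bound on $\partial_x\rho$ along the rod. If this fails, we would argue by contradiction: a collapsed rescaled limit with a $\mathbb T^2$ action retaining the corners contradicts the fact that at a corner the Killing fields span a $4$-dimensional normal rotation.
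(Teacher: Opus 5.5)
\emph{Missing argument for $L_k\to0$.} Taking $L_k$ to be the rod length makes the first inequality of \eqref{eq:corner-noncollapse} immediate. However, your proposal does not prove $L_k\to0$, and the mechanism you suggest points the wrong way. By \eqref{eq:core-coordinates}, the level sets of $\psi$ get shorter towards $o$. The $1$-Lipschitz retraction of Lemma~\ref{lem:core-retraction} maps $\partial\Omega^{(k)}$ onto $\{\psi_k=\delta\}$, so these level sets bound the boundary length from \emph{below}, not above. As $k\to+\infty$ their lengths converge to $\frac{4\pi}{3}\sqrt{F(\sqrt\delta)}\sim\delta^{-1/4}$. Hausdorff dimension $\frac43>1$ means $\partial B_\infty$ has infinite length, not that the boundaries stay short. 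Hence $\operatorname{Length}(\partial\Omega^{(k)})\to+\infty$, and your $L_k\to0$ is equivalent to this growth being $o(k)$. Nothing in the proposal controls that, and Hopf's lemma only gives $\partial_x\rho>0$ along the rods.

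The paper instead picks the scale intrinsically. $L_k$ is the length of the unique level set $\sigma_k=\{\rho_k=t_k\}$ whose length equals $\min_{\sigma_k}|d\ell_k|_{h_k}^{-1}$; it exists because lengths decrease and $\min\varkappa$ increases toward $o$. The short level sets $\{\psi_k=\delta\}$, of length at most $C_\delta/(k+1)$ in the quotient, force $t_k/\rho_k(o)\to0$. Then $\varkappa\leqslant\frac12\sqrt{\psi_k/(1-\psi_k)}$ gives $L_k\to0$, and descending curves of length at most $2\varkappa\leqslant CL_k$ put every corner within $CL_k$ of $\sigma_k$.

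\emph{Missing orbit-area bound.} For the volume bound you correctly flag the orbit-area lower bound as the crux, but you leave it unresolved, and your suggested routes are circular.
\begin{itemize}
\item The transport estimate \eqref{eq:killing-transport} needs $|\Rm|\leqslant Ar^{-2}$ along the whole path. \eqref{eq:curvature-away-corners} gives this only at distance $\gtrsim L_k$ from the corners, so you cannot start from the corner $1$-jets.
\item Using the corner model at scale $L_k$, or a rescaled limit retaining the corners, presupposes uniform control near the corners at that scale. That control is what Proposition~\ref{prop:corner-compactness} deduces \emph{from} this lemma.
\item Even granting transport, a lower bound on $L_k^{-1}|X|+|\nabla X|$ is compatible with $|X|\ll L_k$.
\end{itemize}
The paper supplies three ingredients you are missing.
\begin{itemize}
\item The transport starts at an open-rod point $e_k$, reached by the descending curve from the midpoint of the longest arc of $\sigma_k$ between the projected corners. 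This curve stays $C^{-1}L_k$ away from all corners, so $X_v(\widetilde e_k)=0$, $|\nabla X_v|(\widetilde e_k)=\sqrt2$, and $|\Rm|\leqslant CL_k^{-2}$ along it.
\item On $\sigma_k$ one uses $|\nabla X_v|\leqslant C(|d\ell_k|+|d\Phi_k|)|X_v|$ together with $|d\ell_k|+|d\Phi_k|\leqslant CL_k^{-1}$. This is exactly where the choice of $\sigma_k$ enters.
\item One needs $\Phi_k\leqslant C\Phi_o$ on $\sigma_k$, which follows from $\int_{\sigma_k}|d\Phi_k|\leqslant C$ and \eqref{eq:cyclic-displacement}. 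Only then does $|X_v|^2=t_k\Phi_k(v,v)$ yield $t_k\geqslant C^{-1}L_k^2$.
\end{itemize}
Finally, rods of length $L_k$ do not give a base region of area $\gtrsim L_k^2$, since it could be a thin strip along the boundary. The paper obtains this from the annulus $\{t_k\leqslant\rho_k\leqslant2t_k\}$, on which $h_k$ is comparable to $L_k^2(du^2+d\theta^2)$ by \eqref{eq:rho-annulus-metric}.
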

\begin{proof}
 We choose the level set $\sigma_k$ of $\rho_k$ whose length $L_k$ equals the minimum of $|d\ell_k|_{h_k}^{-1}$ on $\sigma_k$, see \eqref{eq:rho-level-choice}. The annulus $\{t_k\leqslant\rho_k\leqslant2t_k\}$ then has width and circumference comparable to $L_k$, by \eqref{eq:rho-annulus-metric}, hence base area at least $C^{-1}L_k^2$, and Step~3 shows that its torus orbits have area at least $C^{-1}L_k^2$. We use the reduced data $h_k,\rho_k,\Phi_k$ of $g_{Z,k}$, with $\ell_k:=\log\rho_k$ and  $\psi_k=\rho_k/\rho_k(o)$, on the orbit space $\Omega_Z^{(k)}$ of Section~\ref{subsec:free-quotients}, the quotient of $\Omega^{(k)}$ by the rotation induced by  $\varpi_k$, of order $k+1$. The estimates \eqref{eq:base-radius} and \eqref{eq:core-coordinates}, the flow of $V_\psi$ and the retractions  of Lemma~\ref{lem:core-retraction}  pass to $\Omega_Z^{(k)}$. Below, $C$ denotes constants independent of $k$.

\emph{Step 1: the level set $\sigma_k$.}  Away from $o$, put $\varkappa:=|d\ell_k|_{h_k}^{-1}$, and let $N:=\varkappa\nabla_{h_k}\ell_k$ be the ascending unit normal to the level sets of $\rho_k$.  The Hessian identity in \eqref{eq:Einstein-identities}, $K_{h_k}\leqslant1$ (Lemma~\ref{lem:bochner}) and $B\geqslant0$ give
\[
 N\varkappa=-\varkappa^2\Hess_{h_k}\ell_k(N,N)\geqslant\tfrac12+2\varkappa^2.
\]
So the minimum of $\varkappa$ on the level sets of $\rho_k$ strictly increases along the flow of $V_\psi$, whereas their lengths strictly decrease, by \eqref{eq:core-coordinates}. As $\varkappa=\psi_k/|d\psi_k|_{h_k}$, \eqref{eq:base-radius} gives
\[
 \frac{\psi_k}{2\sqrt{1-\psi_k^2}}\leqslant\varkappa\leqslant\frac12\sqrt{\frac{\psi_k}{1-\psi_k}}.
\]
Hence near $\partial\Omega_Z^{(k)}$, $\varkappa\to0$ while, for fixed $k$, the lengths stay bounded below, and near $o$, $\varkappa\to+\infty$ while the lengths tend to zero. So there is a unique level set
\begin{equation}\label{eq:rho-level-choice}
 \sigma_k:=\{\rho_k=t_k\}\qquad\text{with}\qquad L_k:=\operatorname{Length}_{h_k}(\sigma_k)=\min_{\sigma_k}\varkappa.
\end{equation}
 Fix $\delta\in(0,1)$. By Lemma~\ref{lem:smooth-cores}, the level set $\{\psi_k=\delta\}$ of $\Omega^{(k)}$ has length at most $C_\delta$, so its image in $\Omega_Z^{(k)}$ has length at most $C_\delta/(k+1)$, which is smaller than $\frac{\delta}{2}\leqslant\min\varkappa$ there for large $k$. Hence $t_k<\delta\rho_k(o)$ for large $k$, so $t_k/\rho_k(o)\to0$ and $L_k\to0$.

On $\{\rho_k\leqslant2t_k\}$,  for large $k$,  the same identity and $\Delta_{h_k}\ell_k=-6-\varkappa^{-2}$ give $-Ch_k\leqslant\varkappa^2\Hess_{h_k}\ell_k\leqslant-\frac{1}{2}N^\flat\otimes N^\flat$. As $d\varkappa=-\varkappa^2\Hess_{h_k}\ell_k(N,\cdot)$, this gives, for $J$ tangent to the level sets of $\rho_k$,
 \begin{equation}\label{eq:rho-level-derivatives}
 \tfrac12\leqslant N\varkappa,\qquad |d\varkappa|_{h_k}\leqslant C,\qquad\text{and}\qquad -C|J|_{h_k}^2\leqslant\varkappa^2\Hess_{h_k}\ell_k(J,J)\leqslant0.
\end{equation}
 Integrating $|d\varkappa|_{h_k}\leqslant C$ along $\sigma_k$,  of length $L_k=\min_{\sigma_k}\varkappa$, gives $L_k\leqslant\varkappa\leqslant CL_k$ on $\sigma_k$.

Let $\theta\in\mathbb R/\mathbb Z$ be the arclength of $\sigma_k$ divided by $L_k$, transported along the flow of $V_\psi$, and put $u:=\log(\rho_k/t_k)$. Then  $h_k=\varkappa^2du^2+|\partial_\theta|_{h_k}^2d\theta^2$, $\partial_u\log\varkappa=N\varkappa$ and, by \eqref{eq:core-coordinates}, $\frac{1}{2}\partial_u|\partial_\theta|_{h_k}^2=\varkappa^2\Hess_{h_k}\ell_k(\partial_\theta,\partial_\theta)$. On $\{0\leqslant u\leqslant\log2\}$, \eqref{eq:rho-level-derivatives}  therefore gives $C^{-1}L_k\leqslant\varkappa\leqslant CL_k$ and $C^{-1}L_k\leqslant|\partial_\theta|_{h_k}\leqslant L_k$, so
\begin{equation}\label{eq:rho-annulus-metric}
 C^{-1}L_k^2(du^2+d\theta^2)\leqslant h_k\leqslant CL_k^2(du^2+d\theta^2)\qquad\text{on }\{t_k\leqslant\rho_k\leqslant2t_k\}.
\end{equation}
For $u\leqslant0$,  $\partial_u\log\varkappa\geqslant\frac{1}{2}$ gives $\varkappa(u,\theta)\leqslant e^{u/2}\varkappa(0,\theta)$, so each descending curve $\{\theta=\theta_0,\ u\leqslant0\}$ has length at most  $2\varkappa(0,\theta_0)\leqslant CL_k$. Let $P_k$ be the retraction of Lemma~\ref{lem:core-retraction} onto $\{\rho_k\geqslant t_k\}$, which maps each such curve to its initial point. Then $d_{h_k}(y,P_k(y))\leqslant CL_k$ for $y\in\{\rho_k<t_k\}$, and, by continuity, for $y\in\mathcal C_k$. A corner $c$ is fixed by the torus, so $d_{g_{Z,k}}(c,x)=d_{h_k}(c,\pi(x))$ for $x\in Z^{(k)}$, and, as $\sigma_k$ has length $L_k$, this gives the first inequality of \eqref{eq:corner-noncollapse}.

\emph{Step 2: a short path to a regular rod.} We find a short path from $\sigma_k$ to an open rod which avoids the corners, so that the curvature is controlled along it. There are at most $\frac{m}{2}$ points $P_k(c)$, $c\in\mathcal C_k$, on $\sigma_k$, which has $\theta$-length one, so the midpoint $x_k$ of a largest arc of $\sigma_k$ between them has $\theta$-distance at least $\frac{1}m$ from all of them. By \eqref{eq:rho-annulus-metric}, a path in $\{\rho_k\geqslant t_k\}$ from $x_k$ to $P_k(c)$ has length at least $C^{-1}L_k$: if it stays in $\{\rho_k\leqslant2t_k\}$, its $\theta$-length is at least $\frac{1}m$, and otherwise its $u$-length is at least $\log2$. By \eqref{eq:core-retraction}, $d_{h_k}(x_k,P_k(c))\geqslant C^{-1}L_k$. Let $\gamma_k$ be the descending curve from $x_k$. Since $P_k$ is $1$-Lipschitz and constant on $\gamma_k$, every $y\in\gamma_k$ satisfies $d_{h_k}(y,c)\geqslant C^{-1}L_k$ for $c\in\mathcal C_k$. So $\gamma_k$ has length at most $CL_k$ and ends at a point $e_k$ of an open rod. Hence the horizontal lift  $\widetilde\gamma_k$ of $\gamma_k$, from  $\widetilde x_k$ over $x_k$ to  $\widetilde e_k$ over $e_k$, stays at $g_{Z,k}$-distance at least $C^{-1}L_k$ from $\mathcal C_k$, and \eqref{eq:curvature-away-corners} gives $|\Rm(g_{Z,k})|\leqslant CL_k^{-2}$ along  $\widetilde\gamma_k$.

\emph{ Step 3: the torus scale.} The Killing field of the rod vector at the end of this path vanishes there, and its derivative is the rotation by a right angle of the normal plane, by the smooth angle condition; transporting its value and derivative to $\sigma_k$ along the path bounds the orbit area from below. On $\sigma_k$,  $|d\Phi_k|_{h_k}^2=4K_{h_k}+|d\ell_k|_{h_k}^2\leqslant4+\varkappa^{-2}$, so $\int_{\sigma_k}|d\Phi_k|_{h_k}\,ds_{h_k}\leqslant C$. The isometry $\mathsf s_k$ of Section~\ref{subsec:models} preserves  $\sigma_k$, and an arc of $\sigma_k$ joins  each $y\in\sigma_k$ to $\mathsf s_k(y)$, so \eqref{eq:iota-equivariance} and \eqref{eq:cyclic-displacement}  give  $d_{\mathbb H^2}(\Phi_k(y),\Phi_o)\leqslant C$, that is, $\Phi_k\leqslant C\Phi_o$ on $\sigma_k$. Let $v$ be the rod vector at $e_k$, and $X_v:=v_1X_1+v_2X_2$. By \eqref{eq:matrix-smooth-side},  $X_v(\widetilde e_k)=0$ and $|\nabla X_v|(\widetilde e_k)=\sqrt2$. The block form \eqref{eq:matrix-ansatz} gives  $|\nabla X_v|\leqslant C(|d\ell_k|_{h_k}+|d\Phi_k|_{h_k})|X_v|$ on principal orbits, and  at $\widetilde x_k$ we have $|d\ell_k|_{h_k}+|d\Phi_k|_{h_k}\leqslant CL_k^{-1}$ and $|X_v|^2=t_k\Phi_k(v,v)\leqslant Ct_k$. So \eqref{eq:killing-transport} along  $\widetilde\gamma_k$, with $r:=L_k$, gives
\[
 \sqrt2=|\nabla X_v|(\widetilde e_k)\leqslant C\bigl(L_k^{-1}|X_v|+|\nabla X_v|\bigr)(\widetilde x_k)\leqslant C\,\frac{\sqrt{t_k}}{L_k},\qquad\text{that is,}\qquad t_k\geqslant C^{-1}L_k^2.
\]

\emph{Step  4:  the volume.}  By Step~1 and \eqref{eq:rho-annulus-metric}, the corners and the points of $\{t_k\leqslant\rho_k\leqslant2t_k\}$ lie within $h_k$-distance $CL_k$ of $\sigma_k$,  so the preimage of this annulus lies in $B_{g_{Z,k}}(c,CL_k)$  for every $c\in\mathcal C_k$. Its base has $h_k$-area  at least $C^{-1}L_k^2$, by \eqref{eq:rho-annulus-metric}, and its torus orbits have area $4\pi^2\rho_k\geqslant4\pi^2t_k\geqslant C^{-1}L_k^2$, by \eqref{eq:scalar-volume} and Step~3. This gives the second inequality of \eqref{eq:corner-noncollapse}.
\end{proof}

We now prove the compactness at corners.

\begin{proposition}\label{prop:corner-compactness}
There is $C$ such that
\begin{equation}\label{eq:corner-distances}
 \pi\leqslant\Lambda_k^{1/2}d_{g_{Z,k}}(c,c')\leqslant C
\end{equation}
for all distinct $c,c'\in\mathcal C_k$. With $L_k$ as in Lemma~\ref{lem:corner-noncollapse}, we also have
\begin{equation}\label{eq:scale-comparison}
 C^{-1}\leqslant\Lambda_kL_k^2\leqslant C.
\end{equation}For $c_k\in\mathcal C_k$, a subsequence of $(Z^{(k)},\Lambda_kg_{Z,k},c_k)$ converges smoothly on compact subsets to a complete, nonflat and Ricci-flat $(X,g_X,c)$ with $|\Rm|\leqslant1$ and $\int_X|\Rm|^2\leqslant16\pi^2m$. 

The torus actions converge to an effective isometric action of $\mathbb R^2/2\pi\mathbb Z^2$ on $X$ with the original generators, rod vectors and rod and corner models, without orbits with nontrivial finite stabilizer, and the $\frac{m}{2}$ corners of $Z^{(k)}$ converge to the $\frac{m}{2}$ distinct corners of $X$.
\end{proposition}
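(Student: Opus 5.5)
The plan is to obtain the scale comparison \eqref{eq:scale-comparison} first; the distance bounds and the convergence then follow from Lemmas~\ref{lem:corner-noncollapse}, \ref{lem:curvature-localization} and~\ref{lem:torus-limit}.

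\emph{Upper bound $\Lambda_kL_k^2\leqslant C$.} Argue by contradiction: suppose $\Lambda_kL_k^2\to+\infty$ along a subsequence. Rescale to $g_k'':=L_k^{-2}g_{Z,k}$. Then, by \eqref{eq:corner-noncollapse}, all corners lie in a ball $B_{g_k''}(c_k,C)$ whose volume is at least $C^{-1}$. Bishop--Gromov for $\Ric\geqslant0$ then gives a volume lower bound at unit scale up to distance $o(L_k^{-1})$ from $c_k$. Away from the corners, \eqref{eq:curvature-away-corners} gives $|\Rm(g_k'')|\leqslant Cd^{-2}$. Hence the curvature blows up at a point within distance $C$ of $c_k$. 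Choose a point-picking maximum $y_k$ there as in Lemma~\ref{lem:curvature-localization}(iii), with $Q_k\geqslant\Lambda_k$-comparable curvature, and rescale by $Q_k$. The blow-up is noncollapsed, since volume ratios are scale invariant under Bishop--Gromov. It is Ricci-flat and nonflat with finite energy $\leqslant16\pi^2m$, and it carries the torus action of Lemma~\ref{lem:curvature-localization}(ii) with the rod vectors of Section~\ref{sec:geometry} and no finite stabilizers. This limit has Euclidean volume growth, because the ball of radius $CL_k\sqrt{Q_k}\to\infty$ has volume ratio bounded below. Corollary~\ref{cor:ale-exclusion} then makes it flat, a contradiction. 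The main subtlety is verifying Euclidean volume growth of the limit at all scales, which I would get from Bishop--Gromov monotonicity applied from the scale $CL_k$ down to the curvature scale.

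\emph{Lower bound $\Lambda_kL_k^2\geqslant C^{-1}$.} Suppose instead $\Lambda_kL_k^2\to0$. Rescale by $\Lambda_k$ at a corner. Lemma~\ref{lem:curvature-localization}(ii), applied at a maximum point (with $r_k$ arbitrary, since $|\Rm|\leqslant Q_k$ globally), gives a nonflat limit with a corner at bounded distance from the maximum point. By \eqref{eq:corner-noncollapse}, all $\frac m2$ corners then lie within distance $CL_k\Lambda_k^{1/2}\to0$ of each other. In the limit they would merge into a single fixed point, while the smooth corner models of Lemma~\ref{lem:torus-limit}(a) give isolated, distinct corners. Since there are $\frac m2\geqslant2$ corners with different adjacent rod pairs, this is a contradiction.

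\emph{Distances and convergence.} Once \eqref{eq:scale-comparison} holds, the upper bound in \eqref{eq:corner-distances} follows from \eqref{eq:corner-noncollapse}. For the lower bound $\pi$: the fixed sphere over a rod joining two adjacent corners is totally geodesic, its curvature is at most $\Lambda_k$, and it contains both corners as fixed points of a rotation, i.e.\ as antipodal points. Comparison with the round sphere of curvature $\Lambda_k$ then gives distance at least $\pi\Lambda_k^{-1/2}$ along the sphere, and the corresponding bound in $Z^{(k)}$ via the $1$-Lipschitz projection onto the rod. For nonadjacent corners, pass through the orbit-space geometry. Smooth convergence at $c_k$ uses the volume bound from \eqref{eq:corner-noncollapse} and \eqref{eq:scale-comparison} together with Bishop--Gromov and \cite{And89}. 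The action and its properties then follow from Lemma~\ref{lem:torus-limit}(a) and Lemma~\ref{lem:curvature-localization}(ii). Finally, the corners stay at mutual distances in $[\pi,C]$, so they converge to $\frac m2$ distinct corners, and no others appear, since every corner of the limit is a limit of corners.
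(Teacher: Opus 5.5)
Your upper bound $\Lambda_kL_k^2\leqslant C$ is essentially the paper's argument. The detour through $L_k^{-2}g_{Z,k}$ and a second point-picking is unnecessary: the maximum point $p_k$ itself, with $r_k=\pi$ in Lemma~\ref{lem:curvature-localization}(ii), already lies within $C\Lambda_k^{-1/2}$ of a corner, and Bishop--Gromov from the scale $CL_k$ gives the limit Euclidean volume growth.

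The genuine gap is the lower bound $\pi\leqslant\Lambda_k^{1/2}d_{g_{Z,k}}(c,c')$. The fixed sphere over a rod only controls its \emph{intrinsic} geometry. Comparison with the round sphere shows that its meridian, which has the $h$-length of the rod, has length at least $\pi\Lambda_k^{-1/2}$. But $d_{g_{Z,k}}(c,c')$ is the $h$-distance in $\overline\Omega_Z^{(k)}$, and it may be realized by a path through the interior of the orbit space. You have no $1$-Lipschitz retraction of $Z^{(k)}$, or of its orbit space, onto a rod: the curvature has no sign at this scale, and nearest-point projection onto a totally geodesic surface increases distances in positive curvature. So the passage from the sphere to $Z^{(k)}$ fails. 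The remark ``pass through the orbit-space geometry'' for nonadjacent corners is not an argument either; in fact any two of the $\frac m2\leqslant3$ corners of $Z^{(k)}$ are joined by a rod.

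The missing idea is to work along an ambient minimizing geodesic $\gamma$ from $c$ to $c'$.
\begin{itemize}
\item Every torus Killing field restricts to a Jacobi field along $\gamma$ that vanishes at both ends, since corners are fixed by the whole torus.
\item By the corner model \eqref{eq:matrix-smooth-corner}, the derivatives at $c$ of the Killing fields of the two adjacent rod vectors are rotations of two orthogonal planes. They cannot both annihilate $\dot\gamma(0)$, so one of these Jacobi fields is nonzero.
\item Since $\sec(\Lambda_kg_{Z,k})\leqslant1$, Rauch comparison forbids a conjugate point before length $\pi$. This gives the bound for every pair of corners at once.
\end{itemize}
With it, the lower bound in \eqref{eq:scale-comparison} is immediate from \eqref{eq:corner-noncollapse}: $\pi\leqslant\Lambda_k^{1/2}d_{g_{Z,k}}(c,c')\leqslant C\Lambda_k^{1/2}L_k$.

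Your merging argument for that lower bound can be made rigorous. It must go through the stabilizer-preserving equivariant maps of Lemma~\ref{lem:torus-limit}(a), which make each corner of the limit an isolated fixed point. The reason you give, that the corners have different adjacent rod pairs, is false in the square case, where both corners of $Z^{(k)}$ lie between the rods with rod vectors $e_1$ and $e_2$. In any case, merging yields only some positive lower bound for the rescaled corner distances, not the constant $\pi$ in \eqref{eq:corner-distances}. Once the Jacobi-field bound is in place, your upper distance bound, the smooth convergence at $c_k$ and the distinctness of the limit corners go through as you describe.
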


\begin{proof}
By Lemma~\ref{lem:curvature-localization}, $\Lambda_k\to+\infty$ and \eqref{eq:curvature-away-corners} holds. Let $p_k$ satisfy $|\Rm(g_{Z,k})|(p_k)=\Lambda_k$. Lemma~\ref{lem:curvature-localization}(ii), with $y_k:=p_k$ and $r_k:=\pi$, gives a subsequence of $(Z^{(k)},\Lambda_kg_{Z,k},p_k)$ converging to a complete, nonflat and Ricci-flat limit $(X,g_X,p)$ with a torus action as in the statement, and corners $c_k$ with $\Lambda_k^{1/2}d_{g_{Z,k}}(p_k,c_k)\leqslant C$.

 Along a minimizing geodesic between two distinct corners, some combination of $X_1,X_2$ restricts to a nonzero Jacobi field vanishing at both ends, since, by the corner model \eqref{eq:matrix-smooth-corner}, the derivatives at a corner of the Killing fields of the two adjacent rod vectors are the rotations of two orthogonal planes, so they do not both vanish on a nonzero tangent vector. As $\sec(\Lambda_kg_{Z,k})\leqslant1$, this geodesic has $\Lambda_kg_{Z,k}$ -length at least $\pi$, which is the first inequality of \eqref{eq:corner-distances}, compare \cite[proof of Lemma~9]{Don08}.

Suppose that $\Lambda_kL_k^2\to+\infty$ along a subsequence. By Lemma~\ref{lem:corner-noncollapse} and the Bishop-Gromov inequality for $\Ric(g_{Z,k})=3g_{Z,k}$, $\Vol_{g_{Z,k}}B_{g_{Z,k}}(c_k,r)\geqslant C^{-1}r^4$ for $0<r\leqslant CL_k$, that is, $\Vol_{\Lambda_kg_{Z,k}}B(c_k,s)\geqslant C^{-1}s^4$ for $0<s\leqslant C\Lambda_k^{1/2}L_k$. As $c_k$ stays at bounded $\Lambda_kg_{Z,k}$-distance from $p_k$, the limit $X$ has Euclidean volume growth, and its curvature energy is at most $16\pi^2m$, since $\int|\Rm|^2\,dV$ is invariant under rescaling. By Corollary~\ref{cor:ale-exclusion}, whose determinant hypothesis holds by \eqref{eq:rod-determinant}, it is flat, a contradiction. Hence $\Lambda_kL_k^2\leqslant C$, and Lemma~\ref{lem:corner-noncollapse} gives the second inequality of \eqref{eq:corner-distances}. For distinct corners $c,c'$, \eqref{eq:corner-distances} and \eqref{eq:corner-noncollapse} give $\pi\leqslant\Lambda_k^{1/2}d_{g_{Z,k}}(c,c')\leqslant C\Lambda_k^{1/2}L_k$, which is the lower bound in \eqref{eq:scale-comparison}.

So all corners lie at bounded $\Lambda_kg_{Z,k}$-distance from $p_k$, and the limit may be based at $c:=\lim c_k$: $(Z^{(k)},\Lambda_kg_{Z,k},c_k)\to(X,g_X,c)$, with $|\Rm|\leqslant1$ and $\int_X|\Rm|^2\leqslant16\pi^2m$. The convergence maps of Lemma~\ref{lem:torus-limit}(a) preserve the stabilizers, so the corners of $X$ are the limits of corners of $Z^{(k)}$, and, by \eqref{eq:corner-distances}, all $\frac{m}{2}$ corners converge, with distinct limits.
\end{proof}

\subsubsection{Global Weyl coordinates}

\begin{lemma}\label{lem:weyl}
Let $X$ be as in Proposition~\ref{prop:corner-compactness}. Its orbit space $\overline\Omega_X:=X/\mathbb T^2$ is a closed half-cylinder, and on its interior there are coordinates $(\rho,z)$, with $z\in\mathbb R/P_{\mathrm W}\mathbb Z$ for some $P_{\mathrm W}>0$, in which locally
\begin{gather}
 g_X=e^{2a_{\mathrm W}}(d\rho^2+dz^2)+\rho\,\Phi_{ij}\,d\phi_i\,d\phi_j,\qquad\text{with}\qquad \rho=\sqrt{\det G}>0,\label{eq:weyl}\\
 \partial_\rho\bigl(\rho\,\Phi^{-1}\partial_\rho\Phi\bigr)+\partial_z\bigl(\rho\,\Phi^{-1}\partial_z\Phi\bigr)=0,\label{eq:weyl-harmonic}\\
 \partial_\rho a_{\mathrm W}=-\frac{1}{4\rho}+\frac{\rho}{4}\bigl(|\partial_\rho\Phi|_{\mathbb H^2}^2-|\partial_z\Phi|_{\mathbb H^2}^2\bigr),\qquad\text{and}\qquad \partial_za_{\mathrm W}=\frac{\rho}{2}\langle\partial_\rho\Phi,\partial_z\Phi\rangle_{\mathbb H^2},\label{eq:weyl-nu}\\
 K_h\leqslant0,\qquad |D\Phi|\leqslant\rho^{-1},\qquad\text{and}\qquad -\frac{1}{2\rho}\leqslant\partial_\rho a_{\mathrm W}\leqslant0,\label{eq:weyl-bounds}
\end{gather}
where $D$ is the flat derivative for $d\rho^2+dz^2$. The limit $\mathsf s$ of the isometries $\mathsf s_k$ of Section~\ref{subsec:models} acts by  $z\mapsto z+\frac{2P_{\mathrm W}}m$, so the rod endpoints are equally spaced in $z$, and
\begin{equation}\label{eq:weyl-shift}
 \Phi(\rho,z+\tfrac{2P_{\mathrm W}}m)=\iota\bigl(\Phi(\rho,z)\bigr).
\end{equation}
Going once around the $z$-circle does not translate the torus angles: the angular identification is $(z,\boldsymbol\phi)\sim(z+P_{\mathrm W},\boldsymbol\phi)$.
\end{lemma}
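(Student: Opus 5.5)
The plan is to run the standard Weyl–Papapetrou reduction for the Ricci-flat limit. Then I would identify the topology of the orbit space using the harmonicity of $\rho$, and read the bounds off the Einstein identities \eqref{eq:Einstein-identities} with $\lambda=0$.

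\emph{Step 1: reduced equations and $\rho$ harmonic.} On $X$ the metric is Ricci-flat. It has the block form \eqref{eq:matrix-ansatz} with the limits of $h,G$, and the isometry $\mathsf c$ passes to the limit. Proposition~\ref{prop:Ricci} with $\lambda=0$ then gives $\Delta_h\rho=0$, $\tau_h\Phi+d\Phi(\nabla_h\ell)=0$, $4K_h=|d\Phi|_h^2-|d\ell|_h^2$ and the trace-free Hessian relation. The bounds of Lemma~\ref{lem:bochner} pass to the rescaled limit, since $\lambda\Lambda_k^{-1}\to0$; the same computation \eqref{eq:curvature-identities} with $\lambda=0$ gives $\Delta_gK_h+2\langle d\ell,dK_h\rangle\geqslant\frac34\det_hB+\dots$ and a maximum-principle bound $K_h\leqslant0$. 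I would check this directly on the limit: with $\lambda=0$, $\rho K_h$ is a supersolution vanishing on the rods, and the Kasner-free growth at the end is handled by passing the uniform bound $K_h\leqslant 3\Lambda_k^{-1}$ to the limit. Because $\rho>0$ is harmonic on the open orbit space $\Omega_X$ and vanishes on the boundary rods, a conjugate harmonic function $z$ exists locally, and $w=\rho+iz$ is a conformal coordinate, since $\Hess\rho$ being trace-free makes $d\rho$ and $*d\rho$ have equal lengths. In these coordinates $h=e^{2a_{\mathrm W}}(d\rho^2+dz^2)$. The equation for $\Phi$ becomes \eqref{eq:weyl-harmonic} after multiplying by $e^{2a_{\mathrm W}}$, and the two trace-free Hessian components of \eqref{eq:remaining-component} with $\lambda=0$ give \eqref{eq:weyl-nu} by the classical computation.

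\emph{Step 2: global structure.} By Proposition~\ref{prop:corner-compactness}, $X$ has exactly $\frac m2$ corners and the rods between them. The boundary of $\overline\Omega_X$ is therefore a circle of $\frac m2$ rods: it is closed because corners are preserved and the cyclic symmetry $\mathsf s$ permutes them transitively. The orbit space is the limit of the annular regions $\{\rho_k\leqslant C\,t_k\}$ of Lemma~\ref{lem:corner-noncollapse}, which lie over the half-cylinder $u\leqslant\log C$ after rescaling. Using the level-set flow and \eqref{eq:core-coordinates}, $\overline\Omega_X$ is a half-cylinder on which $\rho$ is a proper exhaustion without critical points. The last point is \eqref{eq:base-radius}-type: $\Hess\rho$ is trace-free, so a critical point would be a saddle, which is impossible for the flow-out structure. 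The period of $z$ around the cylinder is the flux $P_{\mathrm W}:=\int_{\{\rho=\mathrm{const}\}}*d\rho$, which is independent of the level since $\rho$ is harmonic; it is positive because $\rho$ increases outward. Thus $(\rho,z)$ is a global coordinate on $\Omega_X$ with $z\in\mathbb R/P_{\mathrm W}\mathbb Z$. The isometry $\mathsf s$ preserves $\rho$ and orientation, so it is a translation in $z$; having order $\frac m2$ and acting transitively, it is the translation by $\frac{2P_{\mathrm W}}m$. Equation \eqref{eq:weyl-shift} is the limit of \eqref{eq:iota-equivariance}. The statement that the angular identification has no twist follows because $\boldsymbol\phi$ are global angles on $Z^{(k)}$ commuting with the torus action, and going around the $z$-circle is a loop in the orbit space; the torus bundle over the interior is trivialized by $\mathsf c$'s fixed section $S$, which passes to the limit.

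\emph{Step 3: the bounds.} The inequality $|D\Phi|\leqslant\rho^{-1}$ follows from the limit of Lemma~\ref{lem:scalar}(a) in conformal form: $4K_h\leqslant0$ gives $|d\Phi|_h^2\leqslant|d\ell|_h^2=e^{-2a_{\mathrm W}}\rho^{-2}$, that is, $|D\Phi|^2\leqslant\rho^{-2}$. Substituting this into \eqref{eq:weyl-nu} gives $\partial_\rho a_{\mathrm W}\geqslant-\frac1{4\rho}-\frac{1}{4\rho}=-\frac1{2\rho}$ and $\partial_\rho a_{\mathrm W}\leqslant-\frac1{4\rho}+\frac\rho4|D\Phi|^2\leqslant0$.

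I expect the main obstacle to be Step~2: proving that $\rho$ is proper with no critical points and that the $z$-period is globally well defined and untwisted. I would first try to transfer the level-set cylinder coordinates $(u,\theta)$ of Lemma~\ref{lem:corner-noncollapse}, uniformly controlled by \eqref{eq:rho-annulus-metric}, to the limit, and use $K_h\leqslant0$ to propagate them to all levels.
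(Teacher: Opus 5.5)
Your Steps~1 and~3 follow the paper. The local Weyl reduction, and the bounds \eqref{eq:weyl-bounds} obtained from the rescaled bound $K_h\leqslant\Lambda_k^{-1}\to0$, are derived the same way.

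\textbf{Main gap: the untwisted angular identification.} Your argument rests on $\boldsymbol\phi$ being global angles on $Z^{(k)}$, with the torus bundle trivialized by the fixed section of $\mathsf c$. This is false for $k\geqslant2$.
\begin{itemize}
\item The isometry $\mathsf c$ does not descend to $Z^{(k)}$, since $\mathsf c\varpi_k\mathsf c^{-1}=\mathsf r_k\mathsf t_k^{-1}\notin\langle\varpi_k\rangle$.
\item A horizontal leaf $\{\boldsymbol\phi=\boldsymbol\phi_0\}$ of $M^{(k)}$ descends to a $(k+1)$-sheeted multisection over $\Omega_Z^{(k)}$, not to a section.
\item Concretely, the horizontal lift of a loop once around the image of $o$ in $\Omega_Z^{(k)}$, started at $(\theta_0,\boldsymbol\phi_0)$, ends at $(\theta_0+\pi,\boldsymbol\phi_0)=\varpi_k(\theta_0,\boldsymbol\phi_0-\frac{2\pi}{k+1}e_1)$. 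So every $Z^{(k)}$ has the nonzero holonomy $-\frac{2\pi}{k+1}e_1$; this is exactly why that loop generates $\pi_1(Z^{(k)})$.
\end{itemize}
The missing idea is that this holonomy tends to zero and passes to the limit. A loop once around the orbit cylinder of $X$ is a limit of such loops, because the convergence identifies the neighborhoods of all rods and corners in their cyclic order. By Proposition~\ref{prop:corner-compactness}, the generators $X_1,X_2$ converge without change of integral basis. Hence the limiting holonomy is $\lim_k\bigl(-\frac{2\pi}{k+1}e_1\bigr)=0$ in $\mathbb R^2/2\pi\mathbb Z^2$. Your appeal in Step~1 to $\mathsf c$ passing to the limit is unjustified for the same reason, though harmless there: integrability of the horizontal distribution is a local property and survives the limit.

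\textbf{Step~2 is only sketched where the paper has a short argument.}
\begin{itemize}
\item \emph{Critical points.} Local Weyl coordinates require $d\rho\neq0$. Your claim that a saddle is \emph{impossible for the flow-out structure} presupposes the structure you are trying to build. The paper instead passes \eqref{eq:concavity} to the limit, giving $\Hess_h\sqrt\rho\leqslant0$. An interior critical point would then be a local maximum of the harmonic function $\rho$, contradicting the strong maximum principle since $\rho=0$ at a corner. An index argument would also work: isolated critical points of a nonconstant harmonic function have negative index, while the rescaled $\nabla_h\rho_k$ have no zeros on compact sets because $o$ escapes. You would need to say this, though.
\item \emph{Half-cylinder and properness.} Rather than propagating the annuli of Lemma~\ref{lem:corner-noncollapse}, flow along $V_\rho:=\nabla_h\rho/|d\rho|_h^2=\partial_\rho$. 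Its speed $e^{a_{\mathrm W}}$ is nonincreasing in $\rho$, and at most $(\rho_1/\rho)^{1/2}$ times its value at $\rho_1$, by your own bounds \eqref{eq:weyl-bounds}. Completeness then gives flow lines defined for all $\rho>0$ and ending on rods or corners, so $\Omega_X\cong(0,+\infty)\times\{\rho=1\}$.
\item \emph{Ruling out a line.} The limit $\mathsf s$ exists by Arzel\`a--Ascoli, since the corner displacements are bounded by \eqref{eq:corner-distances}. It acts on $\{\rho=1\}$ preserving orientation, with order $\frac m2\geqslant2$, which is impossible on a line. Hence $\{\rho=1\}$ is a circle.
\end{itemize}
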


\begin{proof}
\emph{Local equations.} By the scaling rules at the beginning of Section~\ref{subsec:models}, the Einstein constant of $\Lambda_kg_{Z,k}$ is $3\Lambda_k^{-1}\to0$, so \eqref{eq:Einstein-identities}, Lemma~\ref{lem:scalar}(a) and \eqref{eq:concavity} give,  in the limit, $\Delta_h\rho=0$, $K_h\leqslant0$ and $\Hess_h\sqrt\rho\leqslant0$ on the principal part of $X$. If $\rho$ had an interior critical point $x_*$, then $\Hess_h\sqrt\rho\leqslant0$, restricted to the geodesics from $x_*$, would make $x_*$ a local maximum of $\rho$, and the strong maximum principle for the harmonic function $\rho$ would force $\rho$ to be constant, which contradicts $\rho=0$ at a corner. So $dz:=*d\rho$ gives local Weyl coordinates, in which \eqref{eq:weyl-harmonic} and \eqref{eq:weyl-nu} are the standard Weyl equations \cite[Section~2]{LS25}. The bound $K_h\leqslant0$ reads $|D\Phi|\leqslant\rho^{-1}$, since $4K_h=|d\Phi|_h^2-|d\ell|_h^2$. Then \eqref{eq:weyl-nu} gives the bounds on $\partial_\rho a_{\mathrm W}$ in \eqref{eq:weyl-bounds}.

\emph{Orbit space.} Put $V_\rho:=\nabla_h\rho/|d\rho|_h^2$, so that $|V_\rho|_h=e^{a_{\mathrm W}}$ in local Weyl coordinates. By \eqref{eq:weyl-bounds}, along a flow line of $V_\rho$ parametrized by $\rho$,  the speed $|V_\rho|_h$ is nonincreasing  and at most $(\rho_1/\rho)^{1/2}|V_\rho|_h(\rho_1)$ for $\rho<\rho_1$. By completeness, every flow line is defined for all $\rho\in(0,+\infty)$ and has finite length as $\rho\downarrow0$. Its endpoint is a rod or corner point, by Proposition~\ref{prop:corner-compactness}. The flow identifies the interior $\Omega_X$ of $\overline\Omega_X$ with $(0,+\infty)\times\{\rho=1\}$, so the level set $\{\rho=1\}$ is connected, hence a line or a circle.

The isometry $\mathsf s_k$ of Section~\ref{subsec:models}, which satisfies $\Phi_k\circ\mathsf s_k=\iota\circ\Phi_k$ by \eqref{eq:iota-equivariance},  moves a corner by a bounded $\Lambda_kh_k$-distance, by \eqref{eq:corner-distances}. The Arzel\`a-Ascoli theorem, applied to $\mathsf s_k$ and $\mathsf s_k^{-1}$ on the converging orbit spaces, gives a subsequence converging, smoothly on the interior, to an orientation-preserving isometry $\mathsf s$ of $h$ with $\rho\circ\mathsf s=\rho$ and $\Phi\circ\mathsf s=\iota\circ\Phi$. As the corners of $X$ are the distinct limits of those of $Z^{(k)}$, by Proposition~\ref{prop:corner-compactness}, $\mathsf s$ maps each corner to an adjacent one, and has order $\frac{m}{2}\geqslant2$. Since $\mathsf s$ preserves $\rho$, it commutes with the flow of $V_\rho$, so it restricts to an orientation-preserving diffeomorphism of $\{\rho=1\}$ of order $\frac{m}{2}$, which is impossible on a line. Hence $\{\rho=1\}$ is a circle, and the closed form $dz=*d\rho$ integrates to a coordinate $z\in\mathbb R/P_{\mathrm W}\mathbb Z$ on $\Omega_X$, with $P_{\mathrm W}:=\int_{\{\rho=1\}}*d\rho$.

\emph{Boundary.} Let $w=x+iy$ be a conformal chart of a rod or corner model \eqref{eq:matrix-smooth-side}, respectively \eqref{eq:matrix-smooth-corner}, of $g_X$. The odd extension of $\rho$ is harmonic, so $\rho+iz$ is, up to a constant, a holomorphic function of $w$. At a rod point $(0,y_0)$, where $\rho=xe^{\bar\ell}$, its derivative is $e^{\bar\ell(0,y_0)}\neq0$. At a corner, where $\rho=xye^{\widehat\ell}$, the oddness of $\rho$ in $x$ and in $y$ excludes a linear term, and its leading term is $-\frac{i}{2}e^{\widehat\ell(0)}w^2$, which maps the quadrant onto a half-plane. We may therefore replace the corner chart by the first-quadrant branch of $x+iy:=\sqrt{2i(\rho+i(z-z_*))}$, where $z_*$ is the $z$-coordinate of the corner and the two adjacent rods are suitably ordered: by the double-zero expansion, this is a smooth change of conformal coordinates with nonzero derivative, preserving the parities, and $\rho=xy$, $z-z_*=\frac{1}{2}(y^2-x^2)$, compare \cite[Section~6]{CS04}. In both cases, $(\rho,z)$ extends to a homeomorphism from a neighborhood of the point in $\overline\Omega_X$ onto a relative neighborhood of a point of $\{\rho=0\}$. With the flow of $V_\rho$, $\overline\Omega_X$ is therefore homeomorphic to the closed half-cylinder $[0,+\infty)\times\mathbb R/P_{\mathrm W}\mathbb Z$.

As $\mathsf s$ preserves $\rho$ and $dz$, it is a translation in $z$. Since it maps each of the $\frac{m}{2}$ corners to an adjacent one, changing the sign of $z$ if necessary, it is  $z\mapsto z+\frac{2P_{\mathrm W}}m$, so the rod endpoints are equally spaced in $z$, and $\Phi\circ\mathsf s=\iota\circ\Phi$ is \eqref{eq:weyl-shift}.

\emph{Angular period.} Since the distribution orthogonal to the orbits is integrable, by \eqref{eq:matrix-ansatz}, a horizontal lift of a circuit of the $z$-circle closes up to a torus translation independent of the circuit; we show that this translation vanishes. The surfaces $\{\boldsymbol\phi=\mathrm{const}\}$ of $M^{(k)}$ are orthogonal to the torus orbits, by \eqref{eq:matrix-ansatz}. Lift a loop going once around the image of $o$ in $\Omega_Z^{(k)}$, in the direction of increasing $\theta$, orthogonally to the orbits. Starting at $(\theta_0,\boldsymbol\phi_0)$ in $M^{(k)}$, the lift ends at
 \begin{equation}\label{eq:quotient-angular-displacement}
 (\theta_0+\pi,\boldsymbol\phi_0)=\varpi_k\Bigl(\theta_0,\boldsymbol\phi_0-\frac{2\pi}{k+1}\,e_1\Bigr).
\end{equation}
So, in $Z^{(k)}$, the lift ends on its initial orbit, with angular displacement $-\frac{2\pi}{k+1}e_1$. Closing it in this orbit gives a loop whose lift to $M^{(k)}$ joins $p:=(\theta_0,\boldsymbol\phi_0)$ to  $\varpi_kp$, hence generates $\pi_1(Z^{(k)})=\mathbb Z_{k+1}$ (Corollary~\ref{cor:finite-fundamental-groups}).   A loop going once around the orbit cylinder is the limit of loops going once around the image of $o$ in the quotient orbit spaces $\Omega_Z^{(k)}$: equivariant convergence identifies the neighborhoods of all rods and corners, in their cyclic order. By Proposition~\ref{prop:corner-compactness}, the metrics and the original torus generators converge, without change of integral basis, so the angular displacement $-\frac{2\pi}{k+1}e_1$ of these loops, given by \eqref{eq:quotient-angular-displacement}, tends to zero in the fixed torus. Thus the angular identification is $(z,\boldsymbol\phi)\sim(z+P_{\mathrm W},\boldsymbol\phi)$.
\end{proof}

\subsubsection[Kasner asymptotics, topology and identification]{Kasner asymptotics, topology and identification}\label{subsec:kasner-topology}

\begin{proof}[Proof of Theorem~\ref{thm:periodic-models}]
By Proposition~\ref{prop:corner-compactness} and Lemma~\ref{lem:weyl}, it remains to prove the Kasner asymptotics, the  uniqueness of the limit,  which gives the convergence of the whole sequence, the topology, the curvature energy and the identifications.

\emph{Kasner asymptotics.}
By \eqref{eq:weyl-shift} and \eqref{eq:cyclic-displacement}, with $\Phi$ on the diagonal geodesic in the square case by \eqref{eq:ansatz}, and by the bound $|D\Phi|\leqslant\rho^{-1}$ of \eqref{eq:weyl-bounds}, we have
 \begin{equation}\label{eq:end-shape}
 d_{\mathbb H^2}(\Phi(\rho,z),\Phi_o)
 \leqslant
 d_{\mathbb H^2}(\Phi(\rho,z),\Phi(\rho,z+\tfrac{2P_{\mathrm W}}m))
 \leqslant\frac{2P_{\mathrm W}}{m\rho}.
\end{equation}
Let $\Delta_3:=\partial_\rho^2+\rho^{-1}\partial_\rho+\partial_z^2$ be the Euclidean Laplacian on axisymmetric functions. By Lemma~\ref{lem:matrix-calculus}(iii), \eqref{eq:weyl-harmonic} reads $\Delta_3\Phi=|D\Phi|^2\Phi$, so $U:=\Pi_{\Phi_o}\Phi$ of \eqref{eq:central-matrix-coordinate} satisfies
\[
 \Delta_3U=|D\Phi|^2U.
\]
By \eqref{eq:end-shape} and $|U|_o=\sinh d_{\mathbb H^2}(\Phi,\Phi_o)$, we have $U=O(\rho^{-1})$, and $|D\Phi|\leqslant\rho^{-1}$, so $\Delta_3U=O(\rho^{-3})$.
Rescaled interior estimates \cite[Theorems~9.11 and~6.2]{GT01} on balls of radius $\rho/4$ in the
periodic lift, followed by differentiation of the equation $\Delta_3U=|D\Phi|^2U$, give
\[
 |D^jU|\leqslant C_j\rho^{-1-j},
 \qquad\text{for }j\geqslant0.
\]

\emph{The conformal factor.} The quadratures \eqref{eq:weyl-nu} then give
$D(a_{\mathrm W}+\frac{1}{4}\log\rho)=O(\rho^{-3})$.
Integration in $\rho$, and the $z$-equation, give a constant $c_{\mathrm K}$
such that
\[
 \left|D^j\left(a_{\mathrm W}+\tfrac14\log\rho-c_{\mathrm K}\right)\right|
 \leqslant C_j\rho^{-2-j},\qquad\text{for }j\geqslant0.
\]
By \eqref{eq:weyl}, these are the asserted asymptotics to
$e^{2c_{\mathrm K}}\rho^{-1/2}(d\rho^2+dz^2)
+\rho(\Phi_o)_{ij}\,d\phi_i\,d\phi_j$.
With $\varsigma:=\frac{4}{3}e^{c_{\mathrm K}}\rho^{3/4}$, this model has Kasner
exponents $(-\frac{1}{3},\frac{2}{3},\frac{2}{3})$.

\emph{Uniqueness of the limit.} Let $(X',g_{X'},c')$ be the limit along another subsequence, with Weyl coordinates $(\rho',z')$, shape $\Phi'$ and conformal factor $a_{\mathrm W}'$. The homothety $g_{X'}\mapsto\lambda g_{X'}$, with $\lambda:=P_{\mathrm W}/P_{\mathrm W}'$, multiplies $\rho'$ and $z'$ by $\lambda$, so we may assume that both Weyl periods equal $P_{\mathrm W}$. Normalize $z$ and $z'$ to vanish at $c$ and $c'$. By the choice of the corners $c_k$, the two orbit spaces then have the same rod endpoints  $\frac{2P_{\mathrm W}}m\mathbb Z$ and the same rod vectors, in the same cyclic order, and both have the untwisted angular identification of Lemma~\ref{lem:weyl}.

Identify  the two orbit spaces through $(\rho',z')=(\rho,z)$. At a corner, work in the common integral basis of the two adjacent rod vectors, ordered so that they collapse on $\{x=0\}$ and  $\{y=0\}$, respectively, and in the coordinates $\rho=xy$, $z-z_*=\frac{1}{2}(y^2-x^2)$ of the proof of Lemma~\ref{lem:weyl},  which are the same for both metrics. With $C:=\rho^{-1/2}\operatorname{diag}(x,y)$, \eqref{eq:matrix-smooth-corner} gives
\[
 \Phi=C\widehat GC^{\mathsf T},\qquad\text{and}\qquad \Phi'=C\widehat G'C^{\mathsf T},
\]
 where $\widehat G$ and $\widehat G'$  have determinant one, as $\rho=xy$. The map $S\mapsto CSC^{\mathsf T}$ is an isometry of $\mathbb H^2$, so $d_{\mathbb H^2}(\Phi,\Phi')=d_{\mathbb H^2}(\widehat G,\widehat G')$, which is bounded near the corner since $\widehat G$ and $\widehat G'$ are smooth and positive definite.  Near a rod point,  the same argument applies  in a common rod-adapted integral basis, with the common conformal coordinate $\rho+iz$ and $C:=\rho^{-1/2}\operatorname{diag}(\rho,1)$. These unimodular changes of basis act in the same way on both metrics and preserve their hyperbolic distance. Put $u:=d_{\mathbb H^2}(\Phi,\Phi')^2$. By \eqref{eq:weyl-harmonic}, $\Phi$ and $\Phi'$ are harmonic maps to $\mathbb H^2$ for the flat metric $d\rho^2+\rho^2d\vartheta^2+dz^2$, whose Laplacian on functions of $(\rho,z)$ is $\Delta_3$, so the convexity of the squared distance on $\mathbb H^2\times\mathbb H^2$ gives $\Delta_3u\geqslant0$. As $u$ is bounded near the axis, which has codimension two in $D_R^2\times(\mathbb R/P_{\mathrm W}\mathbb Z)$, the maximum principle  of \cite[Section~2]{Wei95} gives $\sup_{\{\rho<R\}}u\leqslant\sup_{\{\rho=R\}}u$, and the right-hand side tends to $0$ as $R\to+\infty$, by \eqref{eq:end-shape}. Hence $\Phi'=\Phi$.

The quadratures \eqref{eq:weyl-nu} then show that $a_{\mathrm W}'-a_{\mathrm W}$ is constant, and the angle condition of \eqref{eq:matrix-smooth-side} on any rod makes it vanish. Hence the rescaled $g_{X'}$ is isometric to $g_X$, by an isometry which maps $c'$ to $c$ and preserves the torus generators.  The original subsequential limits both satisfy $\max|\Rm|=1$, since the points of maximal curvature of Proposition~\ref{prop:corner-compactness} stay at bounded distance from the corners,  so the isometry between $g_X$ and $\lambda g_{X'}$ forces $\lambda=1$. All subsequential limits therefore coincide, and the whole sequence converges.

\emph{Topology.} Fill the principal boundary of $\{\rho\leqslant\rho_0\}$ by
$\mathbb T^2\times D^2$, with $\partial D^2$ identified with the
$z$-circle, as in \cite[Section~5.1]{KRWY22}. The resulting torus manifold has a disk orbit space
and the prescribed two or three rods, so \cite{OR70} identifies
it with $\mathbb S^4$ or $\overline{\mathbb{CP}}{}^2$,
respectively. Removing the core torus gives the asserted
diffeomorphism types. The radial retraction $\rho\mapsto(1-s)\rho$,
$0\leqslant s\leqslant1$, retracts $X$ onto the cyclic union of
its fixed spheres, so $\pi_1(X)=\mathbb Z$.
In the hexagon case, by \cite{OR70}, the sphere $F$ over the middle rod has
\[
 [F]\cdot[F]
 =-\frac{\det(e_1,e_2)}{\det(e_1,e_1+e_2)\det(e_1+e_2,e_2)}=-1,
\]
as do the other fixed spheres, by the symmetry $\mathsf s$ of Lemma~\ref{lem:weyl}. In particular,
$X_{\mathrm{hex}}$ is not spin.

\emph{Curvature energy.} The removed torus has Euler characteristic zero, so the topology above gives $\chi(X)=\chi(\mathbb S^4)=2$ in the square case and $\chi(X)=\chi(\overline{\mathbb{CP}}{}^2)=3$ in the hexagon case, that is, $\chi(X)=\frac{m}{2}$.  By the differentiated Kasner asymptotics above, the hypersurfaces $\{\varsigma=S\}$ satisfy $\Vol\{\varsigma=S\}=O(S)$, $|\mathrm{II}|=O(S^{-1})$ and $|\Rm|=O(S^{-2})$, so the boundary term of the Chern-Gauss-Bonnet formula on $\{\varsigma\leqslant S\}$ is $O(S^{-2})$. Letting $S\to+\infty$ gives, in our normalization of $|\Rm|$,
\[
 \int_X|\Rm|^2\,dV=32\pi^2\chi(X)=16\pi^2m,
\]
which is the curvature energy of every $Z^{(k)}$, by Corollary~\ref{cor:finite-fundamental-groups}. So the blow-ups retain all the curvature energy.

\emph{Square identification.}
The equal-rod Euclidean Myers-Korotkin-Nicolai metric of \cite[Section~4.1]{KRWY22}, with  circle directions $e_1$ and $e_2$  of period $2\pi$  and quotiented by one period, has alternating rods with rod vectors $e_1$ and $e_2$, with endpoints equally spaced in $z$, angle $2\pi$ at the rods  and the untwisted angular identification. By \cite[Section~4.1, (4.5)]{KRWY22}, its diagonal shape tends to a constant as $\rho\to+\infty$. The symmetry exchanging the two circles  and shifting $z$  by one rod fixes this constant, so it equals $\Phi_o=I$. After a homothety, its Weyl period is $P_{\mathrm W}$, and the uniqueness argument above  identifies $g_{\mathrm{sq}}$ with the one-period quotient of  this metric.

\emph{Hexagonal geometry.}
If $g_{\mathrm{hex}}$ were static with respect to a torus generator $V$, then
$g_{\mathrm{hex}}(V,\cdot)/g_{\mathrm{hex}}(V,V)$ would be closed.
For any independent torus generator $W$, this makes
$g_{\mathrm{hex}}(V,W)/g_{\mathrm{hex}}(V,V)$ constant. Subtracting that multiple of $V$
from $W$ makes $G$ diagonal in a fixed real basis.
Its kernels along the rods could then have only two directions,
contradicting the three prescribed rod directions.

If $g_{\mathrm{hex}}$ were locally Hermitian in either orientation, $W^+$ would
have at most two eigenvalues. By \cite[Proposition~5(iv)]{Der83},
either $W^+\equiv0$ or $W^+$ is nowhere zero.
In the first case the universal cover is hyperk\"ahler and spin,
contradicting the lift of a fixed sphere of odd self-intersection.
In the second case, \cite[Proposition~2.1]{Li24} gives a Hermitian
non-K\"ahler structure on a cover of degree at most two, which is
still complete, Ricci-flat and of finite curvature energy.
Since its end has the Kasner asymptotics above, it contradicts
\cite[Theorem~1.2]{Li24}.
\end{proof}

\begin{remark}\label{rem:covers}
The corner blow-ups $(M^{(k)},\Lambda_kg^{(k)},\widetilde c_k)$, with $\widetilde c_k$ over the corner $c_k$ of Theorem~\ref{thm:periodic-models},  converge to the infinite cyclic cover of $X$. Indeed,  the closed horizontal lift of one circuit of the orbit cylinder generates $\pi_1(X)=\mathbb Z$,  and the corresponding closed loop in $Z^{(k)}$ represents $\varpi_k$, which generates $\pi_1(Z^{(k)})=\mathbb Z_{k+1}$, by the generator calculation in the proof of Lemma~\ref{lem:weyl}, so its lift to $M^{(k)}$ closes only after $k+1$ circuits. Any fixed number of periods therefore lifts without identification for large $k$, and the limit has $z\in\mathbb R$. Its rods and corners repeat along $z$, so its curvature energy is infinite. The  estimates \eqref{eq:end-shape} and those following it still hold as $\rho\to+\infty$, while the Kasner end of Theorem~\ref{thm:periodic-models}  refers to the finite-period space $X$.
\end{remark}

\section{Further constructions}\label{sec:questions}

We expect that the same method produces other families of positive Einstein metrics. For each initial metric below, three ingredients of the proof have to be revisited: the symmetry-restricted indicial modes at the central torus, the positivity in the kernel identity of Proposition~\ref{prop:kernel}, and the rod restrictions used in compactness. Let us start with the Page metric \cite{Pag78}.
\begin{conjecture}\label{conj:page}
For every $k\geqslant1$, the manifold
\[
 P_k=(2k+1)\mathbb{CP}^{2}\#(2k+1)\overline{\mathbb{CP}}{}^{2}
\]
admits a positive Einstein metric obtained by deforming the Page metric by allowing conical singularities along a torus fiber.
\end{conjecture}
This conjecture is motivated by numerical approximations of these metrics on $P_1$, $P_2$ and $P_3$. A similar question can be asked about conical deformations of the toric Chen-LeBrun-Weber metric of \cite{CLW08}. Similar deformations of the Fubini-Study metric would reach the \textit{orientation reversals} of the manifolds of the hexagon construction. Numerical tests suggest that they give back the same Einstein metrics, with the opposite orientation.

Figure~\ref{fig:reachable-topologies} depicts the non-spin topologies reached by the hexagon construction, and those that the proposed constructions might reach, on
\[
 X_{p,q}=\#_p\mathbb{CP}^2\#_q\overline{\mathbb{CP}}{}^2,
 \qquad\text{and}\qquad X_{0,0}={\mathbb S^4},
\]
with the strict Hitchin-Thorpe inequalities
\cite{Hit74,Tho69} corresponding to
\[
 2(p+q+2)>3|p-q|
 \quad\Longleftrightarrow\quad p<5q+4\ \text{ and }\ q<5p+4.
\]

\begin{figure}[H]
\centering
\begingroup
\definecolor{topologyblue}{RGB}{43,99,158}
\definecolor{topologygreen}{RGB}{24,126,103}
\definecolor{topologyorange}{RGB}{184,96,29}
\definecolor{topologyred}{RGB}{150,84,80}
\begin{tikzpicture}[x=.40cm,y=.40cm,font=\scriptsize]
 \fill[green!4!white] (-.45,-.45) rectangle (16.4,16.4);
 \fill[red!7!white] (-.45,1.75)--(2.48,16.4)--(-.45,16.4)--cycle;
 \fill[red!7!white] (1.75,-.45)--(16.4,2.48)--(16.4,-.45)--cycle;
 \draw[step=2,black!12,line width=.25pt] (0,0) grid (16,16);
 \foreach \p in {0,...,16} {
   \foreach \q in {0,...,16} {
     \pgfmathtruncatemacro{\admissible}{(\q<5*\p+4)&&(\p<5*\q+4)}
     \ifnum\admissible=1
       \fill[black!16] (\p,\q) circle[radius=.8pt];
     \fi
   }
 }
 \draw[topologyred,dashed,line width=.8pt] (-.45,1.75)--(2.48,16.4);
 \draw[topologyred,dashed,line width=.8pt] (1.75,-.45)--(16.4,2.48);
 \node[topologyred,rotate=78.69,fill=red!7!white,inner sep=1pt]
   at (.89,11.6) {$q=5p+4$};
 \node[topologyred,rotate=11.31,fill=red!7!white,inner sep=1pt]
   at (11.6,.89) {$p=5q+4$};
 \draw[topologyblue!35,line width=.65pt] (1,3)--(7.7,16.4);
 \draw[topologyblue!35,line width=.65pt] (3,1)--(16.4,7.7);
 \draw[topologygreen!35,line width=.65pt] (3,3)--(16.4,16.4);
 \draw[topologyorange!40,line width=.65pt] (3,5)--(10.6,16.4);
 \draw[topologyorange!40,line width=.65pt] (5,3)--(16.4,10.6);
 \foreach \k in {1,...,7} {
   \pgfmathtruncatemacro{\q}{2*\k+1}
   \draw[topologyblue,fill=white,line width=1pt] (\k,\q) circle[radius=3pt];
   \draw[topologyblue,fill=white,line width=1pt] (\q,\k) circle[radius=3pt];
 }
 \foreach \p in {3,5,7,9,11,13,15} {
   \draw[topologygreen,fill=white,line width=1pt]
     (\p-.15,\p-.15) rectangle (\p+.15,\p+.15);
 }
 \foreach \k in {1,...,4} {
   \pgfmathtruncatemacro{\p}{2*\k+1}
   \pgfmathtruncatemacro{\q}{3*\k+2}
   \draw[topologyorange,fill=white,line width=1pt]
     (\p,\q+.19)--(\p-.17,\q-.12)--(\p+.17,\q-.12)--cycle;
   \draw[topologyorange,fill=white,line width=1pt]
     (\q,\p+.19)--(\q-.17,\p-.12)--(\q+.17,\p-.12)--cycle;
 }
 \foreach \p/\q in {0/0,1/0,0/1,1/1,1/2,2/1,1/3,3/1,1/4,4/1,
                      1/5,5/1,1/6,6/1,1/7,7/1,1/8,8/1} {
   \node[inner sep=0pt,font=\fontsize{10}{10}\selectfont] at (\p,\q) {$\star$};
 }
 \draw[black!40,line width=.4pt] (-.45,-.45) rectangle (16.4,16.4);
 \foreach \i in {0,2,...,16} {
   \node[below=4pt,font=\scriptsize] at (\i,-.45) {$\i$};
   \node[left=4pt,font=\scriptsize] at (-.45,\i) {$\i$};
 }
 \node[below=15pt] at (8,-.45) {$p$};
 \node[left=23pt] at (-.45,8) {$q$};
 \draw[topologyblue,fill=white,line width=1pt] (-.3,-2.65) circle[radius=3pt];
 \node[anchor=west] at (.1,-2.65) {$N_k$: Theorem~\ref{thm:main}};
 \draw[topologygreen,fill=white,line width=1pt]
   (8.1,-2.80) rectangle (8.4,-2.50);
 \node[anchor=west] at (8.65,-2.65) {Page: Conjecture~\ref{conj:page}};
 \draw[topologyorange,fill=white,line width=1pt]
   (-.3,-3.21)--(-.47,-3.52)--(-.13,-3.52)--cycle;
 \node[anchor=west] at (.1,-3.4) {CLW: question};
 \node[inner sep=0pt,font=\fontsize{10}{10}\selectfont] at (8.25,-3.4) {$\star$};
 \node[anchor=west] at (8.65,-3.4) {Classical examples};
\end{tikzpicture}
\endgroup
\caption{Constructed and proposed positive Einstein metrics on
$X_{p,q}=p\mathbb{CP}^{2}\#q\overline{\mathbb{CP}}{}^{2}$. Circles mark $(p,q)=(k,2k+1)$ of the present paper, squares mark the conjectural Page family $(2k+1,2k+1)$, triangles mark the predicted topologies of a proposed CLW family $(2k+1,3k+2)$.
The red regions and dashed boundaries violate the strict Hitchin-Thorpe inequalities.}
\label{fig:reachable-topologies}
\end{figure}

 We conjecture that all metrics constructed in this article and proposed in this section are linearly unstable. For the first family, this is Proposition~\ref{prop:square-instability}.

\begin{remark}
The present proof does not readily apply to these situations. With fewer symmetries, the first angular shape modes $R^{1/\beta}e^{\pm i\theta}$ are allowed at the central torus, and their second derivatives, of order $R^{1/\beta-2}$, may be unbounded for $\frac{1}{2}<\beta<1$, so the central spaces and Lemma~\ref{lem:center-expansion} must be modified. The corner symmetry giving \eqref{eq:corner-symmetry}, used to match the boundary values of $b$ in the proof of Proposition~\ref{cmp:comparison-isomorphism}, is also lost. The rod determinant condition \eqref{eq:rod-determinant} still holds for both metrics, however.
\end{remark}

\appendix

\section{Auxiliary proofs and estimates}\label{sec:appendix-auxiliary}

\subsection{Hyperbolic exponential and matrix formulas}\label{subsec:matrix-formulas}

\begin{lemma}\label{lem:matrix-calculus}
Let $S\in\mathbb H^2$ and $\xi\in T_S\mathbb H^2$, with $|\xi|_S^2:=\frac{1}{2}\operatorname{tr}\bigl((S^{-1}\xi)^2\bigr)$.
\begin{enumerate}[label=\textup{(\roman*)}]
\item The exponential map of $g_{\mathbb H^2}$ is
\begin{equation}\label{eq:matrix-exp}
 \operatorname{Exp}_S\xi=S\exp(S^{-1}\xi)=\cosh(|\xi|_S)\,S+\frac{\sinh(|\xi|_S)}{|\xi|_S}\,\xi.
\end{equation}
\item For every $\Phi\in\mathbb H^2$, the projection \eqref{eq:matrix-projection} restricts to an isomorphism $\Pi_S:T_\Phi\mathbb H^2\to T_S\mathbb H^2$, equal to the identity if $\Phi=S$. Moreover $U:=\Pi_S\Phi\in T_S\mathbb H^2$ is a global coordinate on $\mathbb H^2$, with inverse $\Phi=\sqrt{1+|U|_S^2}\,S+U$ as in \eqref{eq:central-matrix-coordinate}.
\item The maps \eqref{eq:matrix-exp}, \eqref{eq:matrix-projection} and \eqref{eq:central-matrix-coordinate} commute with the congruences $S\mapsto P^{\mathsf T}SP$, $|\det P|=1$, hence with the target symmetries, and the tension field is \eqref{eq:matrix-tension}.

\end{enumerate}
\end{lemma}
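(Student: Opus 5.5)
The plan is to reduce everything to the symmetric-space description of $\mathbb H^2=\{S=S^{\mathsf T}>0,\det S=1\}\cong\mathrm{SL}(2,\mathbb R)/\mathrm{SO}(2)$. Here $\mathrm{SL}(2,\mathbb R)$ acts isometrically by congruence $S\mapsto P^{\mathsf T}SP$, and the metric $\tfrac12\operatorname{tr}(S^{-1}dS\,S^{-1}dS)$ is invariant. I will prove (iii) first, since it lets me reduce (i) and (ii) to the base point $S=I$. The congruence $S\mapsto P^{\mathsf T}SP$ with $|\det P|=1$ preserves $\det S=1$ and positivity. On tangent vectors it acts by $\xi\mapsto P^{\mathsf T}\xi P$, and $S^{-1}\xi\mapsto P^{-1}S^{-1}\xi P$ is a conjugation. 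Hence $|\xi|_S$, $\operatorname{tr}(S^{-1}W)$ and $\exp(S^{-1}\xi)$ transform equivariantly, and each of the three formulas \eqref{eq:matrix-exp}, \eqref{eq:matrix-projection}, \eqref{eq:central-matrix-coordinate} commutes with the congruence. This includes $P=\mathsf A^{-1},\mathsf B$ and reflections, so the target symmetries are covered.

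For (i), transitivity lets me take $S=I$. Then $T_I\mathbb H^2$ consists of the trace-free symmetric matrices, and for such $\xi$ with eigenvalues $\pm\lambda$ we have $|\xi|_I=|\lambda|$. The curve $t\mapsto\exp(t\xi)$ is the orbit of a one-parameter subgroup through a Cartan involution fixed point, so it is a geodesic. Concretely, diagonalizing $\xi$ by a rotation, the curve is $\operatorname{diag}(e^{t\lambda},e^{-t\lambda})$, which has constant speed $|\lambda|$ in the metric; it is the fixed set of the isometry $S\mapsto \mathsf DS\mathsf D$ with $\mathsf D=\operatorname{diag}(1,-1)$, hence totally geodesic. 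Cayley–Hamilton gives $\xi^2=\lambda^2I$, so
\[
\exp(\xi)=\cosh|\xi|\,I+\frac{\sinh|\xi|}{|\xi|}\,\xi.
\]
Transporting by congruence with $P=S^{1/2}$ yields $S\exp(S^{-1}\xi)$ and the stated formula.

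For (ii), $T_\Phi\mathbb H^2=\{W:\operatorname{tr}(\Phi^{-1}W)=0\}$ and $\Pi_S W$ has $\operatorname{tr}(S^{-1}\Pi_SW)=0$. The kernel of $\Pi_S$ is $\mathbb R S$, and $S\notin T_\Phi\mathbb H^2$ because $\operatorname{tr}(\Phi^{-1}S)\geqslant 2>0$ by AM–GM on the eigenvalues of $\Phi^{-1}S$, which have product $1$. So $\Pi_S$ is injective between $2$-dimensional spaces, hence an isomorphism, and it is clearly the identity on $T_S\mathbb H^2$. For the coordinate $U=\Pi_S\Phi$, write $\Phi=cS+U$ with $c=\tfrac12\operatorname{tr}(S^{-1}\Phi)>0$. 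Then
\[
1=\det(S^{-1}\Phi)=\det(cI+S^{-1}U)=c^2-|U|_S^2,
\]
because $S^{-1}U$ is trace-free with $\det(S^{-1}U)=-\tfrac12\operatorname{tr}((S^{-1}U)^2)$. Thus $c=\sqrt{1+|U|_S^2}$. Conversely, every $U$ yields via this formula a symmetric matrix of determinant $1$ with positive trace against $S$, hence a positive one, so $U$ is a global coordinate.

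Finally, the tension formula in (iii). The key step, and the only mildly delicate one, is to compute $\nabla d\Phi$ for the embedding of $\mathbb H^2$ as the hyperboloid $\{\det(S^{-1}_o\,\cdot)=1\}$ in the Minkowski-type space of symmetric matrices. After reducing to $S=I$ by congruence, this is the quadratic form $-\det$, and $g_{\mathbb H^2}$ agrees with the induced metric, which I would check along the geodesic of (i). For a hyperboloid with unit normal the position vector, the second fundamental form gives the standard formula $\tau(\Phi)=\Delta\Phi-|d\Phi|^2\Phi$. I expect this step to be the main obstacle only in matching the normalizations: with $\langle W,W\rangle=\tfrac12\operatorname{tr}((\Phi^{-1}W)^2)$, the position $\Phi$ has squared length $1$ in the ambient form, which fixes the sign and the coefficient $1$. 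The step is routine once that ambient bilinear form is identified.
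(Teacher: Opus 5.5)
Your argument is correct in substance. Part (ii) and the congruence covariance are essentially the paper's argument; you add the AM--GM reason for $\operatorname{tr}(\Phi^{-1}S)>0$ and the surjectivity of $U\mapsto\sqrt{1+|U|_S^2}\,S+U$, both of which the paper leaves implicit. For (i) and the tension formula you take a different route.

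\begin{itemize}
\item For (i), the paper obtains $\operatorname{Exp}_S\xi=S\exp(S^{-1}\xi)$ by quoting the exponential map of the symmetric space of all positive definite $2\times2$ matrices, in which $\{\det=1\}$ is totally geodesic. You instead verify the geodesic directly at $I$ (fixed set of $S\mapsto DSD$, constant speed) and transport by congruence, which is self-contained.
\item For the tension, the paper works intrinsically. The Koszul formula gives $\nabla_XY=dY(X)-\frac12(X\Phi^{-1}Y+Y\Phi^{-1}X)$, and the Christoffel term $\sum_j\partial_j\Phi\,\Phi^{-1}\partial_j\Phi$ equals $|d\Phi|^2\Phi$ by $(\Phi^{-1}\partial_j\Phi)^2=|\partial_j\Phi|_\Phi^2I$. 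You compute extrinsically in the hyperboloid model, which also exhibits $\Pi_\Phi$ as the Minkowski-orthogonal projection onto $T_\Phi\mathbb H^2$. The paper's connection formula is the more direct source of the companion identity $\operatorname{div}(\Phi^{-1}d\Phi)=\Phi^{-1}\tau(\Phi)$, though that also follows from your formula by Cayley--Hamilton.
\end{itemize}

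Two points in your last paragraph must be fixed.

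First, agreement of $g_{\mathbb H^2}$ with the induced metric is not established by checking along one geodesic. Check it at $I$, where both are $\frac12\operatorname{tr}(VW)$ on trace-free matrices. Then use that congruences with $|\det P|=1$ preserve $\det$, so they are linear isometries of the ambient form acting transitively on the hyperboloid.

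Second, and more importantly, your normalization remark has the sign mechanism backwards. The expression $\frac12\operatorname{tr}((\Phi^{-1}W)^2)$ is the point-dependent metric of the cone of positive matrices, not a constant bilinear form, so it cannot play the role of the flat ambient form. The ambient form is the polarization of $-\det$,
\[
\eta(V,W):=\tfrac12\bigl(\operatorname{tr}(VW)-\operatorname{tr}V\operatorname{tr}W\bigr).
\]
For it, $\eta(\Phi,\Phi)=-1$, and since $\Phi^{-1}=(\operatorname{tr}\Phi)I-\Phi$, also $\eta(\Phi,W)=-\frac12\operatorname{tr}(\Phi^{-1}W)$. Differentiating $\eta(\Phi,\Phi)=-1$ twice gives $\eta(\Delta_\gamma\Phi,\Phi)=-|d\Phi|_\gamma^2$. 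Hence
\[
\tau_\gamma(\Phi)=\Delta_\gamma\Phi-\frac{\eta(\Delta_\gamma\Phi,\Phi)}{\eta(\Phi,\Phi)}\,\Phi=\Delta_\gamma\Phi-|d\Phi|_\gamma^2\,\Phi .
\]
It is the timelike value $-1$ that produces the minus sign. A unit normal of squared length $+1$ would give the spherical formula $\Delta\Phi+|d\Phi|^2\Phi$.
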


\begin{proof}
(i) Since $\operatorname{tr}(S^{-1}\xi)=0$, we have $(S^{-1}\xi)^2=|\xi|_S^2I$ in 2D, and summing the exponential series gives the second equality in \eqref{eq:matrix-exp}. The first one is the exponential map of the Riemannian symmetric space of positive definite symmetric $2\times2$ matrices, with the metric $|\xi|_S^2$ above, \cite[Chapter~II.10]{BH99}, in which $\mathbb H^2=\{\det=1\}$ is totally geodesic; compare \cite[Appendix]{CS04}. (ii) If $W\in T_\Phi\mathbb H^2$ and $\Pi_SW=0$, then we have
\[
 W=cS,\qquad\text{and}\qquad 0=\operatorname{tr}(\Phi^{-1}W)=c\operatorname{tr}(\Phi^{-1}S)\quad\Longrightarrow\quad c=0,
\]
since $\operatorname{tr}(\Phi^{-1}S)>0$. If $\Phi=cS+U$ with $U\in T_S\mathbb H^2$, then $c=\frac{1}{2}\operatorname{tr}(S^{-1}\Phi)>0$ and, since $S^{-1}U$ is trace-free, $1=\det\Phi=\det(cI+S^{-1}U)=c^2-|U|_S^2$. (iii) By the Koszul formula, the Levi-Civita connection of $g_{\mathbb H^2}$ is $\nabla_XY=dY(X)-\frac{1}{2}(X\Phi^{-1}Y+Y\Phi^{-1}X)$, and $(\Phi^{-1}\partial_j\Phi)^2=|\partial_j\Phi|_\Phi^2I$ as in (i). Hence, in an orthonormal frame, we compute
\[
 \tau_\gamma(\Phi)=\Delta_\gamma\Phi-\sum_j(\partial_j\Phi)\Phi^{-1}(\partial_j\Phi)
 =\Delta_\gamma\Phi-\tfrac12\sum_j\operatorname{tr}\bigl((\Phi^{-1}\partial_j\Phi)^2\bigr)\Phi,
\]
which is \eqref{eq:matrix-tension}. Equivalently, $\diver_\gamma(\Phi^{-1}d\Phi)=\Phi^{-1}\tau_\gamma(\Phi)$, and $\rho^{-1}\diver_\gamma(\rho\Phi^{-1}d\Phi)=\Phi^{-1}\bigl(\tau_\gamma(\Phi)+d\Phi(\nabla_\gamma\ell)\bigr)$ for $\ell=\log\rho$. Congruence covariance is immediate from the formulas.

\end{proof}

\subsection{Division and Euclidean lifts}\label{subsec:division-lifts}

The singular operators in \eqref{eq:dictionary-operators}
 are ordinary Euclidean Laplacians on radial functions or first angular modes. For functions $\varphi$ and $u$ of $(x,y)$, define the lifts
\[
 (\mathcal L_j\varphi)(Z,{Z'}):=\varphi(|Z|,|{Z'}|),\ \ Z,{Z'}\in\mathbb R^{j+1},\quad
 (\mathcal L_\perp u)(Z,{Z'}):=u(|Z|,|{Z'}|)\,\frac{Z}{|Z|}\otimes\frac{Z'}{|{Z'}|},\ \ Z,{Z'}\in\mathbb R^2.
\]
The function $\mathcal L_j\varphi$ is invariant under $O(j+1)\times O(j+1)$, and $\mathcal L_\perp u$ takes values in $\mathbb R^2\otimes\mathbb R^2$. The components of $Z/|Z|$ and ${Z'}/|{Z'}|$ are first spherical harmonics, of eigenvalue $-1$ on the unit circles, and they produce the terms $-x^{-2}-y^{-2}$ of $D_\perp$. 
At a rod, omit the factor ${Z'}$: $(\mathcal L_j\varphi)(Z,y):=\varphi(|Z|,y)$ and $(\mathcal L_\perp u)(Z,y):=u(|Z|,y)\,Z/|Z|$. At a rod there is one radial block: omit the singular terms $jy^{-1}\partial_y$ and $y^{-2}$, but retain $\partial_y^2$. In the energy formulas, replace $x^jy^j$ by $x^j$, $xy$ by $x$, and the squared sphere-volume factors by one factor.

\begin{lemma}\label{lem:division-lifts}

Let $0<\alpha<1$. All statements concern functions on a rod chart $\{x\geqslant0\}$ (resp. a corner chart $\{x,y\geqslant0\}$) of $\overline\Omega$, and a statement in the variable $x$ holds with $y$ as a parameter, with constants independent of $y$, and conversely.
\begin{enumerate}[label=\textup{(\alph*)}]
\item \emph{Division.} If $r,k,l\geqslant0$ are integers and $u\in C^{r+k+l,\alpha}$ has $\partial_x^iu=0$ on $\{x=0\}$ for $i<k$ and $\partial_y^iu=0$ on $\{y=0\}$ for $i<l$, then $u$ can be written as
\[
 u=x^ky^lw,\qquad\text{with}\qquad \|w\|_{C^{r,\alpha}}\leqslant C\|u\|_{C^{r+k+l,\alpha}}.
\] 
In particular, for even $F$ and odd $u$ in the $x$ variable, we have
\begin{gather}
 \frac{\partial_xF}x=\int_0^1\partial_x^2F(tx,y)\,dt,\label{eq:divided-even}\\
 \Bigl\|\frac{\partial_xF}x\Bigr\|_{C^{r,\alpha}}\leqslant C\|F\|_{C^{r+2,\alpha}},\label{eq:divided-even-norm}\\
 \frac{u}x=\int_0^1\partial_xu(tx,y)\,dt,\qquad\text{and}\qquad
 \frac{u_x}x-\frac{u}{x^2}=\int_0^1t\,u_{xx}(tx,y)\,dt.\label{eq:divided-odd}
\end{gather}
At $x=0$, \eqref{eq:divided-even} gives $x^{-1}\partial_xF=\partial_x^2F$. Consequently, we have
\[
 \Bigl\|\frac{u}x\Bigr\|_{C^{r,\alpha}}\leqslant C\|u\|_{C^{r+1,\alpha}},\qquad\text{and}\qquad
 \Bigl\|\frac{u_x}x-\frac{u}{x^2}\Bigr\|_{C^{r,\alpha}}\leqslant C\|u\|_{C^{r+2,\alpha}}.
\]
\item \emph{Euclidean lifts.} For $j\in\{1,2\}$, even $\varphi$ and $u$ odd in each variable, the lifts $\mathcal L_j\varphi$ and $\mathcal L_\perp u$ above satisfy, for every integer $r\geqslant0$,
\begin{equation}\label{eq:lift-norms}
\begin{aligned}
 C^{-1}\|\varphi\|_{C^{r,\alpha}_{\mathrm{ev}}}\leqslant \|\mathcal L_j\varphi\|_{C^{r,\alpha}}&\leqslant C\|\varphi\|_{C^{r,\alpha}_{\mathrm{ev}}},&\qquad
 \Delta\mathcal L_j\varphi&=\mathcal L_j(D_j\varphi),\\
C^{-1}\|u\|_{C^{r,\alpha}_{\mathrm{odd}}} \leqslant \|\mathcal L_\perp u\|_{C^{r,\alpha}}&\leqslant C\|u\|_{C^{r,\alpha}_{\mathrm{odd}}},&
 \Delta\mathcal L_\perp u&=\mathcal L_\perp(D_\perp u).
\end{aligned}
\end{equation}
For $r\geqslant2$ the Laplacian identities hold pointwise. 
 \item \emph{Energy and weak equations.} For a corner chart $U$, put $\widetilde U_j:=\{(Z,Z')\in\mathbb R^{j+1}\times\mathbb R^{j+1}:(|Z|,|Z'|)\in U\}$. Polar integration gives
\begin{equation}\label{eq:lift-energy}
\begin{aligned}
 \|\mathcal L_j\varphi\|_{H^1(\widetilde U_j)}^2&=|\mathbb S^j|^2\!\int_U x^jy^j\bigl(\varphi^2+|d\varphi|_0^2\bigr)\,dx\,dy,\\
 \|\mathcal L_\perp u\|_{H^1(\widetilde U_1)}^2&=(2\pi)^2\!\int_U xy\bigl(|du|_0^2+(1+x^{-2}+y^{-2})u^2\bigr)\,dx\,dy.
\end{aligned}
\end{equation}

Thus the lifts $\mathcal L_j$ and $\mathcal L_\perp$ extend, isometrically up to the factors $|\mathbb S^j|$ and $2\pi$, to the completions of the smooth even, respectively odd, profiles on the quadrant for the weighted energies in \eqref{eq:lift-energy}, with values in $H^1$. For smooth even $\kappa,c$, writing tildes for their radial lifts {to $\mathbb R^{j+1}\times\mathbb R^{j+1}$},
\begin{equation}\label{eq:weak-lift}
 (\Delta+\langle d\widetilde\kappa,d\,\cdot\,\rangle-\widetilde c)\,\mathcal L_j\varphi
 =\mathcal L_j\bigl(D_j\varphi+\langle d\kappa,d\varphi\rangle_0-c\varphi\bigr)
\end{equation}
holds also weakly: tests on the quadrant use the measure $x^jy^je^\kappa\,dx\,dy$, and tests after lifting use $e^{\widetilde\kappa}dZ\,d{Z'}$. The identity $\Delta\mathcal L_\perp u=\mathcal L_\perp(D_\perp u)$ has the analogous weak interpretation in the energy norm in \eqref{eq:lift-energy}.
\end{enumerate}
\end{lemma}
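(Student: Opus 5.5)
The three parts of Lemma~\ref{lem:division-lifts} are independent, and I would treat them in order, reducing everything to one-variable Taylor formulas and polar integration.

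\emph{(a) Division.} First the one-variable case. If $\partial_x^iu(0,y)=0$ for $i<k$, Taylor's formula with integral remainder gives
\[
 u(x,y)=\frac{x^k}{(k-1)!}\int_0^1(1-t)^{k-1}\partial_x^ku(tx,y)\,dt,
\]
so $w=x^{-k}u$ is an average of dilates $\partial_x^ku(tx,\cdot)$. Dilation by $t\in[0,1]$ does not increase $C^{r,\alpha}$ seminorms on a quadrant or half-plane: each $x$-derivative gains a factor $t\leqslant1$, and the H\"older quotient gains $t^\alpha$. Integrating in $t$ therefore gives $\|w\|_{C^{r,\alpha}}\leqslant C\|u\|_{C^{r+k,\alpha}}$. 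For the corner case I would divide by $x^k$ first. The resulting $w_1$ still satisfies $\partial_y^iw_1=0$ on $\{y=0\}$ for $i<l$, because these derivatives commute with the $t$-average. Dividing $w_1$ by $y^l$ in the same way loses $l$ more derivatives. The special formulas are the cases $k=1$ or $k=2$:
\begin{itemize}
\item \eqref{eq:divided-even} is the case $k=1$ applied to the odd function $\partial_xF$.
\item The first formula in \eqref{eq:divided-odd} is the case $k=1$ applied to $u$.
\item The second follows by differentiating $u/x$, or from the identity $\frac{u_x}{x}-\frac{u}{x^2}=\partial_x(u/x)$ together with $\partial_x\int_0^1u_x(tx)\,dt=\int_0^1tu_{xx}(tx)\,dt$.
\end{itemize}

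\emph{(b) Euclidean lifts.} The Laplacian identities are the standard formula in polar coordinates on each factor. For a radial profile on $\mathbb R^{j+1}$,
\[
 \Delta=\partial_r^2+\tfrac{j}{r}\partial_r,
\]
and for $u(r)\,Z/|Z|$ on $\mathbb R^2$ the first spherical harmonic adds $-r^{-2}$. Applying this in both factors gives $D_j$ and $D_\perp$, pointwise where $Z,Z'\neq0$ and $r\geqslant2$.

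The norm equivalences are the main obstacle. The lower bound is easy: restrict $\mathcal L_j\varphi$ to the coordinate plane $Z=(x,0,\dots)$, $Z'=(y,0,\dots)$. This recovers the even extension of $\varphi$, and similarly for $\mathcal L_\perp u$ on its first component.

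For the upper bound I would use Whitney's theorem in its H\"older form, as in Schwarz's theorem on smooth invariant functions. An even $C^{r,\alpha}$ function of $x$ is $\phi(x^2)$ with $\phi$ of the required regularity in $x^2$, but this route is lossy. So instead I would prove it directly by induction on $r$ using part~(a). The derivatives of $\varphi(|Z|)$ involve $\varphi'(|Z|)Z/|Z|$ and $\varphi''$. Since $\varphi'$ is odd, $\varphi'(|Z|)/|Z|$ is controlled in $C^{r-2,\alpha}$ by \eqref{eq:divided-even-norm}, and it is itself an even profile. So $\mathcal L_j\varphi$ has derivatives that are polynomial in $Z$ times lifts of even profiles of lower order, and induction closes. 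The H\"older seminorm across the axis follows because the lifted function is continuous and uniformly $C^{r,\alpha}$ on each side.

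The map $\mathcal L_\perp u$ is handled identically. Here
\[
 u(|Z|,|Z'|)\,\frac{Z}{|Z|}\otimes\frac{Z'}{|Z'|}=\frac{u}{|Z||Z'|}\,Z\otimes Z',
\]
and $u/(xy)$ is even with norm controlled by~(a), though with a loss of two derivatives. To avoid that loss, I would run the same derivative-by-derivative induction rather than dividing globally. This is the delicate bookkeeping step.

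\emph{(c) Energy and weak equations.} The identities \eqref{eq:lift-energy} follow from Fubini in polar coordinates in each factor, using $dZ=r^j\,dr\,d\omega$. For $\mathcal L_\perp$, the angular derivative contributes $|\nabla(Z/|Z|)|^2=|Z|^{-2}$ in each factor, which gives the terms $x^{-2}+y^{-2}$. Density of smooth profiles then extends the lifts isometrically to the weighted completions.

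For \eqref{eq:weak-lift}, I would pair the pointwise identity with an $O(j+1)\times O(j+1)$-invariant test function and integrate by parts away from the axes. The axes have codimension at least two in the lifted space, and the profiles have finite energy, so a logarithmic cutoff removes the boundary terms. Finally, averaging a general test function over $O(j+1)\times O(j+1)$ reduces the general case to invariant tests, and invariant tests correspond exactly to tests on the quadrant with weight $x^jy^je^\kappa$.
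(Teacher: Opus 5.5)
Your parts (a) and (c) are sound and essentially follow the paper: dilation averages for the divisions, then polar integration and averaging over $O(j+1)\times O(j+1)$ for the weak statements. The gap is in the upper bounds $\|\mathcal L_j\varphi\|_{C^{r,\alpha}}\leqslant C\|\varphi\|_{C^{r,\alpha}_{\mathrm{ev}}}$ and $\|\mathcal L_\perp u\|_{C^{r,\alpha}}\leqslant C\|u\|_{C^{r,\alpha}_{\mathrm{odd}}}$ of (b). The induction you describe does not close.

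You write $\partial_{Z_i}\mathcal L_j\varphi=Z_i\,\mathcal L_j(\varphi_x/x)$ with $\varphi_x/x\in C^{r-2,\alpha}_{\mathrm{ev}}$. This is sharp: for $\varphi=|x|^{2+\alpha}$, $\varphi_x/x$ is only $C^{0,\alpha}$. The induction hypothesis then gives $\partial\mathcal L_j\varphi\in C^{r-2,\alpha}$, that is, only $\mathcal L_j\varphi\in C^{r-1,\alpha}$. So one derivative is lost at each step, and it gets worse at higher order, since second derivatives involve $(x^{-1}\partial_x)^2\varphi\in C^{r-4,\alpha}$. What compensates the division is the factor $Z_i$: here $Z_i\mathcal L_j(\varphi_x/x)=n_i\varphi_x(|Z|)$ with $\varphi_x\in C^{r-1,\alpha}$. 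Writing derivatives as ``polynomial in $Z$ times the lift of a lower-order even profile'' discards exactly this information, and at $r=1$ the decomposition is not even defined. Your remark about the seminorm ``across the axis'' does not help either: $\{Z=0\}$ is a point of $\mathbb R^{j+1}$, and the whole issue is the uniformity of the bounds near it. Since you defer $\mathcal L_\perp$ to the same induction, the same objection applies there.

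The missing idea is to use the Laplacian identity you already have as an elliptic equation.
\begin{itemize}
\item For $r=0,1$ the paper argues by hand. The map $|Z|$ is $1$-Lipschitz. If $v(0,z)=0$ and $H$ is smooth on the sphere, then $\|H(n)\,v(|Z|,z)\|_{C^{0,\alpha}}\leqslant C\|H\|_{C^1}\|v\|_{C^{0,\alpha}}$: for two points at distance $d$ with smaller radius $\varsigma$, the angular factor contributes at most $C[v]_\alpha\varsigma^\alpha\min\{1,d/\varsigma\}\leqslant C[v]_\alpha d^\alpha$. The first derivatives $n_i\varphi_x(|Z|,z)$ and $\delta_{ia}\,(u/x)+n_in_a(u_x-u/x)$ are all of this form.
\item For $r\geqslant2$ it uses $\Delta\mathcal Lf=\mathcal L(Df)$. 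By (a), the divided terms of $D_j$ and $D_\perp$ cost exactly two derivatives, $\|Df\|_{C^{r-2,\alpha}}\leqslant C\|f\|_{C^{r,\alpha}}$, and $Df$ has the same parity as $f$. Interior Schauder estimates on unit balls, $\|\mathcal Lf\|_{C^{r,\alpha}}\leqslant C(\|\mathcal L(Df)\|_{C^{r-2,\alpha}}+\|\mathcal Lf\|_{C^0})$, give those two derivatives back. So the induction from $r-2$ to $r$ closes without loss.
\end{itemize}
This is run first for smooth compactly supported profiles, whose lifts are smooth by Whitney's theorem, and a parity-preserving mollification then removes the smoothness assumption. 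The corner case follows by applying the one-block estimate in each variable.
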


\begin{proof}
(a) Each division by $x$ is the formula $u/x=\int_0^1\partial_xu(tx,y)\,dt$, and $\|v(t\,\cdot,\cdot)\|_{C^{r,\alpha}}\leqslant\|v\|_{C^{r,\alpha}}$ for $0\leqslant t\leqslant1$ and similarly in $y$. For even $F$, apply it to the odd function $\partial_xF$. For the last identity of \eqref{eq:divided-odd}, $f(t):=tx\,\partial_xu(tx,y)-u(tx,y)$ satisfies $f(0)=0$ and $f'(t)=tx^2\,\partial_x^2u(tx,y)$.

(b) Consider first one radial variable $x=|Z|$, with $z$ denoting the remaining coordinates, and put $n:=Z/|Z|$ for $Z\neq0$. A bounded extension from a fixed signed coordinate rectangle, followed by averaging with the prescribed parity and an even cutoff, reduces the estimates to compactly supported profiles, which we first assume smooth. Their lifts are then smooth by \cite{Whi43} and part~(a).

Scalar lifting preserves $C^{0,\alpha}$, since $\bigl||Z|-|\widetilde Z|\bigr|\leqslant|Z-\widetilde Z|$. If moreover $v(0,z)=0$ and $H$ is smooth on the unit sphere, then, extending the product by $0$ at $Z=0$, we have
\[
 \|H(n)\,v(|Z|,z)\|_{C^{0,\alpha}}\leqslant C\|H\|_{C^1}\|v\|_{C^{0,\alpha}}:
\]
for two points off the axis, at distance $d$ and with smaller radius $\varsigma$, the change of the angular factor contributes at most $C[v]_\alpha\varsigma^\alpha\min\{1,d/\varsigma\}\leqslant C[v]_\alpha d^\alpha$. This proves the estimates of order $0$. For order $1$, use
\[
 \partial_{Z_i}\varphi(|Z|,z)=n_i\,\varphi_x(|Z|,z),\qquad\text{and}\qquad
 \partial_{Z_i}\bigl(n_a\,u(|Z|,z)\bigr)=\delta_{ia}v(|Z|,z)+n_in_a\,w(|Z|,z),
\]
where $v:=u/x=\int_0^1u_x(tx,\cdot)\,dt$ and $w:=u_x-v$. Here $\varphi_x$ and $w$ vanish at $x=0$, and $\|v\|_{C^{0,\alpha}}+\|w\|_{C^{0,\alpha}}\leqslant C\|u\|_{C^{1,\alpha}}$. Tangential derivatives commute with the lifts and preserve parity, so the estimates of order $0$ apply to them.

Write $\mathcal L$ for either lift and $D$ for its one-block operator, $\partial_x^2+jx^{-1}\partial_x+\Delta_z$, respectively $\partial_x^2+x^{-1}\partial_x-x^{-2}+\Delta_z$. The polar formula for the Laplacian and $\Delta_{\mathbb S^1}n_a=-n_a$ give $\Delta\mathcal Lf=\mathcal L(Df)$. Part~(a) shows that $D$ preserves parity and $\|Df\|_{C^{r-2,\alpha}}\leqslant C\|f\|_{C^{r,\alpha}}$. For $r\geqslant2$, induction on $r$ and the interior Schauder estimate \cite[Theorem~6.2]{GT01} on unit balls give
\[
 \|\mathcal Lf\|_{C^{r,\alpha}}\leqslant C\bigl(\|\mathcal L(Df)\|_{C^{r-2,\alpha}}+\|\mathcal Lf\|_{C^0}\bigr)\leqslant C\|f\|_{C^{r,\alpha}}.
\]
A parity-preserving mollification removes the smoothness assumption, by compactness in $C^{r,\alpha'}$, $\alpha'<\alpha$, and lower semicontinuity of the H\"older seminorm. The corner case follows by applying the one-block result in each variable, and restriction to the signed coordinate planes gives the reverse bounds. The Laplacian identities pass to the limit for $r\geqslant2$.

(c) The identities \eqref{eq:lift-energy} follow from $dZ\,d{Z'}=x^jy^j\,dx\,dy\,d\sigma_{{\mathbb S^j}}\,d\sigma_{{\mathbb S^j}}$ and $|\nabla\mathcal L_\perp u|^2=|du|_0^2+(x^{-2}+y^{-2})u^2$. Let $P$ be the average over $O(j+1)\times O(j+1)$ for the scalar lift, and over the action of $O(2)\times O(2)$ for the tensor lift. A smooth averaged test function is the lift of an even, respectively odd, profile. For the tensor lift, it has the form $Z_aZ'_b\varphi_0(|Z|,|Z'|)$ with $\varphi_0$ smooth and even. By \eqref{eq:lift-energy}, the weak pairing with such a lift is the weighted pairing on the quadrant. Since the lifted operator $\widetilde L$ and forcing $\widetilde F$ are equivariant, for the lift $\Xi$ of a weak solution and every compactly supported smooth test function $\vartheta$, we have
\[
 \langle\widetilde L\Xi-\widetilde F,\vartheta\rangle=\langle\widetilde L\Xi-\widetilde F,P\vartheta\rangle=0,
\]
and \eqref{eq:lift-energy} extends this to the energy completions. 
\end{proof}

\subsection{Regularized equations at rods and corners}\label{subsec:regularized-equations}

We use the coefficients $\widehat\ell,\widehat s,\widehat\omega,\widehat a$ of \eqref{eq:regularized-corner-matrix} and verify the local formulas used in Lemma~\ref{lem:reduced-map} and in Corollary~\ref{cor:fixed-set-regularity}.

\paragraph{Connection form and corner derivatives.} For $q$ near $0$ in the spaces \eqref{eq:side-domain-norm} and \eqref{eq:corner-domain}, the coefficients \eqref{eq:matrix-chart-differential} have the parities of Remark~\ref{rem:conformal-parity} and satisfy the angle equalities \eqref{eq:angle-equalities}, by  Lemma~\ref{lem:regularized-chart}.  Put $r_c:=|x+iy|$. First, since $\omega=(y/x)\widehat\omega$ and $s=\log(y/x)+\widehat s$, the connection form $A_\Phi:=e^{-s}d\omega$ of the frame \eqref{eq:target-rod-frame} has the regularized expression
\begin{equation}\label{eq:connection-form-local}
 A_\Phi=e^{-\widehat s}\Bigl[\Bigl(\partial_x\widehat\omega-\frac{\widehat\omega}x\Bigr)dx+\Bigl(\partial_y\widehat\omega+\frac{\widehat\omega}y\Bigr)dy\Bigr],
\end{equation}
and $A_\Phi=e^{-\bar s}\bigl[(\partial_x\bar\omega-\bar\omega/x)\,dx+\partial_y\bar\omega\,dy\bigr]$ at a rod. The formula $\widehat\omega/x=\int_0^1\partial_x\widehat\omega(tx,y)\,dt$ of Lemma~\ref{lem:division-lifts}(a), and its analogue in $y$, then give, for $k\geqslant2$,
\begin{equation}\label{eq:connection-regularity}
 (\widehat s,\widehat\omega)\in C^{k,\alpha}_{\mathrm{ev}}\times C^{k,\alpha}_{\mathrm{odd}}
 \ \Longrightarrow\
 A_\Phi\in C^{k-1,\alpha},\qquad\text{and}\qquad |A_\Phi|_0=O(r_c),
\end{equation}
and $|A_\Phi|_0=O(x)$ at a rod: the coefficient of $dx$ in \eqref{eq:connection-form-local} is odd in $y$, the coefficient of $dy$ is odd in $x$, and both are $C^{k-1,\alpha}$. No division by $xy$ is needed. Second, differentiating \eqref{eq:angle-equalities} once and then twice along the two axes, and using that $\widehat\ell$ and $\widehat s$ are even, gives the corner derivatives
\begin{equation}\label{eq:corner-jets}
 d\widehat a(c)=0,\qquad
 2\partial_x^2\widehat a(c)=\partial_x^2(\widehat\ell+\widehat s)(c),\qquad\text{and}\qquad
 2\partial_y^2\widehat a(c)=\partial_y^2(\widehat\ell-\widehat s)(c).
\end{equation}

\begin{lemma}\label{lem:regularized-equations}

In a corner chart, with the connection form \eqref{eq:connection-form-local}, substituting $\rho=xye^{\widehat\ell}$, $s=\log(y/x)+\widehat s$ and $\omega=(y/x)\widehat\omega$ in \eqref{eq:conic-map} gives
\begin{equation}\label{cmp:nonlinear-first-residuals}
\begin{aligned}
 e^{2c_\gamma}\mathcal E_1&=D_2\widehat\ell+|d\widehat\ell|_0^2+6e^{2\widehat a},\\
 e^{2c_\gamma}\mathcal H_s&=D_1\widehat s+\langle d\widehat\ell,d\widehat s\rangle_0-\frac{\partial_x\widehat\ell}x+\frac{\partial_y\widehat\ell}y+|A_\Phi|_0^2,\\
 e^{2c_\gamma}\mathcal H_\perp&=e^{-\widehat s}D_\perp\widehat\omega+\langle d\widehat\ell-2d\widehat s,A_\Phi\rangle_0,\\
 e^{2c_\gamma}\mathcal E_3&=\Delta_0\widehat a-\frac{1}{2}\Bigl(\frac{\partial_x(\widehat\ell+\widehat s)}x+\frac{\partial_y(\widehat\ell-\widehat s)}y\Bigr)
   +\frac{1}{4}\bigl(|d\widehat s|_0^2-|d\widehat\ell|_0^2+|A_\Phi|_0^2\bigr).
\end{aligned}
\end{equation}
The right-hand side of the third line is odd in each variable, and the four right-hand sides are smooth maps of $q\in\mathcal U_0$ into $C^{0,\alpha}$, respectively $C^{0,\alpha}_{\mathrm{odd}}$ for the third line. The rod case follows  with the rod convention of Section~ \ref{subsec:division-lifts}.
\end{lemma}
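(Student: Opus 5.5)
The plan is a direct substitution into \eqref{eq:conic-map}, followed by a check of parity and regularity. Fix a corner chart with $\gamma_\beta=e^{2c_\gamma}|dw|^2$, so that $e^{2c_\gamma}\Delta_{\gamma_\beta}=\Delta_0$ and $e^{2c_\gamma}\langle\cdot,\cdot\rangle_{\gamma_\beta}=\langle\cdot,\cdot\rangle_0$. Then every line of \eqref{eq:conic-map}, multiplied by $e^{2c_\gamma}$, becomes a Euclidean expression. Recall $\widehat a=a+c_\gamma$, so that $e^{2a}e^{2c_\gamma}=e^{2\widehat a}$. Write $\ell=\log x+\log y+\widehat\ell$, $s=\log(y/x)+\widehat s$, and $\omega=(y/x)\widehat\omega$.

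\emph{First line.} We have $\rho^{-1}\Delta_0\rho=\Delta_0\ell+|d\ell|_0^2$. The singular pieces are $\Delta_0\log(xy)+|d\log(xy)|^2=0$, and the cross terms $2\langle d\log(xy),d\widehat\ell\rangle_0=2x^{-1}\partial_x\widehat\ell+2y^{-1}\partial_y\widehat\ell$ combine with $\Delta_0\widehat\ell$ to give $D_2\widehat\ell$. This yields the first line.

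\emph{Second and third lines.} Work in the frame $e_s,e_\perp$ with $g_{\mathbb H^2}=ds^2+e^{-2s}d\omega^2$. The tension plus drift has components
\[
\mathcal H_s=\Delta s+\langle d\ell,ds\rangle+e^{-2s}|d\omega|^2
\]
and
\[
\mathcal H_\perp=e^{-s}\bigl(\Delta\omega+\langle d\ell-2ds,d\omega\rangle\bigr),
\]
both computed from the Christoffel symbols of the half-plane, with $A_\Phi=e^{-s}d\omega$. Substituting the singular logs, $\Delta_0\log(y/x)=x^{-2}-y^{-2}$ cancels against $\langle d\log(xy),d\log(y/x)\rangle_0=y^{-2}-x^{-2}$. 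The cross terms give $D_1\widehat s$ together with the divided derivatives $-x^{-1}\partial_x\widehat\ell+y^{-1}\partial_y\widehat\ell$. Using \eqref{eq:connection-form-local}, the $\omega$-equation is rewritten in terms of $\widehat\omega$; the singular coefficients assemble into $D_\perp\widehat\omega$, the $x^{-2},y^{-2}$ terms arising exactly as in the first-mode lift.

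\emph{Fourth line.} Here $K_{\gamma}=-e^{-2c_\gamma}\Delta_0c_\gamma$, so $e^{2c_\gamma}(\Delta_\gamma a-K_\gamma)=\Delta_0\widehat a$. We also have $|d\Phi|^2=ds^2+A_\Phi^2$ in norm. Expanding $|ds|^2-|d\ell|^2$ produces singular squares that cancel, namely $(x^{-2}+y^{-2})-(x^{-2}+y^{-2})$, and cross terms that give $-\tfrac12\bigl(x^{-1}\partial_x(\widehat\ell+\widehat s)+y^{-1}\partial_y(\widehat\ell-\widehat s)\bigr)$.

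\emph{Regularity.} Parity is read off term by term. $D_\perp\widehat\omega$ is odd for odd $\widehat\omega$. By the parities in \eqref{eq:connection-form-local}, $A_\Phi$ has its $dx$ coefficient odd in $y$ and its $dy$ coefficient odd in $x$, so $\langle d\widehat\ell-2d\widehat s,A_\Phi\rangle_0$ is odd in each variable. Smoothness into $C^{0,\alpha}$ then follows from Lemma~\ref{lem:division-lifts}(a): \eqref{eq:divided-even-norm} handles the divided derivatives of even functions, and \eqref{eq:divided-odd} handles $D_\perp$ on odd functions and, via \eqref{eq:connection-regularity}, the term $A_\Phi$. Composition with the smooth maps of Lemma~\ref{lem:regularized-chart} completes this step. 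The rod case is the same computation with only the $x$-singularity.

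The main obstacle is the bookkeeping in the $\omega$-equation, where the singular factor $y/x$ must combine exactly into $D_\perp$. I will check this step by testing on $\widehat\omega=xy\,\varphi$ with $\varphi$ even.
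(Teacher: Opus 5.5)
Your proposal is correct and follows the paper's route: you substitute the singular logarithms, observe that the purely singular terms cancel, and do the parity and H\"older bookkeeping with Lemma~\ref{lem:division-lifts}(a) and \eqref{eq:connection-regularity}. The step you defer (rather than testing on $\widehat\omega=xy\varphi$) is settled in the paper by the operator identity $\frac{x}{y}\bigl(\Delta_0+\frac{3}{x}\partial_x-\frac{1}{y}\partial_y\bigr)\bigl(\frac{y}{x}\widehat\omega\bigr)=D_\perp\widehat\omega$, which uses that the singular part of $d\ell-2ds$ is $3\,dx/x-dy/y$; after that, the leftover term $e^{-s}\langle d\widehat\ell-2d\widehat s,d\omega\rangle_0$ is $\langle d\widehat\ell-2d\widehat s,A_\Phi\rangle_0$ by definition of $A_\Phi$ (the paper also notes that $\Pi_{\Phi_*}$ preserves the parities, so that $\mathcal E_2$ itself lands in $\mathcal Y$).
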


\begin{proof}

For the third line, the singular part of $d\ell-2ds$ is $3\,dx/x-dy/y$, and
\[
 \frac{x}y\Bigl(\Delta_0+\frac{3}x\partial_x-\frac{1}y\partial_y\Bigr)\Bigl(\frac{y}x\widehat\omega\Bigr)=D_\perp\widehat\omega,\qquad\text{and}\qquad A_\Phi=e^{-\widehat s}\bigl[d\widehat\omega+\widehat\omega(-dx/x+dy/y)\bigr].
\]
By \eqref{eq:divided-odd} and Lemma~\ref{lem:division-lifts}(a), the right-hand sides of \eqref{cmp:nonlinear-first-residuals} are smooth maps to $C^{0,\alpha}$. In \eqref{eq:connection-form-local}, the coefficients of $A_\Phi$ have parities (even, odd) and (odd, even), so $|A_\Phi|^2$ is even. The transverse equation $e^{2c_\gamma}\mathcal H_\perp$ is odd in each variable. By Lemma~\ref{lem:matrix-calculus}(iii), $\Pi_{\Phi_*}$ has the same algebraic formula in the rescaled matrices, preserving even diagonal and odd off-diagonal entries. Hence $\mathcal E_2$ takes values in the spaces of \eqref{eq:conic-spaces}. There is no divided derivative of $\widehat a$.

\end{proof}

\subsection{The lifted target equation}\label{subsec:lifted-regularity}

\begin{lemma}\label{lem:lifted-target}
Let $U$ be a rod (resp. corner) chart for the fixed smooth background of Lemma~\ref{lem:shifted-regularity}, and $U'\subset U$ a chart whose closure in the closed half-plane (resp. quadrant) is a compact subset of $U$. Let $\xi$, with $\mathfrak a_2(\xi;U)<+\infty$ in \eqref{cmp:positive-forms}, be a weak solution of $A_2\xi=F_2$ on $U$ in this local energy class, with $F_s\in C^{0,\alpha}_{\mathrm{ev}}(U)$ and $F_\perp\in C^{0,\alpha}_{\mathrm{odd}}(U)$. Then we have
\[
 \|\xi_s\|_{C^{2,\alpha}_{\mathrm{ev}}(U')}+\|\xi_\perp\|_{C^{2,\alpha}_{\mathrm{odd}}(U')}
 \leqslant C\bigl(\|F_s\|_{C^{0,\alpha}_{\mathrm{ev}}(U)}+\|F_\perp\|_{C^{0,\alpha}_{\mathrm{odd}}(U)}+\|\xi\|_{L^2(U,\rho\,dA_\gamma)}\bigr),
\]
where $C$ depends only on $U$, $U'$ and the background.
\end{lemma}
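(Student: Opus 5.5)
The plan is to write the operator $A_2=L_\Phi-1$ in the regularized corner variables and lift it, via Lemma~\ref{lem:division-lifts}, to an ordinary elliptic system on $\mathbb R^2\times\mathbb R^2$ (resp. $\mathbb R^2\times\mathbb R$ at a rod). There the weak solution becomes an $H^1$ weak solution, and standard $W^{2,p}$ and Schauder theory applies.

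\emph{Step 1: the linearized regularized equations.} Differentiate the second and third lines of \eqref{cmp:nonlinear-first-residuals} at $q=0$ in the direction $(0,\xi,0)$. By Lemma~\ref{lem:regularized-chart}, the variation of $(\widehat s,\widehat\omega)$ is $(\xi_s,e^{\widehat s_*}\xi_\perp)$. Up to the factor $e^{2c_\gamma}$, this gives
\[
 D_1\xi_s+\langle d\widehat\ell,d\xi_s\rangle_0+\text{(terms in }\xi_\perp\text{ via }A_\Phi),\qquad D_\perp\xi_\perp+\langle d\widehat\ell,d\xi_\perp\rangle_0+\text{(lower order)},
\]
plus the curvature term and the shift $-\xi$. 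The divided derivatives $\widehat\ell_x/x$ do not act on $\xi$, so the only singular parts are $D_1$ and $D_\perp$. The remaining couplings have coefficients built from $A_\Phi$, $d\widehat s_*$ and $d\widehat\ell_*$. These are smooth, and by \eqref{eq:connection-regularity} and the parities they have the right parity: the $\xi_\perp$-coupling in the $s$-equation is odd$\times$odd, hence even, and vice versa.

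\emph{Step 2: lift.} Set $\Xi:=(\mathcal L_1\xi_s,\mathcal L_\perp\xi_\perp)$. By \eqref{eq:lift-energy}, the energy $\mathfrak a_2(\xi;U)$, whose weight $\rho\sim xy$ and terms $(x^{-2}+y^{-2})\xi_\perp^2$ come from $|\nabla\xi|$ in the frame \eqref{eq:target-rod-frame}, controls $\|\Xi\|_{H^1}$ on the lifted domain. By Lemma~\ref{lem:division-lifts}(c), using $G$-averaging of test functions and equivariance, $\Xi$ is a weak solution of a system $\Delta\Xi+b\cdot\nabla\Xi+c\,\Xi=\widetilde F$. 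Here the zeroth-order coupling between the scalar lift and the tensor lift has to be written as multiplication by lifted coefficients. The coefficient of $\xi_\perp$ in the $s$-equation is a product $A_\Phi$-component$\times$odd profile. I would factor it as $(\text{odd coefficient}/x)\cdot(\xi_\perp)$ paired with $Z/|Z|$ so that it becomes a smooth matrix contraction of $\mathcal L_\perp\xi_\perp$, using Lemma~\ref{lem:division-lifts}(a) to keep the coefficients in $C^{0,\alpha}$. This bookkeeping, making every coupling a bounded H\"older coefficient in the lifted picture, is the main obstacle.

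\emph{Step 3: regularity.} With bounded coefficients, De Giorgi--Nash/$L^\infty$ estimates \cite[Theorem~8.17]{GT01} followed by $W^{2,p}$ estimates \cite[Theorem~9.11]{GT01} with large $p$ give $\Xi\in C^{1,\alpha}$. The coefficients are then $C^{0,\alpha}$, and Schauder estimates \cite[Theorem~6.2]{GT01} give $\Xi\in C^{2,\alpha}$ on the lift of $U'$. The reverse bounds in \eqref{eq:lift-norms} transfer this to $\|\xi_s\|_{C^{2,\alpha}_{\mathrm{ev}}}+\|\xi_\perp\|_{C^{2,\alpha}_{\mathrm{odd}}}$. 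Finally, the $H^1$ norm on the right is bounded by $\|F\|$ plus $\|\xi\|_{L^2(\rho\,dA_\gamma)}$ through a Caccioppoli estimate for the coercive form $\mathfrak a_2$. At a rod, the same argument works with a single radial block.
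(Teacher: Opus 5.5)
\textbf{Gap in the coupling step.} Your overall route matches the paper's: lift $(\xi_s,\xi_\perp)$ by $(\mathcal L_1,\mathcal L_\perp)$, get an $H^1$ weak solution across the axes via Lemma~\ref{lem:division-lifts}(c), then apply elliptic regularity. However, the step you flag as the main obstacle is resolved incorrectly, and the Step~1 claim behind it is false.

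Linearize $|A_\Phi|_0^2$ in \eqref{cmp:nonlinear-first-residuals} with $\delta\widehat\omega=e^{\widehat s}\xi_\perp$ and \eqref{eq:connection-form-local}. This gives $\delta A_\Phi=-\xi_sA_\Phi+d\xi_\perp+\xi_\perp\,ds$, with the full $ds=-dx/x+dy/y+d\widehat s$ rather than $d\widehat s$. So the $\xi_\perp$-coupling in the $s$-equation is
\[
 2\langle A_\Phi,d\xi_\perp\rangle_0+2\xi_\perp\langle A_\Phi,ds\rangle_0=e^{2c_\gamma}T_0^*\xi_\perp .
\]
Write $A_\Phi=x^2yP\,dx+xQ\,dy$ with $P,Q$ smooth and even. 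The zeroth-order coefficient then contains $2xQ/y$. In a corner chart this is unbounded near $\{y=0\}$ wherever $\mathfrak t=\widehat\omega_*/(xy)$ does not vanish there, which is the case for the hexagon background.

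So a divided quantity, $\xi_\perp/y$, does act on $\xi$. Your proposed factorization cannot turn this into a bounded zeroth-order coupling: a bounded contraction $c_{ab}(\Xi_\perp)_{ab}=c_{ab}\frac{Z_a}{x}\frac{Z'_b}{y}\xi_\perp$ would need $c_{ab}\sim Z_aZ'_b/y^2$ to produce $x/y$. The term becomes regular only when grouped with the derivative terms:
\begin{itemize}
\item $xQ(\partial_y\xi_\perp+\xi_\perp/y)=Q\,Z_a\partial_{Z'_b}(\Xi_\perp)_{ab}$ is a lifted divergence;
\item $xyP(x\partial_x\xi_\perp-\xi_\perp)=P\,Z_aZ'_b\bigl((Z\cdot\nabla_Z)-1\bigr)(\Xi_\perp)_{ab}$.
\end{itemize}
The coupling is therefore a first-order operator with smooth lifted coefficients, not a bounded multiplication. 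Relatedly, the weight $(x^{-2}+y^{-2})\xi_\perp^2$ in the energy comes from $|\xi\wedge d\Phi|^2\ni\xi_\perp^2|ds|^2$, not from $|\nabla\xi|$.

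\textbf{How the paper avoids this, and how to repair your argument.} The paper writes $A_2$ in the symmetric form \eqref{cmp:frame-jacobi}. The coupling $T_0=-2\langle A_\Phi,d\,\cdot\,\rangle_\gamma$ from $\xi_s$ to $\xi_\perp$ involves no division. It lifts to \eqref{app:lifted-coupling} with smooth coefficients, because $\mathfrak t$ is smooth and even for the smooth background. The other coupling, $T_0^*$, is the formal adjoint of $T_0$ for $\rho\,dA_\gamma$, which lifts to a constant multiple of the smooth positive density $e^{\widehat\ell+2c_\gamma}dZ\,dZ'$. So $T_0^*$ automatically has smooth coefficients too.

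To close your argument you need either this adjoint observation or the explicit regrouping above. Alternatively, keep the coupling on the right-hand side, using $|\xi_\perp/y|\leqslant|\nabla\Xi_\perp|$, and bootstrap through $W^{2,p}$ estimates and Lemma~\ref{lem:division-lifts}(a). In every version, note that \cite[Theorem~8.17]{GT01} is a scalar result. For the coupled system, regularity should come from the diagonal Laplacian principal part, with the couplings treated perturbatively.
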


\begin{proof}
Write $\xi=\xi_se_s+\xi_\perp e_\perp$ in the frame \eqref{eq:target-rod-frame}, whose connection form $A_\Phi:=e^{-s}d\omega$ has the regularized expression \eqref{eq:connection-form-local}. Then we have
\[
 \nabla\xi=(d\xi_s+\xi_\perp A_\Phi)\,e_s+(d\xi_\perp-\xi_sA_\Phi)\,e_\perp,\qquad\text{and}\qquad d\Phi=ds\,e_s+A_\Phi\,e_\perp.
\]
The background map equation gives $\operatorname{div}_{\rho\,dA_\gamma}A_\Phi^\sharp=\langle A_\Phi,ds\rangle_\gamma$. With it, \eqref{eq:weighted-Jacobi} takes the symmetric form
\begin{equation}\label{cmp:frame-jacobi}
 A_2=\begin{pmatrix}L_0-2|A_\Phi|_\gamma^2-1&T_0^*\\ T_0&L_0-|ds|_\gamma^2-|A_\Phi|_\gamma^2-1\end{pmatrix},
\end{equation}
in the basis $(\xi_s,\xi_\perp)$, with $L_0:=\Delta_\gamma+\langle d\ell,d\,\cdot\,\rangle_\gamma$ and $T_0:=-2\langle A_\Phi,d\,\cdot\,\rangle_\gamma$, where $T_0^*$ is the formal adjoint of $T_0$ for $\rho\,dA_\gamma$. The zeroth-order coefficients are those of $\mathfrak a_2$ in \eqref{cmp:positive-forms}.

Write $x=|Z|$, $y=|{Z'}|$ for $Z,{Z'}\in\mathbb R^2$, and lift the two components of $\xi$ from the quadrant to a scalar and a tensor field on $\widetilde U:=\{(Z,Z')\in\mathbb R^2\times\mathbb R^2:(|Z|,|Z'|)\in U\}$ by
\[
 \Xi_s:=\mathcal L_1\xi_s,\qquad \Xi_\perp:=\mathcal L_\perp\xi_\perp,\qquad\text{that is}\quad
 (\Xi_\perp)_{ab}(Z,{Z'})=\frac{Z_a}x\frac{{Z'}_b}y\,\xi_\perp(x,y).
\]
For the fixed smooth background, $\mathfrak t:=\widehat\omega/(xy)$ is smooth and even, by Corollary~\ref{cor:fixed-set-regularity} and Lemma~\ref{lem:division-lifts}(a) with $k=l=1$, and we write
\[
 s=\log(y/x)+\widehat s,\qquad P:=e^{-\widehat s}\frac{\partial_x\mathfrak t}x,\qquad Q:=e^{-\widehat s}(2\mathfrak t+y\partial_y\mathfrak t),\qquad\text{and}\qquad A_\Phi=x^2yP\,dx+xQ\,dy,
\]
with $P$ and $Q$ smooth and even.
Direct computations in the variables $(Z,{Z'})$ give
\begin{equation}\label{eq:angular-energy}
\begin{aligned}
 |\nabla\Xi_\perp|^2&=|d\xi_\perp|_0^2+(x^{-2}+y^{-2})\xi_\perp^2,\\
 |ds|_0^2-x^{-2}-y^{-2}&=|d\widehat s|_0^2-2\frac{\partial_x\widehat s}x+2\frac{\partial_y\widehat s}y.
\end{aligned}
\end{equation}
The formula for $\nabla\xi$ above, the identity $|\xi\wedge d\Phi|=|\xi_sA_\Phi-\xi_\perp ds|$, the boundedness of $A_\Phi$ and \eqref{eq:angular-energy} give $\|\Xi\|_{H^1(\widetilde U)}^2\leqslant C\,\mathfrak a_2(\xi;U)$. The right-hand side of the second identity of \eqref{eq:angular-energy} is smooth by evenness and \eqref{eq:divided-even}. In the transverse component of $A_2\xi=F_2$, the operator $D_1-|ds|_0^2$ therefore lifts to the Euclidean Laplacian on $\mathbb R^2\times\mathbb R^2$ plus a smooth potential.

By \eqref{cmp:frame-jacobi}, the off-diagonal entries of $A_2$ are $T_0=-2\langle A_\Phi,d\,\cdot\,\rangle_\gamma$, from $\xi_s$ to $\xi_\perp$, and its formal adjoint $T_0^*$ for $\rho\,dA_\gamma$. After lifting, $e^{2c_\gamma}T_0=-2\langle A_\Phi,d\,\cdot\,\rangle_0$ becomes
\begin{equation}\label{app:lifted-coupling}
\begin{aligned}
 &\bigl[\mathcal L_\perp\bigl(-2\langle A_\Phi,d\xi_s\rangle_0\bigr)\bigr]_{ab}=-2Z_a\bigl[P\,Z'_b\,(Z\cdot\nabla_Z\Xi_s)+Q\,\partial_{Z'_b}\Xi_s\bigr],
\end{aligned}
\end{equation}
whose coefficients are smooth. The measure $\rho\,dA_\gamma$ lifts to a constant multiple of the smooth positive density $e^{\widehat\ell+2c_\gamma}dZ\,dZ'$, so the formal adjoint $T_0^*$ for this measure also has smooth coefficients. The remaining diagonal terms are smooth.
Thus the lifted equation has the form
\[
 \Delta\Xi+B^a\partial_a\Xi+C\Xi=\bigl(\mathcal L_1(e^{2c_\gamma}F_s),\mathcal L_\perp(e^{2c_\gamma}F_\perp)\bigr),\qquad\text{with }B^a,C\ \text{smooth}.
\]
By Lemma~\ref{lem:division-lifts}(c), the lifted system holds weakly across the axes.

By Lemma~\ref{lem:division-lifts}(b), its right-hand side is $C^{0,\alpha}$, and interior elliptic regularity for this system with scalar Laplacian principal part, followed by the Schauder estimate \cite[Chapters~6 and~9]{GT01}, gives the asserted bound.
Restriction to $Z=(x,0)$, ${Z'}=(y,0)$, with signed $x,y$, gives the even extension of $\xi_s$ and the odd extension of $\xi_\perp$.
\end{proof}

\subsection{Weak removability}\label{subsec:weighted-calculus}

\begin{lemma}\label{lem:B-cutoffs}Let $p_0$ be a point of a two-dimensional chart with the metric $\gamma_\beta$, for $\beta$ in a compact subset of $(0,+\infty)$. Let $A$ be bounded and uniformly elliptic, $f\in L^1_{\mathrm{loc}}$ and $u\in H^1_{\mathrm{loc}}$. If $\diver(A\,du)=f$ holds weakly off $p_0$, then it holds weakly across $p_0$. The same conclusion holds if the measure is multiplied by a density bounded above and below by positive constants.
\end{lemma}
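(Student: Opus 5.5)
The plan is the standard capacity argument: in dimension two a point has zero $H^1$-capacity, so cutoffs that vanish near $p_0$ can approximate the constant $1$ in energy while costing nothing in the limit. Fix a test function $\varphi\in C^\infty_c$ of the chart, possibly supported near $p_0$. Work in the Cartesian coordinates $(R\cos\theta,R\sin\theta)$ adapted to $\gamma_\beta$. There $dA_{\gamma_\beta}$ is a bounded positive multiple of Lebesgue measure, and $|du|_{\gamma_\beta}$ is comparable to the Euclidean gradient, uniformly for $\beta$ in the compact set. So it suffices to argue with Euclidean measure and a bounded uniformly elliptic matrix $A$. A density bounded above and below is simply absorbed into $A$ and into $f$.

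For $0<\varepsilon<1$ I will use the logarithmic cutoff
\[
 \chi_\varepsilon(R):=\begin{cases}0,&R\leqslant\varepsilon^2,\\ \log(R/\varepsilon^2)/\log(1/\varepsilon),&\varepsilon^2\leqslant R\leqslant\varepsilon,\\ 1,&R\geqslant\varepsilon.\end{cases}
\]
This is Lipschitz, vanishes near $p_0$, satisfies $0\leqslant\chi_\varepsilon\leqslant1$, and has $\int|d\chi_\varepsilon|^2\,dA\leqslant 2\pi/\log(1/\varepsilon)\to0$. Since $\chi_\varepsilon\varphi\in H^1$ is compactly supported off $p_0$, and $f$ is only $L^1_{\mathrm{loc}}$, I first need $\chi_\varepsilon\varphi$ to be an admissible test function. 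I will obtain this by mollifying $\chi_\varepsilon$ slightly, keeping its support away from $p_0$. The mollified function is smooth and bounded, so the weak equation applies to it, and the $L^\infty$ bound lets $\int f\chi_\varepsilon\varphi$ pass to the limit under the mollification. Testing then gives
\[
 -\int\langle A\,du,d\varphi\rangle\chi_\varepsilon-\int\langle A\,du,d\chi_\varepsilon\rangle\varphi=\int f\chi_\varepsilon\varphi .
\]

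Now let $\varepsilon\to0$. The first term converges to $-\int\langle A\,du,d\varphi\rangle$ by dominated convergence, since $du\in L^2_{\mathrm{loc}}$ and $A$ is bounded. The right-hand side converges to $\int f\varphi$ by dominated convergence, using $f\varphi\in L^1$. The middle term is bounded by
\[
 C\|\varphi\|_\infty\,\|du\|_{L^2(B_\varepsilon)}\,\|d\chi_\varepsilon\|_{L^2}\to0,
\]
because both factors tend to zero. In the limit this is exactly the weak equation across $p_0$.

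The only point needing care is the admissibility of the Lipschitz, non-smooth test function $\chi_\varepsilon\varphi$ when $f$ is merely $L^1$. The mollification step above handles this, so I expect no genuine obstacle. The uniformity in $\beta$ enters only through the comparability constants of the first paragraph.
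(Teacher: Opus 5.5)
Your proposal is correct and matches the paper's proof essentially step for step: the same logarithmic cutoff $\chi_\varepsilon$ vanishing on $\{R\leqslant\varepsilon^2\}$, the same $O(1/\log(1/\varepsilon))$ energy bound that kills the cross term, and dominated convergence for the other two terms. Your mollification step, which justifies testing with the Lipschitz function $\chi_\varepsilon\varphi$ against $f\in L^1_{\mathrm{loc}}$, is a detail the paper leaves implicit; with a radial mollifier it also keeps the test functions invariant under the finite symmetries, a point the paper remarks on.
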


\begin{proof}Let $\varrho$ be the radial coordinate of a cone chart centered at $p_0$, which is $R$ for the central chart. The logarithmic cutoffs $\chi_\varepsilon:=\min\bigl\{1,\max\bigl\{0,\lambda_\varepsilon\bigr\}\bigr\}$, with $\lambda_\varepsilon:=\log(\varrho/\varepsilon^2)/\log(1/\varepsilon)$, vanish near $p_0$, converge to $1$ off $p_0$, and satisfy, by polar integration, for every compactly supported smooth $\varphi$,
\[
 \int\varphi^2|d\chi_\varepsilon|^2\leqslant\frac{C_\varphi}{\log(1/\varepsilon)},\qquad\text{and}\qquad
 \Bigl|\int\varphi\langle A\,du,d\chi_\varepsilon\rangle\Bigr|\leqslant C\|du\|_{L^2(\operatorname{supp}\varphi)}\|\varphi\,d\chi_\varepsilon\|_{L^2}\longrightarrow0.
\]
Testing the equation with $\chi_\varepsilon\varphi$ and dominated convergence give the weak equation across $p_0$. The cutoff estimate persists for such a density, and radial cutoffs preserve the finite symmetries.

\end{proof}

\bigskip\noindent\footnotesize
\textsc{MIT, Dept.\ of Math., 77 Massachusetts Avenue, Cambridge, MA 02139-4307}\par
\textit{Email address:} \texttt{ozuch@mit.edu}\par

\begin{thebibliography}{MRS15}


\bibitem[And89]{And89}
M.~T.~Anderson,
\emph{Ricci curvature bounds and Einstein metrics on compact manifolds},
J. Amer. Math. Soc. \textbf{2} (1989), no.~3, 455--490.
\href{https://doi.org/10.1090/S0894-0347-1989-0999661-1}{doi:10.1090/S0894-0347-1989-0999661-1}.

\bibitem[And06]{And06}
M.~T.~Anderson,
\emph{Dehn filling and Einstein metrics in higher dimensions},
J. Differential Geom. \textbf{73} (2006), no.~2, 219--261.
\href{https://doi.org/10.4310/jdg/1146169911}{doi:10.4310/jdg/1146169911}.

\bibitem[And10]{And10}
M.~T.~Anderson,
\emph{A survey of Einstein metrics on $4$-manifolds},
Handbook of Geometric Analysis, No. 3, Adv. Lect. Math. \textbf{14},
International Press, 2010, 1--39.

\bibitem[AL13]{AL13}
M.~F.~Atiyah and C.~LeBrun,
\emph{Curvature, cones, and characteristic numbers},
Math. Proc. Cambridge Philos. Soc. \textbf{155} (2013), 13--37.
\href{https://arxiv.org/abs/1203.6389}{arXiv:1203.6389}.

\bibitem[Aub78]{Aub78}
T.~Aubin,
\emph{\'Equations du type Monge--Amp\`ere sur les vari\'et\'es k\"ahl\'eriennes compactes},
Bull. Sci. Math. (2) \textbf{102} (1978), no.~1, 63--95.

\bibitem[Bam12]{Bam12}
R.~H.~Bamler,
\emph{Construction of Einstein metrics by generalized Dehn filling},
J. Eur. Math. Soc. \textbf{14} (2012), no.~3, 887--909.
\href{https://doi.org/10.4171/JEMS/321}{doi:10.4171/JEMS/321}.

\bibitem[BKN89]{BKN89}
S.~Bando, A.~Kasue, and H.~Nakajima,
\emph{On a construction of coordinates at infinity on manifolds
with fast curvature decay and maximal volume growth},
Invent. Math. \textbf{97} (1989), 313--349.
\href{https://doi.org/10.1007/BF01389045}{doi:10.1007/BF01389045}.

\bibitem[BS99]{BS99}
V.~V.~Batyrev and E.~N.~Selivanova,
\emph{Einstein--K\"ahler metrics on symmetric toric Fano manifolds},
J. Reine Angew. Math. \textbf{512} (1999), 225--236.
\href{https://arxiv.org/abs/math/9901001}{arXiv:math/9901001}.

\bibitem[BM07]{BM07}
Ya.~V.~Bazaikin and I.~V.~Matvienko,
\emph{On the four-dimensional $T^2$-manifolds of positive
Ricci curvature},
Siberian Math. J. \textbf{48} (2007), no.~5, 778--783.
\href{https://doi.org/10.1007/s11202-007-0080-7}
{doi:10.1007/s11202-007-0080-7}.

\bibitem[BKMR21]{BKMR21}
J.~Bertrand, C.~Ketterer, I.~Mondello, and T.~Richard,
\emph{Stratified spaces and synthetic Ricci curvature bounds},
Ann. Inst. Fourier \textbf{71} (2021), no.~1, 123--173.
\href{https://arxiv.org/abs/1804.08870}{arXiv:1804.08870}.

\bibitem[Bes87]{Bes87}
A.~L.~Besse,
\emph{Einstein Manifolds}, Springer, 1987.
\href{https://doi.org/10.1007/978-3-540-74311-8}{doi:10.1007/978-3-540-74311-8}.

\bibitem[Biq08]{Biq08}
O.~Biquard,
\emph{Polycopi\'e on Differential Geometry and Global Analysis},
lecture notes, UPMC, February 2008, Chapter~3.
\url{https://webusers.imj-prg.fr/~olivier.biquard/dgga2007.pdf}.

\bibitem[BG22]{BG22}
O.~Biquard and H.~Guenancia,
\emph{Degenerating K\"ahler--Einstein cones, locally symmetric cusps, and the Tian--Yau metric},
Invent. Math. \textbf{230} (2022), 1101--1163.
\href{https://arxiv.org/abs/2108.13318}{arXiv:2108.13318}.


\bibitem[Biq13]{Biq13}
O.~Biquard,
\emph{D\'esingularisation de m\'etriques d'Einstein. I},
Invent. Math. \textbf{192} (2013), no.~1, 197--252.


\bibitem[Boh98]{Boh98}
C.~B\"ohm,
\emph{Inhomogeneous Einstein metrics on low-dimensional spheres and
other low-dimensional spaces}, Invent. Math. \textbf{134} (1998), 145--176.
\href{https://doi.org/10.1007/s002220050261}{doi:10.1007/s002220050261}.

\bibitem[BH99]{BH99}
M.~R.~Bridson and A.~Haefliger,
\emph{Metric Spaces of Non-Positive Curvature},
Grundlehren der Mathematischen Wissenschaften, vol.~319, Springer, 1999.

\bibitem[CS04]{CS04}
D.~M.~J.~Calderbank and M.~A.~Singer,
\emph{Einstein metrics and complex singularities},
Invent. Math. \textbf{156} (2004), 405--443.
\href{https://arxiv.org/abs/math/0206229}{arXiv:math/0206229}.


\bibitem[CLSZ07]{CLSZ07}
C.~Charmousis, D.~Langlois, D.~Steer, and R.~Zegers,
\emph{Rotating spacetimes with a cosmological constant},
J. High Energy Phys. \textbf{2007}, no.~2, 064.

\bibitem[CC97]{CC97}
J.~Cheeger and T.~H.~Colding,
\emph{On the structure of spaces with Ricci curvature bounded below. I},
J. Differential Geom. \textbf{46} (1997), no.~3, 406--480.

\bibitem[CGT82]{CGT82}
J.~Cheeger, M.~Gromov, and M.~Taylor,
\emph{Finite propagation speed, kernel estimates for functions of the Laplace operator, and the geometry of complete Riemannian manifolds},
J. Differential Geom. \textbf{17} (1982), no.~1, 15--53.

\bibitem[CN15]{CN15}
J.~Cheeger and A.~Naber, \emph{Regularity of Einstein manifolds and the codimension $4$ conjecture}, Ann. of Math. (2) \textbf{182} (2015), no.~3, 1093--1165.

\bibitem[CT06]{CT06}
J.~Cheeger and G.~Tian,
\emph{Curvature and injectivity radius estimates for Einstein $4$-manifolds},
J. Amer. Math. Soc. \textbf{19} (2006), no.~2, 487--525.
\href{https://doi.org/10.1090/S0894-0347-05-00511-4}{doi:10.1090/S0894-0347-05-00511-4}.

\bibitem[CZ17]{CZ17}
Q.~Chen and G.~Zhao,
\emph{A Schwarz lemma for $V$-harmonic maps and their applications},
Bull. Aust. Math. Soc. \textbf{96} (2017), no.~3, 504--512.
\href{https://doi.org/10.1017/S000497271700051X}{doi:10.1017/S000497271700051X}.

\bibitem[CDS15a]{CDS15a}
X.~Chen, S.~Donaldson, and S.~Sun,
\emph{K\"ahler--Einstein metrics on Fano manifolds, I:
approximation of metrics with cone singularities},
J. Amer. Math. Soc. \textbf{28} (2015), no.~1, 183--197.
\href{https://doi.org/10.1090/S0894-0347-2014-00799-2}{doi:10.1090/S0894-0347-2014-00799-2}.

\bibitem[CDS15b]{CDS15b}
X.~Chen, S.~Donaldson, and S.~Sun,
\emph{K\"ahler--Einstein metrics on Fano manifolds, II:
limits with cone angle less than $2\pi$},
J. Amer. Math. Soc. \textbf{28} (2015), no.~1, 199--234.
\href{https://doi.org/10.1090/S0894-0347-2014-00800-6}{doi:10.1090/S0894-0347-2014-00800-6}.

\bibitem[CDS15c]{CDS15c}
X.~Chen, S.~Donaldson, and S.~Sun,
\emph{K\"ahler--Einstein metrics on Fano manifolds, III:
limits as cone angle approaches $2\pi$ and completion of the main proof},
J. Amer. Math. Soc. \textbf{28} (2015), no.~1, 235--278.
\href{https://doi.org/10.1090/S0894-0347-2014-00801-8}{doi:10.1090/S0894-0347-2014-00801-8}.

\bibitem[CLW08]{CLW08}
X.~Chen, C.~LeBrun, and B.~Weber,
\emph{On conformally K\"ahler, Einstein
manifolds}, J. Amer. Math. Soc. \textbf{21} (2008), 1137--1168.

\bibitem[CT10]{CT10}
Y.~Chen and E.~Teo,
\emph{Rod-structure classification of gravitational instantons with $U(1)\times U(1)$ isometry},
Nuclear Phys. B \textbf{838} (2010), no.~1--2, 207--237.

\bibitem[Chr08]{Chr08}
P.~T.~Chru\'sciel,
\emph{Mass and angular-momentum inequalities for axi-symmetric initial data sets. I. Positivity of mass},
Ann. Physics \textbf{323} (2008), no.~10, 2566--2590.

\bibitem[DHPW23]{DHPW23}
X.~Dai, S.~Honda, J.~Pan, and G.~Wei,
\emph{Singular Weyl's law with Ricci curvature bounded below},
Trans. Amer. Math. Soc. Ser. B \textbf{10} (2023), 1212--1253.
\href{https://arxiv.org/abs/2208.13962}{arXiv:2208.13962}.

\bibitem[DJ91]{DJ91}
M.~W.~Davis and T.~Januszkiewicz,
\emph{Convex polytopes, Coxeter orbifolds and torus actions},
Duke Math. J. \textbf{62} (1991), no.~2, 417--451.

\bibitem[Der83]{Der83}
A.~Derdzi\'nski,
\emph{Self-dual K\"ahler manifolds and Einstein manifolds of dimension four},
Compositio Math. \textbf{49} (1983), no.~3, 405--433.


\bibitem[DGSW23]{DGSW23}
O.~J.~C.~Dias, G.~W.~Gibbons, J.~E.~Santos, and B.~Way,
\emph{Static Black Binaries in de Sitter Space},
Phys. Rev. Lett. \textbf{131} (2023), 131401.
\href{https://doi.org/10.1103/PhysRevLett.131.131401}{doi:10.1103/PhysRevLett.131.131401}.

\bibitem[Don08]{Don08}
S.~K.~Donaldson,
\emph{Extremal metrics on toric surfaces: a continuity method},
J. Differential Geom. \textbf{79} (2008), no.~3, 389--432.

\bibitem[Don12]{Don12}
S.~K.~Donaldson,
\emph{K\"ahler metrics with cone singularities along a divisor},
Essays in Mathematics and its Applications, Springer, 2012, 49--79.

\bibitem[ES64]{ES64}
J.~Eells and J.~H.~Sampson,
\emph{Harmonic mappings of Riemannian manifolds},
Amer. J. Math. \textbf{86} (1964), 109--160.

\bibitem[ER02]{ER02}
R.~Emparan and H.~S.~Reall,
\emph{Generalized Weyl solutions},
Phys. Rev. D \textbf{65} (2002), 084025.
\href{https://arxiv.org/abs/hep-th/0110258}{arXiv:hep-th/0110258}.

\bibitem[FP20]{FP20}
J.~Fine and B.~Premoselli,
\emph{Examples of compact Einstein four-manifolds
with negative curvature}, J. Amer. Math. Soc. \textbf{33} (2020), 991--1038.

\bibitem[Fre82]{Fre82}
M.~H.~Freedman,
\emph{The topology of four-dimensional manifolds},
J. Differential Geom. \textbf{17} (1982), 357--453.
\href{https://doi.org/10.4310/jdg/1214437136}{doi:10.4310/jdg/1214437136}.

\bibitem[GH77]{GH77}
G.~W.~Gibbons and S.~W.~Hawking,
\emph{Cosmological event horizons, thermodynamics, and particle creation},
Phys. Rev. D \textbf{15} (1977), no.~10, 2738--2751.

\bibitem[GT01]{GT01}
D.~Gilbarg and N.~S.~Trudinger,
\emph{Elliptic Partial Differential Equations of Second Order},
Classics in Mathematics, Springer, 2001.

\bibitem[Gri85]{Gri85}
P.~Grisvard, \emph{Elliptic Problems in Nonsmooth Domains},
Monographs and Studies in Mathematics, vol.~24, Pitman, 1985;
reprinted in Classics in Applied Mathematics, vol.~69, SIAM, 2011.

\bibitem[GL80]{GL80}
M.~Gromov and H.~B.~Lawson, Jr.,
\emph{The classification of simply connected manifolds of positive
scalar curvature}, Ann. of Math. (2) \textbf{111} (1980), no.~3, 423--434.
\href{https://doi.org/10.2307/1971103}{doi:10.2307/1971103}.

\bibitem[GK73]{GK73}
K.~Grove and H.~Karcher,
\emph{How to conjugate $C^1$-close group actions}, Math. Z. \textbf{132} (1973), 11--20.
\href{https://doi.org/10.1007/BF01214029}{doi:10.1007/BF01214029}.

\bibitem[GZ12]{GZ12}
K.~Grove and W.~Ziller,
\emph{Polar manifolds and actions},
J. Fixed Point Theory Appl. \textbf{11} (2012),
no.~2, 279--313.
\href{https://doi.org/10.1007/s11784-012-0087-y}
{doi:10.1007/s11784-012-0087-y}.

\bibitem[GH25]{GH25}
H.~Guenancia and U.~Hamenst\"adt,
\emph{K\"ahler--Einstein metrics of negative curvature}, preprint (2025).
\href{https://arxiv.org/abs/2503.02838}{arXiv:2503.02838}.

\bibitem[HJ24]{HJ24}
U.~Hamenst\"adt and F.~J\"ackel,
\emph{Negatively curved Einstein metrics on Gromov--Thurston manifolds},
J. Eur. Math. Soc., to appear;
\href{https://arxiv.org/abs/2411.12956}{arXiv:2411.12956} (2024).

\bibitem[Hit74]{Hit74}
N.~Hitchin,
\emph{Compact four-dimensional Einstein manifolds},
J. Differential Geom. \textbf{9} (1974), 435--441.


\bibitem[HO25]{HO}
Y.~Huang and T.~Ozuch,
\emph{Regularity of Einstein $5$-manifolds via $4$-dimensional gap theorems}, preprint (2025).
\href{https://arxiv.org/abs/2512.21317}{arXiv:2512.21317}.


\bibitem[JMR16]{JMR16}
T.~Jeffres, R.~Mazzeo, and Y.~A.~Rubinstein,
\emph{K\"ahler--Einstein metrics with edge singularities},
Ann. of Math. \textbf{183} (2016), 95--176.
\href{https://doi.org/10.4007/annals.2016.183.1.3}{doi:10.4007/annals.2016.183.1.3}.

\bibitem[KRWY22]{KRWY22}
M.~Khuri, M.~Reiris, G.~Weinstein, and S.~Yamada,
\emph{Gravitational solitons and complete Ricci flat Riemannian manifolds of infinite topological type},
Pure Appl. Math. Q. \textbf{20} (2024), no.~4, 1895--1921. \href{https://doi.org/10.4310/PAMQ.2024.v20.n4.a12}{doi:10.4310/PAMQ.2024.v20.n4.a12}; \href{https://arxiv.org/abs/2204.08048}{arXiv:2204.08048}.

\bibitem[KN94]{KN94}
D.~Korotkin and H.~Nicolai,
\emph{A periodic analog of the Schwarzschild solution},
preprint (1994), \href{https://arxiv.org/abs/gr-qc/9403029}{arXiv:gr-qc/9403029}.

\bibitem[Kot18]{Kot18}
F.~Kottler,
\emph{\"Uber die physikalischen Grundlagen der Einsteinschen Gravitationstheorie},
Ann. Phys. (4) \textbf{56} (1918), 401--462.
\href{https://doi.org/10.1002/andp.19183611402}{doi:10.1002/andp.19183611402}.

\bibitem[LeB96]{LeB96}
C.~LeBrun,
\emph{Four-manifolds without Einstein metrics},
Math. Res. Lett. \textbf{3} (1996), no.~2, 133--147.
\href{https://doi.org/10.4310/MRL.1996.v3.n2.a1}{doi:10.4310/MRL.1996.v3.n2.a1}.

\bibitem[LeB01]{LeB01}
C.~LeBrun,
\emph{Ricci curvature, minimal volumes, and Seiberg--Witten theory},
Invent. Math. \textbf{145} (2001), 279--316.

\bibitem[LeB12]{LeB12}
C.~LeBrun,
\emph{On Einstein, Hermitian $4$-manifolds},
J. Differential Geom. \textbf{90} (2012), 277--302.

\bibitem[LeB21]{LeB21}
C.~LeBrun,
\emph{Einstein metrics, conformal curvature, and anti-holomorphic involutions},
Ann. Math. Qu\'e. \textbf{45} (2021), no.~2, 391--405.
\href{https://doi.org/10.1007/s40316-020-00154-2}{doi:10.1007/s40316-020-00154-2}.

\bibitem[LeB26]{LeB26}
C.~LeBrun,
\emph{An overview of Einstein $4$-manifolds}, preprint, 2026.
\href{https://arxiv.org/abs/2609.03033}{arXiv:2609.03033}.

\bibitem[LO26]{LO26}
C.~LeBrun and T.~Ozuch,
\emph{Desingularizations of conformally K\"ahler, Einstein orbifolds},
preprint (2026).
\href{https://arxiv.org/abs/2601.19215}{arXiv:2601.19215}.

\bibitem[Li24]{Li24}
M.~Li,
\emph{Classification results for conformally K\"ahler gravitational instantons}, preprint (2024).
\href{https://arxiv.org/abs/2310.13197v4}{arXiv:2310.13197v4}.

\bibitem[LS25]{LS25}
M.~Li and S.~Sun,
\emph{Gravitational instantons and harmonic maps}, preprint (2025).
\href{https://arxiv.org/abs/2507.15284}{arXiv:2507.15284}.

\bibitem[Lic63]{Lic63}
A.~Lichnerowicz,
\emph{Spineurs harmoniques}, C. R. Acad. Sci. Paris
\textbf{257} (1963), 7--9.

\bibitem[LPP04]{LPP04}
H.~L\"u, D.~N.~Page, and C.~N.~Pope,
\emph{New inhomogeneous Einstein metrics on sphere bundles over
Einstein--K\"ahler manifolds}, Phys. Lett. B \textbf{593} (2004), 218--226.
\href{https://doi.org/10.1016/j.physletb.2004.04.068}{doi:10.1016/j.physletb.2004.04.068}.

\bibitem[Maz91]{Maz91}
R.~Mazzeo, \emph{Elliptic theory of differential edge operators. I}, Comm. Partial Differential Equations \textbf{16} (1991), no.~10, 1615--1664.

\bibitem[MW17]{MW17}
R.~Mazzeo and H.~Weiss,
\emph{Teichm\"uller theory for conic surfaces},
in \emph{Geometry, Analysis and Probability: In Honor of Jean-Michel Bismut},
Progress in Mathematics \textbf{310}, Birkh\"auser, Cham, 2017, 127--164.
\href{https://doi.org/10.1007/978-3-319-49638-2_7}{doi:10.1007/978-3-319-49638-2\_7}.

\bibitem[Mil63]{Mil63}
J.~Milnor,
\emph{Morse Theory},
Annals of Mathematics Studies \textbf{51}, Princeton University Press, 1963.

\bibitem[Mon18]{Mon18}
I.~Mondello,
\emph{An Obata singular theorem for stratified spaces},
Trans. Amer. Math. Soc. \textbf{370} (2018), no.~6, 4147--4175.
\href{https://doi.org/10.1090/tran/7105}{doi:10.1090/tran/7105}.

\bibitem[My87]{My87}
R.~C.~Myers,
\emph{Higher-dimensional black holes in compactified space-times},
Phys. Rev. D \textbf{35} (1987), no.~2, 455--466.
\href{https://doi.org/10.1103/PhysRevD.35.455}{doi:10.1103/PhysRevD.35.455}.

\bibitem[Oba62]{Oba62}
M.~Obata,
\emph{Certain conditions for a Riemannian manifold to be isometric with
a sphere}, J. Math. Soc. Japan \textbf{14} (1962), no.~3, 333--340.
\href{https://doi.org/10.2969/jmsj/01430333}{doi:10.2969/jmsj/01430333}.

\bibitem[OR70]{OR70}
P.~Orlik and F.~Raymond,
\emph{Actions of the torus on $4$-manifolds. I},
Trans. Amer. Math. Soc. \textbf{152} (1970), no.~2, 531--559.
\href{https://doi.org/10.1090/S0002-9947-1970-0268911-3}{doi:10.1090/S0002-9947-1970-0268911-3}.

\bibitem[Ozu21]{Ozu21}
T.~Ozuch,
\emph{Higher order obstructions to the desingularization of Einstein metrics},
Camb. J. Math. \textbf{9} (2021), no.~4, 901--976.
\href{https://arxiv.org/abs/2012.13316}{arXiv:2012.13316}.

\bibitem[Ozu22a]{Ozu22a}
T.~Ozuch,
\emph{Noncollapsed degeneration of Einstein $4$-manifolds, I},
Geom. Topol. \textbf{26} (2022), no.~4, 1483--1528.
\href{https://arxiv.org/abs/1909.12957}{arXiv:1909.12957}.

\bibitem[Ozu22b]{Ozu22b}
T.~Ozuch,
\emph{Noncollapsed degeneration of Einstein $4$-manifolds II},
Geom. Topol. \textbf{26} (2022), no.~4, 1529--1634.


\bibitem[Ozu24]{Ozu24}
T.~Ozuch,
\emph{Integrability of Einstein deformations and desingularizations},
Comm. Pure Appl. Math. \textbf{77} (2024), no.~1, 177--220.
\href{https://arxiv.org/abs/2105.13193}{arXiv:2105.13193}.

\bibitem[Ozu26]{Ozu26}
T.~Ozuch,
\emph{Stability of gravitational instantons with a bounded
Killing vector field},
arXiv:2608.31008, 2026.

\bibitem[PR00]{PR00}
F.~Pacard and T.~Rivi\`ere,
\emph{Linear and nonlinear aspects of vortices: the Ginzburg-Landau model},
Progress in Nonlinear Differential Equations and their Applications \textbf{39}, Birkh\"auser, Boston, 2000.

\bibitem[Pag78]{Pag78}
D.~N.~Page,
\emph{A compact rotating gravitational instanton},
Phys. Lett. B \textbf{79} (1978), 235--238.

\bibitem[PP87]{PP87}
D.~N.~Page and C.~N.~Pope,
\emph{Inhomogeneous Einstein metrics on complex line bundles},
Class. Quantum Grav. \textbf{4} (1987), 213--225.
\href{https://doi.org/10.1088/0264-9381/4/2/005}{doi:10.1088/0264-9381/4/2/005}.

\bibitem[Pan23]{Pan23}
J.~Pan,
\emph{The Grushin hemisphere as a Ricci limit space with curvature $\geqslant1$},
Proc. Amer. Math. Soc. Ser. B \textbf{10} (2023), 71--75.
\href{https://arxiv.org/abs/2211.02747}{arXiv:2211.02747}.

\bibitem[Par86]{Par86}
J.~Parker,
\emph{$4$-dimensional $G$-manifolds with $3$-dimensional orbits},
Pacific J. Math. \textbf{125} (1986), no.~1, 187--204.


\bibitem[PW22]{PW22}
J.~Pan and G.~Wei,
\emph{Examples of Ricci limit spaces with non-integer Hausdorff dimension},
Geom. Funct. Anal. \textbf{32} (2022), 676--685.
\href{https://arxiv.org/abs/2106.03967}{arXiv:2106.03967}.

\bibitem[RS99]{RS99}
I.~Rivin and J.-M.~Schlenker,
\emph{The Schl\"afli formula in Einstein manifolds with boundary},
Electron. Res. Announc. Amer. Math. Soc. \textbf{5} (1999), 18--23.
\href{https://doi.org/10.1090/S1079-6762-99-00057-8}{doi:10.1090/S1079-6762-99-00057-8}.

\bibitem[RS00]{RS00}
I.~Rivin and J.-M.~Schlenker,
\emph{On the Schl\"afli differential formula}, preprint (2000).
\href{https://arxiv.org/abs/math/0001176}{arXiv:math/0001176}.


\bibitem[Tho69]{Tho69}
J.~A.~Thorpe,
\emph{Some remarks on the Gauss--Bonnet integral},
J. Math. Mech. \textbf{18} (1969), 779--786.

\bibitem[Tia90]{Tia90}
G.~Tian,
\emph{On Calabi's conjecture for complex surfaces with positive first
Chern class}, Invent. Math. \textbf{101} (1990), 101--172.
\href{https://doi.org/10.1007/BF01231499}{doi:10.1007/BF01231499}.

\bibitem[Tia15]{Tia15}
G.~Tian,
\emph{K-stability and K\"ahler--Einstein metrics},
Comm. Pure Appl. Math. \textbf{68} (2015), no.~7, 1085--1156;
corrigendum, \textbf{68} (2015), no.~11, 2082--2083.
\href{https://arxiv.org/abs/1211.4669}{arXiv:1211.4669}.

\bibitem[WZ90]{WZ90}
M.~Y.~Wang and W.~Ziller,
\emph{Einstein metrics on principal torus bundles},
J. Differential Geom. \textbf{31} (1990), no.~1, 215--248.
\href{https://doi.org/10.4310/jdg/1214444095}{doi:10.4310/jdg/1214444095}.

\bibitem[Web19]{Web19}
B.~Weber,
\emph{Energy and asymptotics of Ricci-flat $4$-manifolds with a Killing vector field},
Proc. Amer. Math. Soc. \textbf{147} (2019), no.~7, 3117--3130.
\href{https://doi.org/10.1090/proc/14014}{doi:10.1090/proc/14014}; revised version \href{https://arxiv.org/abs/1308.4997}{arXiv:1308.4997v2}.


\bibitem[Wei95]{Wei95}
G.~Weinstein,
\emph{On the Dirichlet problem for harmonic maps with prescribed singularities},
Duke Math. J. \textbf{77} (1995), no.~1, 135--165.
\href{https://arxiv.org/abs/dg-ga/9408005}{arXiv:dg-ga/9408005}.

\bibitem[Whi43]{Whi43}
H.~Whitney,
\emph{Differentiable even functions}, Duke Math. J. \textbf{10} (1943), 159--160.

\bibitem[Wis03]{Wis03}
T.~Wiseman,
\emph{Static axisymmetric vacuum solutions and
non-uniform black strings}, Class. Quantum Grav. \textbf{20}
(2003), no.~6, 1137--1176.
\href{https://doi.org/10.1088/0264-9381/20/6/308}{doi:10.1088/0264-9381/20/6/308}.

\bibitem[Yau78]{Yau78}
S.-T.~Yau,
\emph{On the Ricci curvature of a compact K\"ahler manifold and the complex Monge--Amp\`ere equation.~I},
Comm. Pure Appl. Math. \textbf{31} (1978), no.~3, 339--411.
\href{https://doi.org/10.1002/cpa.3160310304}{doi:10.1002/cpa.3160310304}.

\end{thebibliography}
\end{document}